\newcommand{\eqn}[2]{\begin{equation}\label{#1}#2\end{equation}}
\newcommand{\eqnst}[1]{\begin{equation*}#1\end{equation*}}
\newcommand{\eqnspl}[2]{\begin{equation}\begin{split}\label{#1}%
   #2\end{split}\end{equation}}
\newcommand{\eqnsplst}[1]{\begin{equation*}\begin{split}%
   #1\end{split}\end{equation*}}
\def\es{\emptyset}
\def\Zd{\mathbb{Z}^d}
\def\N{\mathbb{N}}
\def\cnctd{\longleftrightarrow}
\def\dcnctd{\Longleftrightarrow}
\def\P{\mathbf{P}}
\def\E{\mathbf{E}}
\def\cE{\mathcal{E}}
\def\tB{\widetilde{\mathcal{B}}}
\def\tC{\widetilde{\mathcal{C}}}
\def\tD{\widetilde{\mathcal{D}}}
\def\tPsi{\widetilde{\Psi}}
\def\Chi{\mathrm{X}}
\def\tR{\widetilde{R}}
\newcommand{\LG}{{\mathcal L_{\bf G}}}
\newcommand{\col}{}
\newcommand{\colAJ}{}
\newcommand{\abs}[1]{\left\vert #1\right\vert}
\newcommand{\Pbiic}{\P_{2-\operatorname{IIC}}}
\newcommand{\Piic}{\P_{\operatorname{IIC}}}
\theoremstyle{plain}
\newtheorem{theorem}{Theorem}[section]
\newtheorem{proposition}[theorem]{Proposition}
\newtheorem{lemma}[theorem]{Lemma}
\newtheorem{corollary}[theorem]{Corollary}
\newtheorem{example}[theorem]{Example}
\theoremstyle{definition}
\newtheorem{definition}[theorem]{Definition}
\newtheorem{remark}[theorem]{Remark}
\numberwithin{equation}{section}
\title{Bi-infinite incipient cluster in high dimensions}
\author{Manuel Cabezas}
\address{Manuel Cabezas\\ Pontificia Universidad Cat\'{o}lica de Chile, Facultad de Matem\'{a}ticas, Campus San Joaqu\'{i}n, Avenida Vicu\~{n}a Mackenna 4860, Santiago, Chile.}
\email{cabezas.mn@gmail.com}
\author{Alexander Fribergh}
\address{Alexander Fribergh\\ Universit\'e de Montr\'eal, DMS\\
Pavillon Andr\'e-Aisenstadt\\     2920, chemin de la Tour Montréal (Qu\'ebec),  H3T 1J4}
\email{alexander.fribergh@umontreal.ca}
\author{Markus Heydenreich}
\address{Markus Heydenreich\\Universität Augsburg, Department of Mathematics, D-86135 Augsburg, Germany}
\email{markus.heydenreich@uni-a.de}
\author{Antal A.~J\'arai}
\address{Antal A.~J\'arai\\ University of Bath, Department of Mathematical Sciences, 
Bath BA2 7AY, United Kingdom}
\email{A.Jarai@bath.ac.uk}
\date\today
\begin{document}

\begin{abstract}{\col
We consider high-dimensional percolation at the critical threshold. 
We condition the origin to be disjointly connected to two points, $x$ and $x'$, and subsequently take the limit as $|x|$, $|x'|$ as well as $|x-x'|$ diverge to infinity. This limiting procedure gives rise to a new percolation measure that locally resembles critical percolation but is concentrated on configurations with two disjoint infinite occupied paths. 
We coin this the bi-infinite incipient percolation cluster. It is mutually singular with respect to incipient infinite clusters that have been constructed in the literature. 
We achieve the construction through a double lace expansion of the cluster. 
}
\end{abstract}

\maketitle

\section{Introduction}
\paragraph{\bf Percolation.} 
Consider bond percolation on the hypercubic lattice $\mathbb Z^d$. Each bond $\{x,y\}$ ($x,y\in\Zd$; $x\neq y$) is \emph{occupied} with probability $p\in[0,1]$ independently of the occupation status of other bonds. The resulting product measure is denoted $\mathbb P_p$. 
We furthermore write $\{x\cnctd y\}$ for the event that two sites $x,y\in\Zd$ are connected by a finite path that solely consists of occupied bonds. 
The corresponding probability is known as \emph{two-point} function, 
\[ \tau_p(x,y):=\P_p(x\cnctd y)=\P_p(0\cnctd y-x),\qquad x,y\in\mathbb Z^d,\]
and we note its spatial shift-invariance.  
Our focus is on \emph{critical} percolation $p=p_c$, which is characterised through 
\[ p_c = \sup \Big\{ p\in[0,1] \;\Big| \sum_{x\in\Zd}\tau_p(0,x)<\infty\Big\}. \]
It is well-known that $0<p_c<1$ in dimension $d\ge2$, e.g.\ \cite{Grimm99}. 
Throughout this article we shall assume that $p=p_c$ (unless explicitly stated otherwise), and further drop it from the notation. 
We denote this version as \emph{nearest-neighbor model} in order to contrast it to another model, which we introduce next. 

\medskip
\paragraph{\bf Spread-out percolation.} 
We consider a second variant known as (uniform) \emph{spread-out percolation}. Here, we enrich the original (nearest-neighbor) lattice by extra bonds, namely every pair of vertices $x,y\in\Zd$ with $\|x-y\|_\infty\le L$ is linked by an edge. As before, we make bonds occupied independently with probablity $p$. 
The extra bonds change the value of $p_c$, and it is known that $\lim_{L\to\infty}p_c(L)=0$ (cf.\ \cite{HaraSlade90a,HaraSlade95}), but we fix a finite (though large) value of $L$. 
The main reason to study this variant is that the upper critical dimension is coming out more clearly for spread-out percolation. More general spread-out models have been considered \cite{HeydeHofstSakai08}, but will not be further studied here. 

\subsection{Assumptions} 
{\col We can in fact consider more general percolation models, where edge $\{x,y\}$ is occupied independently with probability $pD(x,y)$ for some kernel $D\colon\Zd\times\Zd\to[0,1]$ that is invariant under translations (i.e., it depends only on the difference $y-x$) and rotations of 90 degress, and sums to one: $\sum_yD(x,y)=1$ for all $x\in\Zd$. 
This notion allows us to unify the nearest-neighbor case and the uniform spread-out case by setting 
\[
	D(x,y)=
	\begin{cases}
		\frac1{2d} \,I\big[|x-y|=1\big]\qquad&\text{in the nearest-neighbor model;}\\
		\frac1{(2L+1)^d-1}\, I\big[0<\|x-y\|_\infty\le L\big]\qquad&\text{in the uniform spread-out model.}
	\end{cases}
\]
}

We assume throughout that we are in the high-dimensional setting, where percolation is in the mean-field regime. See \cite{HeydeHofst17} or \cite[Chapters 9--11]{Slade06} for a comprehensive treatment of percolation in these regimes. 
To this end, we require that there is a (dimension-dependent) constant $A_d$ such that at criticality (i.e., for $p=p_c$) we have  
\begin{equation}\label{eq:tauAsy}
	\tau(0,x)=A_d\, {|x|}^{2-d}(1+o(1))\quad\text{ as }|x|\to\infty.
\end{equation}
This is indeed verified for nearest-neighbor bond percolation on $\mathbb Z^d$ in dimension $d\ge11$ (Hara \cite{Hara08}, Fitzner \& van der Hofstad \cite{FitznHofst17}) as well as for uniform spread-out percolation in $d>6$ (Hara, van der Hofstad and Slade \cite{HaraHofstSlade03}). 
All these references provide precise estimates on the error term as well as a construction for the constant $A_d$, but we need here only that $A_d\in(0,\infty)$. It is expected to be true in dimension $d>6$ also for the nearest-neighbor model, but this an open problem, cf.~\cite[Open Problem 10.1]{HeydeHofst17}.  
The condition in \eqref{eq:tauAsy} might be rephrased in the language of critical exponents (see \cite[Section 2]{HeydeHofst17}) by saying that the critical exponent $\eta$ exists and attains its mean-field value $\eta=0$. 

{\col
We furthermore assume that there is a uniform constant $C>0$ such that for all $x\in\Zd$, 
\begin{equation}\label{eq:DtauBd}
	\sum_y D(0,y)\tau(y,x)\le C\tau(0,x).
\end{equation}

We finally need a third assumption to assure that the lace expansion is converging. 
To this end, we define 
\eqnspl{eq:Amid}{
 &   A_{1}(u,v,x,y)\
  := \sum_{s,t}\tau(u,s)\tau(s,y)\tau(v,t)\tau(s,t)pD\ast \tau(t,x)=
        \begin{minipage}{0.3\textwidth}
      \includegraphics[width=0.5\linewidth]{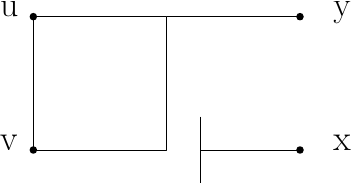}
    \end{minipage}\ \\
 &       A_{2}(u,v,x,y)\
  := \sum_{s,t}\tau(u,y)\tau(u,s)\tau(v,s)\tau(v,t)\tau(s,t)pD\ast \tau(t,x)=
        \begin{minipage}{0.3\textwidth}
      \includegraphics[width=0.5\linewidth]{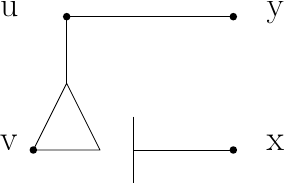}
    \end{minipage}\ \\
 &       A^{(\text{end})}(u,v,x)\
   := \tau(u,x)\tau(u,v)\tau(v,x) = 
        \begin{minipage}{0.3\textwidth}
      \includegraphics[width=0.4\linewidth]{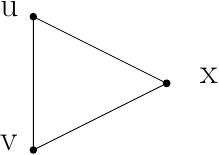}
    \end{minipage}\
}

The final expressions are a graphical representation for the $\tau$'s: each line represents a factor of $\tau$, and a dashed line consists of a convolution $pD\ast\tau(t,x)=\sum_wpD(w-t)\tau(w,x)$. Unlabelled vertices are summed over (here these are $s$ and $t$). 
Such pictorial representations of analytic quantities are convenient tools to organise and bound complex formulae, and will be used more extensively in Section \ref{sec:BoundsFramework}. 
For $N\ge 1$ we define for $\Zd$-valued arguments 
\begin{equation}\label{def_A}
	A^{\scriptscriptstyle(N)}(u_1,v_1,x):= \sum_{\substack{u_2,v_2,\ldots,\\u_N,v_N}}\prod_{i=1}^{N-1}\Big[A_1(u_i,v_i,u_{i+1},v_{i+1})+A_2(u_i,v_i,u_{i+1},v_{i+1})\Big] A^{(\text{end})}(u_N,v_N,x),
\end{equation}
where for $N=1$ we interpret the empty product as 1 so that $A^{(1)}(u,v,x)=A^{(\text{end})}(u,v,x)$. 
Our second assumption now is 
\begin{equation}\label{eq:Aass}
	\sup_{u,v,x\in\Zd}\frac{\sum_{N=1}^\infty A^{\scriptscriptstyle(N)}(u,v,x)}{\tau(u,v)\tau(u,x)\tau(v,x)}<\infty,
\end{equation}
The arguments in \cite[Proof of Prop.~3.1]{HofstJarai04} imply that \eqref{eq:Aass} is true in the uniform spread-out model whenever $d>6$ and $L$ is sufficiently large. Similarly, it is true in the nearest-neighbor model whenever the dimension $d$ is sufficiently large (cf.\ \cite{HeydeHofst17,HofstJarai04}).} 

\subsection{Incipient infinite cluster} {\col While for $p\le p_c$, we know that there are no infinite clusters, there exists almost surely a (unique) infinite cluster of $p>p_c$. The \emph{incipient infinite cluster} is devised as a critical cluster at the verge of appearance of an infinite cluster, and is constructed through a suitable limit of conditional distributions. The first such construction in dimension 2 is due to Kesten \cite{Keste86a}, and his motivation was to study anomalous diffusive behavior of random walks on such clusters. He proved two constructions: (1) by conditioning on the origin to be connected to the boundary of an $n$-box, and then taking the limit $n\to\infty$, and (2) conditioning  the cluster of the origin being infinite for $p>p_c$, and then taking the limit $p\searrow p_c$. He proved that both constructions gave the same limit, for which he coined the term incipient infinite cluster. J\'arai \cite{Jarai03b} extended this constructions in dimension 2 to several other limiting schemes, thereby establishing that the incipient infinite cluster is quite a robust object. 
Damron and Sapozhnikov \cite{DamroSapoz11} derived multiple-arm asymptotics for two-dimensional incipient infinite clusters, and these were sharpened by Yao \cite{Yao18} in the special case of site percolation on the triangular lattice. 

The first construction in high dimension is due to J\'arai and van der Hofstad \cite{HofstJarai04}, who considered two constructions: first by conditioning on $\{0\cnctd x\}$ for some $x\in\Zd$ and then letting $|x|\to\infty$ and secondly by an appropriately weighted subcritical limit. 
Heydenreich, van der Hofstad and Hulshof \cite{HeydeHofstHulsh14b} extended these constructions to long-range percolation, and further verified that Kesten's original construction (1) is also valid in high dimensions. The high-dimensional constructions all rest on the lace expansion, which has been pioneered by Hara and Slade \cite{HaraSlade90a} in the percolation context. In these constructions it was unclear how large the dimension has to be in order for the expansion to converge; Fitzner and van der Hofstad \cite[Theorem 1.5]{FitznHofst17} clarified that $d\ge11$ suffices. 
Very recently, Chatterjee, Chinmoy, Hanson and Sosoe \cite{ChattChinmHansoSosoe25} unified and generalized all the previous approaches in high dimensions by providing a robust new construction that is solely based on the two-point function \eqref{eq:tauAsy}. In their approach they combine techniques from the two-dimensional and high-dimensional constructions. 

We now describe this construction in more detail.} We call an event $F\in\mathfrak F$ a \emph{cylinder event} if it depends on the occupation status of only finitely many edges. We denote the subset of cylinder events by $\mathfrak F_0$. 
Van der Hofstad and J\'arai \cite{HofstJarai04} prove that the limit 
\eqnst{\Piic(F):=\lim_{|x|\to\infty}\P_{p_c}(F\mid\{ o \cnctd x \}), \qquad F\in\mathfrak F_0,}
exists whenever \eqref{eq:tauAsy} is true. 
Since $\mathfrak F_0$ is a $\cap$-stable generator of $\mathfrak F$, we have thus characterised a measure on $\mathfrak F$, which is called the \emph{incipient infinite cluster} measure. 
It is concentrated on configurations that locally resemble critical percolation but where the cluster of the origin is infinitely large ($\Piic$-)almost surely. 
Unlike the critical percolation measure $\P_{p_c}$, the incipient infinite cluster measure is not (shift-)ergodic. 

\subsection{Main result}
In this work we give a variation of the infinite incipient cluster measure, which we do believe to be ergodic w.r.t.\ certain shifts. 
Our main result is the following. 
\begin{theorem}\label{thm:ExistenceBIIC} 
For any independent, translation invariant percolation model on $\Zd$ in dimension $d>6$ which satisfies \eqref{eq:tauAsy}, \eqref{eq:DtauBd} and \eqref{eq:Aass}, the following is true. For any cylinder set $F\in\mathfrak F_0$, the limit 
\[\Pbiic(F):=\lim_{|x|,|x'|,|x-x'|\to\infty}\P_{p_c}(F\mid\{ o \cnctd x \} \circ \{ o \cnctd x' \})\]
exists, and can thus be extended to a measure $\Pbiic$ on the $\sigma$-field $\mathfrak F:=\sigma(\mathfrak F_0)$. 
\end{theorem}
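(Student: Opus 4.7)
The plan is to adapt the lace-expansion construction of the single-arm IIC from \cite{HofstJarai04} to the two-arm setting, using the ``double lace expansion'' advertised in the abstract and built from the diagrams $A_1$, $A_2$, $A^{(\text{end})}$ introduced in \eqref{eq:Amid}. The target representation is
\eqn{eq:plan-repr}{
  \P_{p_c}\bigl(F\cap\{o\cnctd x\}\circ\{o\cnctd x'\}\bigr)
  = \sum_{y,y'\in\Zd}\Psi_F(y,y')\,\tau(y,x)\,\tau(y',x') + E_F(x,x'),
}
where $\Psi_F\colon\Zd\times\Zd\to[0,\infty)$ is a summable kernel depending only on $F$ (not on $x,x'$), and $E_F(x,x')$ is of strictly smaller order than the main sum as $|x|,|x'|,|x-x'|\to\infty$. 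Applying \eqref{eq:plan-repr} once for general $F\in\mathfrak F_0$ and once for the trivial event $F=\Omega$, then dividing, turns the conditional probability of Theorem \ref{thm:ExistenceBIIC} into
\[
\frac{\sum_{y,y'}\Psi_F(y,y')\,\tau(y,x)\,\tau(y',x')}{\sum_{y,y'}\Psi_\Omega(y,y')\,\tau(y,x)\,\tau(y',x')}+o(1).
\]
The two-point asymptotics \eqref{eq:tauAsy} give $\tau(y,x)/\tau(0,x)\to 1$ for every fixed $y$ as $|x|\to\infty$, and similarly for $y',x'$; combined with the convolution bound \eqref{eq:DtauBd} and the summability of $\Psi_F$, dominated convergence identifies the limit as
$\Pbiic(F):=\bigl(\sum_{y,y'}\Psi_F(y,y')\bigr)\big/\bigl(\sum_{y,y'}\Psi_\Omega(y,y')\bigr)$.

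Once the pointwise limit on $\mathfrak F_0$ is established, extension to a probability measure on $\mathfrak F=\sigma(\mathfrak F_0)$ is routine: finite additivity and monotonicity are inherited from the pre-limit measures, and compactness of the product space $\{0,1\}^{\text{edges}}$ automatically provides continuity from above at $\emptyset$ for any cylinder-defined content, so Carath\'eodory/Kolmogorov produce the unique extension. The real content of the theorem is therefore the construction of \eqref{eq:plan-repr}. I would build $\Psi_F$ by first fixing a finite window $\Lambda$ containing all edges on which $F$ depends, and then performing along each of the two disjoint backbones $o\cnctd x$ and $o\cnctd x'$ a backbone-lace expansion of the kind used for the one-arm IIC, while an additional layer of bookkeeping handles segments where the two backbones interact (share a vertex, or cross the same edge). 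The diagrams $A_1$ and $A_2$ correspond exactly to the two local topologies of a double-backbone segment (two parallel arms, or arms merging at a common vertex), and $A^{(\text{end})}$ to the triangular terminations at $x$ or $x'$; the iterated formula \eqref{def_A} is then the backbone of the main term in \eqref{eq:plan-repr}.

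The main technical obstacle is controlling $E_F(x,x')$ uniformly in the three diverging parameters $|x|,|x'|,|x-x'|$ simultaneously. The difficulty specific to the bi-infinite setting is geometric: the two disjoint backbones may stay close to each other for a long time before separating, so the location at which they branch off from the common subcluster of the origin is a priori anywhere in $\Zd$, and one has to rule out the possibility that the mass of the expansion escapes to infinity along the $x$- or $x'$-direction. This is precisely what the triangle-type hypothesis \eqref{eq:Aass} is designed for: the uniform bound on $\sum_N A^{\scriptscriptstyle(N)}(u,v,x)$ by $\tau(u,v)\tau(u,x)\tau(v,x)$ forces both absolute convergence of the double-backbone expansion and concentration of the main mass on configurations whose branching location lies at bounded distance from the origin, which is what permits the pointwise convergence $\tau(y,x)/\tau(0,x)\to 1$ to be transferred to the sum by dominated convergence. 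Carrying this transfer out rigorously, while maintaining the BK-style disjoint-occurrence structure throughout the expansion and ensuring symmetric treatment of the pairs $(x,y)$ and $(x',y')$, is where the bulk of the technical effort will reside.
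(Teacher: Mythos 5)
Your overall strategy matches the paper's: express $\sigma(F;x,x')=\P_{p_c}[F\cap\{o\cnctd x\}\circ\{o\cnctd x'\}]$ via a double lace expansion, normalise by $\tau(x)\tau(x')$, pass to the limit, and divide by the $F=\Omega$ normalisation. However, two pieces of your argument are genuine gaps rather than details to be filled in routinely.

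First, you never address positivity of the denominator. After proving convergence of $\sigma(F;x,x')/(\tau(x)\tau(x'))$ and $\sigma(\Omega;x,x')/(\tau(x)\tau(x'))$ to constants, you divide and declare the limit; but the limiting constant $\sum_{y,y'}\Psi_\Omega(y,y')$ is a limit of an \emph{alternating} lace-expansion series (the $(-1)^{N+N'}$ signs in $\bar\Psi_{n,n'}$, $\hat\Psi_{n,n'}$), about which one has no monotonicity or sign information a priori, so there is no free argument that it is nonzero. The paper needs Corollary \ref{corol:sigmaBd}, which proves strict positivity not from the series itself but from the left-hand side $\lim\sigma(x,x')/(\tau(x)\tau(x'))$, via an `ultra-violet regularisation' (rewiring inside a box $Q_n$ to place the two arm-endpoints at $\pm(n-2)e_1$) combined with an inverse-BK lower bound and the triangle diagram being $o(1)$ in $n$. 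Without some argument of this type, the conclusion $\Pbiic(F)=\sum\Psi_F/\sum\Psi_\Omega$ is unjustified.

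Second, your appeal to dominated convergence for $\sum_{y,y'}\Psi_F(y,y')\tau(y,x)\tau(y',x')/(\tau(x)\tau(x'))$ is not directly available. The pointwise ratio $\tau(x-v)/\tau(x)$ tends to $1$ only for \emph{fixed} $v$, and is \emph{not} uniformly bounded over $v\in\Zd$: for $v$ near $x$ it grows like $|x|^{d-2}/|x-v|^{d-2}$. So no single summable dominating function exists. The actual proof of Lemma \ref{lem:sigmaBd} in the paper splits the sum over $(v,v')$ into six ranges (``tiny / average / large'' relative to $|x|$, $|x'|$) and shows, using the convolution bound of Lemma \ref{lem:ConvBd} in each regime, that only the ``tiny--tiny'' part survives; that part then converges by the quantitative asymptotic \eqref{eq:taucomparison}. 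Your sketch hides this as ``concentration of mass near the origin,'' which is the right intuition, but the step you are skipping is the technical heart of the convergence argument. Relatedly, your representation \eqref{eq:plan-repr} collapses the paper's half-expanded cross term $\sum_{u,v}\hat\Psi(F;u,v,x')\,pD(u,v)\tau(v,x)$ (which carries $x'$ as an argument of the coefficient rather than as an arm $\tau(\cdot,x')$) into the generic remainder $E_F$; this term is of the same rough scale as the main term individually and has to be shown to vanish after normalisation by a separate convolution estimate, not by declaration. A smaller issue is that you misattribute the role of $A_1,A_2,A^{(\text{end})}$: these build the assumption \eqref{eq:Aass} controlling convergence of a single-arm Hara--Slade expansion, and are not themselves the local topologies of double-backbone segments; the double-arm structure in the paper lives in the events $\widetilde E$, $E'$, $\hat E''$ of Section~2.
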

We call $\Pbiic$ the \emph{bi-infinite incipient cluster}, as it resembles critical percolation clusters but has two disjoint connected paths to infinity. It is singular both w.r.t.\ $\P_{p_c}$ as well as w.r.t.\ the ``usual'' incipient infinite cluster measures $\Piic$. 

{\col We recall that for the nearest neighbor model, assumption \eqref{eq:tauAsy} is verified for $d\ge 11$, and there exists $d_*>6$ such that \eqref{eq:Aass} is true for $d>d_*$.  
For the uniform spread-out model, for any $d>6$ there exists $L_*$ such that both \eqref{eq:tauAsy} and \eqref{eq:Aass} are true for $L>L_*$.
Assumption \eqref{eq:DtauBd} is satisfied for both the nearest-neighbor model and the uniform spread-out model, see Lemma \ref{lem:D*tau}. }

In \eqref{IICconstr} we give an explicit construction for the measure $\Pbiic$ in terms of certain lace expansion coefficients. 

Hara and Slade \cite{HaraSlade00a,HaraSlade00b} used a double expansion to study the probability that vertices $o, x, y$ are in the same cluster of large size $n$, and they derived the scaling limit if this probability (under suitable scaling of $x$ and $y$) using the Fourier transform. Our expansion is different and studies the cluster with unrestricted size. 

\subsection{Motivation for this work} 
Our main motivation for introducing the bi-infinite cluster comes from~\cite{BCF1}, which gave sufficient conditions for proving that the simple random walk on a critical graph converges to a Brownian on a conditioned super Brownian motion. {\col The abstract framework of~\cite{BCF1}} was applied to critical branching random walks~\cite{BCF2} and lattice trees~\cite{BCF3} in high dimensions. It is believed that the same result should be applicable to critical percolation in high dimensions. 

One of the aforementioned conditions states that, loosely speaking, the resistance between two distant points should be proportional intrinsic distance. In order to prove this, one needs to understand the \lq\lq backbone\rq\rq~connecting those two points and typical points on that backbone have two long connections emanating from them. This means this backbone should resemble the bi-infinite cluster.

Furthermore, we believe that this bi-infinite cluster has an ergodic property with respect to the shift along pivotal edges of this backbone (here a pivotal edge is an edge whose removal split the bi-infinite cluster into two infinite clusters). This ergodic property should be central to proving the proportionality between the resistance and the intrinsic distance.

Finally, the methods developed in this article provide lots of tools for expanding a 3-arm event. This is also interesting since  large critical clusters contain {\lq\lq branching points\rq\rq} from which three long connections emerge. Those branching points are of particular importance since they are key to understanding $k$-point  functions in critical percolation. This condition is not only important from a lace expansion perspective, but it is central for proving the convergence of the simple random walk on critical graphs (see~\cite{CFHP} for a further discussion on this in the case of lattice trees and branching random walks).

For the model of self-avoiding walk, Markering \cite{Marke24} has recently constructed a two-sided infinite self-avoiding walk in dimension 5 and higher. In contrast to our work, his approach is based on the two-point function alone, and does not use the lace expansion directly. 

\subsection{Contribution and strategy of proof} 
{\col
Apart from the main result, this paper offers another important contribution which is a new framework for {\colAJ bounding} lace expansion coefficients.

The standard approach has been to carefully study the events in lace expansion coefficients in order to extract as few diagrams as possible. In our context, the number of potential diagrams would prove to be extremely high even with great care. This led us to choosing a different approach where the bounding of events by diagrams is standardized, which produces a higher number of diagrams but focusing our efforts on compact notations for those diagrams and  a systematic approach for reducing the diagrams to easier ones.

More specifically, we introduce the notion of a diagrammatic event (see~Section~\ref{sect_diagram_graph}) which is an event composed of disjoint connections given by the edges of a multigraph. When performing a lace expansion argument, those diagrammatic events often appear and they are typically intersected with certain events loosely describe as \lq\lq a point intersects the cluster from a previous percolation\rq\rq~or~\lq\lq a point is doubly connected to a box $W$\rq\rq. We show that, under very general conditions, those intersections systematically provide the existence of \lq\lq an extra disjoint path\rq\rq in the sense that the initial diagrammatic event (corresponding to a multigraph $G$) is shown to be included in a {\it union} of diagrammatic events corresponding to a multigraph $G'$ obtained by adding edges to $G$ (see Lemma~\ref{lem_extra_path2} and Lemma~\ref{lem_extra_path4} for examples).  This union of diagrammatic events can be written with a very convenient graphical representation akin to standard lace expansion diagrams (this representation is described in Section~\ref{sect_gen_diag}). 

As mentioned previously, the straightforward nature of this method for bounding lace expansion events in terms of diagrammatic graphs has a drawback: it produces many diagrams that we have to bound. For this, we use a tool that we named the H-reduction (see Proposition~\ref{prop_Hred}) which had previously been used in some form throughout the lace expansion literature but not in a systematic way. Informally, this result states that a diagram where the edges form the letter \lq\lq H\rq\rq~ can be bounded, up to multiplicative constants, by the same diagram but where the central edge has been removed. This turns our problem of bounding diagrams into a graph theoretical problem of reducing graphs through successive H-reductions.

In Proposition~\ref{kill_bill}, we prove that any graph with good connectivity properties and which has three marked points (which will correspond to the origin and the ends of {\colAJ the expanded-out parts of} the two long arms of our conditioned cluster) can be H-reduced to one of the following two diagrams
 \[
\begin{minipage}{0.25\textwidth}
      \includegraphics[width=0.45\linewidth]{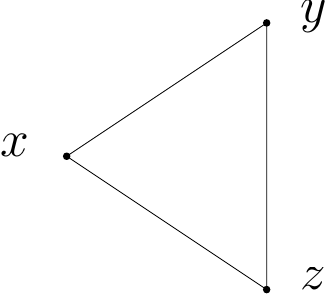}
    \end{minipage}
{\rm or }\qquad
    \begin{minipage}{0.25\textwidth}
      \includegraphics[width=0.45\linewidth]{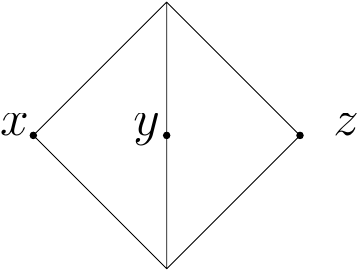}
    \end{minipage}
   \]
turning our problem of bounding tens of thousands of diagrams into understanding those two simple ones.  In Section~\ref{sect_big_class} we provide a tool for proving that many graphs which are obtained from natural building blocks of lace expansion have the connectivity properties (see Proposition~\ref{expand_class}) which allows them to be efficiently H-reduced (where Proposition~\ref{prop:strong-H-reduce} provides 
{\colAJ a suitable general tool}).

We believe this framework should be useful more broadly deriving a lace expansion for three arms and thus for the analysis of $k$-point functions. 
}

%
\subsection{Open problems.} 
{\col
Our construction paves the road to thrice-infinite incipient clusters, where there are three disjoint arms to infinity, which in turn would be the starting point for studying $k$-point functions for high-dimensional percolation clusters. 

A second set of open questions addresses the preliminaries. Is it possible to base the analysis on \eqref{eq:tauAsy} alone (so that it is true for $d\ge 11$ in the nearest-neighbor case), as done in \cite{ChattChinmHansoSosoe25}? Or is the (even weaker) strong triangle condition sufficient, as in \cite{HeydeHofstHulsh14b}, which then would include also long-range percolation models? {\colAJ We note that an advantage of our stronger assumptions is that one can extract from our arguments a quantitative estimate on the rate of convergence.}
{\col Indeed, if we have bounds on the error term in \eqref{eq:tauAsy}, e.g.\ $\tau(0,x)=A_d\, {|x|}^{2-d}(1+O(|x|^\varepsilon))$, then this gives quantitative control in \eqref{eq:taucomparison}, which may then be turned into polynomial bounds on the difference  $\Pbiic(F)-\P_{p_c}(F\mid\{ o \cnctd x \} \circ \{ o \cnctd x' \})$. 
}

{\col
\subsection{Organisation} 
We provide a double lace expansion in the upcoming section. The conceptional most interesting part is the expansion of a ``second arm'' in Section \ref{sec:secondarm}. 
In Section \ref{sec:bounds} we formulate diagrammatic bounds for the various lace expansion coefficients appearing in Section \ref{sec:expansion}, and use these bounds to prove Theorem \ref{thm:ExistenceBIIC} in Section \ref{sec:convergence}. 
{\colAJ In Section \ref{sec:BoundsFramework} we develop a general framework for bounding diagrams. Finally, in Section \ref{sec:bounding-diagrams} we prove the diagrammatic bounds of Proposition \ref{prop:PsiBd}}. 

As we are concentrating on leading order factors without any effort to constant factors, we simplify notation by writing $C$ for a generic though positive constant. It might depend on the dimension $d$ (and on $L$ in case of the spread-out model), but not on other arguments. Its value might change from line to line. }

\bigskip

\section{Expansion of the backbone probability}
\label{sec:expansion}
For cylinder events $F$ we let 
\[\sigma(F; x,x')   := \P_{p_c} [ F \cap \{ o \cnctd x \} \circ \{ o \cnctd x' \} ]\]
where $x \not= x'$. 
Then \( \sigma( x,x')  := \sigma(\Omega; x,x') = \P_{p_c} [ \{ o \cnctd x \} \circ \{ o \cnctd x' \} ], \)
and 
\[\Pbiic(F)=\lim_{|x|,|x'|,|x-x'|\to\infty}\frac{\sigma(F;x,x')}{\sigma(x,x')}\]
provided that the limit exists. 

It is sufficient to show the existence of the limit for special cylinder events $F$ of the form
\[ F 
   = \{ \omega : \text{$\omega(b) = 1$ for $b \in B_1$ and $\omega(b) = 0$ for $b \in B_2$} \}, \]
where $B_1$ and $B_2$ are disjoint and there union equals $E_W$, the set of all bonds with both 
endpoints in a finite box $W = W(M) = [-M,M]^d$ for some $M \ge 1$. Indeed, any element 
of $\mathfrak{F}_0$ can be written as a finite disjoint union of special cylinder events of the 
above type for some $M$. In what follows we denote by $\mathfrak{F}_{00}$ the set of special 
cylinder events.

\begin{proposition}\label{prop:laceexpansion}
There exists families of functions $(\tPsi_{n})_{ n,\ge0}$, $(R_{n,n'})_{n,n'\ge0}$ (defined on $\Zd\times\Zd$) as well as $(\hat\Psi_{n,n'})_{ n,n'\ge0}$ (defined on $\Zd\times\Zd\times \Zd$) and $(\bar\Psi_{n,n'})_{ n,n'\ge0}$ (defined on $\Zd\times\Zd\times \Zd\times\Zd$) such that 
\begin{align} 
	\sigma(x,x')
	=&
		\tPsi_{n}(x,x')
	+\sum_{u,v,u',v'}\bar\Psi_{n,n'}(u,v,u',v')\,pD(u,v)\tau(u,x)\,pD(u',v')\tau(u',x') \nonumber \\
	&
		+ \sum_{u,v}pD(u,v)\hat\Psi_{n,n'}(u,v,x')\tau(v,x) 
	+ R_{n,n'}(x,x')
\end{align}
	for $ n,n'\in\mathbb N, x,x'\in\Zd$. 
	Similarly, for any special cylinder event $F\in\mathfrak F_{00}$ there are $\tPsi_{n,n'}(F;x,x')$, $\bar\Psi_{n,n'}(F;u,v,u',v')$, $\hat\Psi_{n,n'}(F;u,v,u')$, $R_{n,n'}(F;x,x')$ such that 
\begin{align}
	 \sigma(F;x,x')
	 =&
		\tPsi_{n}(F;x,x')
	+\sum_{u,v,u',v'}\bar\Psi_{n,n'}(F;u,v,u',v')\,pD(u,v)\tau(v,x)\,pD(u',v')\tau(v',x') \nonumber \\
	&
		+ \sum_{u,v}pD(u,v)\hat\Psi_{n,n'}(F;u,v,x')\tau(v,x)  
	+ R_{n,n'}(F;x,x').
\end{align}
\end{proposition}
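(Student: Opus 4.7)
The strategy is a double lace expansion: first expand the arm $\{o\cnctd x\}$, then, within the residual blob at $o$, expand the arm $\{o\cnctd x'\}$.

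\emph{First arm expansion.} Following the Hara--Slade/Hofstad--J\'arai percolation lace expansion, I identify the first pivotal edge $\{u,v\}$ for $\{o\cnctd x\}$ in realizations of $\{o\cnctd x\}\circ\{o\cnctd x'\}$, separating a ``backbone sausage'' containing $o$, $u$, and $x'$ (with $\{o\cnctd x'\}$ disjoint from $\{o\cnctd u\}$) from a restricted connection $v\cnctd x$. Iterating the standard inclusion--exclusion $n$ times produces a decomposition
\[
\sigma(x,x') \;=\; \tPsi_n(x,x') \;+\; \sum_{u,v} \Theta_n(u,x')\, pD(u,v)\,\tau(v,x) \;+\; R_n^{(1)}(x,x'),
\]
where $\tPsi_n$ collects configurations in which the first expansion terminates early, absorbing both $x$ and $x'$ into a common blob (so no long arm to $x$ remains), $\Theta_n(u,x')$ is the end-stage coefficient retaining the pivotal foot $u$ and the second-arm endpoint $x'$, and $R_n^{(1)}$ is the standard lace-expansion remainder.

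\emph{Second arm expansion.} Next I expand each coefficient $\Theta_n(u,x')$ by an analogous lace expansion, this time on the arm $\{o\cnctd x'\}$. The event underlying $\Theta_n$ has three distinguished points $o,u,x'$ with the disjointness constraint $\{o\cnctd u\}\circ\{o\cnctd x'\}$; identifying the first pivotal $\{u',v'\}$ for $\{o\cnctd x'\}$ peels off a blob containing $o$ and $u$ together with a restricted connection $v'\cnctd x'$. Iterating $n'$ times splits the contribution according to whether (i) the second expansion terminates before step $n'$, absorbing $x'$ into the final blob---which, combined with the Step~1 factor $pD(u,v)\tau(v,x)$, produces the term $\hat\Psi_{n,n'}(u,v,x')$---or (ii) it does not, producing a second pivotal $\{u',v'\}$ and long arm $\tau(v',x')$, and hence the term $\bar\Psi_{n,n'}(u,v,u',v')$. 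The truncation at step $n'$ contributes, together with $R_n^{(1)}$, to the full remainder $R_{n,n'}$.

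\emph{Cylinder events.} For a special cylinder event $F\in\mathfrak F_{00}$ depending only on the finitely many edges inside the box $W$, the argument runs verbatim: the indicator of $F$ multiplies the disjoint-connection event pointwise, and the identification of pivotal edges and the associated inclusion--exclusion proceed identically, producing the $F$-decorated coefficients $\tPsi_n(F;\cdot)$, $\bar\Psi_{n,n'}(F;\cdot)$, $\hat\Psi_{n,n'}(F;\cdot)$, $R_{n,n'}(F;\cdot)$.

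\emph{Main obstacle.} The conceptual crux is the second expansion. Ordinary single-arm lace expansion operates on a bare cluster of the origin, whereas here the second arm must be expanded inside a blob that already carries a second root $u$ inherited from the first expansion and is subject to a disjointness constraint with the surviving $\{o\cnctd u\}$ connection. Designing the inclusion--exclusion so that the resulting coefficients $\hat\Psi_{n,n'}$ and $\bar\Psi_{n,n'}$ depend only on their displayed arguments, and so that successive expansion steps continue to telescope cleanly, is the novel ingredient and is carried out in Section~\ref{sec:secondarm}.
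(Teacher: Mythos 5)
Your high-level strategy---a double lace expansion, first on the arm to $x$ and then on the arm to $x'$---matches the paper's, and the first-arm stage proceeds essentially as you describe (with the minor bookkeeping difference that the paper tracks pivotals for $W\cnctd x$, where $W$ is the box supporting $F$, rather than for $o\cnctd x$). You also correctly flag the second expansion as the crux. However, the specific prescription you propose---``identifying the first pivotal $\{u',v'\}$ for $\{o\cnctd x'\}$''---would not produce a factorization that closes, and your diagnosis of the obstacle is not quite the right one.

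The difficulty is not merely that the residual blob carries a second root $u$ and a disjointness constraint with $\{o\cnctd u\}$. It is that the $\omega_0$-cluster $\tC_0$ feeds into the iterated first-arm expansion through the event $E'(v_0,u_1;\tC_0)$, which is evaluated in the \emph{independent} configuration $\omega_1$, and that event depends on $\tC_0$ only through its intersection with the $\omega_1$-backbone $\tB_1$ between $v_0$ and $u_1$. If you cut $\tC_0$ at the first pivotal for $o\cnctd x'$ (or $W\cnctd x'$), the piece carried away toward $x'$ could still meet $\tB_1$, so $E'(v_0,u_1;\cdot)$ would not be determined by the remaining piece $\tD_0$ alone, and the three constituent events would not factor. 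The paper's key move (Lemma~\ref{lem:factor-2nd}) is to take the pivotal $(u'_0,v'_0)$ for the \emph{union} $(W\cup\tB_1)\cnctd x'$ and to make the new arm avoid $\tD_0\cup\tC(\tB_1)$; this forces $\tC_0\cap\tB_1=\tD_0\cap\tB_1$, hence $E'(v_0,u_1;\tC_0)=E'(v_0,u_1;\tD_0)$, and the three pieces of the event become conditionally independent given $\tD_0$ and $\omega_1$ (Corollary~\ref{lem:factor-2nd-rv}), which is exactly what lets the inclusion--exclusion telescope. Without replacing your pivotality notion with this one, the coefficients $\hat\Psi_{n,n'}$ and $\bar\Psi_{n,n'}$ would not be well-defined functions of only their displayed arguments, and the decomposition would not be an identity.
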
 
In this section, we prove these identities thereby establishing the particular form of $\tPsi_{n}$,  $R_{n,n'}$, $\hat\Psi_{n,n'}$ and $\bar\Psi_{n,n'}$ cf.\ \eqref{eq:DefBarPiNN}--\eqref{eq:DefRNN}. 
In Section \ref{sec:bounds}, we provide bounds on these quantities. 
Note that it is sufficient to prove the expansion of $\sigma(F;x,x')$, as $\sigma(x,x')=\sigma(\Omega;x,x')$ {\col (note that $\Omega$ is a special cylinder event when we take $W$ to be the origin without any edges)}. 

What follows is a double expansion of the probability $\sigma(F;x,x')$: we first expand the connection from $o\cnctd x$ (similar as in \cite{HaraSlade90a,HofstJarai04}), and then expand the ``second arm'', i.e., the connection of the resulting structure to $x'$. 

\subsection{Expanding the first arm}
We first expand the probability of the connection $o \cnctd x$. 
{\col 
For events $E,F$, we denote the disjoint occurrence $E \circ F$ by 
\[ E\circ F = \big\{ \omega\in\Omega\mid \text{there exists subsets $B_1,B_2$ of the edges s.t.\ $B_1\cap B_2=\varnothing$, $\omega\in E|_{B_1}\cap F|_{B_2}$}\big\}, \]
where $E|_{B}$ is the set of those $\omega'$ whose restriction to the edge set $B$ implies that it is in $E$ (in such case we call $B$ a \emph{witness set} of $E$).}
We further write 
\[\{x\dcnctd y\}:=\{x\cnctd y\}\circ\{x\cnctd y\}, \qquad x,y\in\Zd,\] 
for the \emph{double connection} from $x$ to $y$ (i.e., either $x=y$ or there exist two edge-disjoint occupied paths from $x$ to $y$), and 
\[\{ W \dcnctd u \}:=\bigcup_{w,w'\in W}\{w\cnctd u\}\circ\{w'\cnctd u\},\qquad W\subset \Zd, u\in\Zd.\] 
A major tool in our analysis is the BK-inequality \cite{BergKeste85}, which states that the probability of disjoint occurrence of two event is bounded above by the product of their respective probabilities. Here we solely use the version of \cite[Prop.~9.12]{Slade06}. 
Let 
\[ E(F; u, x'):= E(F; u, x';\omega)
   := F \cap \{ W \dcnctd u \} \cap \left( \{ o \cnctd u \} \circ \{ o \cnctd x' \} \right), \]
and for $\{u_0,v_0\} \not\in E_W$, define
\[ \tC_0
   = \tC_0({\colAJ{W; u_0,v_0; \omega_0}})
   = \{ y \in \Zd : \text{$W \cnctd y$ on the configuration $(\omega_0)_{\{ u_0,v_0 \}}$} \}, \]
where
\[ (\omega_0)_{\{u,v\}}(b)
   = \begin{cases}
     \omega_0(b) & \text{if $b \not= \{u,v\}$;} \\
     0           & \text{if $b = \{u,v\}$.}
     \end{cases} \]
We will also need the following variant of $E$:
\[ \widetilde{E}(F; u, v, x'):= \widetilde{E}(F; u, v, x';\omega_0) 
   := F \cap \Bigg( \text{$\{ W \dcnctd u \} \cap 
      \Big( \{ o \cnctd u \} \circ \{ o \cnctd x' \} \Big)$ on $(\omega_0)_{\{u,v\}}$} \Bigg). \]
 
Our strategy now is as follows. We first expand the connection $\{0 \cnctd x\}$ as in \cite{HofstJarai04,HeydeHofstHulsh14b}, and keep the effect of $\{0 \cnctd x'\}$ in the ``0th order expansion term''. This yields quantities $\tilde \Psi$ and $\tilde R$. Subsequently, we make a second expansion, where we deal with these. 
In the following lemma, we add a subindex $0$ to $\P$ (and $\E$) to later distinguish it from further probability measures (which will be denoted $\P_1,\P_2,\dots$) which will arise when considering independent percolation configurations. Also, for $A\subset\Zd$, the notation $\tau^A(x,y)$ denotes $ \P [ x \cnctd y\text{ in }\Zd \setminus A ]$.

\begin{lemma}
\label{lem:1st-pivotal}
For $x \not= x'$ and $x, x' \not\in W$, we have
\eqnspl{e:1st-expansion-1st-stage}
{ \sigma(F;x,x')
  &= \P_0 [ E(F;x, x') ]
    + \sum_{\{u_0,v_0\} \not\in E_W} p D(u_0,v_0) 
      \E_0 \Big[ I [ \widetilde{E}(F;u_0,v_0,x') ] \tau^{\tC_0}(v_0,x) \Big]. }
\end{lemma}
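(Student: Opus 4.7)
The plan is to carry out the standard first step of the Hara--Slade lace expansion for the connection $o \cnctd x$, with the additional bookkeeping required to preserve the second arm event $\{o \cnctd x'\}$ and the cylinder event $F$. This is structurally the same move as in the first step of \cite{HofstJarai04,HeydeHofstHulsh14b}, with $\{o \cnctd x'\}$ and $F$ carried along inside the ``early-cluster'' coefficient.

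For any $\omega_0 \in F \cap \{o \cnctd x\} \circ \{o \cnctd x'\}$, the path witnessing $o\cnctd x$ leaves $W$ through some bond outside $E_W$ on its way to $x$ (since $x \notin W$). I would then examine the pivotal bonds for $\{W \cnctd x\}$ that lie outside $E_W$. Either this set is empty, in which case $W \dcnctd x$ holds in $\omega_0$ and we land in the event $E(F;x,x')$; or it is non-empty, in which case there is a well-defined first such bond $(u_0,v_0) \notin E_W$, namely the one closest to $W$. Characterizing $(u_0,v_0)$ as the first pivotal amounts to: $\{u_0,v_0\}$ is occupied in $\omega_0$; $v_0 \cnctd x$ in $\Zd \setminus \tC_0$, where $\tC_0 = \tC_0(W;u_0,v_0;\omega_0)$ is the cluster of $W$ in the modified configuration $(\omega_0)_{\{u_0,v_0\}}$; and $W \dcnctd u_0$ holds in that same modified configuration (expressing that no earlier pivotal exists). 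Summed over $(u_0,v_0) \notin E_W$, this yields a clean partition of the complement of $E(F;x,x')$ inside $F \cap \{o \cnctd x\} \circ \{o \cnctd x'\}$.

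Next I would apply the standard Markov-type factorization that underlies every first-step lace expansion. The bond $\{u_0,v_0\}$ contributes the independent factor $pD(u_0,v_0)$. Conditionally on the set $\tC_0$, the bonds with both endpoints in $\Zd \setminus \tC_0$ are independent of the bonds ``touching'' $\tC_0$, so the event $\{v_0 \cnctd x \text{ in } \Zd \setminus \tC_0\}$ contributes the deterministic factor $\tau^{\tC_0}(v_0,x)$ depending only on the random set $\tC_0$. The remaining ``inside'' event on $(\omega_0)_{\{u_0,v_0\}}$ is precisely $\widetilde{E}(F;u_0,v_0,x')$. Summing over $(u_0,v_0)$ produces the second term of \eqref{e:1st-expansion-1st-stage}.

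The main subtlety is tracking the disjoint occurrence $\{o \cnctd x\} \circ \{o \cnctd x'\}$ through the pivotal cut. The $\{o \cnctd x\}$-witness crosses $(u_0,v_0)$ by construction, so the $\{o \cnctd x'\}$-witness avoids this bond; since both of its endpoints $o$ and $x'$ must then lie in $\tC_0$ (as $o$ is in $\tC_0$ and $x'$ is connected to $o$ in the modified configuration), the entire $\{o \cnctd x'\}$-witness lies inside $\tC_0$ and is preserved when we close $\{u_0,v_0\}$. Consequently the disjoint occurrence $\{o \cnctd u_0\} \circ \{o \cnctd x'\}$ on the modified configuration correctly captures the contribution of the inside, which is exactly the form appearing inside $\widetilde{E}$. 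This is the step that makes the second arm harmless in this first expansion; without it, the argument reduces to the expansion of a single two-point function.
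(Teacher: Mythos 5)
Your proof is correct and follows essentially the same route as the paper: decompose according to whether $W \dcnctd x$ holds, identify the unique first pivotal bond $(u_0,v_0)$ for $W\cnctd x$ when it does not, establish the corresponding event identity involving $\widetilde{E}(F;u_0,v_0,x')$, and factorize by conditioning on $\tC_0$. Your observation that the edge-disjoint $\{o\cnctd x'\}$-witness avoids $\{u_0,v_0\}$ and hence lies entirely inside $\tC_0$ is exactly the ingredient the paper uses (stated there as ``$\widetilde{E}$ is determined by the value of $\tC_0$'') to make the conditional-independence factorization go through while carrying the second arm.
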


\begin{proof}
First, note that $E(F; x, x')$ is a subset of the event $F \cap \{ o \cnctd x \} \circ \{ o \cnctd x' \}$ 
defining $\sigma(F; x, x')$. Observe that 
\[ \left( F \cap \{ o \cnctd x \} \circ \{ o \cnctd x' \} \right) \setminus E(F; x,x')
   = (F \cap \{ o \cnctd x \} \circ \{ o \cnctd x' \}) \cap \{ W \not\dcnctd x \}. \]
On the event in the right hand side, there exists a unique first pivotal bond, $(u_0,v_0)$,
for the connection from $W$ to $x$. {\col(Even though we generally work with undirected edges, we think of the pivotal edge $(u_0,v_0)$ to be directed in the sense that any self-avoiding path from $W$ to $x$ must first pass through $u_0$, then traverse the edge to $v_0$, and from there continue to $x$.)} 
We claim that 
\eqnspl{e:events-eq}
{ &\left[ \left( F \cap \{ o \cnctd x \} \circ \{ o \cnctd x' \} \right) \setminus E(F; x,x') \right]
  \cap \{ \text{$(u_0,v_0)$ is the 1st pivotal for $W \cnctd x$} \} \\
  &\qquad = \widetilde{E}(F;u_0,v_0,x')
    \cap \big\{ \text{$(u_0,v_0)$ occupied} \big\}
    \cap \big\{ \text{$v_0 \cnctd x$ in $\Zd \setminus \tC_0$} \big\}. }   
The claim of the lemma will follow easily from this equality.

Let us first show that the left hand side is contained in the right hand side. 
Assume that the left hand event occurs. Since $o$ is connected to $x$, $o \in W$ and $(u_0,v_0)$ is
pivotal for $W \cnctd x$, we have that $(u_0,v_0)$ is occupied. Fix bond-disjoint occupied paths 
$\pi_1$ from $o$ to $x$ and $\pi_2$ from $o$ to $x'$. The pivotality of $(u_0,v_0)$ implies that
$\pi_1$ uses $(u_0,v_0)$, and in particular $F \cap \{ o \cnctd u_0 \} \circ \{ o \cnctd x' \}$
occurs. In order to deduce that $\widetilde{E}(F;u_0,v_0,x')$ occurs, we use that $(u_0,v_0)$ is the \emph{first} 
pivotal for the connection from $W$ to $x$, to see that $W \dcnctd u_0$, without using the bond
$(u_0,v_0)$. Let $\pi'_1$ be the portion of 
$\pi_1$ between $o$ and $u_0$, so $\pi'_1$ does not use $\{u_0,v_0\}$. 
Since  $\{u_0,v_0\}$ is used by $\pi_1$, which is edge-disjoint from $\pi_2$, it is not used by $\pi_2$. 
It follows that $\widetilde{E}$ occurs.
Finally, the existence of $\pi_1$ and the pivotality of $(u_0,v_0)$ also implies that $v_0 \cnctd x$ 
and that this connection takes place in $\Zd \setminus \tC_0$.

Let us now show that the right hand side of \eqref{e:events-eq} is contained in the left hand side.
The event $\widetilde{E}(F;u_0,v_0,x')$ ensures that we can find bond-disjoint paths $\tilde{\pi}_1$ 
from $o$ to $u_0$ and $\tilde{\pi}_2$ from $o$ to $x'$ that do not use $\{u_0,v_0\}$. 
Now the concatenation of $\tilde{\pi}_1$,
$(u_0,v_0)$ and a path from $v_0$ to $x$ that avoids $\tC_0$ shows that 
$\{ o \cnctd x \} \circ \{ o \cnctd x' \}$ occurs. Since $x \not\in \tC_0$, we have that $(u_0, v_0)$
is pivotal for $W \cnctd x$. Since $W \dcnctd u_0$ without using $(u_0,v_0)$ 
(due to the event $\widetilde{E}(F;u_0,v_0,x')$),
we have that $(u_0, v_0)$ is the first pivotal for $W \cnctd x$.

To re-write the probability of the right hand side of \eqref{e:events-eq},
let us partition according to the value of the cluster $\tC_0$.
(Note we have $p \le p_c$, so $\P_p [ |\tC_0| < \infty ] = 1$).
{\colAJ{The event $\widetilde{E}(F;u_0,v_0,x')$ is determined by the value of 
$\tC_0$ and the event $\{ \text{$(u_0, v_0)$ is occupied} \}$ is independent of 
$\tC_0$. 
We get that the probability of the right hand side of \eqref{e:events-eq} equals
\eqnspl{e:firstFactorization}
{ &p D(u_0,v_0) \E_0 \left[ I[ \widetilde{E}(F;u_0,v_0,x') ] 
     \tau^{\tC_0}(v_0,x) \right]. }
}}
\end{proof}

We proceed by writing
\eqnspl{e:tauS}
{ \tau^{\tC_0}(v_0,x)
  = \tau(v_0,x) - \P_1 [ v_0 \stackrel{\tC_0}{\cnctd} x ], }
{\colAJ{where $\{ v_0 \stackrel{S}{\cnctd} x \}$ is the event that $v_0 \cnctd x$, but the connection
is broken, when all edges touching $S$ are turned off.
}}
The index $1$ emphasizes that $\tC_0$ is determined by a percolation configuration
$\omega_0$, whereas the connection event $v_0 \stackrel{\tC_0}{\cnctd} x$ is on a percolation
configuration $\omega_1$ that is independent of $\omega_0$.

Define 
\eqnspl{e:PsiZero}
{ \tPsi^{(0)}(F;x,x')
  &:= \P [ E(F; x, x') ] \\
  \hat{\tPsi}^{(0)}(F; u_0, v_0, x')
  &:= \P \left[ \widetilde{E}(F;u_0,v_0,x') \right]. } 
We get the following for the `1st stage' of the expansion.
\eqnspl{e:1st-stage}
{ \sigma(F;x,x') 
  &= \tPsi^{(0)}(F;x,x') 
     + \sum_{u_0,v_0} p D(u_0,v_0) \hat{\tPsi}^{(0)}(F;u_0,v_0,x') \tau(v_0,x) 
     - \tR_0(F;x,x'), }
where
\eqnspl{e:R0}
{ \tR_0(F;x,x')
  &:= \sum_{u_0,v_0} p D(u_0,v_0) \E_0 \left[
      I \left[ \widetilde{E}(F;u_0,v_0,x') \right] 
      \P_1 \left[ v_0 \stackrel{\tC_0}{\cnctd} x \right] \right]. }
Note that for obtaining diagrammatic upper bounds, we will be able to neglect the extra 
condition that the connection from $o$ to $x'$ does not use $\{u_0,v_0\}$. For example, 
we have the simple upper bound $\hat{\tPsi}^{(0)}(F;u_0,v_0,x') \le \tPsi^{(0)}(F;u_0,x')$.

The Hara-Slade expansion \cite{HaraSlade90a} provides an expression for the probability of 
$v_0 \stackrel{S}{\cnctd} x$, for any finite set $S$, that we insert without change into \eqref{e:R0}. 
We recall some notation for this.
Write $P_{(v_0,x)}$ for the set of pivotal bonds for the connection $v_0 \cnctd x$ in the configuration 
$\omega_1$. We call $E'(v_0,x;S)=E'(v_0,x;S;\omega)$ the event that $v_0 \stackrel{S}{\cnctd} x$ and either $P_{(v_0,x)} = \es$ or 
for all $(u'',v'') \in P_{(v_0,x)}$ we have that $v_0 \stackrel{S}{\cnctd} u''$ does not occur 
(i.e.~for all such pivotal bonds we have $v_0 \cnctd u''$ in $\Zd \setminus S$).

We call 
\[
E(v_0,u',v',x;S)=\{v_0 \stackrel{S}{\cnctd} x\} \cap  E'(v_0,u';S) \cap \{(u',v')\text{ is 
occupied and pivotal for }v_0 \cnctd x\},
\]
 (all in the configuration $\omega_1$). We call $(u',v')$ the \textbf{cutting bond}, which is the \emph{first} pivotal bond along the 
connection from $v_0$ to $x$ such that $v_0 \stackrel{S}{\cnctd} u'$.

These definitions allow us to write the following decomposition: 
\eqnspl{eq:expansion}
{ \P_1 \left[ v_0 \stackrel{S}{\cnctd} x \right]
  = \P_1 \left[ E'(v_0,x;S) \right] + \sum_{(u',v')} \P_1 \left[ E(v_0,u',v',x;S) \right]. }
Let 
\[ \tC_1 
   = \tC_1^{\{u_1,v_1\}}(v_0)
   = \{ y \in \Zd : \text{$v_0 \cnctd y$ in $(\omega_1)_{\{u_1,v_1\}}$} \} \]
be the restricted cluster of $v_0$ in $\omega_1$ after the bond $\{u',v'\}$ is made vacant. 
The following lemma is taken from \cite{HaraSlade90a}. 

\begin{lemma}[{Hara-Slade \cite{HaraSlade90a}}]
\label{lem:on-in}
We have
\eqnsplst
{ \P_1 \left[ E(v_0,u',v',x; \tC_1) \right]
  = p D(u',v') \E_1 \left[ I [ E'(v_0,u';\tC_1) ] \tau^{\tC_1}(v',x) \right]. }
\end{lemma}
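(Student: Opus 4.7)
The plan is to condition on the realization of the restricted cluster $\tC_1$ and to factorize the probability across three independent groups of bonds. First I would fix a finite set $A \subset \Zd$ with $v_0, u' \in A$ and $v', x \notin A$, and partition the event $E(v_0, u', v', x; \tC_1)$ according to the event $\{\tC_1 = A\}$. On the event $E$, the edge $\{u', v'\}$ is occupied and pivotal for $v_0 \cnctd x$, so the cluster of $v_0$ in $(\omega_1)_{\{u', v'\}}$ necessarily contains $v_0$ and $u'$ but excludes $v'$ and $x$; this justifies the restriction $v_0, u' \in A$, $v', x \notin A$.

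Next I would identify four disjoint groups of bonds: (i) bonds with both endpoints in $A$; (ii) the single edge $\{u', v'\}$; (iii) bonds with no endpoint in $A$; (iv) the remaining boundary bonds (exactly one endpoint in $A$, not equal to $\{u', v'\}$). The event $\{\tC_1 = A\}$ is the intersection of two independent pieces: a condition on the internal configuration in (i) making $A$ a connected cluster containing $v_0$, and the closure of every bond in (iv). On $\{\tC_1 = A\}$, each path witnessing $v_0 \cnctd u'$ in $\omega_1$ avoids $\{u',v'\}$ (by pivotality) and stays inside $A$, so the event $E'(v_0, u'; A)$ is measurable with respect to group (i). Meanwhile, on $\{\tC_1 = A\} \cap \{(u',v') \text{ occupied and pivotal}\}$, the residual condition $v_0 \stackrel{A}{\cnctd} x$ is equivalent to $\{v' \cnctd x \text{ in } \Zd \setminus A\}$, which is measurable with respect to group (iii).

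Combining these observations with the independence of the four groups of bonds gives the factorization
\[
\Prob_1 \big[ E(v_0, u', v', x; \tC_1) \cap \{\tC_1 = A\} \big]
= \Prob_1 \big[ \{\tC_1 = A\} \cap E'(v_0, u'; A) \big] \cdot pD(u',v') \cdot \tau^A(v', x).
\]
Summing over all admissible $A$ and recognizing the sum as an expectation over the random cluster $\tC_1$ yields the stated identity.

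The main technical obstacle is the verification that $E'(v_0, u'; \tC_1)$ is measurable with respect to bonds internal to $\tC_1$ once we are on the event $\{(u',v') \text{ pivotal}\} \cap \{\tC_1 = A\}$: one must check that the set of pivotals for $v_0 \cnctd u'$ computed in $\omega_1$ agrees with the set of pivotals computed in $(\omega_1)_{\{u',v'\}}$, and that the auxiliary condition $v_0 \stackrel{A}{\cnctd} u''$ appearing in the definition of $E'$ is likewise determined by the bonds inside $A$. Both follow from the pivotality of $\{u',v'\}$ and the fact that any self-avoiding path realizing $v_0 \cnctd u'$ lies entirely in $\tC_1 = A$. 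The remainder of the argument is a bookkeeping application of product measure independence, which is exactly the manipulation carried out in \cite{HaraSlade90a}.
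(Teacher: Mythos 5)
The paper states this lemma with a citation to Hara--Slade \cite{HaraSlade90a} and gives no proof of its own, so there is no in-paper argument to compare against. Your sketch is the standard conditioning-and-factorization argument that underlies the cited result, and it is correct: the partition on $\{\tC_1=A\}$, the disjointness of the four bond groups, and the key measurability observation (on $\{(u',v')\text{ pivotal}\}\cap\{\tC_1=A\}$ any path realizing $v_0\cnctd u'$ stays in $A$ and avoids $\{u',v'\}$, so $E'(v_0,u';\,\cdot\,)$ and its pivotal set are determined by the bonds internal to $A$, while $\{\{u',v'\}\text{ occupied}\}$ and $\{v'\cnctd x\text{ in }\Zd\setminus A\}$ live on the remaining disjoint groups) are exactly the ingredients of the Hara--Slade factorization.
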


Using this, we get the following for the `2nd stage' of the expansion:
\eqnspl{e:2nd-stage}
{ \sigma(F;x,x') 
  &= \tPsi^{(0)}(F;x,x') 
     + \sum_{u_0,v_0} p D(u_0,v_0) \hat{\tPsi}^{(0)}(F;u_0,v_0,x')
      \tau(v_0,x) \\
  &\quad - \sum_{u_0,v_0} p D(u_0,v_0)
      \E_0 \left[ I \left[ \widetilde{E}(F;u_0,v_0,x') \right] 
      \E_1 \left[ E'(v_0,x;\tC_0) \right] \right] \\
  &\quad - \sum_{u_0,v_0} p D(u_0,v_0) \sum_{u_1,v_1} p D(u_1,v_1) \\
  &\quad\qquad \E_0 \left[
      I \left[ \widetilde{E}(F;u_0,v_0,x') \right] 
      \E_1 \left[ E'(v_0,u_1;\tC_0) \right] \right] \tau(v_1,x) \\
  &\quad + \tR_1(x,x'), }
where
\eqnspl{e:R1}
{ &\tR_1(F;x,x') \\
  &\quad := \sum_{u_0,v_0} p D(u_0,v_0) \sum_{u_1,v_1} p D(u_1,v_1) \\
  &\quad\qquad \E_0 \left[ I \left[ \widetilde{E}(F; u_0, v_0, x') \right] 
     \E_1 \left[ E'(v_0,u_1;\tC_0) \P_2 \left[ v_1 \stackrel{\tC_1}{\cnctd} x \right] \right] \right]. }
For better organisation we can write this in more compact form using 
\eqn{eqDefPsi1}{
	 \tPsi^{(1)}(F;u,x')= \sum_{u_0,v_0} p D(u_0,v_0)
      \E_0 \left[ I \left[ \widetilde{E}(F;u_0,v_0,x') \right] 
      \E_1 \left[ E'(v_0,u;\tC_0) \right] \right]}
as 
\eqnspl{e:2nd-stage-a}
{ \sigma(F;x,x') 
  &= \tPsi^{(0)}(F;x,x')-\tPsi^{(1)}(F;x,x') \\
  &\qquad  +\sum_{u,v} \left(\hat{\tPsi}^{(0)}(F;u,v,x')-\tPsi^{(1)}(F;u,x')\right)p D(u,v)\tau(v,x) 
     +\tR_1(F;x,x').
}

Applying Lemma \ref{lem:on-in} and inclusion-exclusion repeatedly, we get the general form of the 1st expansion.
To this end, we need a couple of definitions: For $N \ge 2$ define
\eqnspl{e:tPsiN}
{ \tPsi^{(N)}(F;x,x')
  &= \sum_{u_0,v_0} \dots \sum_{u_{N-1}, v_{N-1}} \prod_{i=0}^{N-1} p D(u_i,v_i)
     \E_0 \Big[
     I \left[ \widetilde{E}(F;u_0,v_0,x') \right]  \\
  &\qquad \E_1 \Big[ I \big[ E'(v_0,u_1; \tC_0) \big] \cdots 
     \E_{N-1} \Big[ I \big[ E'(v_{N-2},u_{N-1}; \tC_{N-2}) \big] \\
  &\qquad \E_N \Big[ I \big[ E'(v_{N-1},x; \tC_{N-1}) \big] \Big] \Big] \cdots \Big] \Big]. }
Here $\tC_{j} = \tC_j^{\{u_j,v_j\}}(v_{j-1})$ for $j \ge 1$.
Also, for $N \ge 2$ we define
\eqnsplst
{ \tR_N(F;x,x')
  &= (-1)^{N+1} \sum_{u_0,v_0} \dots \sum_{u_{N}, v_{N}} \prod_{i=0}^{N} p D(u_i,v_i)
     \E_0 \Big[
     I \left[ \widetilde{E}(F;u_0,v_0,x') \right] \\
  &\qquad \E_1 \Big[ I \big[ E'(v_0,u_1; \tC_0) \big] \cdots 
     \E_N \Big[ I \big[ E'(v_{N-1},u_N; \tC_{N-1}) \big] \\
  &\qquad \P_{N+1} \Big[ v_N \stackrel{\tC_N}{\cnctd} x \Big] \Big] \cdots \Big] \Big]. }

In order to split off the parts that behave differently from the 1-arm case and to keep the rest 
of the expression compact, we introduce the following $\omega_1$-random variables: for $N \ge 3$, let
\eqnsplst
{ &\Xi^{(N)}(v_1,x;\tC_1) \\
  &\quad := \sum_{u_2,v_2} \dots \sum_{u_{N-1},v_{N-1}} \prod_{i=2}^{N-1} p D(u_i,v_i)
     \E_2 \Big[ I \big[ E'(v_1,u_2; \tC_1) \big] \cdots
     \E_{N-1} \Big[ I \big[ E'(v_{N-2},u_{N-1};\tC_{N-2}) \big] \\
  &\hskip6em \E_{N} \Big[ I \big[ E'(v_{N-1},x;\tC_{N-1}) \big] \Big] \Big] \cdots \Big] }
and for $N = 2$ we set
\eqnst
{ \Xi^{(2)}(v_1,x;\tC_1) 
  := \E_2 \Big[ I \big[ E'(v_1,x;\tC_1) \big] \Big]. }

Similarly, in order to abbreviate the remainder terms, for $N \ge 2$ we define
\eqnsplst
{ &\Chi^{(N)}(v_1,x;\tC_1) \\
  &\quad := \sum_{u_2,v_2} \dots \sum_{u_{N},v_{N}} \prod_{i=2}^{N} p D(u_i,v_i)
     \E_2 \Big[ I \big[ E'(v_1,u_2; \tC_1) \big] \cdots
     \E_{N} \Big[ I \big[ E'(v_{N-1},u_{N};\tC_{N-1}) \big] \\
  &\qquad\qquad\qquad \P_{N+1} \Big[ v_N \stackrel{\tC_N}{\cnctd} x \Big] \Big] \cdots \Big]. }
For $N = 1$, we set
\eqnsplst
{ \Chi^{(1)}(v_1,x;\tC_1)
  &:= \P_{2} \Big[ v_1 \stackrel{\tC_1}{\cnctd} x \Big]. }
This allows us to rewrite $\tPsi^{(N)}$ with $N \ge 2$ in more compact form as
\eqnspl{e:tPsiN-short}
{ \tPsi^{(N)}(F;x,x')
  &= \sum_{u_0,v_0} \sum_{u_{1},v_{1}} p D(u_0,v_0) p D(u_1,v_1) 
     \E_0 \times \E_1 \Big[ I \left[ \widetilde{E}(F;u_0,v_0,x') \right] \\
  &\qquad\qquad I \big[ E'(v_0,u_1;\tC_0) \big] \Xi^{(N)}(v_1,x;\tC_1) \Big]. }
We also rewrite $\tR_N$ for $N \ge 1$ in more compact form as
\eqnspl{e:RN-short}
{ \tR_N(F;x,x')
  &= (-1)^{N+1} \sum_{u_0,v_0} \sum_{u_{1}, v_{1}} p D(u_0,v_0) p D(u_1,v_1) 
     \E_0 \times \E_1 \Big[ I \left[ \widetilde{E}(F;u_0,v_0,x') \right] \\
  &\qquad\qquad I \big[ E'(v_0,u_1;\tC_0) \big] \Chi^{(N)}(v_1,x;\tC_1) \Big]. }

\begin{proposition}
\label{prop:1st-expansion}
For any $n \ge 2$, we have the expansion 
\eqnspl{e:N-th-stage}
{ \sigma(F; x,x')
  &= \sum_{N=0}^n (-1)^N\tPsi^{(N)}(F;x,x') \\
  &\qquad + \sum_{u,v} p D(u,v) \left( \hat{\tPsi}^{(0)}(F;u,v,x') 
  + \sum_{N=1}^n (-1)^N \tPsi^{(N)}(F;u,x') \right) \tau(v,x) \\
  &\qquad + \tR_n(F;x,x'). }
\end{proposition}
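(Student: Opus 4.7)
The plan is to prove Proposition \ref{prop:1st-expansion} by induction on $n$, using the same ingredients that produced \eqref{e:2nd-stage-a}: the decomposition \eqref{eq:expansion} of a connection probability $\P[v_N\stackrel{\tC_N}{\cnctd} x]$ according to whether the cutting bond exists, the ``on-in'' identity from Lemma \ref{lem:on-in}, and the elementary identity \eqref{e:tauS} $\tau^{\tC}(v,x)=\tau(v,x)-\P[v\stackrel{\tC}{\cnctd} x]$.

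For the base case $n=2$, I would take the already-established identity \eqref{e:2nd-stage-a} (which is exactly the statement for $n=1$, if one formally extends the proposition to $n\ge 1$), and apply a single further expansion step: on the innermost probability $\P_2[v_1\stackrel{\tC_1}{\cnctd} x]$ appearing inside $\tR_1$ (see \eqref{e:RN-short} with $N=1$), split via \eqref{eq:expansion}; feed the $E(v_1,u_2,v_2,x;\tC_1)$-term through Lemma \ref{lem:on-in}; and decompose the resulting $\tau^{\tC_2}(v_2,x)$ via \eqref{e:tauS}. Matching pieces with the definitions \eqref{e:tPsiN} of $\tPsi^{(2)}$ and \eqref{e:RN-short} of $\tR_2$ yields the $n=2$ formula.

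The inductive step is structurally identical. Assume the identity for some $n\ge 2$; apply the same three operations to the innermost probability $\P_{n+1}[v_n\stackrel{\tC_n}{\cnctd} x]$ inside $\tR_n$. Writing $\tR_n = (-1)^{n+1}\,S_n$, the splitting \eqref{eq:expansion} produces two pieces: the $E'(v_n,x;\tC_n)$-piece is by definition $\tPsi^{(n+1)}(F;x,x')$, and the remaining sum, after applying Lemma \ref{lem:on-in} at level $n+1$ (with a new summation over $(u_{n+1},v_{n+1})$) and the decomposition \eqref{e:tauS} of $\tau^{\tC_{n+1}}(v_{n+1},x)$, produces
\[
\sum_{u_{n+1},v_{n+1}} p D(u_{n+1},v_{n+1})\,\tPsi^{(n+1)}(F;u_{n+1},x')\,\tau(v_{n+1},x) \;-\; S_{n+1},
\]
where $S_{n+1}$ is the bracketed expression in \eqref{e:RN-short} for $N=n+1$ (so that $\tR_{n+1}=(-1)^{n+2}S_{n+1}$). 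Putting these together and multiplying by $(-1)^{n+1}$ gives
\[
\tR_n \;=\; (-1)^{n+1}\tPsi^{(n+1)}(F;x,x') \;+\; (-1)^{n+1}\!\!\sum_{u,v} p D(u,v)\,\tPsi^{(n+1)}(F;u,x')\,\tau(v,x) \;+\; \tR_{n+1},
\]
which, when inserted into the inductive hypothesis, collects the new terms into the $N=n+1$ summand of \eqref{e:N-th-stage}.

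The only real bookkeeping hurdle is to confirm that after the on-in step the produced expression matches the definition \eqref{e:tPsiN} of $\tPsi^{(n+1)}$ with the external variable relabelled (the $u_{n+1}$ in the new pivotal edge playing the role of ``$x$'' in the innermost $E'$), and that the signs compose correctly; both are mechanical but need to be tracked. A secondary point to verify is that the introduction of a fresh independent percolation configuration $\omega_{n+1}$ at each induction step is legitimate — this is guaranteed because $\tC_n$ is measurable with respect to $\omega_n$ only, so Lemma \ref{lem:on-in} (applied at level $n+1$) factors cleanly under conditioning on $(\omega_0,\ldots,\omega_n)$, exactly as it did at level $1$. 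No new analytic ingredients are required beyond those already used in the first two stages.
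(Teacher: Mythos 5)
Your proof is correct and follows exactly the route the paper intends: the paper states that Proposition \ref{prop:1st-expansion} is obtained by ``Applying Lemma \ref{lem:on-in} and inclusion-exclusion repeatedly,'' and your induction on $n$ is precisely the formalisation of that remark, starting from the $n=1$ identity \eqref{e:2nd-stage-a} and at each step expanding the remainder $\tR_n$ via \eqref{eq:expansion}, Lemma \ref{lem:on-in} and \eqref{e:tauS}. The sign bookkeeping and the matching of the three resulting pieces with $\tPsi^{(n+1)}(F;x,x')$, $\sum_{u,v}pD(u,v)\tPsi^{(n+1)}(F;u,x')\tau(v,x)$ and $\tR_{n+1}(F;x,x')$ all check out, and your note on measurability of $\tC_n$ with respect to $\omega_n$ alone is exactly the justification needed to introduce the fresh configuration $\omega_{n+1}$ at each step.
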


Everything so far is conceptually following the original Hara-Slade lace expansion \cite{HaraSlade90a}.
The new ingredient is a second (and separate) expansion for the connection to $x'$, which we derive next. 

\subsection{Expanding the second arm}\label{sec:secondarm}
We now turn to expanding the arm towards $x'$ contained in the cluster $\tC_0$.
Note that this cluster, that lives on the configuration $\omega_0$, only directly interacts with $\omega_1$
through the terms $I[ E'(v_0,u_1;\tC_0) ](\omega_1)$. Since the terms containing $\tPsi^{(m)}(F;x,x')$
will be error terms, we only need to expand further the terms that contain a convolution with $\tau(v,x)$
for some $m = 0, 1, \dots$. Among these, the $m = 0$ case will be special (and somewhat simpler than 
the rest), so we start with assuming $m \ge 1$.

We will need the notation 
\eqnsplst
{ \tB_1
  &= \tB_1(v_0,u_1;\omega_1)
  := \left\{ y \in \Zd : \{ v_0 \cnctd_{\omega_1} y \} \circ \{ y \cnctd_{\omega_1} u_1 \} \right\} \\
  &= \text{`backbone between $v_0$ and $u_1$'}. }
Note that while in the definition of $\tC_0$ we require the bond $\{ u_0, v_0 \}$ to be set vacant, we will 
have no need for a similar requirement in the definition of $\tB_1$.

In \eqref{e:N-th-stage} we are interested in realisations of the subgraph $\tC_0$ 
that contain an $(\omega_0)_{\{u_0,v_0\}}$-connection from $o$ to $x'$.
In addition, due to the presence of the event $E'(v_0,u_1;\tC_0)$, the intersection of the 
cluster $\tC_0$ with $\tB_1$ has to realize this event, which puts contraints 
on the pattern of the intersection. Our aim is to split $\tC_0$ at a bond that is pivotal 
for $(W \cup \tB_1) \cnctd x'$ in such a way that the part containing $W$, that we will call 
$\tD_0$, witnesses the required intersection pattern ($E'(v_0,u_1;\tD_0)$ occurs) 
and the part containing $x'$ avoids $\tB_1$. This motivates the definitions below.

Keeping $\omega_1$ fixed, let $P'_{x'}$ denote the set of pivotal bonds for 
$(W \cup \tB_1) \cnctd x'$ when $(u_0,v_0)$ is held vacant, that is
\eqnsplst
{ P'_{x'}
  &= P'_{x'}(\{u_0,v_0\}, \tB_1(\omega_1);\omega_0) \\
  &:= \left\{ (u',v') : \parbox{7.5cm}{$\{u',v'\} \not\in E_W$ and any $(\omega_0)_{\{u_0,v_0\}}$-occupied 
     path from $W \cup \tB_1$ to $x'$ uses $(u',v')$} \right\}, } 
whenever there is no $(\omega_0)_{\{u_0,v_0\}}$-occupied path between $W \cup \tB_1$ to $x'$, we define
$P'_{x'} = \es$.
We define
\eqnsplst
{ \tD_0 
  = \tD_0({\colAJ{W; u_0,v_0,u'_0,v'_0;\omega_0}})
  := \left\{ y \in \Zd : 
     \text{$W \cnctd_{(\omega_0)_{\{ u_0, v_0\} \cup \{u'_0,v'_0\}}} y$} \right\}, } 
which is the union of the clusters intersecting $W$ after both bonds $\{ u_0, v_0 \}$ and 
$\{ u'_0, v'_0 \}$ are turned off.
Note that the set of vertices $\tD_0$, together with the set of bonds $\tD_0$ induces from the configuration
$(\omega_0)_{\{ u_0, v_0\} \cup \{u'_0,v'_0\}}$ is a subgraph of $\tC_0$.

The key steps in starting the second expansion are the following definitions and lemma.
Let 
\eqnsplst
{ \tC(\tB_1)
  &= \tC(u_0,v_0,u_1,u'_0,v'_0;\omega_0,\omega_1) \\
  &:= \left\{ y \in Z^d : \tB_1 \cnctd_{(\omega_0)_{\{u_0,v_0\} \cup \{u'_0,v'_0\}}} y \right\} \\
  &= \{ \text{`all $\omega_0$-occupied paths starting in $\tB_1$ that do not use $\{u_0,v_0\}$ nor
    $\{u'_0,v'_0\}$'} \}. }
Let $E''(F;u_0,v_0,u_1,x')$ denote the following event measurable with respect to the 
pair $(\omega_0,\omega_1)$, and which depends on $\omega_1$ only through the set $\tB_1$ {\col (note: $E'(v_0,u_1,\tC_0)$ {\colAJ{only}} depends on $\tB_1$)}:
\eqnsplst
{E''(F;u_0,v_0,u_1,x'; \omega_0,\omega_1)
  &:= \widetilde{E}(F;u_0,v_0,x';\omega_0) \cap E'(v_0,u_1,\tC_0;\omega_1) \cap \{ P_{x'} = \es \}. } 
We will also need the following variant of $E''$: 
\eqnsplst
{ &\hat{E}''(F;u_0,v_0,u_1,u'_0,v'_0;\omega_0,\omega_1) \\ 
  &\quad := \widetilde{E}(F;u_0,v_0,u'_0;\omega_0) \cap E'(v_0,u_1,\tD_0(F;u_0,v_0,u'_0,v'_0);\omega_1) 
     \cap \{ P'_{u'_0} = \es \}. } 
We will mostly drop the dependence on $\omega_0$ and $\omega_1$ in $E''$ and $\hat E'$. 

\begin{lemma}
\label{lem:factor-2nd} 
For any $\{u_0,v_0\} \not\in E_W$ and $u_1 \in \Zd$ we have
\eqnspl{e:factor-event}
{ &\widetilde{E}(F;u_0,v_0,x';\omega_0) \cap E'(v_0,u_1,\tC_0;\omega_1) 
   = E''(F;u_0,v_0,u_1,x';\omega_0,\omega_1) \cup \\
   &\qquad \bigcup_{(u'_0,v'_0) : \{u'_0,v'_0\} \not\in E_W \cup \{u_0,v_0\}}
    \Big( \hat{E}''(F;u_0,v_0,u_1,u'_0,v'_0;\omega_0,\omega_1) \cap \big\{ \text{$\{u'_0,v'_0\}$ $\omega_0$-occupied} \big\} \\
   &\qquad\qquad\qquad\qquad\qquad 
   \cap \left\{ \text{$v'_0 \cnctd_{\omega_0} x'$ in $\Zd \setminus (\tD_0 \cup \tC(\tB_1))$} \right\} \Big), }
where $\tC_0 = \tC_0(W;u_0,v_0;\omega_0)$, $\tD_0 = \tD_0(W;u_0,v_0,u'_0,v'_0;\omega_0)$ and the union on the right hand side is disjoint.
\end{lemma}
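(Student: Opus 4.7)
My plan is to prove the two inclusions and the disjointness of the union separately, following the standard strategy of expanding at a first pivotal bond --- but with the key twist that the pivotality here is taken with respect to the enlarged source $W \cup \tB_1$ rather than just $W$.

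For the inclusion LHS $\subseteq$ RHS, I fix $(\omega_0,\omega_1)$ in $\widetilde E(F;u_0,v_0,x')\cap E'(v_0,u_1,\tC_0)$ and split according to whether $P'_{x'}=\es$ or not. In the former case the event $E''(F;u_0,v_0,u_1,x')$ occurs by its very definition. In the latter, I let $(u'_0,v'_0)$ be the first element of $P'_{x'}$ (this set is totally ordered along any $(\omega_0)_{\{u_0,v_0\}}$-path from $W\cup\tB_1$ to $x'$) and verify the six ingredients of the corresponding $\hat E''$-summand in turn: (i) restricting the connection pattern in $\widetilde E(F;u_0,v_0,x')$ to the portion before $(u'_0,v'_0)$ furnishes $\widetilde E(F;u_0,v_0,u'_0)$; (ii) $\tD_0$ coincides with the cluster of $W$ in $(\omega_0)_{\{u_0,v_0\}\cup\{u'_0,v'_0\}}$ by construction; (iii) $E'(v_0,u_1,\tD_0)$ holds, using that $\tB_1$ cannot meet $\tC_0\setminus\tD_0$ (any such meeting would give a path from $W\cup\tB_1$ to the $x'$-side of $(u'_0,v'_0)$ bypassing that bond, contradicting pivotality); (iv) $P'_{u'_0}=\es$ by the minimality of $(u'_0,v'_0)$; (v) $\{u'_0,v'_0\}$ is occupied since it is pivotal; and (vi) $v'_0\cnctd x'$ in $\Zd\setminus(\tD_0\cup\tC(\tB_1))$, again by a contradiction-with-pivotality argument.

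For the reverse inclusion, the $E''$-summand is trivially contained in the LHS. For a term in the union with given $(u'_0,v'_0)$, I concatenate the $o\cnctd u'_0$ connection witnessed by $\widetilde E(F;u_0,v_0,u'_0)$ with the occupied bond $(u'_0,v'_0)$ and the $v'_0\cnctd x'$ path avoiding $\tD_0\cup\tC(\tB_1)$. The disjoint-occurrence structure needed for $\widetilde E(F;u_0,v_0,x')$ is preserved because the $v'_0\cnctd x'$ path lives outside $\tD_0$, hence outside the witness set of $o\cnctd u'_0$ inherited from $\widetilde E(F;u_0,v_0,u'_0)$. To recover $E'(v_0,u_1,\tC_0)$ from $E'(v_0,u_1,\tD_0)$, the condition $v'_0\cnctd x'$ in $\Zd\setminus\tC(\tB_1)$ yields $\tB_1\cap(\tC_0\setminus\tD_0)=\es$, so the stopping-set events relative to $\tC_0$ and $\tD_0$ describe the same $\omega_1$-configurations.

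Disjointness of the union is immediate from uniqueness of the first pivotal: two distinct $(u'_0,v'_0)$ cannot simultaneously be the first element of $P'_{x'}$, while the $E''$-summand requires $P'_{x'}=\es$ and is therefore incompatible with every other summand. I expect the main technical hurdle to be item (iii) and its reverse counterpart --- carefully checking that $E'(v_0,u_1,\tC_0)$ and $E'(v_0,u_1,\tD_0)$ really coincide on the relevant configurations --- because this demands matching up the pivotals of the $v_0\cnctd u_1$ backbone with respect to two slightly different stopping sets, and the argument is sensitive to the precise interaction of $\tB_1$ with the $\omega_0$-configuration through the new set $\tC(\tB_1)$.
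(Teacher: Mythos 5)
Your proposal follows the paper's proof essentially step for step: split on whether $P'_{x'}$ is empty, identify the first element $(u'_0,v'_0)$ of $P'_{x'}$ in the non-empty case, verify the six ingredients of the corresponding $\hat E''$-summand, and establish the reverse inclusion plus disjointness via uniqueness of the first pivotal. The key observation that $\tC_0\cap\tB_1 = \tD_0\cap\tB_1$ (so $E'(v_0,u_1,\tC_0)$ and $E'(v_0,u_1,\tD_0)$ agree on the relevant $\omega_1$-configurations) is exactly the pivot of the paper's items (a2) and (b3), and your contradiction-with-pivotality justification is correct.

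There is, however, one point you gloss over in the reverse inclusion that the paper treats with some care, and which is in fact the most delicate step rather than item (iii). To recover $\widetilde E(F;u_0,v_0,x')$ you must exhibit two bond-disjoint $(\omega_0)_{\{u_0,v_0\}}$-paths, one from $o$ to $u_0$ (call it $\pi_1$) and one from $o$ to $x'$ obtained by concatenating $\pi_2$ (the $o\cnctd u'_0$ witness), the bond $\{u'_0,v'_0\}$, and $\pi_3$ (the $v'_0\cnctd x'$ path avoiding $\tD_0\cup\tC(\tB_1)$). You argue that $\pi_3$ avoids $\tD_0$ and hence avoids the ``witness set of $o\cnctd u'_0$,'' but this only addresses disjointness of $\pi_3$ from $\pi_2$; you must also check that $\{u'_0,v'_0\}$ and $\pi_3$ are bond-disjoint from $\pi_1$, and that $\pi_3$ does not use $\{u_0,v_0\}$. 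The latter is easy ($v_0\in\tB_1$, which $\pi_3$ avoids). The former is more subtle: one must rule out that $\pi_1$ traverses $\{u'_0,v'_0\}$, and the argument for the orientation $(u'_0,v'_0)$ genuinely requires the double connection $W\dcnctd u_0$ guaranteed by $\widetilde E(F;u_0,v_0,u'_0)$ --- a simple path from $W$ to $u_0$ avoiding $\{u'_0,v'_0\}$ exists, and concatenating it with the reversal of the tail of $\pi_1$ would put $v'_0$ in $\tD_0$, a contradiction. Without invoking $W\dcnctd u_0$ this case cannot be dispatched, so this step deserves to be spelled out rather than absorbed into ``hence outside the witness set.''
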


The following corollary will be an easy consequence (and the main purpose) of Lemma \ref{lem:factor-2nd}.

\begin{corollary}
\label{lem:factor-2nd-rv}
(i) For any $\{u_0,v_0\} \not\in E_W$ and $u_1, v_1, u \in \Zd$ and $N \ge 2$ we have
\eqnspl{e:factor-rv}
{ &\E_0 \times \E_1 \Big[ I \left[ \widetilde{E}(F;u_0,v_0,x') \right]
       I \big[ E'(v_0,u_1;\tC_0) \big] \Xi^{(N)}(v_1,u,\tC_1) \Big] \\
  &\quad = \E_0 \times \E_1 \Big[ I \left[ E''(F;u_0,v_0,u_1,x') \right] \Xi^{(N)}(v_1,u,\tC_1) \Big] 
      + \!\!\!\!\!\sum_{\{u'_0,v'_0\} \not\in E_W \cup \{u_0,v_0\}} \!\!\!\!\! p D(u'_0,v'_0) \\
  &\qquad\qquad\qquad \E_0 \times \E_1 \Big[ I \left[ \hat{E}''(F;u_0,v_0,u_1,u'_0,v'_0) \right] 
       \Xi^{(N)}(v_1,u,\tC_1) \tau^{\tD_0 \cup \tC(\tB_1)}(v'_0,x') \Big]. }
(ii) \emph{($N = 1$ case)} For any $\{u_0,v_0\} \not\in E_W$ and $u \in \Zd$ we have  
\eqnspl{e:factor-rv-N=1}
{ &\E_0 \times \E_1 \Big[ I \left[ \widetilde{E}(F;u_0,v_0,x') \right]
       I \big[ E'(v_0,u;\tC_0) \big] \Big] \\
  &\quad = \E_0 \times \E_1 \Big[ I \left[ E''(F;u_0,v_0,u,x') \right] \Big] \\
  &\quad\quad + \!\!\!\sum_{\{u'_0,v'_0\} \not\in E_W \cup \{u_0,v_0\}} \!\!\! p D(u'_0,v'_0)
       \E_0 \times \E_1 \Big[ I \left[ \hat{E}''(F;u_0,v_0,u,u'_0,v'_0) \right] 
       \tau^{\tD_0 \cup \tC(\tB_1)}(v'_0,x') \Big]. }
\end{corollary}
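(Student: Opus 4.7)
The plan is to derive both parts of the corollary from the event identity in Lemma~\ref{lem:factor-2nd} by multiplying by the appropriate $\omega_1$-random variable, taking $\E_0\times\E_1$, and then performing a standard conditional factorization on the contribution coming from the ``hat'' piece.

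For part (i), I will multiply the identity \eqref{e:factor-event} by the $\omega_1$-measurable quantity $\Xi^{(N)}(v_1,u,\tC_1)$ and apply $\E_0\times\E_1$. Since the union on the right-hand side of \eqref{e:factor-event} is disjoint, the indicators add; the first piece reproduces the first term of \eqref{e:factor-rv} at once. For the union over $(u'_0,v'_0)$ with $\{u'_0,v'_0\}\notin E_W\cup\{u_0,v_0\}$, I will first fix $\omega_1$ (which fixes $\tB_1$, $\tC_1$ and $\Xi^{(N)}$) and then condition on the value of $S:=\tD_0\cup\tC(\tB_1)$ in $\omega_0$. Because $S$ is built from $(\omega_0)_{\{u_0,v_0\}\cup\{u'_0,v'_0\}}$, the status of the bond $\{u'_0,v'_0\}$ is independent of $S$ under $\omega_0$, and on the event in question $v'_0\notin S$ while $u'_0\in\tD_0\subset S$. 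Conditional on $S$, the indicator of $\hat E''(F;u_0,v_0,u_1,u'_0,v'_0)$ is determined by $\omega_1$ together with $\omega_0$ on bonds incident to $S$; the indicator of $\{\{u'_0,v'_0\}\text{ occupied}\}$ is an independent Bernoulli, contributing $pD(u'_0,v'_0)$; and the indicator of $\{v'_0\cnctd_{\omega_0}x'$ in $\Zd\setminus S\}$ depends only on $\omega_0$ restricted to bonds with both endpoints in $\Zd\setminus S$, hence has conditional probability $\tau^S(v'_0,x')$. Multiplying the three factors, re-summing over $(u'_0,v'_0)$, and re-integrating over $\omega_1$ gives the second term of \eqref{e:factor-rv}.

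Part (ii) is the special case of the above argument in which the factor $\Xi^{(N)}(v_1,u,\tC_1)$ is simply dropped (equivalently, replaced by the constant~$1$); no new ideas are needed.

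The main obstacle I anticipate is the careful checking of the measurability/independence inputs to the factorization step: that $\hat E''$ really only references $\omega_1$ and the part of $\omega_0$ on bonds touching $S$ (in particular that the constraint $P'_{u'_0}=\es$ does not secretly pull in bonds exterior to $S$), that $\{u'_0,v'_0\}$ is genuinely a boundary bond of $S$ whose occupation variable is conditionally free, and that the exterior connection $v'_0\cnctd x'$ uses no bond incident to $S$. These verifications are in the spirit of the exploration-and-conditioning technique of Hara--Slade~\cite{HaraSlade90a,HofstJarai04}, with the additional twist that the auxiliary set $\tB_1$ is supplied by the independent configuration $\omega_1$, which forces the factorization to be carried out first conditionally on $\omega_1$ before exploiting the independence inside $\omega_0$.
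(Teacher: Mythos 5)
Your proposal is correct and follows essentially the same route as the paper: multiply the event identity of Lemma~\ref{lem:factor-2nd} by the $\omega_1$-measurable factor, take $\E_0\times\E_1$, use disjointness of the union, and factor the contribution from each $(u'_0,v'_0)$ by conditioning on the explored region and the independent configuration. If anything you are a bit more precise than the paper's one-line description, which refers only to conditioning on ``$\tD_0$ and $\omega_1$'': since the factor $\tau^{\tD_0\cup\tC(\tB_1)}(v'_0,x')$ must emerge verbatim, one does indeed need to condition on the full explored set $S=\tD_0\cup\tC(\tB_1)$ (together with the $\omega_0$-configuration on bonds incident to it, excluding $\{u'_0,v'_0\}$), exactly as you do.
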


\begin{proof}[Proof of Lemma \ref{lem:factor-2nd}]
We first show that the left hand side of \eqref{e:factor-event} is contained in the right hand side.
Negating the event $P'_{x'} = \es$ in the definition of $E''$, let $(u'_0,v'_0)$
be the first pivotal in $P'_{x'}$, when paths are traversed from $W \cup \tB_1$ towards $x'$.

Since $\{u'_0,v'_0\}$ is occupied in $(\omega_0)_{\{u_0,v_0\}}$, it cannot equal $\{u_0,v_0\}$. 
By the definition of $P'_{x'}$, we also have $\{u'_0,v'_0\} \not\in E_W$. We now check that the 
required events inside the union in the right hand side of \eqref{e:factor-event} all occur.
\begin{itemize}

    \item[(a)] \emph{The event $\hat{E}'' = \widetilde{E}(F;u_0,v_0,u'_0) \cap E'(v_0,u_1,\tD_0(W;u_0,v_0,u'_0,v'_0)) \cap \{ P_{u'_0} = \es \}$.}

\begin{itemize}

    \item[(a1)] \emph{Occurrence of $\widetilde{E}(F,u_0,v_0,u'_0)$.}
    Since $\widetilde{E}(F,u_0,v_0,x')$ is present in the left hand side of 
\eqref{e:factor-event}, we have that $F$ occurs and $\{ W \dcnctd u_0 \}$ without using 
$(u_0,v_0)$. We can also select bond-disjoint $(\omega_0)_{\{u_0,v_0\}}$-occupied paths 
witnessing the connections $o \cnctd u_0$ and $o \cnctd x'$, respectively, present in this event. 
The path from $o$ to $x'$ must use $(u'_0,v'_0)$, so the portion up to $u'_0$
witnesses the connection $o \cnctd u'_0$, and is disjoint from the connection from $o$ to $u_0$. 
It follows that $\widetilde{E}(F,u_0,v_0,u'_0)$ occurs.

     \item[(a2)] \emph{Occurrence of $E'(v_0,u_1,\tD_0(W;u_0,v_0,u'_0,v'_0))$.} 
     Since $E'(v_0,u_1,\tC_0(W;u_0,v_0))$ occurs in the left hand side of \eqref{e:factor-event},
     it is sufficient to show that 
     \[ \tC_0(W;u_0,v_0) \cap \tB_1 
        = \tD_0(W;u_0,v_0,u'_0,v'_0) \cap \tB_1. \]
     Indeed, the right hand side here is contained in the left hand side, and if the left hand side was
     strictly greater, there would be an $(\omega_0)_{\{u_0,v_0\}}$-occupied path from 
     $v'_0$ to $\tB_1$, contradicting pivotality of $(u'_0,v'_0)$.

    \item[(a3)] \emph{Occurrence of $\{ P'_{u'_0} = \es \}$.} 
    When $P'_{u'_0} \not= \es$ any element $(u',v')$ of it is also a member of $P'_{x'}$, and any such
    $(u',v')$ necessarily precedes $(u'_0,v'_0)$. Thus the choice of 
    $(u'_0,v'_0)$ implies that $P'_{u'_0} = \es$.

\end{itemize}

\item[(b)] \emph{The event $\{ \text{$\{u'_0,v'_0\}$ occupied} \}$.}
It was already noted above that this event occurs.

\item[(c)] \emph{The event $\{ \text{$v'_0 \cnctd x'$ in $\Zd \setminus (\tD_0 \cup \tC(\tB_1))$} \}$.}
Choose bond-disjoint $(\omega_0)_{\{u_0,v_0\}}$-occupied paths $\pi_1$ and $\pi_2$ from $o$ to $u_0$ 
and from $o$ to $x'$ (witnessing these connections present in the event $\widetilde{E}(F,u_0,v_0,x')$ in the 
left hand side of \eqref{e:factor-event}). Then $\pi_2$ uses the bond $(u'_0,v'_0)$. Let $\pi'_2$ be the 
portion of $\pi_2$ from $v'_0$ to $x'$. To prove claim (c) it will be sufficient to show that $\pi'_2$ does not pass through 
$\tD_0$ nor $\tC(\tB_1)$. The pivotality of the oriented edge $(u'_0,v'_0)$ implies that 
$\pi'_2$ cannot pass through $\tD_0$. Suppose that $\pi'_2$ included a vertex in $\tC(\tB_1)$. That is, 
there exists an $(\omega_0)_{\{u_0,v_0\} \cup \{u'_0,v'_0\}}$-open path between $\pi'_2$ and $\tB_1$.
This would contradict pivotality of $(u'_0,v'_0)$.
\end{itemize}

Let us now show that the right hand side of \eqref{e:factor-event} is contained in the left hand side.

\begin{itemize}

    \item[(a)] \emph{The event $E''$.} By its definition, this event is a subset of the event in the 
left hand side.

    \item[(b)] \emph{A member of the union over $(u'_0,v'_0)$.} We consider the occurrence of the
two events in the left hand side of \eqref{e:factor-event} in turn. We also show that 
$(u'_0,v'_0) \in P'_{x'}$, and that it is the first element in $P_{x'}$, implying disjointness over
$(u'_0,v'_0)$.

    \begin{itemize}
    
        \item[(b1)] \emph{Occurrence of the event} \\
        $\widetilde{E}(F,u_0,v_0,x') = F \cap \Big( \text{$\{ W \dcnctd u_0 \} \cap 
      \left( \{ o \cnctd u_0 \} \circ \{ o \cnctd x' \} \right)$ on $(\omega_0)_{\{u_0,v_0\}}$} \Big)$. \\
      It clear that $F$ occurs and that $\{ W \dcnctd u_0 \}$ without using $(u_0,v_0)$, since these are 
      part of the event 
      $\widetilde{E}(F,u_0,v_0,u'_0)$ which is included in the event $\hat{E}''$ in the right 
      hand side term being considered. In order to show the required bond-disjoint connections,
      let us fix bond-disjoint occupied paths $\pi_1, \pi_2$ from $o$ to $u_0$ and from $o$ to $u'_0$,
      respectively, that do not use $\{u_0,v_0\}$. Existence of such paths is again guaranteed by 
      the event $\widetilde{E}(F,u_0,v_0,u'_0)$. We can further fix an $\omega_0$-occupied path 
      $\pi_3$ from $v'_0$ to $x'$ that avoids $\tD_0 \cup \tC(\tB_1)$, due to the last member 
      of the right hand side term being considered. Since we also know that 
      $\{u'_0,v'_0\}$ is occupied, the concatenation of $\pi_2$, $\{u'_0,v'_0\}$, and $\pi_3$ 
      furnishes an $\omega_0$-occupied path $\pi_4$ from $o$ to $x'$ (at this point, {\colAJ{it is 
      not yet clear that $\pi_4$ is self-avoiding}}).
      
      We show that $\pi_4$ does not use $\{u_0,v_0\}$, which amounts to showing this for $\pi_3$. 
      This is immediate, since $v_0 \in \tB_1$, which $\pi_3$ avoids. We next show that 
      $\{u'_0,v'_0\}$ and $\pi_3$ are bond-disjoint from $\pi_1$. First, note that $\pi_1$ 
      cannot use the oriented bond $(v'_0,u'_0)$, because than we had $v'_0 \in \tD_0$, 
      a contradiction. Consider next the possibility that $\pi_1$ uses the oriented bond 
      $(u'_0,v'_0)$. Then, since $W \dcnctd u_0$, there is a path $\pi_5$ from $W$ to $u_0$ 
      that does not use $\{u'_0,v'_0\}$, nor $\{u_0,v_0\}$. Considering the concatenation of $\pi_5$ 
      and the piece of the reversal of $\pi_1$ between $u_0$ and $v'_0$, it follows that $v'_0 \in \tD_0$, 
      and this is again a contradiction. Finally, we show that $\pi_3$ is even vertex-disjoint from $\pi_1$.
      Indeed, assume that $\pi_3$ visits a vertex $y \in \pi_1$. Then we again have 
      $v'_0 \in \tD_0$ (via the piece of $\pi_1$ from $o$ to $y$ and the piece of the reversal of 
      $\pi_3$ from $y$ to $v'_0$), a contradiction. Observe that $\pi_2$ and $\pi_3$ cannot use 
      $\{u'_0,v'_0\}$, since that would contradict $v'_0 \not\in \tD_0$.  

      \item[(b2)] \emph{$(u'_0,v'_0)$ is the first pivotal in $P'_{x'}$.} \\
      The arguments in the previous part (b1) showed that there is an $(\omega_0)_{(u_0,v_0)}$-occupied
      path between $W$ and $x'$ (namely the path $\pi_4$ there). Let now $\pi$ be any 
      $(\omega_0)_{(u_0,v_0)}$-occupied path from $W \cup \tB_1$ to $x'$, and assume that it does not use
      $(u'_0,v'_0)$. It cannot use $(v'_0,u'_0)$ instead, as that would contradict that 
      $v'_0 \not\in \tD_0 \cup \tC(\tB_1)$. Hence $\pi$ does not use $\{u'_0,v'_0\}$ in either direction.
      Due to $x' \not\in \tC(\tB_1)$, we have that $\pi$ cannot intersect $\tB_1$. 
      Therefore, $\pi$ connects $W$ to $x'$ in $(\omega_0)_{(u_0,v_0),(u'_0,v'_0)}$. 
      This, however, contradicts $x' \not\in \tD_0$. Hence $(u'_0,v'_0) \in P_{x'}$.
      Finally, assume that there exists $(u',v') \in P_{x'}$ preceding $(u'_0,v'_0)$.
      Then any $(\omega_0)_{(u_0,v_0)}$-occupied path from $W \cup \tB_1$ to $u'_0$
      can be extended, via $(u'_0,v'_0)$ and $\pi_3$ to an 
      $(\omega_0)_{(u_0,v_0)}$-occupied path from $W \cup \tB_1$ to $x'$. Thus this path
      must use $(u',v')$ (before reaching $u'_0$), and therefore $(u',v') \in P'_{u'_0}$,
      a contradiction to the event $\{ P'_{u'_0} = \es \}$ present in $\hat{E}''$.

      \item[(b3)] \emph{Occurrence of $E'(v_0,u_1,\tC_0)$.} \\
      We show that $\tC_0 \cap \tB_1 = \tD_0 \cap \tB_1$, which implies the statement, since 
      $E'(v_0,u_1,\tD_0)$ was assumed. Suppose there exists $y \in \tB_1 \cap (\tC_0 \setminus \tD_0)$.
      Then an $(\omega_0)_{(u_0,v_0)}$-occupied path exists from $W$ to $y$. This path must use
      $\{u'_0,v'_0\}$, because $y \not\in \tD_0$. It cannot use it in the direction 
      $(u'_0,v'_0)$, else $v'_0 \in \tC(\tB_1)$. It also cannot use it in the direction
      $(v'_0,u'_0)$, because then $v'_0 \in \tD_0$. The claim follows.
        
    \end{itemize}
    
\end{itemize}
\end{proof}

\begin{proof}[Proof of Corollary \ref{lem:factor-2nd-rv}]
Both cases (i)--(ii) can be proved the same way. For any $(u'_0,v'_0)$, the three events 
in the union in the right hand side of \eqref{e:factor-event} are conditionally independent, 
given the subgraph $\tD_0$ and the configuration $\omega_1$. This, and disjointness over $(u'_0,v'_0)$,
imply that we can factor the expectation and sum over $(u'_0,v'_0)$ as stated.
\end{proof}

The following lemma deals with the easier case of $N = 0$. We need a couple of definitions.
\eqnsplst
{ P^{',0}_{x'}
  &= P^{',0}_{x'}(\{u,v\};\omega_0) 
  := \left\{ (u',v') : \parbox{7.5cm}{$\{u',v'\} \not\in E_W$ and any $(\omega_0)_{\{u,v\}}$-occupied 
     path from $W$ to $x'$ uses $(u',v')$} \right\}. } 
When there is no $(\omega_0)_{\{u,v\}}$-occupied path from $W$ to $x'$, we define
$P^{',0}_{x'} = \es$. 
We will use
\eqnsplst
{ \tD_0 
  = \tD_0(F;\{u,v\},\{u'_0,v'_0\},\omega_0)
  := \left\{ y \in \Zd : 
     \text{$W \cnctd_{(\omega_0)_{\{ u, v\} \cup \{u'_0,v'_0\}}} y$} \right\}. } 
Let $E^{'',0}(F;u,v,x')$ denote the following event measurable with respect $\omega_0$:
\begin{equation}\label{def_epp}
 E^{'',0}(F;u,v,x')
  := \widetilde{E}(F;u,v,x') \cap \{ P^{',0}_{x'} = \es \}. 
  \end{equation}

\begin{lemma}
\label{lem:factor-2nd-N=0}
\emph{($N = 0$ case)} For any $\{u,v\} \not\in E_W$ we have  
\eqnspl{e:factor-rv-N=0}
{ \E_0 \Big[ I \left[ \widetilde{E}(F;u,v,x') \right] \Big] 
  &= \E_0 \Big[ I \left[ E^{'',0}(F;u,v,x') \right] \Big] + \!\!\!\!\!\sum_{\{u'_0,v'_0\} \not\in E_W \cup \{u,v\}} \!\!\!\!\! p D(u'_0,v'_0) \\
  &\qquad\qquad  \E_0 \Big[ I \left[ E^{'',0}(F;u,v,u'_0) \right] 
       \tau^{\tD_0}(v'_0,x') \Big]. }
\end{lemma}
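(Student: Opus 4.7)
The plan is to follow the template of the proof of Lemma \ref{lem:factor-2nd}, substantially simplified by the absence of any second configuration $\omega_1$ and of the backbone $\tB_1$. The key step is to split the event $\widetilde{E}(F;u,v,x')$ according to whether the pivotal set $P^{',0}_{x'}$ is empty or not, and in the latter case to identify the first pivotal edge $(u'_0,v'_0)$ along the connection $W \cnctd x'$ in the configuration $(\omega_0)_{\{u,v\}}$.

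Concretely, I would first write
\[
\widetilde{E}(F;u,v,x') = E^{'',0}(F;u,v,x') \,\sqcup\, \bigsqcup_{(u'_0,v'_0)} \Big[ \widetilde{E}(F;u,v,x') \cap \{\text{$(u'_0,v'_0)$ first pivotal for $W \cnctd x'$ in $(\omega_0)_{\{u,v\}}$}\} \Big],
\]
the union being disjoint and the first term corresponding to $\{ P^{',0}_{x'} = \es \}$. On any term in the second piece, $\{u'_0,v'_0\} \notin E_W \cup \{u,v\}$ automatically: not in $E_W$ by definition of $P^{',0}_{x'}$, and distinct from $\{u,v\}$ because $(u'_0,v'_0)$ must be $(\omega_0)_{\{u,v\}}$-occupied.

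Next, I would establish the event identity analogous to \eqref{e:events-eq}:
\[
\widetilde{E}(F;u,v,x') \cap \{\text{$(u'_0,v'_0)$ first pivotal for $W \cnctd x'$ in $(\omega_0)_{\{u,v\}}$}\}
\]
equals
\[
E^{'',0}(F;u,v,u'_0) \,\cap\, \{\text{$\{u'_0,v'_0\}$ is $\omega_0$-occupied}\} \,\cap\, \{v'_0 \cnctd_{\omega_0} x' \text{ in } \Zd \setminus \tD_0\},
\]
with $\tD_0 = \tD_0(F;\{u,v\},\{u'_0,v'_0\},\omega_0)$. For the forward inclusion, bond-disjoint witnesses $\pi_1\colon o \cnctd u$ and $\pi_2\colon o \cnctd x'$ in $(\omega_0)_{\{u,v\}}$ yield the required events: $\pi_2$ must use $(u'_0,v'_0)$, so truncating $\pi_2$ at $u'_0$ produces a path bond-disjoint from $\pi_1$ witnessing $\{o\cnctd u\}\circ\{o\cnctd u'_0\}$; pivotality of $(u'_0,v'_0)$ forces the tail of $\pi_2$ beyond $v'_0$ to avoid $\tD_0$; and the first-pivotal property of $(u'_0,v'_0)$ gives $P^{',0}_{u'_0} = \es$. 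The reverse inclusion is obtained by concatenating the three pieces and verifying pivotality along the same lines as parts (b1)--(b2) in the proof of Lemma \ref{lem:factor-2nd}, with all references to $\tB_1$, $\tC(\tB_1)$, and $E'(v_0,u_1;\cdot)$ suppressed.

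Finally, for each fixed $(u'_0,v'_0)$ I would factor the expectation by conditioning on the random subgraph $\tD_0$: the indicator of $E^{'',0}(F;u,v,u'_0)$ is measurable with respect to $\tD_0$ and the bonds touching it; the bond $\{u'_0,v'_0\}$ is occupied with probability $pD(u'_0,v'_0)$ independently of $\tD_0$; and the connection event $\{v'_0 \cnctd x' \text{ in } \Zd \setminus \tD_0\}$ depends only on bonds with both endpoints outside $\tD_0$, contributing the factor $\tau^{\tD_0}(v'_0,x')$. Summing over $\{u'_0,v'_0\} \notin E_W \cup \{u,v\}$ yields the stated identity. The main obstacle is the meticulous verification of the event-level decomposition above; this is strictly easier than in Lemma \ref{lem:factor-2nd}, as the single-configuration setting eliminates all bookkeeping around the backbone $\tB_1$ and its expanded cluster $\tC(\tB_1)$.
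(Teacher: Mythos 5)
Your proposal is correct and fleshes out exactly what the paper's one-line proof (``analogous to, but simpler than, the proof of Lemmas \ref{lem:factor-2nd} and \ref{lem:factor-2nd-rv}'') is gesturing at: decompose $\widetilde{E}(F;u,v,x')$ on the first pivotal for $W\cnctd x'$ in $(\omega_0)_{\{u,v\}}$, establish the event identity analogous to \eqref{e:events-eq}, then factor by conditioning on $\tD_0$.

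One small caveat about the phrase ``with all references to $\tB_1$, $\tC(\tB_1)$, and $E'(v_0,u_1;\cdot)$ suppressed.'' In step (b1) of the proof of Lemma \ref{lem:factor-2nd}, the argument that the tail $\pi_3$ (from $v'_0$ to $x'$) does not use $\{u_0,v_0\}$ relies on $v_0 \in \tB_1 \subset \tC(\tB_1)$, which $\pi_3$ avoids. In the $N=0$ setting there is no $\tB_1$, so you cannot simply delete that reference; you need a replacement. The replacement is: $\widetilde{E}(F;u,v,u'_0)$ gives $W \dcnctd u$ on $(\omega_0)_{\{u,v\}}$, and at most one of the two edge-disjoint paths can use $\{u'_0,v'_0\}$, so $u \in \tD_0$. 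Since $\pi_3$ avoids $\tD_0 \ni u$, it avoids the edge $\{u,v\}$. That closes the gap and the rest of the argument (bond-disjointness of the concatenation from $\pi_1$, verification that $(u'_0,v'_0)$ is first pivotal, conditional-independence factorisation) goes through exactly as you outline.
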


\begin{proof}
The proof is analogous to, but simpler than, the proof of Lemmas \ref{lem:factor-2nd} and \ref{lem:factor-2nd-rv}.
\end{proof}

With the expressions \eqref{e:factor-rv}--\eqref{e:factor-rv-N=1} and \eqref{e:factor-rv-N=0} at hand, 
we can apply inclusion-exclusion expansion to the terms
$\tau^{\tD_0 \cup \tC(\tB_1)}(v'_0,x')$ and $\tau^{\tD_0}(v'_0,x')$, writing them as
\eqnspl{eqTauDCBexpansion}
{ \tau^{\tD_0 \cup \tC(\tB_1)}(v'_0,x')
  = \tau(v'_0,x') - \P'_1 \left[ v'_0 \stackrel{\tD_0 \cup \tC(\tB_1)}{\cnctd} x' \right], }
and 
\eqnspl{eqTauDexpansion}
{ \tau^{\tD_0}(v'_0,x')
  = \tau(v'_0,x') - \P'_1 \left[ v'_0 \stackrel{\tD_0}{\cnctd} x' \right], }
respectively, 
where the prime on $\P'_1$ indicates that the expectation is over a new independent percolation 
configuration $\omega'_1$. This probability in turn can be decomposed, using~\eqref{eq:expansion} and Lemma~\ref{lem:on-in}, as 
\eqnspl{eqPp1prob}
{ \P'_1 \left[ v'_0 \stackrel{\tD_0 \cup \tC(\tB_1)}{\cnctd} x' \right]
  &= \E'_1 \Big[ I \big[ E'(v'_0,x';\tD_0 \cup \tC(\tB_1)) \big] \Big] \\
  &\qquad  + \sum_{(u'_1,v'_1)} p D(u'_1,v'_1) 
  \E'_1 \Big[ I \big[ E'(v'_0,u'_1; \tD_0 \cup \tC(\tB_1)) \big] \tau^{\tC'_1}(v'_1,x') \Big], }
where 
\eqnsplst
{ \tC'_1
  = \left\{ y \in \Zd : v'_0 \cnctd_{(\omega'_1)_{(u'_1,v'_1)}} y \right\}. }
Continuing with the expansion of $\tau^{\tC'_1}(v'_1,x')$ yields Lemma \ref{lem:tPsiN-expanded} below. 
For its statement, let us introduce some abbreviations.

For $N, N' \ge 2$, let
\eqnspl{eq:PsiNN'}{
  \Psi^{N,N'}(F;u,u')
  &= \!\!\!\!\!\sum_{\substack{\{u_0,v_0\} \not\in E_W \\ \{u'_0,v'_0\} \not\in E_W \cup \{u_0,v_0\}}} 
     \sum_{\substack{\{u_1,v_1\} \\ \{u'_1,v'_1\}}} 
    p D(u_0,v_0) p D(u'_0,v'_0) p D(u_1,v_1) p D(u'_1,v'_1) \\
  &\qquad \E_0 \times \E_1 \times \E'_1 \Big[ I \left[ \hat{E}''(F;u_0,v_0,u_1,u'_0,v'_0) \right] 
       I \left[ E'(v'_0,u'_1;\tD_0 \cup \tC(\tB_1)) \right](\omega'_1) \\
  &\qquad\qquad \Xi^{(N)}(v_1,u,\tC_1) \Xi^{(N')}(v'_1,u',\tC'_1) \Big]. }
For $N \ge 2$, $N' = 1$, let
\eqnsplst{
  \Psi^{N,1}(F;u,u')
  &= \!\!\!\!\!\sum_{\substack{\{u_0,v_0\} \not\in E_W \\ \{u'_0,v'_0\} \not\in E_W \cup \{u_0,v_0\}}}
     \sum_{\{u_1,v_1\}} 
    p D(u_0,v_0) p D(u'_0,v'_0) p D(u_1,v_1) \\
  &\qquad \E_0 \times \E_1 \times \E'_1 \Big[ I \left[ \hat{E}''(F;u_0,v_0,u_1,u'_0,v'_0) \right] 
       I \left[ E'(v'_0,u';\tD_0 \cup \tC(\tB_1)) \right](\omega'_1) \\
  &\qquad\qquad \Xi^{(N)}(v_1,u,\tC_1) \Big]. }
For $N \ge 2$, $N' = 0$, let
\eqnsplst{
  \hat\Psi^{N,0}(F;u,u',v')
  &= \!\!\!\!\!\sum_{\substack{\{u_0,v_0\} \not\in E_W \\ \{u'_0,v'_0\} \not\in E_W \cup \{u_0,v_0\}}}  
     \sum_{\{u_1,v_1\}} 
    p D(u_0,v_0) p D(u'_0,v'_0) p D(u_1,v_1) \\
  &\qquad \E_0 \times \E_1 \Big[ I \left[ \hat{E}''(F;u_0,v_0,u_1,u',v') \right] 
     \Xi^{(N)}(v_1,u,\tC_1) \Big] }
{\col      and 
\eqnsplst{
  \Psi^{N,0}(F;u,x')
  &= \!\!\!\!\!\sum_{\substack{\{u_0,v_0\} \not\in E_W \\ \{u'_0,v'_0\} \not\in E_W \cup \{u_0,v_0\}}}  
     \sum_{\{u_1,v_1\}} 
    p D(u_0,v_0) p D(u'_0,v'_0) p D(u_1,v_1) \\
  &\qquad \E_0 \times \E_1 \Big[ I \left[ {E}''(F;u_0,v_0,u_1,x') \right] 
     \Xi^{(N)}(v_1,u,\tC_1) \Big]. }     
     }
Moreover, for $N = 1$, $N' \ge 2$, let
\eqnsplst{
  \Psi^{1,N'}(F;u,u')
  &= \!\!\!\!\!\sum_{\substack{\{u_0,v_0\} \not\in E_W \\ \{u'_0,v'_0\} \not\in E_W \cup \{u_0,v_0\}}}
     \sum_{\{u'_1,v'_1\}}
    p D(u_0,v_0) p D(u'_0,v'_0) p D(u'_1,v'_1) \\
  &\qquad \E_0 \times \E_1 \times \E'_1 \Big[ I \left[ \hat{E}''(F;u_0,v_0,u,u'_0,v'_0) \right]
       I \left[ E'(v'_0,u'_1;\tD_0 \cup \tC(\tB_1)) \right](\omega'_1) \\
  &\qquad\qquad \Xi^{(N')}(v'_1,u',\tC'_1) \Big]. }
For $N = 1$, $N' = 1$, let
\eqnsplst{
  \Psi^{1,1}(F;u,u')
  &= \!\!\!\!\!\sum_{\substack{\{u_0,v_0\} \not\in E_W \\ \{u'_0,v'_0\} \not\in E_W \cup \{u_0,v_0\}}} \!\!\!\!\! 
    p D(u_0,v_0) p D(u'_0,v'_0) \\
  &\qquad \E_0 \times \E_1 \times \E'_1 \Big[ I \left[ \hat{E}''(F;u_0,v_0,u,u'_0,v'_0) \right]
       I \left[ E'(v'_0,u';\tD_0 \cup \tC(\tB_1) )\right](\omega'_1) \Big]. }
For $N = 1$, $N' = 0$, let
\eqnsplst{
  \hat\Psi^{1,0}(F;u,u',v')
  &= \!\!\!\!\!\sum_{\{u_0,v_0\} \not\in E_W} \!\!\!\!\! 
    p D(u_0,v_0) \E_0 \times \E_1 \Big[ I \left[ \hat{E}''(F;u_0,v_0,u,u',v') \right] \Big] }
{\col and
\eqnsplst{
  \Psi^{1,0}(F;u,x')
  &= \!\!\!\!\!\sum_{\{u_0,v_0\} \not\in E_W} \!\!\!\!\! 
    p D(u_0,v_0) \E_0 \times \E_1 \Big[ I \left[ {E}''(F;u_0,v_0,u,x') \right] \Big]. }
}
Finally, for $N = 0$, $N' \ge 2$, let
\eqnsplst{
  \hat\Psi^{0,N'}(F;u,v,u')
  &= \!\!\!\!\!\sum_{\{u'_0,v'_0\} \not\in E_W} 
     \sum_{\{u'_1,v'_1\}}
     p D(u'_0,v'_0) p D(u'_1,v'_1) \\
  &\qquad \E_0 \times \E'_1 \Big[ I \left[ E^{'',0}(F;u,v,u'_0) \right]
       I \left[ E'(v'_0,u'_1;\tD_0) \right](\omega'_1)
       \Xi^{(N')}(v'_1,u',\tC'_1) \Big]. }
For $N = 0$, $N' = 1$, let
\eqnsplst{
  \hat\Psi^{0,1}(F;u,v,u')
  &= \!\!\!\!\!\sum_{\{u'_0,v'_0\} \not\in E_W \cup \{u,v\}} \!\!\!\!\! 
    p D(u'_0,v'_0) \\
  &\qquad \E_0 \times \E'_1 \Big[ I \left[ E^{'',0}(F;u,v,u'_0) \right]
       I \left[ E'(v'_0,u';\tD_0) \right](\omega'_1) \Big]. }
For $N = 0$, $N' = 0$, let
\eqnsplst{
  \hat\Psi^{0,0}(F;u,v,u')
  &= \E_0 \Big[ I \left[ E^{'',0}(F;u,v,u') \right] \Big]. }   

We finally define the error term $R^{N,N'}(F;x,x')$. As in \eqref{e:tPsiN-short}--\eqref{e:RN-short}, we define $R^{N,N'}(F;x,x')$ for $N'\ge2$ as $\Psi^{N,N'}(F;u,u')$ in \eqref{eq:PsiNN'} but with $\Xi^{(N')}(v'_1,u',\tC'_1)$ replaced by $\Chi^{(N')}(v'_1,u',\tC'_1)$ and an additional factor $(-1)^{N+1}$ (a factor that we shall neglect later on). 
\begin{lemma}
\label{lem:tPsiN-expanded} 
For $N \ge 1$, $n' \ge 2$ and $u, x' \in \Zd$ we have
\eqnspl{e:tPsiN-expanded}
{ \tPsi^{(N)}(F;u,x')
  = & \sum_{u',v'} \left[ \hat\Psi^{N,0}(F;u,u',v')+\sum_{N'=1}^{n'} (-1)^{N'} \Psi^{N,N'}(F;u,u') \right] 
    p D(u',v') \tau(v',x') \\
    & + \sum_{N'=0}^{n'} (-1)^{N'} \Psi^{N,N'}(F;u,x') + R^{N,n'}(F;u,x'). }
Furthermore, for $N = 0$, $n' \ge 2$ and $u,v, x' \in \Zd$ we have
\eqnspl{e:tHatPsiN-expanded}
{ \hat\tPsi^{(0)}(F;u,v,x')
  & = \sum_{u',v'} \left[ \sum_{N'=0}^{n'} (-1)^{N'} \hat\Psi^{0,N'}(F;u,v,u') \right] p D(u',v') \tau(v',x') \\
    & + \sum_{N'=0}^{n'} (-1)^{N'} \hat\Psi^{0,N'}(F;u,v,x') + R^{0,n'}(F;u,x'). }
\end{lemma}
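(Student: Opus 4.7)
The plan is to mirror the first-arm expansion of Proposition \ref{prop:1st-expansion}, but now on the ``second arm'', i.e.\ the $\omega_0$-connection from $W$ to $x'$ (for $N\ge1$ passing through the backbone $\tB_1$). All of the combinatorial content has already been established: the factorization of the joint event at the first pivotal bond on the second arm is Corollary \ref{lem:factor-2nd-rv} (for $N\ge 1$) and Lemma \ref{lem:factor-2nd-N=0} (for $N=0$), and the inclusion--exclusion expansion along successive pivotals is \eqref{eq:expansion} together with Lemma \ref{lem:on-in}. What remains is to assemble them in the correct order and keep track of the signs and the $pD$ factors.

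Consider first the case $N\ge 1$. Start from \eqref{e:tPsiN-short} if $N\ge 2$, or from \eqref{eqDefPsi1} written as an $\E_0\times\E_1$ expectation with the convention that the $\Xi^{(N)}$-factor is replaced by $1$ if $N=1$. Applying Corollary \ref{lem:factor-2nd-rv} splits the combined event $\widetilde{E}(F;u_0,v_0,x')\cap E'(v_0,u_1;\tC_0)$ into an ``$E''$''-piece and an ``$\hat{E}''$''-piece summed over the first pivotal bond $(u'_0,v'_0)\notin E_W\cup\{u_0,v_0\}$, the latter carrying a factor $\tau^{\tD_0\cup\tC(\tB_1)}(v'_0,x')$. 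By definition, the $E''$-piece is exactly $\Psi^{N,0}(F;u,x')$, which matches the $N'=0$ summand of $\sum_{N'=0}^{n'}(-1)^{N'}\Psi^{N,N'}(F;u,x')$ in \eqref{e:tPsiN-expanded}.

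For the $\hat{E}''$-piece, use \eqref{eqTauDCBexpansion} to write $\tau^{\tD_0\cup\tC(\tB_1)}(v'_0,x')=\tau(v'_0,x')-\P'_1\bigl[v'_0\stackrel{\tD_0\cup\tC(\tB_1)}{\cnctd}x'\bigr]$, where $\omega'_1$ is a fresh configuration independent of $(\omega_0,\omega_1)$. After renaming $(u'_0,v'_0)\to(u',v')$, the $\tau(v'_0,x')$ contribution yields precisely $\sum_{u',v'}\hat\Psi^{N,0}(F;u,u',v')\,pD(u',v')\tau(v',x')$, the boundary term in the first sum of \eqref{e:tPsiN-expanded}. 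To the probability remainder I then apply \eqref{eqPp1prob} (the Hara-Slade decomposition on $\omega'_1$), producing one contribution with no further cutting bond and one contribution summed over a cutting bond $(u'_1,v'_1)$ with an extra factor $\tau^{\tC'_1}(v'_1,x')$. Iterating this operation $n'-1$ more times --- each iteration costing a factor of $-1$ via the identity $\tau^S=\tau-\P[\,\cdot\stackrel{S}{\cnctd}\cdot\,]$ --- the non-convolutional terms assemble into $\Psi^{N,N'}(F;u,x')$, the convolutional ones into $\Psi^{N,N'}(F;u,u')\,pD(u',v')\tau(v',x')$ for $N'=1,\dots,n'$, and the leftover probability factor after $n'$ iterations collapses into $R^{N,n'}(F;u,x')$ --- obtained from the corresponding $\Psi^{N,n'}$-formula by replacing $\Xi^{(n')}$ with $\Chi^{(n')}$, exactly in the way $\tR_N$ was built from $\tPsi^{(N)}$.

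The $N=0$ case is entirely analogous, and indeed one step simpler: start from $\hat{\tPsi}^{(0)}(F;u,v,x')=\P[\widetilde{E}(F;u,v,x')]$ and apply Lemma \ref{lem:factor-2nd-N=0} in place of Corollary \ref{lem:factor-2nd-rv}. Since no backbone $\tB_1$ is present at this stage, the cutoff set is simply $\tD_0$ and \eqref{eqTauDexpansion} replaces \eqref{eqTauDCBexpansion}; the same iterated inclusion--exclusion then produces the coefficients $\hat\Psi^{0,N'}$ and the remainder $R^{0,n'}$, yielding \eqref{e:tHatPsiN-expanded}. No new probabilistic identity is needed beyond those already established above; the only delicate point is purely bookkeeping, namely tracking which $pD$ factors and which range restrictions $\{u'_i,v'_i\}\notin E_W\cup\cdots$ end up absorbed into the definitions of the various $\Psi^{N,N'}$ and $\hat\Psi^{N,N'}$ as opposed to remaining as explicit summation weights in \eqref{e:tPsiN-expanded}--\eqref{e:tHatPsiN-expanded}.
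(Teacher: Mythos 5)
Your proof proposal is correct and follows essentially the same route as the paper's proof: start from \eqref{e:tPsiN-short} (or \eqref{eqDefPsi1} for $N=1$), split via Corollary \ref{lem:factor-2nd-rv}, expand $\tau^{\tD_0\cup\tC(\tB_1)}$ via \eqref{eqTauDCBexpansion} and \eqref{eqPp1prob}, iterate the Hara-Slade step, and for $N=0$ substitute Lemma \ref{lem:factor-2nd-N=0} with $\tD_0$ replacing $\tD_0\cup\tC(\tB_1)$. You spell out the iteration a bit more explicitly than the paper, which leaves it at the level of ``a second Hara-Slade expansion as in Proposition \ref{prop:1st-expansion}'', but the content is the same.
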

{\col 
\proof
For $N\ge2$, we start with \eqref{e:tPsiN-short} and substitute the joint expected value $\E_0 \times \E_1 \big[\cdots\big]$ according to \eqref{e:factor-rv}. 
For the factor $\tau^{\tD_0 \cup \tC(\tB_1)}(v'_0,x')$ on the right hand side of \eqref{e:factor-rv} we then apply \eqref{eqTauDCBexpansion} and \eqref{eqPp1prob}. With this replacement we are now in business for a second Hara-Slade expansion as in Lemma \ref{prop:1st-expansion}, which readily gives the expansion of $\tPsi^{(N)}(F;u,x')$ and identifies $R^{N,N'}$. 
This yields \eqref{e:tPsiN-expanded} for $N\ge2$. 
Likewise for $N=1$, where we use instead \eqref{e:factor-rv-N=1} rather then \eqref{e:factor-rv}, and plug it into \eqref{eqDefPsi1}. 
This establishes \eqref{e:tPsiN-expanded}.

In the identity \eqref{e:tHatPsiN-expanded} we have $v$ as a separate argument. This is inherited from  \eqref{e:PsiZero}, where we have an extra condition that the bond $(u_0,v_0)$ must be vacant for the connection from $W$ to $x'$, see Lemma \ref{lem:1st-pivotal}. 
The starting point is \eqref{e:PsiZero}, and we expand using Lemma \ref{lem:factor-2nd-N=0}. 
A notable difference with the previous terms is that now we are expanding $ \tau^{\tD_0}(v'_0,x')$, whereas previously we had $\tau^{\tD_0 \cup \tC(\tB_1)}(v'_0,x')$. We therefore get the indicator $I \big[ E'(v'_0,u';\tD_0)\big]$ rather than $I \big[ E'(v'_0,u';\tD_0 \cup \tC(\tB_1) )\big]$ for $N\ge1$. 
The rest of the expansion, using \eqref{eqTauDexpansion}, is the same as in the previous case. 
\qed
}

\bigskip

For a better organisation of the resulting sum, we write the partial sum for 
$n\ge1$: 
{\col 
\eqnspl{eq:DefBarPiNN}{ 
	\bar\Psi_{n,n'}(F;u,v,u',v'):=&\sum_{N'=0}^{n'} (-1)^{N'} \hat\Psi^{0,N'}(F;u,v,u') 
				+ \sum_{N=1}^n (-1)^{N}\hat\Psi^{N,0}(F;u,u',v')\\
	&+\sum_{N=1}^n\sum_{N'=1}^{n'} (-1)^{N+N'} \Psi^{N,N'}(F;u,u'). }
Mind that the quantities have three arguments when $N=0$ or $N'=0$, these are accommodated in the first line. 
Similarly, we have 
\eqnspl{eq:DefHatPiNN}{ 
	\hat\Psi_{n,n'}(F;u,v,x')
	:=&\sum_{N'=0}^{n'} (-1)^{N'}\hat\Psi^{0,N'}(F;u,v,x') 
	+\sum_{N=1}^n\sum_{N'=0}^{n'} (-1)^{N+N'} \Psi^{N,N'}(F;u,x'). 
	}
The difference between $\bar\Psi_{n,n'}$ and $\hat\Psi_{n,n'}$ is in the terms $\hat\Psi^{N,0}$ (which differ when ending at $x'$ or at an intermediate pivotal edge $\{u',v'\}$). } 
We further let 
\eqn{eq:tPsiDef}{
	\tPsi_{n}(F;x,x') := \sum_{N=0}^n (-1)^N \tPsi^{(N)}(F;x,x').
}
Finally, we also collect all error terms as 
\eqn{eq:DefRNN}{ R_{n,n'}(F;x,x'):=\tilde R_n(F;x,x') + \sum_{N=0}^n (-1)^{N}
 	 \sum_{u,v}R^{N,n'}(F;u,x') pD(u,v)\tau(v,x) . }
These are the quantities that appear in Proposition \ref{prop:laceexpansion}, and we now present its proof.

\proof[Proof of Proposition \ref{prop:laceexpansion}]
We start with the identity in \eqref{e:N-th-stage}, and replace the quantities $\hat{\tPsi}^{(0)}$ and $\tPsi^{(m)}$ according to Lemma \ref{lem:tPsiN-expanded}. 
This yields for $n,n'\ge2$,
\eqnspl{e:N-th-stage-inserted}
{ &\hskip-1em \sigma(F; x,x')\\
  =& \sum_{N=0}^n (-1)^N \tPsi^{(N)}(F;x,x') \\
  &{} + \sum_{u,v} p D(u,v) \Biggl( 
  \sum_{u',v'}  \sum_{N'=0}^{n'} (-1)^{N'} \hat\Psi^{0,N'}(F;u,v,u') p D(u',v') \tau(v',x') \\
    &\qquad\hskip5em + \sum_{N'=0}^{n'} (-1)^{N'} \hat\Psi^{0,N'}(F;u,v,x') + R^{0,n'}(F;u,x')\\
  & \qquad+ \sum_{N=1}^n (-1)^N 
  	\biggl[\sum_{u',v'}  \Big( \Psi^{N,0}(F;u,u',v') + \sum_{N'=1}^{n'} (-1)^{N'} \Psi^{N,N'}(F;u,u') \Big)
    p D(u',v') \tau(v',x') \\
    & \qquad\hskip5em+ \sum_{N'=0}^{n'} (-1)^{N'} \Psi^{N,N'}(F;u,x') + R^{N,n'}(F;u,x')\biggr] \Biggr) \tau(v,x) \\
  & + \tR_n(F;x,x'). }
The resulting expression is then organised using \eqref{eq:DefHatPiNN}--\eqref{eq:DefRNN}. 
\qed

\bigskip
\section{Convergence of the limit}\label{sec:convergence}
\subsection{Bounds on the expansion coefficients}\label{sec:bounds}
The main result of this section is a bound on the lace expansion quantities $\Psi$, $\hat\Psi$, and the (asymptotically vanishing) remainder term $R$: 
\begin{proposition}\label{prop:PsiBd}
If \eqref{eq:tauAsy}, \eqref{eq:DtauBd} and \eqref{eq:Aass} are satisfied, then for any $F\in\mathfrak F_{00}$ there exits $C>0$ such that for all $x,x'\in\Zd$ and $p\le p_c$, 
\begin{equation}\label{eq:momentbound}
	\sum_{N \ge 0} \tPsi^{(N)}(F;x,x')
	   \le C\tau(x)\,\tau(x') \left( \min\{|x|,|x'|,|x-x'|\}\right)^{-(d-4)} 
\end{equation}
\begin{equation}\label{eq:momentbound2}
\sup_{v'}\sum_{N\ge1}\left(
\hat\Psi^{(N,0)}(F;x,x',v') + \sum_{N'\ge1}\Psi^{(N,N')}(F;x,x')\right)\le C 
	\tau(x)\,\tau(x')\,\tau(x-x').
\end{equation} 
\begin{equation}\label{eq:momentbound3}
\sup_v\sum_{N'\ge0}\hat\Psi^{(0,N')}(F;x,v,x') 
	\le C 
	\tau(x)\,\tau(x')\,\tau(x-x').
\end{equation} 
Finally, for all $x,x'\in\Zd$, 
\begin{equation}\label{eq:RNN}
\lim_{N,N'\to\infty}R_{N,N'}(F;x,x')=0.
\end{equation}
\end{proposition}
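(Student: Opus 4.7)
\emph{Strategy.} My plan is to bound each of the expansion coefficients $\tPsi^{(N)}$, $\Psi^{(N,N')}$, $\hat\Psi^{(N,0)}$, $\hat\Psi^{(0,N')}$ by a sum over diagrams obtained from applying the BK inequality to the constituent disjoint-connection events, and then to reduce these diagrams systematically via the H-reduction framework of Section~\ref{sec:BoundsFramework}. Each event such as $\widetilde{E}(F;u_0,v_0,x')$, $E'(v_{i-1},u_i;\tC_{i-1})$, or $\hat{E}''(F;u_0,v_0,u_1,u'_0,v'_0)$ involves an ``extra disjoint path'' arising from intersections with events like $\{W \dcnctd u_0\}$ or the existence of pivotal bonds; I plan to apply Lemmas~\ref{lem_extra_path2}--\ref{lem_extra_path4} to translate these intersections into additional edges of the corresponding multigraph, and then to verify via Proposition~\ref{expand_class} that each resulting graph belongs to the class for which Proposition~\ref{kill_bill} applies. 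Iterated H-reduction (Proposition~\ref{prop_Hred} or its strong version Proposition~\ref{prop:strong-H-reduce}) would then reduce every such diagram, up to multiplicative constants, to either a triangle $\sum_y\tau(o,y)\tau(y,x)\tau(y,x')$ or a square with a central edge, the latter being further reducible to the triangle via~\eqref{eq:DtauBd}.

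\emph{Deriving the per-coefficient bounds.} For $\tPsi^{(N)}(F;x,x')$, the expected diagrammatic event is a chain of $N$ sausages with an extra arm branching from the initial sausage to $x'$. H-reduction would collapse this to the triangle in the three marked points $\{o,x,x'\}$, weighted by an $A^{(N)}$-like chain coefficient whose summation over $N$ is controlled by~\eqref{eq:Aass}. The triangle $\sum_y \tau(o,y)\tau(y,x)\tau(y,x')$ I would then evaluate using~\eqref{eq:tauAsy}: splitting the sum according to which of $o,x,x'$ the vertex $y$ is closest to and using $d>6$, one should obtain exactly the factor $\tau(x)\tau(x')\left(\min\{|x|,|x'|,|x-x'|\}\right)^{-(d-4)}$, proving~\eqref{eq:momentbound}. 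For $\Psi^{(N,N')}$, $\hat\Psi^{(N,0)}$, $\hat\Psi^{(0,N')}$, the second expansion furnishes an extra cutting bond that separates the arm to $x'$ from the rest of the cluster; the resulting diagram has three edges already corresponding to the three pairs from $\{o,x,x'\}$, and thus after H-reduction would evaluate directly to $\tau(x)\tau(x')\tau(x-x')$, giving~\eqref{eq:momentbound2} and~\eqref{eq:momentbound3}.

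\emph{Remainder term and main obstacle.} For the remainder $R_{n,n'}$, the same diagrammatic analysis applies, now with an additional factor $\P_{N+1}[v_N \stackrel{\tC_N}{\cnctd} x]$ (or its $N'$-analogue) bounded simply by $\tau(v_N,x)$. Inserting this into the diagrammatic bound yields tail sums over $N>n$ (and $N'>n'$), which vanish by the convergence of $\sum_N A^{(N)}$ guaranteed by~\eqref{eq:Aass}; this gives~\eqref{eq:RNN}. The main obstacle I anticipate is the first step: carefully translating the rich events arising from $\hat{E}''$ and the interplay between the sets $\tD_0$, $\tC(\tB_1)$, $\tB_1$ into tractable multigraph events, and checking that each resulting graph indeed satisfies the hypotheses of Proposition~\ref{expand_class}. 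This is precisely the place where the systematic framework of the paper pays off, by avoiding a case-by-case analysis of the numerous diagrams that a direct approach would produce.
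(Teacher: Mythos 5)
Your overall strategy agrees with the paper's: express each coefficient in terms of diagrammatic events via BK, convert the intersections with $\{W\dcnctd\cdot\}$, $\{\cdot\in\tC_i\}$, $\{\cdot\in\tB_1\}$ into extra edges using Lemmas~\ref{lem_extra_path2}--\ref{lem_extra_path4} and Lemma~\ref{sam}, verify 3-connectivity via Proposition~\ref{expand_class}, and H-reduce using Propositions~\ref{prop_Hred} and~\ref{prop:strong-H-reduce}. For \eqref{eq:momentbound2} and \eqref{eq:momentbound3} this reproduces the paper's route through Proposition~\ref{kill_bill} to the product triangle $\tau(x)\tau(x')\tau(x-x')$. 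The treatment of \eqref{eq:RNN}, deferring to the convergence of $\sum_N A^{(N)}$ via \eqref{eq:Aass}, is also in the spirit of the paper (which cites \cite{HofstJarai04}).

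The gap is in your claim for \eqref{eq:momentbound}. You assert that the $\tPsi^{(N)}$ diagram H-reduces to the sum-triangle $\sum_y\tau(o,y)\tau(y,x)\tau(y,x')$ and that this evaluates to $\tau(x)\tau(x')\,(\min\{|x|,|x'|,|x-x'|\})^{-(d-4)}$; both statements are wrong. The near-origin contribution to $\sum_y\tau(o,y)\tau(y,x)\tau(y,x')$ is already of order $|x|^{4-d}|x'|^{2-d}$ (say when $\min=|x|$), which exceeds the target $|x|^{6-2d}|x'|^{2-d}$ by a factor of $1/\tau(x)$; the sum-triangle is simply not small enough. The reason is that after Lemma~\ref{lem:E-bnd}(i) the correct diagram retains the separate disjoint connection $\{o\cnctd x\}$, producing $(\{0,x'\}\circledast x)\uplus\{o,x\}$, i.e.
\[
  \tau(o,x)\sum_y\tau(o,y)\tau(y,x)\tau(y,x'),
\]
and it is the extra leg $\tau(o,x)$ that supplies the missing factor (Lemma~\ref{lem_triangle_plus}). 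Moreover this diagram cannot be fed directly into Proposition~\ref{kill_bill}: in the reduced graph $x'$ has degree $1$, violating the degree-$2$ requirement on all three marked points, which is precisely why the paper routes $\tPsi^{(N)}$ through Lemmas~\ref{extra_diag} and~\ref{lem_triangle_plus} (applying Proposition~\ref{kill_bill} only to the 3-connected core of the diagram) instead of invoking Proposition~\ref{kill_bill} globally. As stated, your reduction would either yield a bound too weak for the convergence argument or assert a product-triangle bound $\tau(x)\tau(x')\tau(x-x')$ that is actually false for $\tPsi^{(0)}$ when $|x-x'|\gg|x|$.
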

\noindent\emph{Remark.} Even though the bounds hold uniformly in $p\le p_c$, we only need it at the critical point $p=p_c$. 

\medskip
The relevance of these bounds comes from the fact that the right hand side of \eqref{eq:momentbound} 
{\colAJ{is of smaller order than the probability of the conditioning in the main theorem}}, 
{\colAJ{whereas}} the right hand side of \eqref{eq:momentbound2} is the well-known \emph{triangle diagram}, 
which is 
{\colAJ{summable}} whenever $d>6$ (see Lemma \ref{lem:ConvBd} below). 
In particular, the sums in \eqref{eq:momentbound} and \eqref{eq:momentbound3} as well as the double sum in \eqref{eq:momentbound2} are finite if  \eqref{eq:tauAsy} is satisfied, and we can thus take the double limit $n,n'\to\infty$ in Proposition \ref{prop:laceexpansion} to obtain the expansion identity 
\eqnspl{eq:OZ}{
	&\sigma(F;x,x')=  \tPsi(F;x,x')
	+\sum_{u,v,u',v'}\bar\Psi(F;u,v,u',v')\,pD(u,v)\tau(v,x)\,pD(u',v')\tau(v',x') \\
	& \hskip8em
	+ \sum_{u,v}\hat\Psi(F;u,v,x')pD(u,v)\tau(v,x), 
}
for any $x,x'\in\Zd$ and special cylinder event $F$, where $\tPsi(F;x,x')=\lim_{n\to\infty}\tPsi_{n}(F;x,x')$, 
\[  \bar\Psi(F;u,v,u',v;)=\lim_{n,n'\to\infty}\bar\Psi_{n,n'}(F;u,v,u',v'),  \text{ and }
	\hat\Psi(F;u,v,u')=\lim_{n,n'\to\infty}\hat\Psi_{n,n'}(F;u,v,u'). \]
We prove this Proposition \ref{prop:PsiBd} in Section \ref{sec:bounding-diagrams}. 
Indeed, \eqref{eq:momentbound} is proven in Lemma \ref{lem:tPsiBd0} (the $N=0$ term) and Lemma \ref{lem:tPsiBd} (the remaining sum). Further, \eqref{eq:momentbound2} is proven in Proposition \ref{prop:n=1error} (the $N=1$ contribution), and in Proposition \ref{prop:n>=2error} (the remaining double sum). 
The bound \eqref{eq:momentbound3} is proven in Proposition \ref{prop:n=0error}.  
The bound in \eqref{eq:RNN} follows as in \cite[(4.21)--(4.22)]{HofstJarai04}. 
In the remainder of this section, we show how Proposition \ref{prop:PsiBd} implies our main theorem.

\subsection{Convergence of conditional probabilities}\label{sec:Conv}
Throughout this section, we abbreviate \[\tau(x):=\tau(0,x)=\tau(y,y+x),\qquad x,y\in\Zd.\] 

The diagrams bounding the lace expansion coefficients contain numerous convolution terms, it is hence crucial to have a sharp estimates on such convolutions. These are readily provided in the literature: 
\begin{lemma}[{Convolution bounds {\cite[Prop.~1.7]{HaraHofstSlade03}}}]\label{lem:ConvBd}
Suppose $a\ge b>0$ and $f,g\colon \mathbb Z^d\to\mathbb R$ satisfy $f(x)\le c_1|x|^{-a}$ and $g(x)\le c_2|x|^{-b}$ for constants $c_1,c_2>0$. 
Then there exists $c_3>0$ such that 
\begin{equation}
	(f\ast g)(x)\le c_3
	\begin{cases}
		|x|^{-b}\quad&\text{ if }a>d,\\
		|x|^{d-(a+b)}\quad&\text{ if }a<d, a+b>d.
	\end{cases}
\end{equation}
\end{lemma}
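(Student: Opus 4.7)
The result is a standard estimate, and my plan is to split the sum $(f\ast g)(x)=\sum_{y\in\Zd}f(x-y)g(y)$ according to the relative positions of $y$ with respect to $0$ and $x$. Concretely, I would introduce the three regions
\begin{equation*}
A:=\{y:|y|\le|x|/2\},\qquad B:=\{y:|x-y|\le|x|/2\},\qquad C:=\Zd\setminus(A\cup B),
\end{equation*}
and bound the sum over each region separately. The intuition is that in $A$ the factor $f(x-y)$ is the ``large'' one, in $B$ it is $g(y)$, while in $C$ both factors are small. The only elementary input I would use is the discrete estimate $\sum_{|y|\le R}|y|^{-s}\asymp R^{d-s}$ when $0<s<d$, and $\sum_{|y|\le R}|y|^{-s}=O(1)$ when $s>d$.

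On $A$ I would pull out $f(x-y)\le c_1 2^a|x|^{-a}$, reducing the sum to $\sum_{|y|\le|x|/2}|y|^{-b}$. Since $b\le a$, in the regime $a<d$ this is of order $|x|^{d-b}$, contributing $C|x|^{d-a-b}$; in the regime $a>d$ the same bound is still $\le C|x|^{d-a-b}\le C|x|^{-b}$ (using $d-a<0$). Region $B$ is symmetric: substituting $z=x-y$, I would pull out $g(y)\le c_2 2^b|x|^{-b}$ and estimate $\sum_{|z|\le|x|/2}|z|^{-a}$, which is of order $|x|^{d-a}$ when $a<d$ (giving $C|x|^{d-a-b}$) and of order $1$ when $a>d$ (giving $C|x|^{-b}$, the dominant term in the first case).

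For region $C$ I would split further into $C_1:=C\cap\{|y|\le 2|x|\}$ and $C_2:=C\cap\{|y|>2|x|\}$. On $C_1$ both $|y|$ and $|x-y|$ are of order $|x|$, so the summand is $O(|x|^{-(a+b)})$ and the volume is $O(|x|^d)$, contributing $O(|x|^{d-a-b})$. On $C_2$ the inequality $|x-y|\ge|y|/2$ gives $|x-y|^{-a}\le 2^a|y|^{-a}$, reducing the sum to $\sum_{|y|>2|x|}|y|^{-(a+b)}$, which is of order $|x|^{d-a-b}$ whenever $a+b>d$ (automatic in both advertised cases). Collecting the pieces, in the case $a<d$, $a+b>d$ every region contributes at most $|x|^{d-a-b}$; in the case $a>d$ the dominant contribution is the $|x|^{-b}$ coming from region $B$, since $d-a-b<-b$. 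There is no serious obstacle: the argument is pure bookkeeping and amounts to verifying that the borderline case $a=d$ is not hit by the hypotheses, so the monomial bounds above can be applied region by region.
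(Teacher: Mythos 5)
The paper does not supply a proof of this lemma; it is imported verbatim from Proposition~1.7 of Hara, van der Hofstad and Slade \cite{HaraHofstSlade03}, so there is no in-paper argument to compare against. Your region decomposition (small $|y|$, small $|x-y|$, and the two far-field pieces) is the standard argument and it does establish the claim.

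One small inaccuracy worth tightening: in region $A$ under the regime $a>d$, you route the contribution through the intermediate bound $C|x|^{d-a-b}$, which presumes $\sum_{|y|\le |x|/2}|y|^{-b}\asymp|x|^{d-b}$. That estimate is valid only when $b<d$; since the hypotheses allow any $0<b\le a$, you may have $b\ge d$, in which case $\sum_{|y|\le|x|/2}|y|^{-b}$ is $O(1)$ (or $O(\log|x|)$ at $b=d$) and is \emph{not} $O(|x|^{d-b})$ — indeed $|x|^{d-b}\to 0$, so the claimed intermediate inequality fails. The final conclusion is unaffected, because the region-$A$ contribution is then $O(|x|^{-a})$ (up to a log at $b=d$), which is $\le C|x|^{-b}$ using only $a\ge b$. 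So for $a>d$ you should bound region $A$ directly by $C|x|^{-b}$, splitting on $b<d$ vs.~$b\ge d$, rather than passing through $|x|^{d-a-b}$. The treatments of regions $B$, $C_1$ and $C_2$ are fine: in $C_2$ you correctly observe that $a+b>d$ holds automatically when $a>d$ (since $b>0$), so the tail sum converges at the right rate in both advertised cases.
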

As an example, the lemma implies (assuming validity of \eqref{eq:tauAsy}) that 
\begin{equation}\label{eq:TauTauBd}
	(\tau\ast\tau)(x)=\sum_{y\in\Zd}\tau(y)\tau(x-y)\le C\,|x|^{d-4}\quad\text{whenever $d>6$.}
\end{equation}
We further observe that the asymptotics of $\tau$ in \eqref{eq:tauAsy} implies that $x\mapsto\frac{\tau(x-v)}{\tau(x)}$ is uniformly bounded for any $v\in\Zd$, and 
\eqn{e:tautaulimit}{\lim_{|x|\to\infty}\frac{\tau(x-v)}{\tau(x)}=1}
for any $v\in\Zd$. 
This relation is crucial in the following representation of the two-arm event, which uses the earlier identified lace expansion coefficients.

\begin{lemma}\label{lem:sigmaBd}
Assume that \eqref{eq:tauAsy} and \eqref{eq:Aass} are valid and $d>6$, then 
\[\lim_{|x|,|x'|,|x-x'|\to\infty}\frac{\sigma(F;x,x')}{\tau(x)\,\tau(x')}
	=\sum_{u,v,u',v'}\hat\Psi(F;u,v,u')p D(u,v)p D(u',v').\]
\end{lemma}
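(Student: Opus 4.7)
The plan is to start from the Ornstein--Zernike type identity \eqref{eq:OZ}, divide both sides by $\tau(x)\tau(x')$, and compute the limit of each of the three resulting terms as $|x|,|x'|,|x-x'|\to\infty$. The standing analytic facts I will use are the pointwise limit $\tau(v,x)/\tau(x)\to 1$ from \eqref{e:tautaulimit}, together with uniform boundedness of that ratio (which follows from \eqref{eq:tauAsy}); and the absolute summability of the lace expansion coefficients supplied by \eqref{eq:momentbound}--\eqref{eq:momentbound3}. Combined with \eqref{eq:DtauBd} and the convolution bound \eqref{eq:TauTauBd}, these estimates will furnish the dominating functions needed for dominated convergence.

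The $\tPsi$-piece vanishes at rate $(\min\{|x|,|x'|,|x-x'|\})^{-(d-4)}\to 0$ by \eqref{eq:momentbound} and $d>6$. For the $\bar\Psi$-convolution I would rewrite
$$\frac{1}{\tau(x)\tau(x')}\sum_{u,v,u',v'}\bar\Psi(F;u,v,u',v')\,pD(u,v)\,\tau(v,x)\,pD(u',v')\,\tau(v',x')=\sum_{u,v,u',v'}\bar\Psi(F;u,v,u',v')\,pD(u,v)\,pD(u',v')\,\frac{\tau(v,x)}{\tau(x)}\,\frac{\tau(v',x')}{\tau(x')}$$
and pass to the limit via dominated convergence, the $\tau$-ratios tending pointwise to $1$ and being uniformly bounded in $v,v'$. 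Absolute summability of the dominating series follows from \eqref{eq:momentbound2}--\eqref{eq:momentbound3} together with \eqref{eq:DtauBd}.

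The remaining term $\tau(x)^{-1}\tau(x')^{-1}\sum_{u,v}\hat\Psi(F;u,v,x')\,pD(u,v)\,\tau(v,x)$ is the delicate one, because $\hat\Psi(F;u,v,x')$ itself depends on the diverging vertex $x'$. I would handle it by performing a one-arm version of the second-arm expansion of Section \ref{sec:secondarm} internally in $\hat\Psi(F;u,v,x')$; decomposing along the final pivotal edge $(u',v')$ of the connection to $x'$ should yield the asymptotic identification
$$\hat\Psi(F;u,v,x')=\sum_{u',v'}\hat\Psi(F;u,v,u')\,pD(u',v')\,\tau(v',x')+o(\tau(x'))$$
uniformly in $(u,v)$. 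Dividing by $\tau(x')$, applying \eqref{e:tautaulimit}, and running dominated convergence one more time then turns this into the claimed sum $\sum_{u,v,u',v'}\hat\Psi(F;u,v,u')\,pD(u,v)\,pD(u',v')$. Combining the three pieces yields the lemma.

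The main obstacle I anticipate is precisely this uniform-in-$(u,v)$ one-arm expansion of $\hat\Psi(F;u,v,x')$, which is essentially a one-arm analogue of the two-arm machinery developed in Sections \ref{sec:expansion}--\ref{sec:bounds}: one must produce a remainder that is $o(\tau(x'))$ uniformly in $(u,v)$, so that the outer dominated convergence over $(u,v)$ is legitimate. Once this uniformity is established, the rest of the argument is routine bookkeeping on top of the diagrammatic bounds already available from Proposition \ref{prop:PsiBd}.
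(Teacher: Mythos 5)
Your plan misreads the role of the third piece. The term $\tau(x)^{-1}\tau(x')^{-1}\sum_{u,v}\hat\Psi(F;u,v,x')\,pD(u,v)\,\tau(v,x)$ is not a ``delicate'' residual that hides a contribution to the limit — it simply vanishes. By construction, $\hat\Psi(F;u,v,x')$ is the no-remaining-pivotal remainder of the second-arm expansion of Section~\ref{sec:secondarm}, built from the events $\{P'_{x'}=\es\}$ (see \eqref{eq:DefHatPiNN}, Lemma~\ref{lem:tPsiN-expanded} and the structure of $\Psi^{N,N'}(F;u,x')$). There is no further pivotal to extract: the decomposition you propose would merely regenerate the $\bar\Psi$ piece that was already split off, and in fact your bookkeeping is internally inconsistent — you expect both piece~2 and piece~3 to deliver $\sum_{u,v,u',v'}\hat\Psi(F;u,v,u')\,pD(u,v)\,pD(u',v')$, which would give twice the correct answer. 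The paper's treatment is simpler: apply the a~priori diagrammatic bound $\hat\Psi(F;u,v,x')\le C\,\tau(u)\tau(x')\tau(u-x')$ from Proposition~\ref{prop:PsiBd}, use \eqref{eq:DtauBd} to perform the $v$-sum, and conclude via Lemma~\ref{lem:ConvBd} that the whole term is bounded by $C\,\tau(x)\tau(x')\bigl[|x-x'|^{4-d}+|x'|^{4-d}\bigr]$, which is $o(\tau(x)\tau(x'))$.

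There is a second gap in piece~2. You invoke dominated convergence with the claim that the ratios $\tau(x-v)/\tau(x)$ are ``uniformly bounded in $v,v'$''. This is false: for a fixed $v$ the ratio is bounded uniformly over $x$, but the bound itself grows like $|v|^{d-2}$ (take $x$ near $v$, where $\tau(x-v)\approx 1$ while $\tau(x)\approx A_d|v|^{2-d}$). Hence the putative dominating series $\sum_{v,v'}\Phi(F;v,v')\,M_v\,M_{v'}$, with $M_v:=\sup_x\tau(x-v)/\tau(x)$ and $\Phi$ bounded as in \eqref{eq:momentbound3}, behaves like $\sum_{v,v'}\tau(v-v')=\infty$; dominated convergence with a $v$-independent constant does not apply. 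The paper instead splits the $(v,v')$-sum into regimes according to $|v|\lessgtr|x|^{1/2},|x|/2$ and $|v'|\lessgtr|x'|^{1/2},|x'|/2$ (cases (1)--(4) in the proof). Only the ``tiny--tiny'' regime $|v|\le|x|^{1/2}$, $|v'|\le|x'|^{1/2}$ contributes, using the \emph{uniform} version \eqref{eq:taucomparison} of \eqref{e:tautaulimit} valid precisely on that range; each of the remaining regimes is shown to be $o(1)$ by combining the diagrammatic bound in \eqref{eq:PsiTauTauBd} with \eqref{eq:tauAsy}, \eqref{eq:TauTauBd} and Lemma~\ref{lem:ConvBd}. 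Without this case analysis the argument does not close.
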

It is a consequence of Proposition \ref{prop:PsiBd} and Lemma \ref{lem:ConvBd} that the above limit is \emph{finite} for $p\le p_c$. 
\begin{proof}
The proof crucially uses the upper bound in  Proposition \ref{prop:PsiBd}. 
Our starting point is then \eqref{eq:OZ}, which we recall as 
\eqnspl{newExpan}{
  &\sigma(F;x,x')
  =  \tPsi(F;x,x') + \sum_{u,v}\hat\Psi(F;u,v,x')pD(u,v)\tau(v,x)\\
	& \hskip6.5em
	+ \sum_{v,v',u,u'}\hat\Psi(F;u,v,u') p D(u,v)p D(u',v') \tau(x-v)\tau(x'-v'). 
}
The bound \eqref{eq:momentbound} in Proposition \ref{prop:PsiBd} in conjunction with \eqref{e:tautaulimit} implies that 
\[ \lim_{|x|,|x'|\to\infty}\frac{\tPsi(F;x,x')}{\tau(x)\,\tau(x')}
   \le C \left( \min\{|x|,|x'|,|x-x'|\}\right)^{-(d-4)} \longrightarrow 0 
   \qquad\text{as $|x|, |x'|, |x-x'|\to\infty$}. \]
We next argue that also the contribution from the second {\colAJ{term}} of \eqref{newExpan} vanishes in the limit. 
For the second summand in \eqref{newExpan} 
{\colAJ{we first bound $\hat{\Psi}(u,v,x') \le C \tau(u) \tau(x') \tau(u-x')$, and then use \eqref{eq:DtauBd}. 
We can then bound 
\begin{align}
   \sum_{u,v}\hat\Psi(F;u,v,x')pD(u,v)\tau(v,x)
   &\le \tau(x') \sum_u \tau(u) \tau(u,x) \tau(u,x')\nonumber \\
   &\le \tau(x) \tau(x') C \left[ |x-x'|^{4-d} + |x'|^{4-d} \right]. 
\end{align} 
After dividing by $\tau(x) \tau(x')$, the resulting expression vanishes as 
$|x|, |x'|, |x-x'| \to \infty$.}}

We are left to deal with the third summand on the right hand side of \eqref{newExpan}. 
In order to simplify notation, we abbreviate $\Phi(F;v,v') := \sum_{u,u'}\Psi(F;u,u') p D(u,v)p D(u',v')$, and have to show that  
\eqn{eq:newExpan2}{
	\sum_{v,v',u,u'}\hat\Psi(F;u,v,u') p D(u,v)p D(u',v') \frac{\tau(x-v)\tau(x'-v')}{\tau(x)\tau(x')}
	= \sum_{v,v'}\Phi(F;v,v') \frac{\tau(x-v)\tau(x'-v')}{\tau(x)\tau(x')}
}
yields the limit in Lemma \ref{lem:sigmaBd} as $|x|,|x'|,|x-x'|\to\infty$. 
We proceed by splitting the sum over $v,v'\in\mathbb Z^d$ in six terms (depending on the location of $x,x'$): 
\begin{enumerate}
	\item $|v|\le|x|^{1/2}$ and $|v'|\le|x'|^{1/2}$,   \qquad(``tiny $v$, tiny $v'$'')
	\item $|x|^{1/2}<|v|\le|x|/2$ and $|v'|\le|x'|/2$,   \qquad(``average $v$, tiny or average $v'$'')
	\item[(2')]  $|x'|^{1/2}<|v'|\le|x'|/2$ and $|v|\le|x|/2$,   \qquad(``tiny or average $v$, average $v'$'')
	\item $|x|/2<|v|$ and $|v'|\le|x'|/2$,   \qquad(``large $v$, tiny or average $v'$'')
	\item[(3')] $|x'|/2<|v'|$ and $|v|\le|x|/2$,   \qquad(``tiny or average $v$, large $v'$'')
	\item $|v| > |x|/2$, $|v'| > |x'|/2$.   \qquad(``large $v$, large $v'$'')
\end{enumerate}
We now show that only the first contribution is solely responsible for the expression in the limit, all other contributions are asymptotically negligible. 
For the first case, observe that \eqref{eq:tauAsy} implies to following strengthening of \eqref{e:tautaulimit}: 
\eqn{eq:taucomparison}{
\lim_{|x|\to \infty}\sup_{v:|v|\le |x|^{1/2}}\frac{\tau(x-v)}{\tau(x)}=1. 
}
We thus get that for the contribution from (1) that 
\eqnst{\sum_{\substack{v:|v|\le|x|^{1/2}\\ v':|v'|\le|x'|^{1/2}}}\Phi(F;v,v') \frac{\tau(x-v)}{\tau(x)}\frac{\tau(x'-v')}{\tau(x')}
	\to \sum_{v,v'\in\mathbb Z^d}\Phi(F;v,v')\quad\text{ as $|x|,|x'|\to\infty$,} 
}
where summability of the limit comes from Proposition \ref{prop:PsiBd} in conjunction with \eqref{eq:tauAsy}. 

We now show that all the remaining cases (2)--(4) all vanish in the limit. To this end, we use the diagrammatic bound in \eqref{eq:momentbound} and 
\eqref{eq:tauAsy} to get 
\eqnspl{eq:PsiTauTauBd}{
	\sum_{v,v'}\Phi(F;v,v') \frac{\tau(x-v)\tau(x'-v')}{\tau(x)\,\tau(x')}
	& \le C \sum_{v,v'}\frac{\tau(v) \tau(v') \tau(v-v') \tau(x-v)\tau(x'-v')}{\tau(x)\,\tau(x')}\\
	& \le C \sum_{v,v'}\frac{|v|^{2-d}\, |v'|^{2-d}\, |v-v'|^{2-d}\, |x-v|^{2-d}\,|x'-v'|^{2-d}}{|x|^{2-d}\,|x'|^{2-d}}.
}
For case (4), we observe that $|v|\ge |x|/2$, and hence $|v|^{2-d}\le |x|^{2-d}2^{d-2}$, and the same for $v'$ resp.\ $x'$. Further, Lemma \ref{lem:ConvBd} gives 
\eqnst{
	\sum_{v,v'} |v-v'|^{2-d}\, |x-v|^{2-d}\,|x'-v'|^{2-d}
	\le C |x-x'|^{6-d},
	}
so that 
\eqnst{
	\sum_{\substack{v\colon |v| > |x|/2\\v'\colon |v'| > |x'|/2}}\Phi(F;v,v') \frac{\tau(x-v)}{\tau(x)}\frac{\tau(x'-v')}{\tau(x')}
	\le C |x-x'|^{6-d},
}
which vanishes as $|x-x'|\to\infty$ in dimension $d>6$. 

The cases (2') and (3') are symmetric to the cases (2) and (3), and we thus consider only the latter. 
For case (2) we have that $|x-v|\ge |x|-|v|\ge |x|/2$ and thus $|x-v|^{2-d}\le |x|^{2-d}2^{d-2}$ and similarly $|x'-v'|^{2-d}\le |x'|^{2-d}2^{d-2}$. 
Then the corresponding proportion of \eqref{eq:PsiTauTauBd} is bounded above (again using Lemma \ref{lem:ConvBd}) by 
\eqnst{
	C\sum_{\substack{v\colon |x|^{1/2}<|v| \le |x|/2\\v'\colon |v'| \le |x'|/2}}
	|v|^{2-d} |v-v'|^{2-d} |v'|^{2-d}
	\le C \sum_{v\colon |x|^{1/2}<|v| }
	|v|^{(4-d)+(2-d)}
	= C |x|^{\frac{6-d}2},
}
which also vanishes as $|x|\to\infty$. 

Finally case (3). Then we have $|v|^{2-d}\le |x|^{2-d}2^{d-2}$ (as in case (4)) and $|x'-v'|^{2-d}\le |x'|^{2-d}2^{d-2}$ (as in case (2)). This yields for the corresponding proportion of \eqref{eq:PsiTauTauBd} the upper bound 
\eqnst{
	C\sum_{\substack{v\colon |v| > |x|/2\\v'\colon |v'| \le |x'|/2}}
	|v'|^{2-d} |v-v'|^{2-d} |x-v'|^{2-d}
	\le C |x|^{\frac{6-d}2},
}
which also vanishes as $|x|\to\infty$. 
\end{proof}

We write the special case of $F=\Omega$ as a corollary: 
\begin{corollary}\label{corol:sigmaBd}
Assume that \eqref{eq:tauAsy} and 
\eqref{eq:Aass} are valid. Then 
\[\lim_{|x|,|x'|,|x-x'|\to\infty}\frac{\sigma(x,x')}{\tau(x)\,\tau(x')}
	=\sum_{u,v,u',v'}\hat\Psi(u,v,u')p D(u,v)p D(u',v')>0.\]
\end{corollary}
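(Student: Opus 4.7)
The equality part is immediate from Lemma~\ref{lem:sigmaBd} specialized to $F = \Omega$: the whole sample space is itself a special cylinder event (take $W = \{o\}$ with no edges, so $E_W = \varnothing$ and the trivial choice $B_1 = B_2 = \varnothing$ yields $\Omega \in \mathfrak{F}_{00}$). The lemma then delivers the stated limit identity directly, and the only remaining task is to verify that the sum on the right-hand side is \emph{strictly} positive.

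The plan is to establish a matching lower bound of the form $\sigma(x,x') \ge c\,\tau(x)\tau(x')$ for some $c > 0$ uniformly in the regime $|x|, |x'|, |x-x'| \to \infty$. Pick two distinct vertices $y_1 \neq y_2$ with $D(o, y_i) > 0$ (such vertices exist since $D$ is a nondegenerate probability kernel on $\Zd$). On the joint event that both bonds $\{o, y_1\}$ and $\{o, y_2\}$ are occupied together with the existence of bond-disjoint occupied paths from $y_1$ to $x$ and from $y_2$ to $x'$, neither of which touches $o$, the event $\{o \cnctd x\} \circ \{o \cnctd x'\}$ occurs, with disjoint witness sets obtained by adjoining the two edges at the origin to the respective paths. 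Using independence of the bonds at the origin from the remaining percolation, this yields
\[
\sigma(x,x') \,\ge\, p^2\,D(o, y_1)\,D(o, y_2)\,
  \P\!\left[\,\{y_1 \cnctd x\} \circ \{y_2 \cnctd x'\} \text{ on } \Zd \setminus \{o\}\,\right].
\]
The remaining probability can then be bounded below by $\tau(y_1, x)\,\tau(y_2, x')$ minus triangle-type corrections accounting for (i) the restriction to paths avoiding the origin and (ii) the disjoint-occurrence requirement.

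The main obstacle will be to control these corrections. Under assumption~\eqref{eq:Aass}, both reduce to expressions of the form $\sum_u \tau(u)\,\tau(u,x)\,\tau(u,x')$, which are of order $o(\tau(x)\tau(x'))$ as $|x|, |x'|, |x-x'| \to \infty$ by precisely the case analysis already carried out in the proof of Lemma~\ref{lem:sigmaBd} (in particular, the argument used there to dispense with the second summand of~\eqref{newExpan}). Once this lower bound is in place, it forces the limit obtained in Lemma~\ref{lem:sigmaBd} to be bounded below by a positive constant, hence the sum $\sum_{u,v,u',v'} \hat\Psi(u,v,u')\,pD(u,v)\,pD(u',v')$ is strictly positive, completing the proof.
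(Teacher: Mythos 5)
Your derivation of the equality from Lemma~\ref{lem:sigmaBd} with $F=\Omega$ matches the paper. The issue is with the positivity argument.

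You claim that both the origin-avoidance correction and the disjoint-occurrence correction reduce to $\sum_u \tau(u)\,\tau(u,x)\,\tau(u,x')$, which is indeed $o(\tau(x)\tau(x'))$. But neither correction actually has this form. Avoiding the origin costs, by a BK argument, a correction bounded by $\tau(y_1,o)\,\tau(o,x)\cdot\tau(y_2,x') \approx \tau(y_1,o)\,\tau(x)\,\tau(x')$, i.e.\ a \emph{constant} multiple of $\tau(x)\tau(x')$, not $o(\tau(x)\tau(x'))$. More seriously, the disjoint-occurrence correction coming from inverse BK (using $E[I\{y_1\cnctd x\}\tau^{\tC(y_1)}(y_2,x')]$ as in \eqref{e:tauS} followed by BK) produces the two-variable sum $\sum_{v,v'}\tau(y_1,v)\tau(v,x)\tau(v,v')\tau(y_2,v')\tau(v',x')$. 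After dividing by $\tau(x)\tau(x')$ and passing to the limit via \eqref{e:tautaulimit}, this converges to the \emph{constant} $\sum_{v,v'}\tau(y_1,v)\tau(v,v')\tau(y_2,v')$, the open triangle anchored at two fixed nearby vertices. Nothing in \eqref{eq:Aass} or \eqref{eq:tauAsy} forces this constant to be less than~$1$, so your lower bound may well be $\tau(x)\tau(x')$ times a nonpositive number.

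This is precisely the difficulty the paper's proof is designed to overcome. There, one first performs an ``ultraviolet regularization'': one rewires all bonds touching a box $Q_n$ to move the anchors from $o,o$ to $u = (n-2)e_1$ and $u' = -(n-2)e_1$, at the cost of a (possibly enormous) $n$-dependent constant $c_n$. Because only positivity of the limit is needed (not a sharp constant), paying $c_n$ is harmless. The crucial gain is that the anchors are now at mutual distance $\sim n$, so the residual triangle correction is $\sum_{v,v'}\tau(u,v)\tau(v,v')\tau(u',v') = O(|u-u'|^{6-d})$, which tends to~$0$ as $n\to\infty$ since $d>6$. By choosing $n$ large, the factor $1 - C|u-u'|^{6-d}$ can be made strictly positive, whence positivity. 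Your version keeps $y_1,y_2$ near the origin and so cannot make the triangle small; without the freedom to spread the anchors (and to absorb the rewiring cost into an $n$-dependent constant), the lower bound does not close.
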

{\col\begin{proof}
The identity follows directly from Lemma \ref{lem:sigmaBd} for $F=\Omega$, and we only need to show (strict) positivity of the series. A complication is that $\hat\Psi(u,v,u')$ is representing an alternating series, and we have no control of monotonicity properties. We therefore give a direct argument for positivity via the limit representation on the left-hand side. 
The argument is standard in high-dimensional percolation literature, it combines an `ultra-violet regularization' with 'inverse BK'-type arguments. We argue as follows. 

For $n\in\N$ let $Q_n:=[-n,n]\cap\Zd$ and denote $u=(n-2)e_1$, $u'=-(n-2)e_1$ (with $e_1$ being the first unit vector). Any reconfiguration of the occupation of edges `touching' $Q_n$ (by this we mean that at least one endpoint is in $Q_n$) changes the probability at most by a finite ($n$-dependent) factor $c_n>0$. So in particular, when we have the disjoint occurrence $\big\{(0\leftrightarrow x)\circ(0\leftrightarrow x')\big\}$ then we can `rewire' the connections of edges inside $Q_n$ so that instead the event $\big\{(u\leftrightarrow x)\circ(u'\leftrightarrow x')\big\}$ occurs, hence 
\eqnsplst{
	\sigma(x,x')
	& \ge c_n\,\P\big((u\leftrightarrow x)\circ(u'\leftrightarrow x')\big)
} 
We bound the latter factor from below as 
\eqnsplst{
	\P\big((u\leftrightarrow x)\circ(u'\leftrightarrow x')\big)
	&\ge \P\big((u\leftrightarrow x)\cap(u'\leftrightarrow x')\cap(u \not\leftrightarrow u')\big)\\
	&= \E\left[ I\{u\leftrightarrow x\}\tau^{\tC(u)}(u',x')\right],
}
similar to \eqref{e:firstFactorization}. We now employ \eqref{e:tauS} followed by BK to further bound 
\eqnsplst{
	&\E\left[ I\{u\leftrightarrow x\}\tau^{\tC(u)}(u',x')\right]\\
	&= \P(x\leftrightarrow x)\,\P(x'\leftrightarrow x')\\
	&\hskip3em 	-\P\Big(\bigcup_{v,v'} \big((u\leftrightarrow v)\circ(v\leftrightarrow x)\circ(v\leftrightarrow v')\big)\text{ on $\omega_0$, }\big((u'\leftrightarrow v')\circ(v'\leftrightarrow x')\big)\text{ on $\omega_1$}\Big)\\
	&\ge \tau(u,x)\,\tau(u',x') - \sum_{v,v'} \tau(u,v)\,\tau(v,x)\,\tau(v,v')\,\tau(u',v')\,\tau(v',x').
}
Dividing both sides by $\tau(x)\,\tau(x')$ and taking the limit as $|x|,|x'|\to\infty$ gives the lower bound 
\[\lim_{|x|,|x'|,|x-x'|\to\infty}\frac{\sigma(x,x')}{\tau(x)\,\tau(x')}
	\ge c_n\Big(1-\sum_{v,v'} \tau(u,v)\,\tau(v,v')\,\tau(u',v')\Big)
	\ge c_n\big(1-C|u-u'|^{6-d}\big)
\]
by \eqref{eq:tauAsy} and Lemma \ref{lem:ConvBd}. Since $d>6$, we can take $n$ large enough to make the term in parenthesis strictly positive. 
\end{proof}
}

We can now give the proof of our main theorem: 
\proof[Proof of Theorem \ref{thm:ExistenceBIIC}.]
For any special cylinder event $F\in\mathfrak F_{00}$, we therefore get
\eqnspl{IICconstr}{
	\Pbiic(F)
	&=\lim_{|x|,|x'|,|x-x'|\to\infty}\frac{\sigma(F;x,x')}{\sigma(x,x')}\\
	&=\lim_{|x|,|x'|,|x-x'|\to\infty}\left(\frac{\sigma(F;x,x')}{\tau(x)\,\tau(x')}\;\Big/\; \frac{\sigma(x,x')}{\tau(x)\,\tau(x')}\right)\\
	&=\frac{\sum_{u,v,u',v'}\hat\Psi(F;u,v,u')p D(u,v)p D(u',v')}
		{\sum_{u,v,u',v'}\hat\Psi(u,v,u')p D(u,v)p D(u',v')}
}
by Lemma \ref{lem:sigmaBd} and Corollary \ref{corol:sigmaBd}. Since $\mathfrak F_{00}$ is a $\cap$-stable generator of the sigma algebra $\mathfrak F=\sigma(\mathfrak F_{00})$, this therefore determines the measure $\Pbiic$ on $\mathfrak F$ uniquely. 
\qed

\bigskip

\section{A robust theoretical framework for upper-bounding lace expansion diagrams}
\label{sec:BoundsFramework}
It remains to show the upper bounds on the lace expansion coefficients as formulated in Proposition \ref{prop:PsiBd}. 
{\col In order to derive a proof of this statement, we first express summands $\Psi^{NN'}$ in terms of lace expansion diagrams. We have to deal with a large number of these diagrams, and in order to deal with them systematically, we organise the lace expansion diagrams using a new object that we call diagrammatic graphs. Then we introduce a method of reducing diagrammatic graphs to simpler ones by means of an ``H-reduction'', see Prop.\ \ref{prop_Hred}, and use graph theory to show that  \emph{every} diagrammatic graph that appears here can be reduced to only two very simple graphs, see Prop.\ \ref{super_reduc}. This enables us to readily bound various diagrams that appear in the lace expansion of bi-infinite incipient cluster (see Lemmas \ref{lem_triangle_plus}--\ref{diag13}). Finally, in Section \ref{sec:bounding-diagrams} we demonstrate that controlling these diagrams is sufficient to bound the lace expansion coefficients thereby finishing the proof of Proposition \ref{prop:PsiBd}.
}

\subsection{Diagrammatic graphs}\label{sect_diagram_graph}

Let us define an object which will be central in our analysis.
\begin{definition}[Diagrammatic graphs]
A diagrammatic graph ${\bf G}$ is a triple  $(G,S, \ell)$ where 
\begin{itemize}
\item $G$ is  a multigraph with a finite number of edges {\col without self-loops},
\item $S\subset V(G)$ is called the set of labelled vertices,
\item $\ell\colon S\to \mathbb{Z}^d$ is an injective function that encodes the labels of the vertices in $S$
\end{itemize}

If $V(G)=S$, we say that the graph is fully labelled.

We denote by $E(G)$ the set of edges of $G$ and  $E^*(G)$ the set of pairs of endpoints of the edges. For any ${\bf e}\in E^*(G)$  denote  $n_G({\bf e})$ the multiplicity of ${\bf e}$, i.e.~the number of edges in $E(G)$ whose endpoints are given by ${\bf e}$  (with the convention that $n_G({\bf e})=0$ if ${\bf e}\notin E^*(G) $). 

We denote $\mathfrak{G}$ the set of diagrammatic graphs. 
\end{definition}
\medskip

The link between diagrammatic graphs and lace expansion coefficients becomes apparent in the following definition. 
\begin{definition}\label{def_diagram_event}[Diagrammatic events]
For ${\bf G}$ a fully labelled diagrammatic graph, we introduce the diagrammatic event of $G$ as
\[
\mathcal{D}_{{\bf G}}:=
\bigcirc_{e\in E(G)} \{\ell(\underline e)\leftrightarrow \ell(\bar e)\},
\]
where 
$\bigcirc_{e\in E(G)} \{\ell(\underline e)\leftrightarrow \ell(\bar e)\} := \{\ell(\underline e_1)\leftrightarrow \ell(\bar e_1)\} \circ \cdots \circ \{\ell(\underline e_{|E(G)|})\leftrightarrow \ell(\bar e_{|E(G)|})\}$ for any enumeration $e_1=\{\underline e_1,\bar e_1\},\dots,e_{|E(G)|}=\{\underline e_{|E(G)|},\bar e_{|E(G)|}\}$ of the edges of $G$.

For a general diagrammatic graph ${\bf G}=(G,S,\ell)$ we introduce the set of admissible label functions by 
\begin{equation}\label{eq:LG}
	\mathcal L_{\bf G}=\big\{f\colon V(G)\to\Zd \text{ s.t. }f|_S=\ell\big\}
\end{equation}
and define 
\[
\mathcal{D}_{{\bf G}}:= \bigcup_{f\in\LG} \bigcirc_{\{x,y\}\in E(G)} \{f(x) \leftrightarrow f(y)\}.
\]
\end{definition}

We will define a few operations on diagrammatic graphs. First, let us explain how to add a labeled edge to a diagrammatic graph (with, if necessary, increased multiplicity).

\begin{definition}[Union of diagrammatic graphs] Let ${\bf G}=(G,S,\ell)$ be a diagrammatic graph and $x,y\in \mathbb{Z}^d$. We will denote ${\bf G}\uplus \{x,y\}$ the diagrammatic graph $(G’,S’,\ell’)$ defined as follows
\begin{enumerate}
\item if $x,y\in \{\ell(s), s\in S\}$, then 
\begin{itemize}
 \item $V(G’)=V(G)$ 
 \item $E(G’)=E(G)\cup e_{\{x,y\}}$ where an edge $e_{\{x,y\}}$ is added between the vertices $s,s’\in S$ such that $x=\ell(s)$ and $y=\ell(s’)$ 
  \item $S=S’$ and $\ell=\ell’$.
\end{itemize}
\item if $x\in \{\ell(s), s\in S\}$ and $y\notin  \{\ell(s), s\in S\}$,
\begin{itemize}
 \item $V(G’)=V(G)\cup\{y_*\}$ 
 \item $E(G’)=E(G)\cup e_{\{x,y\}}$ where an edge $e_{\{x,y\}}$ is added between $y_*$ and the vertex $s\in S$ such that $x=\ell(s)$
  \item $S’=S \cup \{y_*\}$, $\ell(s)=\ell’(s)$ if $s\in S$ and $\ell’(y_*)=y$.
\end{itemize}
\item if $x,y\notin \{\ell(s), s\in S\}$
\begin{itemize}
 \item $V(G’)=V(G)\cup\{x_*,y_*\}$ 
 \item $E(G’)=E(G)\cup e_{\{x,y\}}$ where an edge $e_{\{x,y\}}$ is added between $x_*$ and $y_*$.
  \item $S’=S \cup \{x_*,y_*\}$, $\ell(s)=\ell’(s)$ if $s\in S$, $\ell’(x_*)=x$ and $\ell’(y_*)=y$.
\end{itemize}
\end{enumerate}

We will extend this notation to ${\bf G}\uplus {\bf G}'$ when ${\bf G}$ is a diagrammatic graph and ${\bf G}'$ a fully labeled graph simply, by successively adding to ${\bf G}$ the finite number of labeled edges composing ${\bf G}'$.
\end{definition}

\begin{remark} We are working with multigraph and thus, for $e\in E(G)$, when writing $E(G) \setminus e$ we only remove the edge $e$ which means the endpoints of $e$ may still be connected by an edge (in that case, the multiplicity of the connection has been diminished by 1).

{\col This is only pertinent up until Section~\ref{sec:reduction}, as from Section~\ref{sect_graph_theory}  onward we focus on diagrammatic graphs whose graph component are simple graphs.} 
\end{remark}

The following operation will appear naturally in lace expansion (the reader may skip forward to Lemma~\ref{lem_extra_path} and its more important consequence Lemma~\ref{lem_extra_path2} to see the connection) as a way to explain the existence of an \lq\lq extra path\rq\rq~(with an endpoint summed over) in a lace expansion diagram. 
\begin{definition}[Convolution of diagrammatic graphs]
Take a diagrammatic graph ${\bf G}=(G,S,\ell) \in \mathfrak{G}$ and a vertex $u\in \mathbb{Z}^d$. If $u\not\in\ell(S)$ ($u$ is not yet a label of $\bf G$), then we define define ${\bf G} \circledast u$ as the family of diagrammatic graphs $\{(G_{u,e},S',\ell'), e\in E(G)\}$ where $G_{u,e}$ is such that
\begin{itemize}
\item $V(G_{u,e})=V(G)\cup \{u,u_*\}$.
\item $E(G_{u,e}) =(E(G) \setminus e) \cup  \{\{x,u_*\},\{y,u_*\},\{u,u_*\}\}$, where $x$ and $y$ are the endpoints of $e$. That is, we erase the edge $e$ and replace it by with 3 new edges,
\item $S'=S\cup \{u\}$,
\item $\ell'(z)=\ell(z)$ if $z\in S$ and $\ell'(u)=u$.
\end{itemize}
If $u\in\ell(S)$ then set ${\bf G}\circledast u:=\{{\bf G}\}$. 
\end{definition}
{\col
For $x,y\in\Zd$, we simplify by writing the edge $\{x,y\}$ for the graph with vertices $x$ and $y$ and one edge between them. With this notation we now exemplify the above notions.} 

\begin{example}\label{ex_graph}
For $x,x'\in\Zd$, we can draw $(\{0,x\} \circledast x')\uplus \{0,x'\}$ as
\[
        \begin{minipage}{0.4\textwidth}
      \includegraphics[width=0.4\linewidth]{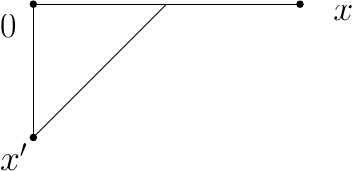}
    \end{minipage}\
    \]
where the unlabelled vertex is the vertex $u_*$ in the previous definition.

Since $\{0,x\} \circledast x'$ and $(\{0,x\} \circledast x')\uplus \{0,x'\}$ correspond to the union over $u_*\in \mathbb{Z}^d$ of the following graphs:
\[
        \begin{minipage}{0.4\textwidth}
      \includegraphics[width=0.4\linewidth]{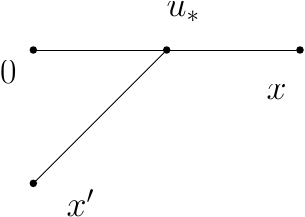}
    \end{minipage}\
\qquad 
            \begin{minipage}{0.4\textwidth}
      \includegraphics[width=0.4\linewidth]{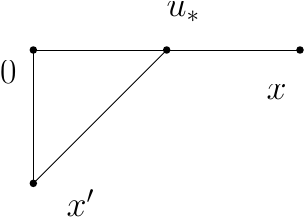}
    \end{minipage}\
\]
we may notice that  $(\{0,x\} \circledast x')\uplus \{0,x'\}=\bigcup_{u_*\in \mathbb{Z}^d} (K_{0,x',u_*} \uplus \{u_*,x\})$ {\col(with $K_{0,x',u_*}$ being the complete graph with vertices $0,x',u_*$)}. 
\end{example}

We will also need the following operation
\begin{definition}[Double-convolution of diagrammatic graphs]
Take a graph ${\bf G}=(G,S,\ell) \in \mathfrak{G}$ and a vertex $u\in \mathbb{Z}^d$. We define define ${\bf G} \circledast^2 u$ as the set of diagrammatic graphs $\{(G_{e,u},S',\ell'); e\in E(G)\} \cup \{(G_{e_1,e_2,u},S,\ell); e_1,e_2\in E(G), {\colAJ{e_1 \not= e_2}}\}$ where $(G_{e,u},S',\ell)$ is the diagrammatic graph such that
\begin{itemize}
\item $V(G_{e,u})=V(G)\cup \{u,u_*,u_{**}\}$,
\item $E(G_{e,u}) =(E(G) \setminus e) \cup \{\{u,u_*\},\{u,u_{**}\},\{u_*,x\},{\colAJ{\{u_*,u_{**}\}}},\{u_{**},y\}\}$, where $x,y$ are the endpoints of $e$,
\item $S'=S\cup \{u\}$,
\item $\ell'(z)=\ell(z)$ if $z\in S$ and $\ell'(u)=u$,
\end{itemize}
and $(G_{e_1,e_2,u},S',\ell')$ is the diagrammatic graph such that 
\begin{itemize}
\item $V(G_{e_1,e_2,u})=V(G)\cup \{u,u_*,u_{**}\}$
\item $E(G_{e_1,e_2,u}) =(E(G) \setminus \{e_1,e_2\}) \cup \{\{u,u_*\},\{x_1,u_*\},\{u_*,y_1\},\{u,u_{**}\},\{x_2,u_{**}\},\{u_{**},y_2\}\}$, where $x_1,y_1$ (resp.~$x_2,y_2$) are the endpoints of $e_1$ (resp.~$e_2$).
\item $S'=S\cup \{u\}$,
\item $\ell'(z)=\ell(z)$ if $z\in S$ and $\ell'(u)=u$.
\end{itemize}
\end{definition}

\begin{example} 
The diagrammatic graphs $\{0,x\} \circledast^2 x'$ can be drawn as  
\[
        \begin{minipage}{0.4\textwidth}
      \includegraphics[width=0.4\linewidth]{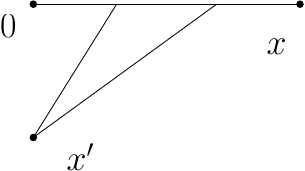}
    \end{minipage}\
    \]
and can be seen to differ from $(\{0,x\} \circledast x')\circledast x'$ (which contains graphs where $x'$ has an edge that attaches to the edge $\{x',u_*\}$ in the first graph of Example~\ref{ex_graph}).
\end{example}

\begin{definition}[Generalized diagrammatic graphs]
Any set of diagrammatic graphs obtained by a finite sequence of unions and/or convolutions ($\circledast$ or $\circledast^2$) of diagrammatic graphs is called a generalized diagrammatic graph. \end{definition}

\begin{remark} \label{gen_W} Fix $x\in \mathbb{Z}^d$ and $W\subset \mathbb{Z}^d$. In our proofs, it will be convenient to use the shorthand notation $\{x,W\}$ for the generalized diagrammatic graph with vertex set $0\cup W$ and edge set $\cup_{w\in W} \{0,w\}$. 
\end{remark}

\subsection{Generalized diagrams}\label{sect_gen_diag}
In lace expansion, the term diagram is used to describe a product of 2-point functions which is typically represented by a diagrammatic graph. That diagrammatic graph ${\bf G}$ is meant to signify 
\[
\text{Diag}({\bf G}):= \sum_{f\in\LG}  \prod_{\{x,y\}\in E(G)} \tau(f(x),f(y)),
\]
where we recall that $f$ is a label function as defined in \eqref{eq:LG}. 

\begin{definition}[Generalized diagrams] 
 For a set of diagrammatic graphs $\mathcal{G}$, we say that the \emph{generalized diagram of $\mathcal{G}$} is given by the product of 2-point functions
\[
\text{Diag}(\mathcal{G}):=\sum_{{\bf G}\in \mathcal{G}} \text{Diag}({\bf G}),
\]
which means it is the sum of diagrams appearing in $\mathcal{G}$ (which is a set of diagrammatic graphs).

In particular, if $\mathcal{G}$ is a generalized diagrammatic graph, and  $E\subset \bigcup_{{\bf G}\in \mathcal{G}} \mathcal{D}_{{\bf G}}$, then the BK inequality will imply that 
\[
{\bf P}[E]\leq {\bf P}\Bigl[ \bigcup_{{\bf G}\in \mathcal{G}} \mathcal{D}_{{\bf G}}\Bigr] \leq   \text{Diag}(\mathcal{G}).
\]
\end{definition}

Diagrams are very convenient in lace expansion, because the picture of a simple graph symbolizes a complex sum. It is not the case  that any generalized diagram can represented graphically in a compact manner, indeed in principle it is just a sum of classical diagrams. However, a simple form exists when dealing with generalized diagrams arising from generalized diagrammatic graphs and even products of such items. The latter will turn out to be useful when dealing with events occurring on different percolation configurations ($\omega_0$, $\omega_1$ ...).

\begin{definition}[Graphical visualization of certain generalized diagrams] 
The diagram of a diagrammatic graph ${\bf G}$ is simply represented by its multigraph component $G$ where the vertices in $S$ are labelled according to $\ell$.

If $\mathcal{G}$ is a generalized diagrammatic graph, then
\begin{itemize}
\item if $G'$ is a fully-labeled diagrammatic graph, the graphical representation of $\mathcal{G}\uplus G'$ is that of $\mathcal{G}$ where we take the union of the two graphs but identify vertices with the same label,
\item if $u\in \mathbb{Z}^d$, for the graphical representation of $\mathcal{G} \circledast u$, we add a label, say $A$, to each edge in the graphical representation of $\mathcal{G}$ and we add a point $u$ connected to an \lq\lq edge\rq\rq~whose other endpoint will be labelled \lq\lq$A?$\rq\rq. That last edge is called a connecting edge and can be labelled in future iterations.
\end{itemize} 

Since a generalized diagrammatic graph is obtained as an iteration of the two previous iterations, we can give a graphical visualization of generalized diagrammatic graphs. We refer to this graphical visualization as a generalized diagram.
\end{definition}
The allows us to give a graphical representation of any generalized diagrams arising from generalized diagrammatic graphs. This graphical representation is very naturally related to traditional diagrams. For example, the generalized diagram of $(\{0,x\} \circledast x')\uplus \{0,x'\}$ (pictured on the left side) is equal to a standard diagram
\[
        \begin{minipage}{0.4\textwidth}
      \includegraphics[width=0.4\linewidth]{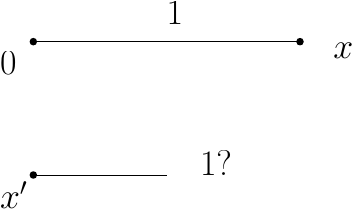}
    \end{minipage}\
\qquad = \qquad 
            \begin{minipage}{0.4\textwidth}
      \includegraphics[width=0.4\linewidth]{base_convol3-eps-converted-to.pdf}
    \end{minipage}\
\]
where the notation basically means that $1?$ will get connected to an edge labelled 1. Here there is only one labelled edge (so simplifying into a diagram makes sense), but it is not always the case, see for example $((\{0,x\} \circledast x')\uplus \{0,x'\})\circledast u_0$. This notation combines well with multiplication with other generalized diagrams or 2-point functions. For example, we can see that the generalized diagram of $((\{0,x\} \circledast z_0)\uplus \{0,z_0\})\circledast y_1$ multiplied by the generalized diagram of  $\{x',z_0\} \circledast y_1$ (drawn in bold in the first picture to emphasize its contribution) multiplied by $\tau(y_1,x')$  (drawn in dashed) verifies the following equality 
\[
        \begin{minipage}{0.4\textwidth}
      \includegraphics[width=0.4\linewidth]{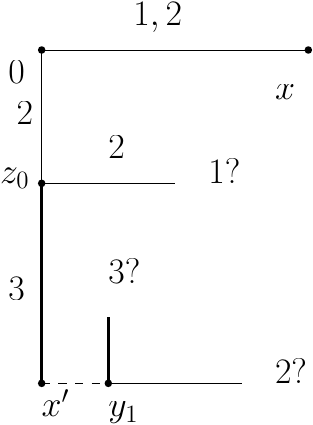}
    \end{minipage}\
 = \qquad 
            \begin{minipage}{0.4\textwidth}
      \includegraphics[width=0.4\linewidth]{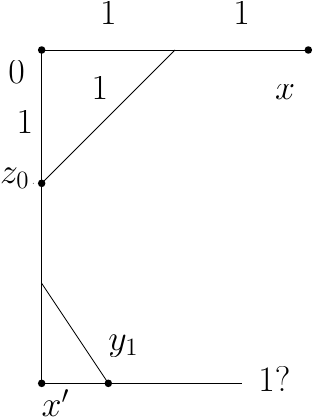}
    \end{minipage}\
\]
which is a sum of 4 standard diagrams. One should notice that for this equality to hold it is important to split the labels of the edge $\{0,x\}$ when the connection $1?$ in the first picture gets connected to it. 

\begin{remark}\label{rem:Wsum} It will be convenient to extend this graphical representation to cover the notation introduced in Remark~\ref{gen_W}. We will only present an example as the notation is fairly self-explanatory in light of everything discussed above. The generalized diagrammatic graphs $(\{0,x'\} \uplus [u,W])\uplus \{0,x\}$ and $((\{0,x'\} \uplus [u,W])\uplus \{0,x\}) \cup ((\{0,x'\} \circledast x )\uplus \{0,x\})$ will be represented as
\begin{align*}
       \begin{minipage}{0.3\textwidth}
      \includegraphics[width=0.4\linewidth]{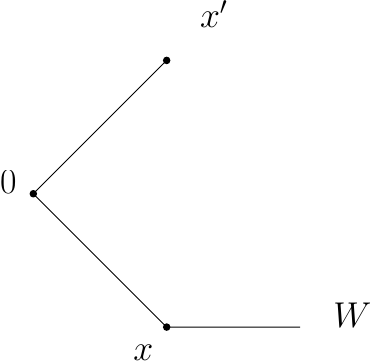}
    \end{minipage}\
 &\leq \sum_{w\in W}
            \begin{minipage}{0.3\textwidth}
      \includegraphics[width=0.4\linewidth]{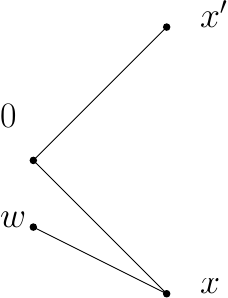}
    \end{minipage}\ \\
       \begin{minipage}{0.3\textwidth}
      \includegraphics[width=0.4\linewidth]{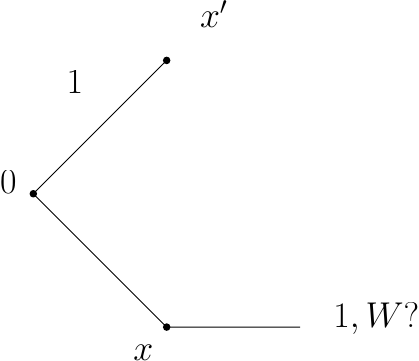}
    \end{minipage}\
 &\leq 
            \begin{minipage}{0.3\textwidth}
      \includegraphics[width=0.5\linewidth]{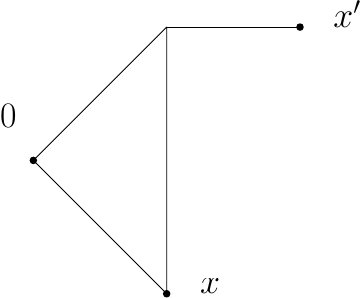}
    \end{minipage}\
    +\sum_{w\in W}
                \begin{minipage}{0.3\textwidth}
      \includegraphics[width=0.4\linewidth]{part0b-eps-converted-to.pdf}
    \end{minipage}\
\end{align*}

\end{remark}

\begin{proposition}\label{neglect_w2}
Let $W \subset \mathbb{Z}^d$ be a finite set containing $0$ and $u\in \mathbb{Z}^d$. There exists a constant $C_W$ such that for any diagrammatic graph ${\bf G}=(G,S,\ell)$ with an edge adjacent to $0$ (i.e.~$\ell(x)=0$ for some $\{x,y\} \in E(G)$), we have 
\[
\sum_{{\bf G}'\subset {\bf G}\uplus [u,W] } \operatorname{Diag}({\bf G'}) \leq C_W \sum_{{\bf G}'\subset {\bf G} \circledast u}\operatorname{Diag}({\bf G'}).
\]
\end{proposition}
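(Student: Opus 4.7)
The plan is to leverage the hypothesis that ${\bf G}$ contains an edge $e_0=\{x_0,y_0\}\in E(G)$ with $\ell(x_0)=0$. Interpreting $[u,W]$ per Remark~\ref{gen_W} as the star with center $u$ and leaves $W$, the union $\uplus$ simply multiplies $\operatorname{Diag}({\bf G})$ by the $\tau$-factor on each added edge, so
\[
\sum_{{\bf G}'\subset{\bf G}\uplus[u,W]}\operatorname{Diag}({\bf G}')
\;=\; \operatorname{Diag}({\bf G})\prod_{w\in W}\tau(u,w).
\]
Since $\tau(u,w)\le 1$ for every $w$ (it is a probability) and $0\in W$ by assumption, this is bounded above by $\tau(u,0)\,\operatorname{Diag}({\bf G})$.

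Next I would extract from the right-hand side just the summand of $\operatorname{Diag}({\bf G}\circledast u)$ corresponding to splitting $e_0$. Writing $\operatorname{Diag}({\bf G})=\tau(0,\ell(y_0))\cdot R$ when $y_0\in S$ (with the obvious adaptation via a summation over the label of $y_0$ if $y_0\notin S$), this summand equals
\[
\operatorname{Diag}(G_{u,e_0})
\;=\; R\sum_{u_*\in\Zd}\tau(0,u_*)\,\tau(u_*,\ell(y_0))\,\tau(u_*,u),
\]
and restricting the sum to the single choice $u_*=0$ produces exactly $\tau(0,\ell(y_0))\,\tau(0,u)\cdot R=\tau(u,0)\operatorname{Diag}({\bf G})$. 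Chaining the two estimates gives
\[
\sum_{{\bf G}'\subset{\bf G}\uplus[u,W]}\operatorname{Diag}({\bf G}')
\;\le\; \tau(u,0)\operatorname{Diag}({\bf G})
\;\le\; \operatorname{Diag}(G_{u,e_0})
\;\le\; \sum_{{\bf G}'\subset{\bf G}\circledast u}\operatorname{Diag}({\bf G}'),
\]
and the proposition holds with $C_W=1$.

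There is no substantive obstacle here; the only care required is the symmetric handling of the possibly unlabelled endpoint $y_0$, where an additional summation over $\ell(y_0)$ appears identically on both sides (and on the $u_*=0$ restriction), preserving the inequality. Should the intended interpretation of $[u,W]$ instead amount to the BK-style sum $\sum_{w\in W}\tau(u,w)$ rather than the product above, the same strategy still goes through, but one replaces the trivial bound ``$\prod_w\tau(u,w)\le\tau(u,0)$'' by the pointwise estimate $\tau(0,y)\le C_w\,\tau(w,y)$ for each $w\in W$, uniformly in $y$. The latter is a direct consequence of \eqref{eq:tauAsy} (the ratio $\tau(0,y)/\tau(w,y)\to 1$ as $|y|\to\infty$ for any fixed $w$) together with positivity of $\tau$ on the bounded set of remaining $y$; one then compares the resulting sum against the $u_*=w$ slice of $\operatorname{Diag}(G_{u,e_0})$, with $C_W:=\max_{w\in W} C_w/\tau(0,w)$ finite since $W$ is finite.
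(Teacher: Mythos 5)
Your primary reading of $[u,W]$ as a single star graph, whose $\uplus$ with ${\bf G}$ multiplies $\operatorname{Diag}({\bf G})$ by the product $\prod_{w\in W}\tau(u,w)$, is not the one the paper intends. By Remarks~\ref{gen_W} and~\ref{rem:Wsum}, $[u,W]$ is a \emph{generalized} diagrammatic graph, namely the set of single-edge graphs $\{u,w\}$ for $w\in W$; consequently ${\bf G}\uplus[u,W]$ is a set of graphs and
\[
\sum_{{\bf G}'\subset{\bf G}\uplus[u,W]}\operatorname{Diag}({\bf G}')
=\Bigl(\sum_{w\in W}\tau(u,w)\Bigr)\operatorname{Diag}({\bf G}),
\]
a sum over $w$, not a product. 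This is why the constant must genuinely depend on $W$ (at least like $|W|$), and why your proposed $C_W=1$ cannot be right for the statement as intended.

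Your fallback argument, for the sum interpretation, is correct and is in substance the paper's own proof. The paper bounds $\tau(u,w)\le C_W\tau(u,0)$ uniformly in $u$ (via \eqref{eq:tauAsy} and finiteness of $W$), turning the left-hand side into $C_W|W|\,\tau(u,0)\operatorname{Diag}({\bf G})$, and then notes that $\tau(u,0)\operatorname{Diag}({\bf G})$ equals the single $u_*=0$ term of the $e_0$-split summand of $\operatorname{Diag}({\bf G}\circledast u)$, using $\tau(0,0)=1$. You instead move the ratio onto the adjacent edge, using $\tau(0,y)\le C_w\tau(w,y)$, and compare the $w$-th term to the $u_*=w$ slice, taking $C_W=\max_{w\in W}C_w/\tau(0,w)$ (finite since $\tau(0,w)>0$ and $W$ is finite). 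Both versions rest on the same bounded-ratio consequence of \eqref{eq:tauAsy} and both extract a single dominating slice of $\operatorname{Diag}({\bf G}\circledast u)$; the choice of slice ($u_*=0$ versus $u_*=w$) is cosmetic.
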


\begin{proof}
We only need to prove the result for the case where ${\bf G}$ is fully labelled, as the general case can then be deduced by summing over all unlabelled points on both sides. 

For a fixed $w\in W$, we know that $\frac{\tau(x,w) }{ \tau(x,0)}$ is bounded by~\eqref{eq:tauAsy} since its limit is $1$ as $|x|\to\infty$. This means that since $W$ is a finite set, we have that 
\eqn{eq:tauXZ}{
\sup_{w\in W}  \sup_{x\in \mathbb{Z}^d} \frac{\tau(x,w) }{ \tau(x,0)} =C_W<\infty.
}
We may hence bound for any diagrammatic graph ${\bf G}=(G,S,\ell)$,
\begin{align}\label{truck}
\sum_{{\bf G}'\subset {\bf G}\uplus [u,W]}  \text{Diag}({\bf G}')& =\sum_{w\in W} \tau(u,w)  \sum_{f\in\LG}  \prod_{\{x,y\}\in E(G)}  \tau(f(x),f(y)) \\ \nonumber
  & \leq C_W |W| \tau(u,0)  \sum_{f\in\LG}  \prod_{\{x,y\}\in E(G)}  \tau(f(x),f(y)).
\end{align}
Summing over all the potentiel target edges $\{x_*,y_*\}$ for the connecting edge in $ {\bf G} \circledast u$ we have
\begin{align*}
 \sum_{{\bf G}'\subset {\bf G} \circledast u}  \text{Diag}({\bf G}')
=& \sum_{\{x_*,y_*\} \in E(G)}\sum_{f\in\LG} \sum_{u_*\in \mathbb{Z}^d} \tau(u,u_*) \,\tau(f(x_*),u_*) \,\tau(f(y_*),u_*) \\
& \qquad \qquad \qquad \times    \prod_{\{x,y\}\in E(G)\setminus \{x_*,y_*\}  } \tau(f(x),f(y)),
\end{align*}
and in this double sum, the term coming from an edge $\{a,b\}\in E(G)$ with $f(x)=0$   (which exists by hypothesis) and $u_*=0$ is 
\begin{align*}
& \tau(u,0)\tau(0,0)\tau(f(a),f(b))\prod_{\{x,y\}\in E(G)\setminus \{a,b\}  } \tau(f(x),f(y))\\
=&\tau(u,0)  \sum_{f\in\LG}  \prod_{\{x,y\}\in E(G)}  \tau(f(x),f(y)),
\end{align*}
since $\tau(0,0)=1$. Hence, that term alone is enough to bound the product in~\eqref{truck}. This yields the result.
\end{proof}

\subsection{A method to reduce diagrams and generalized diagrams}\label{sec:reduction}
Let us prove a key tool for us to upper-bound diagrams called the H-reduction.

\begin{definition}[H-reduced graph]
Let $G$ be a multigraph, we say that an edge $\{x,y\}$ is \emph{H-reducible} if $x$ and $y$ both have an edge degree at least 3 in $G$ and $x$, $y$ are only connected by one edge. 

We define a multigraph $\tilde{G}$, called \emph{H-reduction of $G$ by $\{x,y\}$}, in the following manner:
\begin{itemize}
\item if $x$ and $y$ have degree 3, then $V(\tilde{G})=V(G)\setminus \{x,y\}$ and $E(\tilde{G})=E(G) \setminus (\{\{x,z\}; x\sim_G z\} \cup \{\{y,z\}; y\sim_G z\}) \cup \{e_{x\setminus y}, e_{y \setminus x}\}$ with $e_{x\setminus y}=\{z_1, z_2\}$ where $z_1,z_2$ are the neighbours of $x$ which are not $y$ (and a symmetrical definition of $e_{y\setminus x}$)
\item if $x$ has degree 3, $y$ has degree at least 4, then then $V(\tilde{G})=V(G)\setminus \{x\}$ and $E(\tilde{G})=E(G) \setminus (\{\{x,z\}; z\sim_G z\} ) \cup \{e_{x\setminus y}\}$.
\item if $x$ and $y$ both have degree at least 4, then  $V(\tilde{G})=V(G)$ and $E(\tilde{G})=E(G)\setminus \{x,y\}$.
\end{itemize}
\end{definition}

\begin{proposition}[H-reduction]\label{prop_Hred}
There exists an absolute constant $C<\infty$, such that for any diagrammatic graph ${\bf G}=(G,S,\ell)$ with a finite number of edges and vertices, any $x,y\in V(G)\setminus S$ such that $\{x,y\}$ is H-reducible in $G$, we have 
\[
\operatorname{Diag}((G,S,\ell)) \leq C \operatorname{Diag}((\tilde{G},S,\ell)),
\]
where   $\tilde{G}$ is the H-reduction of $G$ by $\{x,y\}$.
\end{proposition}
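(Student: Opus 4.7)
The plan is to reduce the inequality to a purely analytic statement about convolutions of $\tau$, using two ingredients. The first is a \emph{swap inequality}: there exists $C>0$ such that for all $a,b,u\in\Zd$,
\[
\tau(a,u)\,\tau(b,u)\le C\,\tau(a,b)\,\bigl[\tau(a,u)+\tau(b,u)\bigr].
\]
This follows by splitting on whether $|a-u|\le|b-u|$ or vice versa: in the first case $|a-b|\le 2|b-u|$, so \eqref{eq:tauAsy} yields $\tau(b,u)\le C\tau(a,b)$ (with a universal constant that also covers short distances, since $\tau(0,\cdot)$ is comparable to $(|\cdot|+1)^{2-d}$ everywhere), hence $\tau(a,u)\tau(b,u)\le C\tau(a,b)\tau(a,u)$. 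The second ingredient is that \eqref{eq:tauAsy} combined with Lemma~\ref{lem:ConvBd} gives $\sup_v(\tau\ast\tau)(v)<\infty$ and, for $d>6$, the \emph{triangle condition} $\sup_v(\tau\ast\tau\ast\tau)(v)<\infty$.

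Starting from $\operatorname{Diag}(\mathbf G)=\sum_{f\in\LG}\prod_{\{z,w\}\in E(G)}\tau(f(z),f(w))$, I would fix $f$ on $V(G)\setminus\{x,y\}$ and carry out the inner sum over $f(x)$ (and over $f(y)$ in the cases where $y$ is also removed by the reduction). The claim then reduces to one local inequality per case in the definition of the H-reduction. When both $x$ and $y$ have degree $\ge4$, only the edge $\{x,y\}$ is deleted and the claim follows at once from $\tau(f(x),f(y))\le 1$. When $x$ has degree $3$ with neighbours $\{z_1,z_2,y\}$ while $y$ has degree $\ge 4$, applying the swap inequality to $\tau(f(z_1),f(x))\tau(f(z_2),f(x))$ and summing over $f(x)$ gives
\[
\sum_{u}\tau(f(z_1),u)\,\tau(f(z_2),u)\,\tau(u,f(y)) \le C\,\tau(f(z_1),f(z_2))\,\bigl[(\tau\ast\tau)(f(y)-f(z_1))+(\tau\ast\tau)(f(y)-f(z_2))\bigr],
\]
which is bounded by $C'\tau(f(z_1),f(z_2))$ uniformly in $f(y)$ by boundedness of $\tau\ast\tau$. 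Finally, when both $x$ and $y$ have degree $3$, with $y$'s other neighbours $\{w_1,w_2\}$, applying the swap inequality to both pairs and then executing the double sum over $f(x),f(y)$ yields a bound by
\[
C\,\tau(f(z_1),f(z_2))\,\tau(f(w_1),f(w_2))\sum_{i,j\in\{1,2\}}(\tau\ast\tau\ast\tau)(f(z_i)-f(w_j)),
\]
which is at most $C'\tau(f(z_1),f(z_2))\tau(f(w_1),f(w_2))$ by the triangle condition.

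In each case the resulting upper bound reproduces exactly the factor(s) associated with the edges that $\tilde G$ introduces (either $\{z_1,z_2\}$ alone, or $\{z_1,z_2\}$ together with $\{w_1,w_2\}$), while all edges of $G$ not incident to $\{x,y\}$ are preserved verbatim. Summing over the remaining coordinates $f|_{V(G)\setminus S\setminus\{x,y\}}$, identical on both sides, yields $\operatorname{Diag}(\mathbf G)\le C\operatorname{Diag}(\tilde{\mathbf G})$. The main (though essentially routine) point to watch is that the constants emerging from the swap inequality and from the convolution bounds are \emph{uniform} in the remaining free labels; this uniformity is precisely what the finiteness of $\sup_v(\tau\ast\tau\ast\tau)(v)$ encodes, and is the reason the proof requires $d>6$.
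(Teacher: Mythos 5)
Your proposal is correct and follows essentially the same route as the paper's proof: you fix the labels outside $\{x,y\}$, split the two $\tau$-factors at the degree-$3$ vertex via a triangle-type inequality derived from \eqref{eq:tauAsy} (your ``swap inequality'' is exactly the paper's splitting according to which of the two distances dominates), and then sum out the free labels using boundedness of $\tau\ast\tau$ (resp.\ $\tau\ast\tau\ast\tau$) in the degree-$3$/degree-$\ge 4$ (resp.\ degree-$3$/degree-$3$) case, with the degree-$\ge4$/degree-$\ge4$ case dispatched by $\tau\le 1$. The packaging as a stand-alone ``swap inequality'' is a minor cosmetic difference and does not change the substance of the argument.
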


\begin{proof}
We proceed by cases. We first treat the case when
$x$ and $y$ both have degree at least 4:
In this case $E(\tilde{G})=E(G)\cup\{\{x,y\}\}$ and therefore
\[ \prod_{\{z,w\}\in E(G)} \tau(f(z),f(w))\leq \prod_{\{z,w\}\in E(\tilde{G})} \tau(f(z),f(w))\]
simply because $\tau(f(x),f(y))\leq 1$.

\begin{figure}[ht]
\includegraphics[height=.2\textwidth]{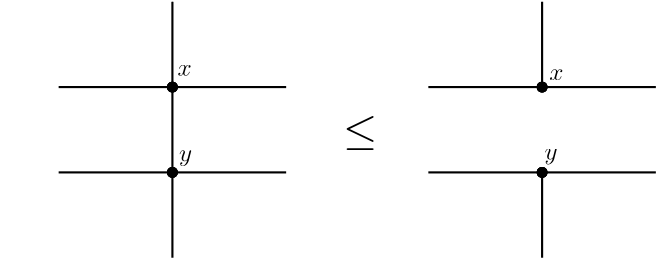}
\caption{Diagrammatic estimates for the proof of the Proposition \ref{prop_Hred} when both $x$ and $y$ have degree larger than $3$.}
\end{figure}
This immediately yields the claim of the lemma.

Now we treat the case when either $x$ or $y$ has degree exactly $3$.
Let $E^*(G)$ be the edges of $G$ which do not have any vertex of $V(G)\setminus V(\tilde{G})$ has one of its endpoints. Let $\mathcal{L}_{\bold {G\setminus\tilde{G}}}:=\{f:V(G)\setminus V(\tilde{G})\to\mathbb{Z}^d\}$. Given $f_1\in\mathcal{L}_{\tilde {\bold G}}$ and $f_2\in\mathcal{L}_{\bold {G\setminus\tilde{G}}}$, let $f_{1,2}\in\LG$, be the function that coincides with $f_1$ in $V(\tilde G)$ and coincides with $f_2$ in  $V(G)\setminus V(\tilde{G})$.
We can write
\eqnsplst{\operatorname{Diag}&((G,S,\ell))=\sum_{f\in\LG}  \prod_{\{z,w\}\in E(G)} \tau(f(z),f(w)).\\
=&
\sum_{f_1\in\mathcal{L}_{\tilde {\bold G}}} \sum_{f_2\in\mathcal{L}_{\bold {G\setminus\tilde{G}}}} \prod_{\{z,w\}\in E(G)} \tau(f_{1,2}(z),f_{1,2}(w))\\
=&
\sum_{f_1\in\mathcal{L}_{\tilde {\bold G}}} \prod_{\{z,w\}\in  E^*(G)} \tau(f_{1,2}(z),f_{1,2}(w))
\sum_{f_2\in\mathcal{L}_{\bold {G\setminus\tilde{G}}}} \prod_{\{z,w\}\in E(G)\setminus E^*(G)} \tau(f_{1,2}(z),f_{1,2}(w))
,}
where, in the last step, the factorization of the term $\prod_{\{z,w\}\in E^*(G)} \tau(f_{1,2}(z),f_{1,2}(w))$ is justified because it does not depend on $f_2$ (indeed, for all $\{z,w\}\in E^*(G)$ it holds that  $f_{1,2}(z)=f_1(z)$ and $f_{1,2}(w)=f_1(w)$).

Also, since in this case we have $E^*(G)\subseteq E(\tilde{G})$, we can write
\eqnsplst{\operatorname{Diag}((\tilde{G},S,\ell))=&
\sum_{f\in\mathcal{L}_{\tilde {\bold G}}} \prod_{\{z,w\}\in E^\ast(G)} \tau(f(z),f(w))
\prod_{\{z,w\}\in E(\tilde{G})\setminus E^*(G)} \tau(f(z),f(w)).
}
Therefore, to obtain the claim of the lemma, it suffices to show that, for all $f_1\in\mathcal{L}_{\tilde {\bold G}}$
\begin{equation}\label{eq:hredisolated}
\sum_{f_2\in\mathcal{L}_{\bold {G\setminus\tilde{G}}}} \prod_{\{z,w\}\in E(G)\setminus E^*(G)} \tau(f_{1,2}(z),f_{1,2}(w))\leq C
\prod_{\{z,w\}\in E(\tilde{G})\setminus E^*(G)} \tau(f_{1,2}(z),f_{1,2}(w)).
\end{equation}
We now treat 2 sub-cases:

Sub-case 1 ($x$ and $y$ have degree 3): Let $\{w,z\}$ be the neighbors of $x$ which are not $y$ and $\{t,s\}$ the neighbors of $y$ which are not $x$. Therefore
\eqnsplst{
&\sum_{f_2\in\mathcal{L}_{\bold {G\setminus\tilde{G}}}} \prod_{\{u,v\}\in E(G)\setminus E^*(G)} \tau(f_{1,2}(u),f_{1,2}(v))\\
&=\sum_{f_2(x),f_2(y)\in\Zd} \tau(f_1(z),f_2(x))\tau(f_2(x),f_1(w)) \tau(f_1(s),f_2(y))\tau(f_2(y),f_1(t))\tau(f_2(x),f_2(y))
}
and
\[\prod_{\{u,v\}\in E(\tilde{G})\setminus E^*(G)} \tau(f_{1,2}(u),f_{1,2}(v))=\tau(f_1(z),f_1(w))\tau(f_1(s),f_1(t)).\]

We notice that either $|f_1(z)-f_2(x)|\geq\frac{1}{2}|f_1(z)-f_1(w)|$ or $|f_2(x)-f_1(w)|\geq\frac{1}{2}|f_1(z)-f_1(w)|$. By \eqref{eq:tauAsy}, this implies that there exists $C>0$ such that 
\eqnsplst{&\tau(f_1(z),f_2(x))\tau(f_2(x),f_1(w))\\
&\leq C\left(\tau(f_1(z),f_1(w))\tau(f_2(x),f_1(w))+\tau(f_2(z),f_2(w))\tau(f_2(w),f_1(x))\right)} and thus 

\eqnsplst{ &\sum_{f_2(x),f_2(y)\in\Zd} \tau(f_1(z),f_2(x))\tau(f_2(x),f_1(w)) \tau(f_1(s),f_2(y))\tau(f_2(y),f_1(t))\tau(f_2(x),f_2(y))\\
	 &\leq C\sum_{f_2(x),f_2(y)\in\Zd}\tau(f_1(z),f_1(w))\tau(f_2(x),f_1(w)) \tau(f_1(s),f_2(y))\tau(f_2(y),f_1(t))\tau(f_2(x),f_2(y))+\\
	&\quad+C\sum_{f_2(x),f_2(y)\in\Zd} \tau(f_1(z),f_2(x))\tau(f_1(z),f_1(w)) \tau(f_1(s),f_2(y))\tau(f_2(y),f_1(t))\tau(f_2(x),f_2(y)).}
	
	\begin{figure}[ht]
	\includegraphics[height=.25\textwidth]{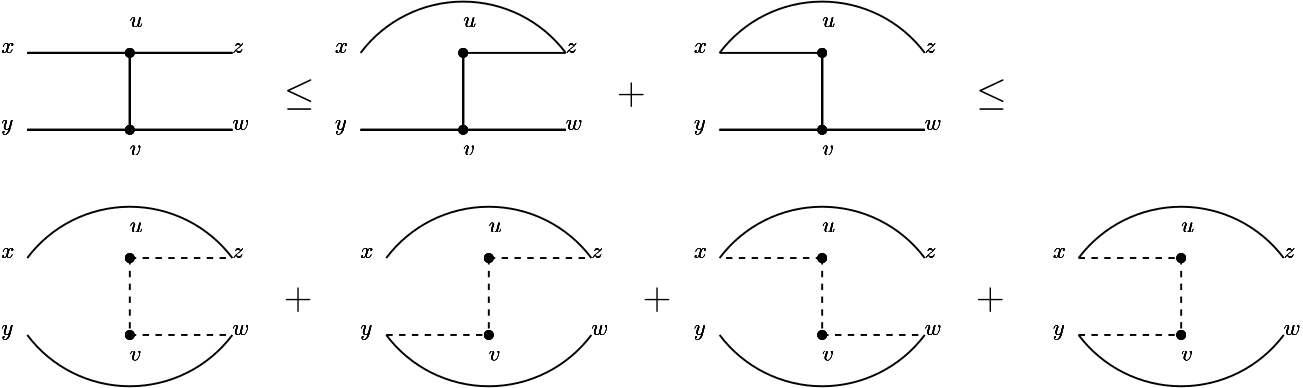}
	\caption{Diagrammatic estimates for the proof of the first case of Proposition \ref{prop_Hred}. Dotted edges contribute only a constant factor and therefore can be erased.}
	\label{fig:HT1}
\end{figure}

Repeating the strategy on each of the terms we get that the display above is bounded by
	\eqnsplst{C \tau(f_1(z),f_1(w))\tau(f_1(s),f_1(t))\Big[ &\sum_{f_2(x),f_2(y)\in\Zd}\tau(f_2(x),f_1(w)) \tau(f_2(y),f_1(t))\tau(f_2(x),f_2(y))+\\
	\quad+&\sum_{f_2(x),f_2(y)\in\Zd} \tau(f_2(x),f_1(w)) \tau(f_1(s),f_2(y))\tau(f_2(x),f_2(y))+
\\
	\quad+&\sum_{f_2(x),f_2(y)\in\Zd}\tau(f_1(z),f_2(x)) \tau(f_2(y),f_1(t))\tau(f_2(x),f_2(y))+\\
	\quad+&\sum_{f_2(x),f_2(y)\in\Zd}\tau(f_1(z),f_2(x))\tau(f_1(s),f_2(y))\tau(f_2(x),f_2(y))\Big]}

	Using that
	 \[\Delta=\sup_{f_1(u),f_1(v)\in\Zd}\sum_{f_2(x),f_2(y)\in\Zd}\tau(f_1(u),f_2(x))\tau(f_2(x),f_2(y))\tau(f_2(y),f_1(v))<\infty\]
we get that each of the 4 terms above is bounded by  $ C \tau(f_1(z),f_1(w)) \tau(f_1(s),f_1(t))$, thus obtaining \eqref{eq:hredisolated}.
A pictorial representation of these bounds is given in Figure \ref{fig:HT1}:


Sub-case 2 ($y$ has degree 3 and $x$ has degree at least 4): Let $z,w$ be the neighbors of $y$ which are not $x$. 
In this case
\eqnsplst{
&\sum_{f_2\in\mathcal{L}_{\bold {G\setminus\tilde{G}}}} \prod_{\{u,v\}\in E(G)\setminus E^*(G)} \tau(f_{1,2}(u),f_{1,2}(v))\\
&=\sum_{f_2(y)\in\Zd} \tau(f_1(z),f_2(y))\tau(f_2(y),f_1(x)) \tau(f_2(y),f_1(w))}
and
\[\prod_{\{u,v\}\in E(\tilde{G})\setminus E^*(G)} \tau(f_{1,2}(u),f_{1,2}(v))=\tau(f_1(z),f_1(w)) .\]

As in the proof of the first case, we have
\[\tau(f_1(z),f_2(y)) \tau(f_2(y),f_1(w))\\
 \leq C(\tau(f_1(z),f_1(w))\tau(f_2(y),f_1(w))+\tau(f_1(z),f_1(w))\tau(f_1(z),f_2(y))).\]
 
  Therefore
\eqnsplst{&\sum_{f_2(y)\in\Zd}  \tau(f_1(z),f_2(y))\tau(f_2(y),f_1(x)) \tau(f_2(y),f_1(w))\\
	& \leq C\tau(f_1(z),f_1(w))\Big[\sum_{f_2(y)\in\Zd} \tau(f_2(y),f_1(x))\tau(f_2(y),f_1(w))+
	\\&\quad+C\sum_{f_2(y)\in\Zd}\tau(f_2(y),f_1(x))\tau(f_1(z),f_2(y))\Big]\\
	&\leq C\tau(f_1(z),f_1(w)),
}
where in the last inequality we have used that \[\sup_{f_1(x),f_1(v)\in\mathbb{Z}^d}\sum_{f_2(y)\in\mathbb{Z}^d}\tau(f_1(x),f_2(y))\tau(f_2(y),f_2(v))<\infty.\]
A pictorial representation of these bound is given here: 
\begin{figure}[ht]
\includegraphics[height=.2\textwidth]{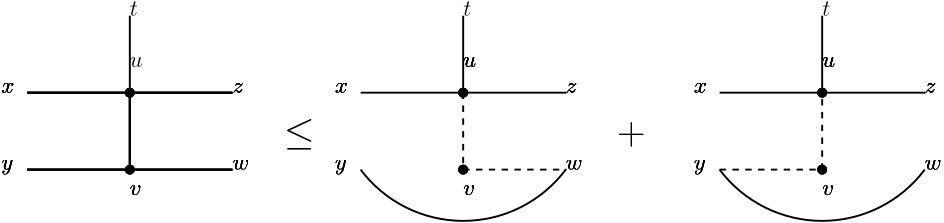}
\caption{Diagrammatic estimates for the proof of the second sub-case of Proposition \ref{prop_Hred}. Dotted edges contribute only a constant factor and therefore can be erased.}
\end{figure}

\end{proof}

\begin{remark}\label{rem_reductions}
From a diagrammatic perspective the previous lemma means that any H-reducible edge with unlabelled endpoints can be erased from a diagram at the cost of a multiplicative constant. This can be represented as follows (where the H-reducible edge has been highlighted in bold)
\[
        \begin{minipage}{0.4\textwidth}
      \includegraphics[width=0.5\linewidth]{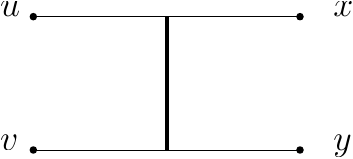}
    \end{minipage}\
    \lesssim
            \begin{minipage}{0.4\textwidth}
      \includegraphics[width=0.5\linewidth]{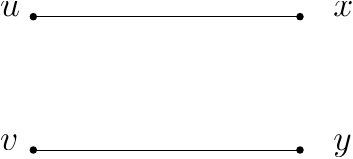}
    \end{minipage}\
\]
where   $\lesssim$ is an inequality up to multiplicative constant.
\medskip

We can note that a particular case of the $H$-reduction is when $x=y$ in the previous figure, we will refer to it as a triangle reduction, illustrated in the following picture (where we highlight the triangle)
 \[
        \begin{minipage}{0.4\textwidth}
      \includegraphics[width=0.4\linewidth]{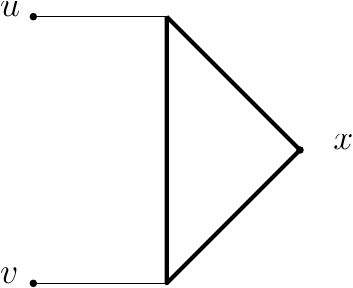}
    \end{minipage}\
    \lesssim
            \begin{minipage}{0.4\textwidth}
      \includegraphics[width=0.4\linewidth]{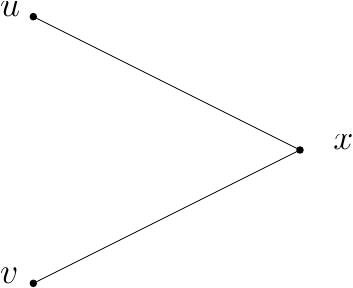}
    \end{minipage}\
\]
\end{remark}

\begin{remark}
Since generalized diagrams are sums of diagrams, we can apply an $H$-reduction to a generalized diagram $\mathcal{G}$ if that operation would work on all its constituting diagrams. 

In particular, we say that a non-connecting edge $e=\{x,y\}$ of generalized diagram is H-reducible, if its endpoints have at least an edge degree of 3 (including connecting edges) and if $e$ does not have a label (which means that this edge is present in all the diagrams of $\mathcal{G}$). A connecting edge $e$ with label A of generalized diagram is called H-reducible if in addition we require all edges neighbouring $e$ to not be labelled $A$.

We can use an H-reduction inequality on a generalized diagram $\mathcal{G}$ when its reduced version $\tilde{\mathcal{G}}$ has the same edge labeling as $\mathcal{G}$ and where the (potentially) newly created edges (e.g.~$e_{x\setminus y}$) inherits the labels of both the two edges that it replaced (e.g.~$\{x,z_1\}$, $\{x,z_2\}$ where $z_1,z_2$ are the unique two neighbours of $x$ which are not $y$). Notice that the new edge needs both labels since the H-reduction provides an upper bound. Here is an example of a part of a generalized graph that can be H-reduced and its result post H-reduction
 \[
        \begin{minipage}{0.4\textwidth}
      \includegraphics[width=0.4\linewidth]{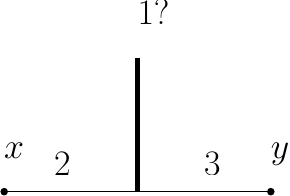}
    \end{minipage}\
    \lesssim \qquad 
            \begin{minipage}{0.4\textwidth}
      \includegraphics[width=0.4\linewidth]{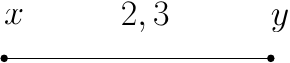}
    \end{minipage}\
\]
and notice that we could not have applied the H-reduction if the edge labelled 2 was instead labelled 1, or if connecting edge (in bold) had any label (say 5).
\end{remark}
\medskip

\subsection{A general graph theoretical result on H-reducibility}\label{sect_graph_theory}

{
In this section we prove a graph theoretical result that will allow us to deal with all arising diagrams
in a very efficient way. 
{\colAJ From now on we limit our consideration to simple graphs, not multi-graphs.}

\begin{definition} \ \\
(i) Let $G$ be a finite connected (not necessarily simple) graph, and $S \subset V(G)$, $|S| = 3$. 
We say that $x \in V(G) \setminus S$ is 3-connected to $S$ if there exist 3 edge-disjoint paths between $x$ and 
each element of $S$, respectively. The graph $G$ is said to be 3-connected to $S$ if $x$ is 3-connected to $S$ 
for any $x\in V(G) \setminus S$. \\
(ii) When $G$ is also simple, we call an edge $\{x,y\}$ strongly H-reducible (for the graph $G$ and 
the set $S$), if $x,y\in V(G) \setminus S$, $\{x,y\}$ is H-reducible and the graph $G$ H-reduced by $\{x,y\}$ 
is still 3-connected to $S$.
\end{definition}
 
 \begin{lemma}
 \label{fix_path-new}
 Let $G$ be a finite connected simple graph, $S \subset V(G)$, $|S| = 3$. 
 Assume that $G$ is 3-connected to $S$, all $v \in V(G) \setminus S$ have degree $3$, and all elements of 
 $S$ have degree at most $2$.
 \begin{itemize}
 \item[(i)] If $x, y \in V(G) \setminus S$, $\{x,y\} \in E(G)$, then $G$ is H-reducible by $\{x,y\}$, and its
 H-reduction by $\{x,y\}$ is a finite connected graph in which all vertices not in $S$ have degree $3$.
 \item[(ii)] If $\{x,y\}$ is strongly H-reducible, then the H-reduction of $G$ by $\{x,y\}$ is also a 
 simple graph (and 3-connected to $S$).
 \item[(iii)] If $x, y \in V(G) \setminus S$, $x \not= y$, and $\mathcal{P}$ is a simple path in $G$ which 
 does not use $\{x,y\}$ which starts and ends in $V(G)\setminus \{x,y\}$ then there is a simple path 
 $\mathcal{P}'$ in the H-reduction of $G$ by $\{x,y\}$, that has the same start and end points as $\mathcal{P}$
 and coincides with $\mathcal{P}$ on edges that belong to both graphs. 
 \end{itemize}
 \end{lemma}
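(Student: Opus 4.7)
My plan is to prove the three parts in the order (iii), (i), (ii), since (iii) is the transport tool that (i) needs for connectivity, and the content of (i) is what drives the contradictions in (ii). For (iii), let $\mathcal P$ be a simple path in $G$ avoiding $\{x,y\}$ with endpoints in $V(G)\setminus\{x,y\}$. If $x$ appears on $\mathcal P$, it does so exactly once, and since $\{x,y\}$ is unused, the two neighbors of $x$ along $\mathcal P$ are precisely $z_1,z_2$ (the two neighbors of $x$ in $G$ other than $y$). Splice $x$ out by replacing the length-two segment $z_1,x,z_2$ of $\mathcal P$ by the single edge $e_{x\setminus y}\in E(\tilde G)$, and do the same for $y$ if it appears. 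The resulting walk $\mathcal P'$ has vertex sequence $V(\mathcal P)\setminus\{x,y\}$, so it is still a simple path, and the edges of $\mathcal P$ not incident to $x$ or $y$ (exactly those lying in $E(G)\cap E(\tilde G)$) remain edges of $\mathcal P'$, giving the claimed agreement.

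Part (i) is then short. H-reducibility is immediate from the degree-$3$ hypothesis on $x,y$ together with simplicity of $G$. Every $v\in V(\tilde G)\setminus S$ retains degree $3$ by a case check on how many of $\{x,y\}$ are neighbors of $v$, since each lost edge to $\{x,y\}$ is matched by exactly one new edge in $e_{x\setminus y}\cup e_{y\setminus x}$. Connectivity of $\tilde G$ follows because, for any $a\in V(\tilde G)$ and any fixed $s\in S$, at least one of the three edge-disjoint paths from $a$ to $s$ in $G$ avoids $\{x,y\}$ (only one can use it) and is transported to $\tilde G$ via (iii).

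For (ii), the 3-connectivity of $\tilde G$ to $S$ is built into the definition of strong H-reducibility, so only simplicity requires an argument. I would suppose for contradiction that $\tilde G$ has a double edge between vertices $a,b$, and consider three cases. If $a,b\notin S$, both have degree $3$, so each has a unique non-$a$-$b$ edge; any simple path from $a$ starting with an $a$-$b$ edge must exit $b$ via that unique non-$a$-$b$ edge, so at most one of the two $a$-$b$ edges is usable in a system of edge-disjoint simple paths from $a$ to $s\in S$, yielding at most two such paths and contradicting 3-connectivity. If $a\in S$ and $b\notin S$, the bound $\deg(a)\le 2$ forces every edge of $a$ to go to $b$, so any simple path from $b$ through an $a$-$b$ edge is trapped at $a$; hence three edge-disjoint simple paths from $b$ to any $s\in S\setminus\{a\}$ would all have to leave $b$ via its single non-$a$ edge, again impossible. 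If $a,b\in S$, the same degree bound makes $\{a,b\}$ an isolated component, contradicting the connectivity of $\tilde G$ established in (i) together with $|S|=3$. The main obstacle I anticipate is precisely this simplicity case analysis, which hinges on using \emph{simple} paths throughout (so that entry into a degree-$2$ $S$-vertex truly blocks further progress) and on choosing $s\in S$ to avoid degenerate coincidences with $a$ or $b$.
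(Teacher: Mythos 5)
Your proof follows essentially the same route as the paper: for (iii) you splice out $x$ and $y$ exactly as in the paper, for (i) you observe H-reducibility from degree $3$ plus simplicity and then track degrees, and for (ii) you derive a contradiction with the $3$-connectivity built into strong H-reducibility, split into the same three cases according to how many endpoints of the putative double edge lie in $S$. The case analysis in (ii) matches the paper's in spirit: the $a,b\notin S$ case corresponds to the paper's size-$2$ cutset, the $a\in S$, $b\notin S$ case to the size-$1$ cutset through $b$'s third edge, and the $a,b\in S$ case to disconnecting $\{a,b\}$ from the third element of $S$.

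One slip worth flagging is in the connectivity argument of (i). You invoke ``the three edge-disjoint paths from $a$ to $s$'', but the definition of $3$-connectivity to $S$ gives three edge-disjoint paths from $a$, one to \emph{each} element of $S$, not three to a single target $s$. As a result, ``at least one avoids $\{x,y\}$'' only lets you conclude that $a$ is connected in $\tilde G$ to \emph{some} element of $S$, which by itself does not force $\tilde G$ to be connected (distinct vertices could a priori attach to different, mutually disconnected elements of $S$). The fix is easy and there are at least two clean routes: either upgrade to ``at least two of the three paths avoid $\{x,y\}$'' and conclude any two vertices of $V(\tilde G)\setminus S$ share a common anchor in $S$ by pigeonhole (and then handle the $S$-vertices, which keep at least one incident edge); or, as the paper does, argue directly that $\{x,y\}$ cannot be a bridge of $G$ (it lies in three edge-disjoint $x$-to-$S$ paths, of which only one can use it), and note that the H-reduction of a non-bridge edge preserves connectivity. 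The rest of your argument, including the way parts (iii) and (i) feed into (ii), is sound.
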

 
\begin{proof} 
(i) Since $G$ is simple, there is no double edge between $x$ and $y$, and hence $G$ is H-reducible by
$\{x,y\}$. The H-reduction does not change the degrees of vertices in $V(G) \setminus (S \cup \{x,y\})$,
so these are still $3$. To see connectedness, note that $\{x,y\}$ cannot be a bridge in $G$, due to the
definition of 3-connectivity of $G$.

(ii) Since $G$ has no double edges, no loops can arise via the H-reduction. Assume that a double edge arises
after H-reduction. If neither endpoint of the double edge is an element of $S$, then the two edges adjacent 
to the double edge at either end form a cut set of size $2$ separating the double edge from $S$, and this 
cannot happen in the case of strong H-reduction. (Here we use that both ends of the double edge have degree 
$3$ by (i).) Assume now that the double edge that arose is of the form $\{s,x\}$ with $s \in S$, 
$x \not\in S$. Since $s$ has degree $2$ in the reduced graph, and two of the three edges out of $x$ lead
to $s$, the third edge out of $x$ forms a cut set of size $1$ separating $x$ from $S \setminus \{s\}$,
contradicting the assumption of strong H-reduction. Finally, the double edge cannot be of the form
$\{ s_1, s_2 \}$ with $s_1, s_2 \in S$, as this would again contradict the assumption of strong 
H-reduction. It follows that the reduced graph is simple.

(iii) Should the path $\mathcal{P}$ in $G$ pass through $x$, say, it necessarily uses both edges 
incident with $x$ that are different from $\{x,y\}$, and uses them consecutively. Thus we can replace
these edges with the new edge present after the H-reduction of $G$. 
\end{proof}

\begin{proposition}
\label{prop:strong-H-reduce}
Let $G$ be a finite connected simple graph, $S \subset V(G)$, $|S| = 3$. 
Assume that $G$ is 3-connected to $S$, all $v \in V(G) \setminus S$ have degree $3$, and all $s \in S$
have degree at most $2$. If $V(G)\setminus S$ contains two neighbouring vertices, then it contains a strongly 
H-reducible edge.
\end{proposition}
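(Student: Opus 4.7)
The plan is to translate the statement into an edge-connectivity preservation problem, and then to run a minimality argument based on submodularity of cuts.

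I will introduce the standard auxiliary graph $G^+$ obtained from $G$ by adjoining a new vertex $s^*$ adjacent to $s_1, s_2, s_3$. Writing $V_0 := V(G)\setminus S$, the degree bound on $S$ gives $\deg_{G^+}(s^*) = 3$ and $\deg_{G^+}(s_i) \le 3$. The classical super-sink trick shows that $G$ being $3$-connected to $S$ is equivalent to the edge-connectivity between any $v \in V_0$ and $s^*$ in $G^+$ being at least $3$ (and hence equal to $3$, by $\deg_{G^+}(v) = 3$). By Lemma~\ref{fix_path-new}(iii) applied to paths of $\tilde G$ and its converse (a walk in $G$ avoiding $\{x,y\}$ that visits $x$ must enter and leave via $\{x,z_1\}$ and $\{x,z_2\}$, and analogously for $y$, so lifts and projections of paths are inverse to one another), edge-disjoint path systems from $v$ to $s^*$ in $\tilde G^+$ correspond bijectively to edge-disjoint path systems from $v$ to $s^*$ in $G^+ \setminus \{x,y\}$. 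Consequently $\{x,y\}$ (with $x,y\in V_0$) is strongly H-reducible if and only if $G^+ \setminus \{x,y\}$ remains $3$-edge-connected from every $v \in V_0 \setminus \{x,y\}$ to $s^*$.

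By Menger's theorem, $\{x,y\}$ fails to be strongly H-reducible (\emph{i.e.}\ is \emph{bad}) iff there is a witness cut $(U,\bar U)$ in $G^+$ with $|\partial_{G^+} U| = 3$, $s^* \in \bar U$, $\{x,y\} \in \partial_{G^+} U$, and $U \cap (V_0 \setminus \{x,y\}) \ne \es$. I will assume for contradiction that every $V_0$-edge is bad, fix a witness cut for each such edge, and choose a pair $(\{x,y\}, U^*)$ minimising $|U^*|$; by symmetry $x \in U^*$. Then $U^* \cap V_0$ contains $x$ and the witness $w \ne y$, so $|U^* \cap V_0| \ge 2$. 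The function $f(A) := |\partial_{G^+} A|$ is submodular, and $3$-connectedness yields $f(A) \ge 3$ whenever $A \cap V_0 \ne \es$; consequently both the intersection and the union of two tight $3$-cuts sharing a $V_0$-vertex are themselves tight $3$-cuts. A short structural step rules out $z_1, z_2 \in \bar U^*$ (else all three edges at $x$ would be in $\partial_{G^+} U^*$, forcing $U^* = \{x\}$ and contradicting $|U^*| \ge 2$), so at least one non-$y$ neighbour $z$ of $x$ lies in $U^*$. One then shows that the $V_0$-edge $\{x,z\}$ (after, if necessary, a separate treatment of the case $z \in S$) has a witness cut $U'$ with $x \in U'$; since $z \in U^* \setminus U'$, we get $|U^* \cap U'| < |U^*|$, and submodularity forces $U^* \cap U'$ to be a tight $3$-cut still witnessing the badness of a $V_0$-edge incident to $x$, contradicting minimality of $|U^*|$.

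The main obstacle is verifying in the last step that $U^* \cap U'$ really serves as a witness cut for a concrete $V_0$-edge, rather than just being a tight $3$-cut: one must identify a $V_0$-edge in $\partial_{G^+}(U^* \cap U')$ together with a $V_0$-witness in $U^* \cap U'$ distinct from the endpoints of that edge. The most delicate subcase is when both non-$y$ neighbours of $x$ lie in $S$, so that there are no $V_0$-edges at $x$ inside $U^*$; then one has to use the degree-$2$ bound on $S$ together with the existence of a witness $w \in V_0 \cap U^*$ to trace a path from $x$ to $w$ inside $U^*$ and apply the iteration to an interior $V_0$-edge found along that path. I expect that organising the argument by $|U^* \cap V_0|$ and by the placement of $z_1, z_2, t_1, t_2$ relative to $U^*$ will make the case analysis systematic.
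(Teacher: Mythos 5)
Your super-sink plus cut-submodularity reformulation is a genuine alternative to the paper's argument, but the two are closely related in spirit: the paper's Lemma~\ref{lem:reduction} explicitly constructs the analogue of your $U^*$-side as a subgraph $G'$ (the component of a witness plus the three cut edges), then runs a minimality argument over $|E(G')|$. The main structural difference is that the paper resolves the recursion by explicit path-lifting arguments between $G$ and the minimal $G'$, whereas you propose to close it algebraically via submodularity of $f(A)=|\partial A|$.

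The gap you flag is real and it is where the proof actually lives. After intersecting $U^*$ with $U'$ you obtain a tight $3$-cut containing $x$, but to contradict minimality you need $U^*\cap U'$ to be a \emph{witness} cut, which requires (a) a $V_0$-edge in $\partial(U^*\cap U')$ and (b) a $V_0$-vertex in $U^*\cap U'$ distinct from that edge's endpoints. Point (a) is fine (the edge $\{x,y\}$ works), but (b) is not automatic: nothing prevents $U^*\cap U'=\{x\}$, in which case the tight cut is just the vertex-cut at $x$ and carries no information. The witness $w$ of $U^*$ need not lie in $U'$, and the witness of $U'$ need not lie in $U^*$, so you cannot simply propagate a witness into the intersection. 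There is a second, smaller gap earlier: you take ``$x\in U'$'' as given, but the witness cut $U'$ for $\{x,z\}$ only guarantees $|\{x,z\}\cap U'|=1$; if instead $z\in U'$ then the relevant submodular combination is $U^*\cap\bar U'$, and the complementary term $U^*\cup\bar U'$ now contains $s^*$, so the bound $f\ge3$ is not directly available and you must argue separately (e.g.\ via $\bar U^*\cap U'$) that it still holds. Finally, the subcase where both non-$y$ neighbours of $x$ lie in $S$ really is a separate case: there is then no $V_0$-edge incident with $x$ inside $U^*$ to iterate on, so the strategy ``iterate on a neighbouring $V_0$-edge at $x$'' breaks down and you must locate a different $V_0$-edge deeper inside $U^*$. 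The paper avoids all three difficulties by working with the subgraph $G'$ directly: it shows (Lemma~\ref{lem:reduction}(ii),(v)) that $G'$ keeps a valid triple $S'$, keeps two neighbouring $V_0$-vertices, and strictly decreases the edge count, so one can iterate unconditionally; the ``good edge lifts back to $G$'' step is then handled by the case analysis in Cases~(i)--(ii). If you want to pursue your route, the missing ingredient is a concrete argument producing a $V_0$-vertex in $U^*\cap U'$ beyond $x$ (or a replacement of $U'$ by a better cut for which this holds), and that appears to be of comparable difficulty to the paper's path-lifting.
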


 \begin{remark}\label{counterex}
 We wrote the previous proposition with three simplifying hypotheses: $|S|=3$, 
 all vertices of $V(G)\setminus S$ have degree 3, and the vertices of $S$ have degree at most 2. 
 However, our proof can be generalized with minor adaptations to avoid these assumptions at the cost of some complications:
 \begin{enumerate}
 \item {\colAJ if $|S| \ge 3$ and vertices of $S$ are allowed to have any degree, then we can show that any 
 simple graph that has an edge $\{x,y\}$ where $x$ and $y$ are adjacent can be strongly H-reduced by some edge.}
 \item if vertices of $V(G)\setminus S$ are allowed to have any degree, then the statement is false because of the graph below. However, the theorem remains true if we slightly strengthen the connectivity property that $G$ is 3-connected to $S$, by assuming that for any $x\in V(G)\setminus S$  there exists 3 edge-disjoint paths from $x$ to $S$ such that the first vertex (after $x$) visited by any path cannot belong to vertices visited by the other two paths. 
\end{enumerate}

In the following graph the reduction by any H-reducible edge will result in the creation of a double edge.
 \[
     \begin{minipage}{0.3\textwidth}
      \includegraphics[width=0.7\linewidth]{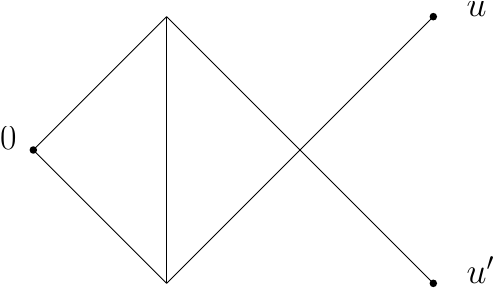}
    \end{minipage}
   \]
 \end{remark}

The proof of Proposition \ref{prop:strong-H-reduce} will rely on the following lemma.

\begin{lemma}
\label{lem:reduction}
Let $G$ be a finite connected simple graph, $S \subset V(G)$, $|S| = 3$. 
Assume that $G$ is 3-connected to $S$, all $v \in V(G) \setminus S$ have degree $3$, and all elements of
$S$ have degree at most $2$.
If $x,y\in V(G)\setminus S$, $\{x,y\} \in E(G)$ but $\{x,y\}$ is not strongly H-reducible, then there exists
a subgraph $G'$ of $G$ and a subset $S' \subset V(G')$ such that the following hold.
\begin{itemize}
    \item[(i)] $G'$ has fewer edges than $G$, and $|S'| = 3$.
    \item[(ii)] Either $y \in S'$ and $x \in V(G') \setminus S'$, or vice versa.
    \item[(iii)] $G'$ is simple, connected, all its vertices not in $S'$ have degree $3$, and all 
    $s' \in S'$ have degree $1$ in $G'$.
    \item[(iv)] $G'$ is 3-connected to $S'$.
    \item[(v)] $V(G') \setminus S'$ contains at least $2$ neighbouring vertices.
\end{itemize}
\end{lemma}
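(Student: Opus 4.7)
The plan is to exploit the failure of 3-connectivity in $\tilde G$ to locate a 3-edge cut in $G$ bordering a region containing $x$, and then carve out $G'$ as this region together with the three outside endpoints of the cut.

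First, since $\{x,y\}$ is not strongly H-reducible, $\tilde G$ fails to be 3-connected to $S$, so an edge-Menger argument in $\tilde G$ produces a nonempty $T\subseteq V(\tilde G)\setminus S$ with $|\partial_{\tilde G} T|\le 2$. Writing $a,b$ for the neighbours of $x$ in $G$ other than $y$, and $c,d$ for those of $y$, a direct edge-count will yield
\[
|\partial_G T| \;=\; |\partial_{\tilde G} T| + 2\,\mathbf{1}_{\{a,b\}\subseteq T} + 2\,\mathbf{1}_{\{c,d\}\subseteq T},
\]
the point being that when exactly one of $\{a,b\}$ lies in $T$, the single boundary contribution of the new edge $\{a,b\}$ in $\tilde G$ precisely matches that of $\{x,a\}$ or $\{x,b\}$ in $G$, and analogously for $c,d$. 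Combining $|\partial_G T|\ge 3$ (from 3-connectivity of $G$ and $T\cap S=\emptyset$) with $|\partial_{\tilde G} T|\le 2$ will force $\{a,b\}\subseteq T$ or $\{c,d\}\subseteq T$. Assuming without loss of generality $\{a,b\}\subseteq T$, the case $T\cap\{c,d\}\ne\emptyset$ can be ruled out, since it would make $U':=T\cup\{x,y\}$ satisfy $|\partial_G U'|\le 2$ with $U'\cap S=\emptyset$, contradicting 3-connectivity. Hence $\{c,d\}\cap T=\emptyset$, and setting $U:=T\cup\{x\}$ gives $|\partial_G U|=3$ with boundary $\partial_G U=\{e_1,e_2,\{x,y\}\}$, where $e_1,e_2$ are the two original edges that formed $\partial_{\tilde G} T$.

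Let $\xi_1,\xi_2$ denote the outside endpoints of $e_1,e_2$, and set $\xi_3:=y$. Since $\xi_1,\xi_2\in V(\tilde G)$ while $y\notin V(\tilde G)$, we automatically have $\xi_1,\xi_2\ne y$, and the main technical step will be to show $\xi_1\ne\xi_2$. Supposing for contradiction that $\xi_1=\xi_2=w^*$, I would consider $T':=T\cup\{w^*\}$ and, if needed, $U':=T'\cup\{x\}$, and carry out a case analysis depending on whether $w^*\in S$, $w^*\in\{c,d\}$, or $w^*$ is elsewhere. In each case the argument produces a non-$S$ subset of $V(G)$ whose boundary in $G$ has size at most $2$, again contradicting 3-connectivity. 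This careful ruling-out of coincidences is the main obstacle of the proof.

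With $\xi_1,\xi_2,y$ pairwise distinct, I would set $V(G'):=U\cup\{\xi_1,\xi_2,y\}$, $E(G'):=E(G[U])\cup\partial_G U$, and $S':=\{\xi_1,\xi_2,y\}$. Properties (i)--(v) then follow without much further work: (ii) because $x\in U\subseteq V(G')\setminus S'$ and $y\in S'$; (v) because the adjacent pair $x\sim a$ lies in $U=V(G')\setminus S'$; (iii) because every $G$-edge incident to a $U$-vertex lies in $E(G[U])\cup\partial_G U$, preserving degree $3$ for $U$-vertices, while the distinctness of the $\xi_i$'s gives each element of $S'$ degree exactly $1$, and the connectedness of $G'$ reduces to that of $G[U]$---the latter holds because any second component of $G[U]$ would have no boundary edges of $U$ and hence be disconnected from $S$ in $G$ altogether; (iv) by restricting to $U$ any triple of edge-disjoint paths from $v\in U$ to the three elements of $S$ in $G$, which by the count $|\partial_G U|=3$ must use the three boundary edges one each, yielding three edge-disjoint paths in $G'$ from $v$ to $\xi_1,\xi_2,y$ respectively; and (i) because $|S'|=3$ by distinctness and, for example, the edges $\{y,c\},\{y,d\}$ lie in $E(G)\setminus E(G')$, giving the strict edge inequality.
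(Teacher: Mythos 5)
Your proposal is essentially the edge-cut dual of the paper's argument, and it arrives at exactly the same $G'$ and $S'$. The paper picks the bad vertex $z$ and fixes three edge-disjoint paths $\mathcal{P}_1,\mathcal{P}_2,\mathcal{P}_3$ from $z$ to $S$ in $G$; it shows the Menger cut $\{e,e'\}$ in $\tilde{G}$ does not use the two new edges $g_x,g_y$, takes $G_0$ to be the component of $z$ in $G$ after removing $e,e',\{x,y\}$, and argues distinctness of the three outer endpoints directly from the edge-disjointness of the $\mathcal{P}_i$ (if two outer endpoints coincided, a degree count at that vertex shows two of the paths could not both exit it). You instead work with a low-boundary set $T$ in $\tilde{G}$, prove the identity relating $|\partial_G T|$ and $|\partial_{\tilde{G}}T|$, deduce $\{a,b\}\subseteq T$, $\{c,d\}\cap T=\es$, and land on the same $U=T\cup\{x\}$ with $\partial_G U=\{e_1,e_2,\{x,y\}\}$. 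The two routes are comparable in length; the paper's is slightly more economical in that it never has to enumerate the position of the ``bad'' coinciding endpoint.

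There is, however, a genuine gap in the distinctness step. You claim that if $\xi_1=\xi_2=w^*$, then ``in each case the argument produces a non-$S$ subset of $V(G)$ whose boundary in $G$ has size at most $2$''. This is correct when $w^* \notin S$ (taking $T\cup\{x,w^*\}$, and if $w^*\in\{c,d\}$ noting the third edge of $w^*$ goes to $y$), but it fails when $w^*\in S$: then $w^*$ has degree $\le 2$ and both its edges are $e_1,e_2$, so $T\cup\{x,w^*\}$ does have boundary of size $1$, but this set meets $S$, so 3-connectivity does not directly give $|\partial_G\cdot|\ge 3$. That subcase \emph{is} impossible, but one must argue differently: for instance, $T\cup\{w^*\}$ has $\partial_{\tilde G}(T\cup\{w^*\})=\es$ while $\tilde G$ is connected (Lemma \ref{fix_path-new}(i)), forcing $V(\tilde G)=T\cup\{w^*\}$ and hence $S\subseteq T\cup\{w^*\}$, contradicting $T\cap S=\es$ and $|S|=3$; alternatively, use the refinement that any $A$ containing a vertex of $V(G)\setminus S$ satisfies $|\partial_G A|\ge 3-|A\cap S|$. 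As written, your sketch does not cover this branch. Separately, you should take $T$ to be the connected component of the bad vertex in $\tilde{G}$ minus the cut (which is what Menger provides), and your justification for connectedness of $G[U]$ (``a second component would have no boundary edges of $U$'') is not quite right --- such a component could absorb $e_1$ and/or $e_2$; the clean argument is to lift a $\tilde{G}$-path inside $T$ through $x$ whenever it traverses $g_x$.
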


\begin{proof}
Let $\tilde{G}$ denote the H-reduction of $G$ by $\{x,y\}$, and let $g_x$ and $g_y$ be the new edges in
$\tilde{G}$ that were formed by merging edges at $x$ and $y$, respectively. Since this is not a strong H-reduction,
$V(G) \setminus S$ is strictly larger than $\{x,y\}$, and there exists a vertex $z$ in
$V(G) \setminus (S \cup \{x,y\})$ such that $z$ is not 3-connected to $S$ in $\tilde{G}$.
Using Menger's theorem, let $e, e' \in E(\tilde{G})$ be two edges that form a cutset between 
$z$ and $S$ in $\tilde{G}$. 

We first claim that both $e$ and $e'$ are distinct from $g_x$ and $g_y$
(and hence can be identified with edges in $E(G)$). Let $\mathcal{P}_i$, $i=1,2,3$, be edge-disjoint paths
from $z$ to $S$ in $G$. If none of these uses $\{x,y\}$, then by Lemma \ref{fix_path-new}(iii), we get three
(edge-disjoint) paths $\mathcal{P}'_i$, $i=1,2,3$, in $\tilde{G}$ from $z$ to $S$ in $\tilde{G}$, contradicting 
the choice of $z$. Let one of the paths, say $\mathcal{P}_1$, use $\{x,y\}$. Then Lemma \ref{fix_path-new}(iii)
gives paths $\mathcal{P}'_2$, $\mathcal{P}'_3$ in $\tilde{G}$ that are edge-disjoint, and cannot use the edges
$g_x$ and $g_y$. But they must pass through $e$ and $e'$, respectively, by the choice of $z$, and the claim follows.

Let now $G_0$ denote the connected component of $z$ in $E(G) \setminus \{e, e', \{x,y\}\}$, and let 
$G'$ denote the graph obtained from $G_0$ by adding the edges $e, e', \{x,y\}$. Using the paths in the 
previous paragraph, it is easy to see that $G'$ is connected. Also, exactly one of $x$ and $y$
is a vertex of $G_0$. For the remainder assume that $\mathcal{P}_1$ reaches $x$ before $y$, so that
$x \in V(G_0)$. Call $x_1 := x$, $y_1 := y$, and let us also write $e = \{ x_2, y_2 \} \in \mathcal{P}_2$, 
$e' = \{ x_3, y_3 \} \in \mathcal{P}_3$, with $x_2, x_3 \in V(G_0)$. Put $S' = \{y_1,y_2,y_3\}$.
Edge-disjointness of the paths $\mathcal{P}_i$, $i=1,2,3$, implies that the $y_i$'s are distinct, and 
hence $|S'|=3$. (It may happen, but is not harmful, that $x_2 = x_3$, in which case they necessarily also
equal $z$.)

We verify the properties claimed about $G', S'$. Two edges incident with $y$ are not in $G'$, and therefore
(i) follows. Property (ii) holds by construction. Simpleness of $G'$ follows since it is a subgraph of $G$, 
and we noted already that it is connected. Since $G_0$ was a connected component, degrees of vertices other 
than in the set $\{x_1,x_2,x_3,y_1,y_2,y_3\}$ remain $3$. The construction also implies that $x_i$, $i=1,2,3$, 
have degree $3$. Hence (iii) follows.

Let now $w \in V(G') \setminus S' = V(G_0)$. Since $w$ is 3-connected to $S$ in $G$, there are edge-disjoint
paths $\mathcal{P}^w_i$, $i=1,2,3$, from $w$ to each element of $S$. Since $S \cap V(G_0) = \es$, these paths
must use the edges $\{x_i,y_i\}$, $i=1,2,3$, and the claim (iv) follows.

Finally, since $x \in V(G') \setminus S'$, either neighbour of $x$ that is not $y$ shows that claim (v) holds.
\end{proof}

\begin{proof}[Proof of Proposition \ref{prop:strong-H-reduce}.]
If all pairs of neighbouring vertices in $V(G) \setminus S$ form a strongly H-reducible edge, there is 
nothing to prove. Otherwise let $x, y \in V(G) \setminus S$ be neighbours that do not form a strongly 
H-reducible edge, and use Lemma \ref{lem:reduction} to find a pair $G'$, $S'$ with the properties in 
that lemma. We will assume that $\{x,y\}$ and $G', S'$ have been chosen in such a way that $G'$ has
as few edges as possible. 

We first claim that all edges $\{u,v\}$ with $u,v \in V(G') \setminus S'$ 
are strongly H-reducible for $G', S'$. (We know this set is non-empty by property (v) of 
Lemma \ref{lem:reduction}.) Indeed, suppose not. Then applying Lemma \ref{lem:reduction} to 
$G', S'$ and an edge $\{u,v\}$ that is not strongly H-reducible for $G', S'$, we obtain a pair
$G'', S''$ where $G''$ has fewer edges than $G'$, by property (i) of the lemma, and this contradicts the 
choice of $\{x,y\}, G', S'$.

Select any edge $\{u,v\}$ with $u,v \in V(G') \setminus S'$ that, by the previous paragraph,
is necessarily strongly H-reducible for $G', S'$. Let $\tilde{G}'$ denote the H-reduction of
$G'$ by $\{u,v\}$. We claim that $\{u,v\}$ is strongly H-reducible for $G, S$
as well, which will complete the proof. First, since $V(G') \setminus S' \subset V(G) \setminus S$,
and $u,v$ have degree $3$ in $G$, the edge $\{u,v\}$ is H-reducible in $G, S$. Let 
$\tilde{G}$ denote the H-reduction of $G$ by $\{u,v\}$. Let $w \in V(G) \setminus S$,
$w \not= u,v$. We have to show that $w$ is 3-connected to $S$ in $\tilde{G}$. 

\emph{Case (i). $w \in V(G') \setminus S'$.} Using that $w$ is 3-connected to $S$ in $G$, we have 
three edge-disjoint paths $\mathcal{P}^w_i$, $i=1,2,3$, from $w$ to $S$. Since $\{ \{x_i,y_i\} : i=1,2,3 \}$
is a cutset between $w$ and $S$, we have that the paths use these three edges, and we may assume that
$\mathcal{P}^w_i$ passes through $\{x_i,y_i\}$, $i=1,2,3$. Let $\pi^w_i$ denote the portion of
$\mathcal{P}^w_i$ between $y_i$ and $S$, $i=1,2,3$. Note that none of the $\pi^w_i$'s uses $\{u,v\}$. 
Since $w$ is 3-connected to $S'$ in $\tilde{G}'$, we can find edge-disjoint paths $\rho^w_i$, $i=1,2,3$,
from $w$ to $y_i$, $i=1,2,3$, respectively, in $\tilde{G}'$. Concatenation of $\rho^w_i$ and
$\pi^w_i$, $i=1,2,3$, completes the proof for this case.

\emph{Case (ii). $w \in (V(G) \setminus V(G')) \cup S'$.} Again we use that $w$ is 3-connected to
$S$ in $G$ to select edge-disjoint paths $\mathcal{P}^w_i$, $i=1,2,3$, from $w$ to $S$. 
If none of the paths enters the set $V(G') \setminus S'$, we are done, as these remain paths in
$\tilde{G}$. Since any of these three paths that enter the set must do so via 
one of the edges $\{x_i,y_i\}$, and then exit via another of these three edges, at most one of the paths 
can enter $V(G') \setminus S'$. We may assume it is the path $\mathcal{P}^w_1$ and it enters via, say,
$(y_1,x_1)$ and exits via $(x_2,y_2)$. We distinguish the following subcases.
\begin{itemize}

\item[(ii)(a)] $\{u,v\} \cap \{x_1,x_2\} = \es$. In this case it is sufficient to use that
$\tilde{G}'$ is connected to get a path from $x_1$ to $x_2$ and replace the portion of $\mathcal{P}^w_1$
inside $V(G') \setminus S'$ by this path to obtain a path in $\tilde{G}$ edge-disjoint from 
$\mathcal{P}^w_2$, $\mathcal{P}^w_3$.

\item[(ii)(b)] $\{u,v\} = \{x_1,x_2\}$. In this case $y_1$ and $y_2$ get connected by an edge 
in $\tilde{G}$, and we may omit any portion of $\mathcal{P}^w_1$ inside $V(G') \setminus S'$
to get a path in $\tilde{G}$.

\item[(ii)(c)] $\{u,v\} \cap \{x_1,x_2\} = \{ x_1 \}$. We may assume $x_1 = u$. Let $t$ be the neighbour in $G'$
of $x_1 = u$ that is different from both $y_1$ and $v$. Then $t \in V(\tilde{G}') \setminus S'$, and hence
it has three edge-disjoint paths to $\{y_1,y_2,y_3\}$ in $\tilde{G}'$. One of these paths is necessarily 
the edge $(t,y_1)$ created by the H-reduction. Another will connect $t$ to $y_2$, and we can use the 
concatenation of these two to replace the portion of $\mathcal{P}^w_1$ inside $V(G') \setminus S'$.

\item[(ii)(d)] $\{u,v\} \cap \{x_1,x_2\} = \{ x_2 \}$. The proof is analogous to case (ii)(c).

\end{itemize}
The above completes the proof of strong H-reduction, and the proposition follows.
\end{proof}

Assume now that $G, S$ satisfies the assumptions of Proposition \ref{prop:strong-H-reduce}.
Let us do the following procedure.
\begin{enumerate}
\item If $G$ does not have a strongly H-reducible edge then set $G_{\mathcal{R}}:=G$. Otherwise move to step 2.
\item Consider a strongly H-reducible edge $e$. We construct $\tilde{G}$, the H-reduction of $G$ by $e$, and restart the process with $\tilde{G}$ in place of $G$.
\end{enumerate}
 
 Note that at each iteration $\tilde{G}$ again satisfies the assumptions of Proposition \ref{prop:strong-H-reduce} 
 and has three edges less than $G$. This means that the procedure finishes, since we start with a finite graph. 
 Any resulting graph $G_{\mathcal{R}}$ is called a reduced graph of $G$. Note that the order in which we choose 
 to do the reductions could lead to different reduced graphs starting from the same initial graph. The following
 proposition holds for any admissible construction.
} 

We say that a {\colAJ graph}
$G$ can be H-reduced to a graph $G'$ if there exists a sequence of successive H-reduction that transform $G$ into $G'$.

\begin{proposition}\label{super_reduc}
Let $G$ be a finite connected simple graph 
and let $S = \{ x,y,z\} \subset V(G)$ be three distinct vertices. 
Assume that $G$ is 3-connected to $\{x,y,z\}$, that any $v\in V(G) \setminus \{x,y,z\}$ has degree 3 and that $x$, $y$ and $z$ have exactly degree two. Then $G$ can be H-reduced to one of the following two graphs
\[
\begin{minipage}{0.3\textwidth}
      \includegraphics[width=0.7\linewidth]{Lem_triangleb-eps-converted-to.pdf}
    \end{minipage}
    \qquad
    \begin{minipage}{0.3\textwidth}
      \includegraphics[width=0.7\linewidth]{squareb-eps-converted-to.pdf}
    \end{minipage}
\]
\end{proposition}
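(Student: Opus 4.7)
The plan is to iterate the strong H-reduction procedure established in Proposition \ref{prop:strong-H-reduce} until no more such reductions are available, and then do a short case analysis on the resulting minimal graph.

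First I will verify that the class of graphs $(G,S)$ satisfying the hypotheses of the proposition is closed under strong H-reduction. By Lemma \ref{fix_path-new}(i), the reduced graph remains finite, connected, and keeps degree $3$ at every vertex outside $S$; by Lemma \ref{fix_path-new}(ii), it stays simple; and 3-connectedness to $S$ is preserved by definition of strong H-reducibility. The one point that needs explicit checking is that the three labelled vertices still have degree exactly $2$: this follows because every strongly H-reducible edge $\{u,v\}$ has $u,v \in V(G)\setminus S$ with both endpoints of degree $3$, so the reduction deletes $u$ and $v$ and recombines their remaining four incidences into two new edges, leaving degrees of all other vertices (in particular those of $x,y,z$) unchanged.

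Next, since each strong H-reduction strictly decreases $|E(G)|$, the iteration terminates at some reduced graph $G_{\mathcal{R}}$ still satisfying the hypotheses. By Proposition \ref{prop:strong-H-reduce}, termination forces $V(G_{\mathcal{R}}) \setminus S$ to contain no two adjacent vertices. I will then split into two cases:
\begin{itemize}
\item If $V(G_{\mathcal{R}}) = S$, then $G_{\mathcal{R}}$ is a simple graph on three vertices with every vertex of degree exactly $2$, so $G_{\mathcal{R}} = K_3$, giving the triangle.
\item If there is at least one internal vertex, then every internal vertex has degree $3$ with all neighbours in $S = \{x,y,z\}$, and since $|S|=3$ and $G_{\mathcal{R}}$ is simple, each internal vertex must be adjacent to each of $x,y,z$ exactly once. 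Writing $k$ for the number of internal vertices and $e$ for the number of edges inside $S$, the handshake lemma gives
\[ 6 = \deg(x)+\deg(y)+\deg(z) = 3k + 2e. \]
The only integer solution with $k\ge 1$ is $k=2$, $e=0$, so $G_{\mathcal{R}} \cong K_{3,2}$, which is exactly the second canonical diagram.
\end{itemize}

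The main conceptual work has already been done upstream: the termination criterion from Proposition \ref{prop:strong-H-reduce} and the structural preservation from Lemma \ref{fix_path-new} reduce the problem to the handshake-lemma count above. The only place I expect friction is making sure that the degree constraint on $S$ really is maintained throughout the iteration, which is why I would spell out the ``both endpoints have degree $3$'' case of the H-reduction explicitly at the start; given this, the finite case analysis at the end is immediate.
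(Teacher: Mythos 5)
Your proof is correct and follows essentially the same strategy as the paper: iterate strong H-reductions (using Proposition \ref{prop:strong-H-reduce} and Lemma \ref{fix_path-new} to keep the hypotheses invariant) until the internal vertices form an independent set, then classify the terminal graph $G_{\mathcal{R}}$. The only difference is in the final classification, where the paper runs through the four cases $|V(G_{\mathcal{R}})\setminus S| \in \{0,1,2,>2\}$ with ad hoc contradictions, whereas you replace that with a single handshake-lemma count $6 = 3k + 2e$; this is a clean streamlining but not a materially different argument.
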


\begin{proof}
The {\colAJ reduced} graph $G_{\mathcal{R}}$ does not have any strongly H-reducible edges by construction.
 
 We can see that any $v,v'\in V(G_{\mathcal{R}})\setminus \{0,u,u'\}$ cannot be adjacent, otherwise the edge $\{v,v'\}$ would be H-reducible by Proposition~\ref{prop:strong-H-reduce}.
 
{\colAJ We have by Lemma \ref{fix_path-new}(i) that any $v\in V(G_{\mathcal{R}})\setminus \{x,y,z\}$ has 
degree $3$.}
In the construction of $G_{\mathcal{R}}$, no reduction occurs on any edge adjacent to $\{x,y,z\}$, 
and this means that $x$, $y$ and $z$ have exactly degree two in $G_{\mathcal{R}}$ since that is the case in $G$. Furthermore, since $G_{\mathcal{R}}$ is a simple graph by construction, we know that  $x$, $y$ and $z$  have exactly two neighbours in $G_{\mathcal{R}}$.
 
Let us do a case distinction 
\begin{enumerate}
\item If $|V(G_{\mathcal{R}}) \setminus \{x,y,z\}|=0$, then $G_{\mathcal{R}}$ is a triangle (since $x$, $y$ and $z$ all have 2 neighbours within $\{x,y,z\}$). 
\item If $|V(G_{\mathcal{R}}) \setminus \{x,y,z\}|= 1$, let $v\in V(G_{\mathcal{R}})\setminus \{x,y,z\}$. We know that $v$ has {\colAJ 3 neighbours} since $v$ is 3-connected to $\{x,y,z\}$. This means that $v$ is connected to $x$, $y$ and $z$ since those are the only vertices different from $v$ ($v$ cannot have 2 edges to one of these vertices because they have degree 2 and 2 neighbours). Since $x$ is of degree $2$, it has to have another neighbour in addition to $v$ and the only available vertices are $y$ or $z$. Let's assume that the neighbour is $y$ (the other case is similar). Then $x$ and $y$ have already degree 2 and thus cannot be neighbours of $z$.  Furthermore, $v$ is already a neighbour of $z$ and $G_{\mathcal{R}}$ contains no other vertices than $x,y,z,v$ which means $z$ is of degree 1. This is a contradiction.
\item If $|V(G_{\mathcal{R}}) \setminus \{0,u,u'\}|= 2$, then we know there exists $v,v'\in V(G_{\mathcal{R}})\setminus \{x,y,z\}$ and, since we proved that they cannot be adjacent, they both have to be adjacent to $x$, $y$, and $z$. This gives the second graph in the lemma.
 \item If $|V(G_{\mathcal{R}}) \setminus \{x,y,z\}|> 2$, then we know there exists $v,v',v''\in V(G_{\mathcal{R}})\setminus \{x,y,z\}$ and, since we proved that they cannot be adjacent, they  all have to be adjacent to $x$, $y$, and $z$. This means that $x$, $y$, and $z$ have degree at least 3, which is a contradiction.
 \end{enumerate}
 This proves the proposition.
\end{proof}
 
The previous result, which is purely graph theoretical, is highly useful for bounding diagrams.

\subsection{General diagrammatic bound for 3-connected graphs}
Before moving onto our main bound, we will prove an interesting preparatory lemma. Since, under certain hypotheses, we can H-reduce a graph to a graph in which unlabelled vertices are not neighbours, it means that in the resulting graph the only remaining unlabelled points are adjacent to labelled points. This means the graph is built from certain specific pieces and in the following lemma we give a bound on such pieces (in the case of a graph of degree at most 3).

\begin{lemma}\label{holder}
For any $u,u'\in \mathbb{Z}^d$,
\[
\begin{minipage}{0.3\textwidth}
      \includegraphics[width=0.4\linewidth]{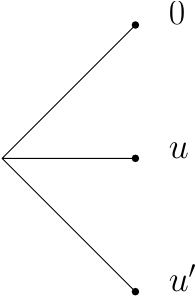}
    \end{minipage}
    \lesssim 
\left(
    \begin{minipage}{0.3\textwidth}
\begin{center}      \includegraphics[width=0.6\linewidth]{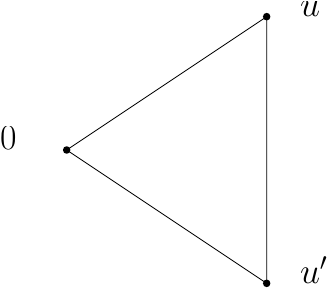}\end{center}
    \end{minipage}\right)^{1/2}
\]
\end{lemma}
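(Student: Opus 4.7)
The plan is to prove the inequality by an application of the Cauchy--Schwarz inequality, which accounts for the name of the lemma. First, I would read off the analytic expression of the fan from the picture: it is a sum over the unlabelled vertex (or vertices) of a product of two-point functions $\tau$, in which $u$ and $u'$ play symmetric roles. The key observation is that this expression factorises in the form $\sum_{w} F_u(w)\,F_{u'}(w)$, where $F_u$ and $F_{u'}$ are structurally identical up to swapping $u \leftrightarrow u'$; this is possible precisely because of the ``fan'' symmetry with respect to the two labelled endpoints $u$ and $u'$.

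Next, I would apply Cauchy--Schwarz in the unlabelled summation variable to obtain
\[ \mathrm{fan}(0,u,u') \;\le\; \Bigl(\sum_{w} F_u(w)^2\Bigr)^{1/2}\,\Bigl(\sum_{w} F_{u'}(w)^2\Bigr)^{1/2}. \]
By the symmetry $u \leftrightarrow u'$ just mentioned, and by translation invariance of $\tau$, each factor on the right-hand side represents the same diagrammatic quantity, in which $u$ (respectively $u'$) has been ``duplicated''. These diagonal expressions can each be identified with the triangle diagram in the lemma's statement, up to a multiplicative constant absorbed by $\lesssim$. Taking the geometric mean of the two equal factors yields $(\text{triangle})^{1/2}$, as required.

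The main obstacle will be pinning down the correct factorisation $F_u(w)\,F_{u'}(w)$ so that the Cauchy--Schwarz factors reproduce exactly the triangle diagram on the right-hand side, rather than some other larger diagram. If the most natural choice of factorisation fails to match cleanly, I would expect to either use a Hölder inequality with three factors (separating out a ``central'' edge that is shared by both blades of the fan), or first use the asymptotic comparison $\tau(x-v)/\tau(x)\le C$ coming from \eqref{eq:tauAsy} to rebase some of the $\tau$'s onto the labelled endpoints before Cauchy--Schwarz, or finally perform an intermediate H-reduction from Proposition \ref{prop_Hred} to reduce the fan to the symmetric shape to which Cauchy--Schwarz applies directly. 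In any of these scenarios the structure of the argument is the same: symmetrise, apply Cauchy--Schwarz, and recognise the resulting factors as a triangle.
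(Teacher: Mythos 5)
Your primary plan --- a two-factor Cauchy--Schwarz in the unlabelled summation variable --- does not work here, and the reason is worth pinning down precisely. The fan is $\sum_{x}\tau(x)\tau(x-u)\tau(x-u')$, so the natural symmetric split is $F_u(x)=\tau(x)^{1/2}\tau(x-u)$ and $F_{u'}(x)=\tau(x)^{1/2}\tau(x-u')$. But then the diagonal sum $\sum_x F_u(x)^2 = \sum_x \tau(x)\tau(x-u)^2$ depends only on $u$ (by Lemma~\ref{lem:ConvBd} it is of order $\tau(u)$, using $2(d-2)>d$ for $d>4$), and likewise $\sum_x F_{u'}(x)^2$ is of order $\tau(u')$. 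Hence Cauchy--Schwarz gives only $\lesssim \tau(u)^{1/2}\tau(u')^{1/2}$, and the factor $\tau(u-u')^{1/2}$ on the right-hand side of the lemma is irretrievably lost. This is a structural defect of any two-factor split of a three-line fan: no such split puts $u$ and $u'$ together in the same factor, so no pairing $\{u,u'\}$ can ever appear in the resulting diagonal diagrams.

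The fallback you sketch is exactly what the paper does, and it is not optional here. One rewrites
\[
\tau(x)\tau(x-u')\tau(x-u) = \bigl(\tau(x)\tau(x-u')\bigr)^{1/2}\bigl(\tau(x)\tau(x-u)\bigr)^{1/2}\bigl(\tau(x-u')\tau(x-u)\bigr)^{1/2},
\]
so that each of the three unordered pairs $\{0,u'\}$, $\{0,u\}$, $\{u,u'\}$ is represented, and applies H\"older with exponents $(3,3,3)$. Each resulting factor is $\bigl(\tau^{3/2}\ast\tau^{3/2}\bigr)^{1/3}$ evaluated at one of $u'$, $u$, $u-u'$; since $\tfrac32(d-2)>d$ precisely when $d>6$, Lemma~\ref{lem:ConvBd} (case $a>d$) gives each factor $\lesssim (1+|\cdot|)^{-(d-2)/2}$, and the product is $\lesssim \bigl(\tau(u)\tau(u')\tau(u-u')\bigr)^{1/2}$. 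So: promote the three-factor H\"older from ``fallback'' to ``the method'', and the rest of your outline goes through.
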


\begin{proof}
The diagram on the left-hand side corresponds to 
  \begin{align*}
& \sum_{x\in  \mathbb{Z}^d}\tau(x)\tau(x-u')\tau(x-u) \\
=& \sum_{x\in \mathbb{Z}^d} (\tau(x)\tau(x-u'))^{1/2}(\tau(x)\tau(x-u))^{1/2}(\tau(x-u')\tau(x-u))^{1/2} \\
\leq & \Bigl (\sum_{x\in \mathbb{Z}^d} (\tau(x)\tau(x-u'))^{3/2}\Bigr)^{1/3} \Bigl (\sum_{x\in \mathbb{Z}^d} (\tau(x)\tau(x-u))^{3/2}\Bigr)^{1/3} \Bigl (\sum_{x\in \mathbb{Z}^d} (\tau(x-u')\tau(x-u))^{3/2}\Bigr)^{1/3}
\end{align*}  
where we used Hölder's inequality. Using Lemma~\ref{lem:ConvBd} and $\tau(x) \lesssim (1+\abs{x})^{-(d-2)}$ we see that (using $\frac 32(d-2) >d$ whenever $d<6$)
\[
\sum_{x\in \mathbb{Z}^d} (\tau(x)\tau(x-u'))^{3/2} =(\tau^{3/2} \ast \tau^{3/2})(u')  \lesssim (1+\abs{u'})^{-\frac 32(d-2)},
\]
and similarly 
\[
\sum_{x\in  \mathbb{Z}^d} (\tau(x)\tau(x-u))^{3/2} \lesssim (1+\abs{u})^{-\frac 32(d-2)} \text{ and } \sum_{x\in \mathbb{Z}^d} (\tau(x-u')\tau(x-u))^{3/2} \lesssim (1+\abs{u-u'})^{-\frac 32(d-2)}.
\]

Putting the last three equations together yields
\[
\sum_{x \in \mathbb{Z}^d} \tau(x)\tau(x-u')\tau(x-u) \lesssim (1+\abs{u})^{-\frac{d-2}2} (1+\abs{u'})^{-\frac{d-2}2} (1+\abs{u-u'})^{-\frac{d-2}2},
\]
which is lower than $C(\tau(0,u)\tau(u,u')\tau(0,u'))^{1/2}$.
\end{proof}

\begin{proposition}\label{kill_bill}
Let $G$ be a finite simple connected  graph and denote $x,y,z\in V(G)$ three distinct vertices in $G$. Assume that $G$ is 3-connected to $\{x,y,z\}$, that any $v\in V(G) \setminus \{x,y,z\}$ has degree 3 and that $x$, $y$ and $z$ have exactly degree two. Assign labels $\ell(x)=0$, $\ell(y)=u$ and $\ell(z)=u'$ for some $u,u' \in \mathbb{Z}^d$. We have
 \[
\operatorname{Diag}((G,\{x,y,z\},\ell))\leq C_{|E(G)|} \tau(0,u)\tau(u',u)\tau(u',0),
  \]
where the constant  $C_{|E(G)|}<\infty$ only depends on the edge cardinality of $G$. 
   \end{proposition}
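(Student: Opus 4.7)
The plan is to combine Proposition \ref{super_reduc} with the H-reduction bound of Proposition \ref{prop_Hred}, reducing the problem to explicitly evaluating only two specific diagrams.

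First, note that the hypotheses of Proposition \ref{kill_bill} are exactly those of Proposition \ref{super_reduc} (with $S=\{x,y,z\}$ of degrees exactly $2$, interior vertices of degree $3$, and $3$-connectivity to $S$). Therefore $G$ admits a finite sequence of strong H-reductions that transform it into either (a) the triangle on the vertices $\{x,y,z\}$, or (b) the graph with two unlabelled vertices $v,v'$, each adjacent to all of $x,y,z$ and not to each other. Each individual step of this procedure acts on an edge whose endpoints lie in $V(G)\setminus\{x,y,z\}$, so the labelled set and its labelling $\ell$ are preserved throughout. Moreover, the number of steps is at most $|E(G)|$, because each step strictly decreases the edge count.

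Now applying Proposition \ref{prop_Hred} iteratively along this sequence, each step introduces at most a multiplicative constant $C$, yielding
\[
\operatorname{Diag}((G,\{x,y,z\},\ell)) \le C^{|E(G)|}\,\operatorname{Diag}((G_{\mathcal R},\{x,y,z\},\ell)),
\]
where $G_{\mathcal R}$ is one of the two reduced graphs above. It remains to bound $\operatorname{Diag}((G_{\mathcal R},\{x,y,z\},\ell))$ in each case. In case (a), the diagram is exactly
\[
\tau(0,u)\,\tau(u,u')\,\tau(0,u'),
\]
which is the desired expression. In case (b), with unlabelled vertices $v,v'$ each adjacent to the three labelled vertices, the diagram factorises as
\[
\operatorname{Diag}((G_{\mathcal R},\{x,y,z\},\ell))
= \left(\sum_{a\in\Zd}\tau(0,a)\,\tau(u,a)\,\tau(u',a)\right)^{2}.
\]
By Lemma \ref{holder}, each factor is bounded above by $C\bigl(\tau(0,u)\,\tau(u,u')\,\tau(0,u')\bigr)^{1/2}$, so squaring gives the bound $C^2\,\tau(0,u)\,\tau(u,u')\,\tau(0,u')$.

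Combining these two cases and absorbing constants into $C_{|E(G)|}$ yields the claim. The only real work is in the structural graph-theoretic Proposition \ref{super_reduc}, which has already been established; beyond that, the proof is a straightforward composition of the H-reduction mechanism with the two explicit computations above. No additional obstacle arises, since strong H-reducibility guarantees that the labelled vertices $x,y,z$ are never touched and that the intermediate graphs remain simple and $3$-connected to $\{x,y,z\}$, so Proposition \ref{prop_Hred} applies at every step.
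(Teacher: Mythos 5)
Your proof is correct and follows essentially the same route as the paper: first invoke Proposition~\ref{super_reduc} to H-reduce $G$ to one of the two model graphs, then apply Proposition~\ref{prop_Hred} iteratively to transfer the bound, and finally evaluate the triangle directly and the two-interior-vertex diagram via the factorisation and Lemma~\ref{holder}. Your extra remarks that the labelled vertices are never touched, that intermediate graphs remain simple and $3$-connected, and that each step strictly decreases the edge count are all accurate (in fact the paper notes each strong H-reduction removes exactly three edges) and merely make explicit what the paper leaves implicit.
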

 \begin{proof}
 By Proposition~\ref{super_reduc}, the graph $G$ can be reduced, after at most $|E(G)|$ H-reductions, to the either of the two graphs shown in said proposition. This means that 
\[
\text{Diag}((G,\{x,y,z\},\ell)) \leq (C_H)^{|E(G)|} \left(\begin{minipage}{0.3\textwidth}
      \includegraphics[width=0.55\linewidth]{Lem_triangle-eps-converted-to.pdf}
    \end{minipage}
+ 
    \begin{minipage}{0.3\textwidth}
      \includegraphics[width=0.55\linewidth]{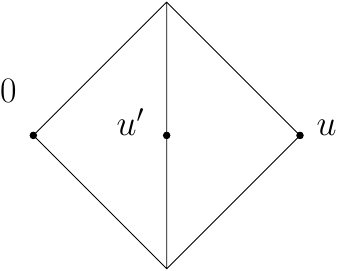}
    \end{minipage}\right)
\]
   where $C_H$ is the constant appearing in Proposition \ref{prop_Hred}.

 The first diagram is equal to $\tau(0,u)\tau(u,u')\tau(0,u')$. The second one is
\[
   \sum_{x,y\in \mathbb{Z}^d} \tau(x)\tau(x-u')\tau(x-u)\tau(y)\tau(y-u')\tau(y-u) = \Bigl(\sum_{x\in  \mathbb{Z}^d}\tau(x)\tau(x-u')\tau(x-u)\Bigr)^2 ,
   \]
   where the right-hand side can be bounded by Lemma~\ref{holder}. This proves the Proposition.
 \end{proof}

\subsection{Building a large class of 3-connected graphs}\label{sect_big_class}

 Let $G$ be a connected simple graph with a finite number of edges and denote $0,u,u'\in V(G)$ three distinct vertices in $G$.

 \begin{remark}\label{ex_3strong}
 The following two graphs are  3-connected to $\{0,u,u'\}$, all vertices in $V(G)\setminus \{0,u,u'\}$ have degree 3 and $0$, $u$, $u'$ all have degree 2
\[
 \begin{minipage}{0.3\textwidth}
      \includegraphics[width=0.5\linewidth]{Lem_triangle-eps-converted-to.pdf}
    \end{minipage}\
      \qquad 
        \begin{minipage}{0.3\textwidth}
      \includegraphics[width=0.8\linewidth]{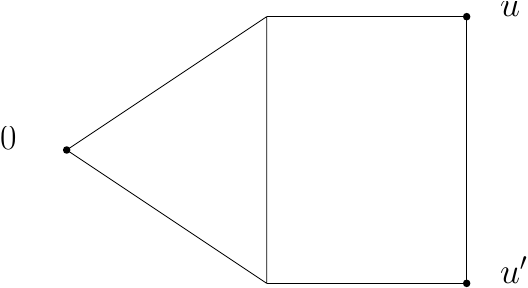}
    \end{minipage}\
\]
\end{remark}
 
 The following proposition applied recursively will allow to build a very large class of 
 3-connected graphs.

 \begin{proposition}\label{expand_class}
Let $G$ be a connected  simple graph with  three labelled vertices $0,u,u'$. Assume that $G$ is 3-connected to $\{0,u,u'\}$, that all vertices in $V(G)\setminus \{0,u,u'\}$ have degree 3 and that $0$, $u$, $u'$ all have degree 2. Then, all the graphs in the generalized diagrammatic graphs $(G\circledast u^+) \uplus G^{(1)}$,  $(G\circledast y) \uplus G^{(2)}$ and $(G\circledast^2 y) \uplus G^{(3)}$ are  3-connected to $\{0,u,u^+\}$ where $y \notin \{0,u,u'\}$ the graphs $G^{(1)}$, $G^{(2)}$ and $G^{(3)}$ are respectively
 \[
\begin{minipage}{0.3\textwidth}
      \includegraphics[width=0.5\linewidth]{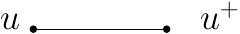}
    \end{minipage}
    \begin{minipage}{0.3\textwidth}
      \includegraphics[width=0.6\linewidth]{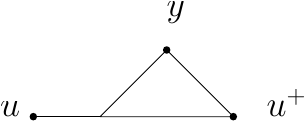}
    \end{minipage}
        \begin{minipage}{0.3\textwidth}
      \includegraphics[width=0.6\linewidth]{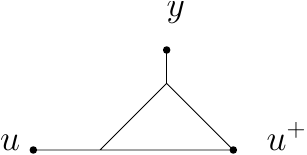}
    \end{minipage}
\]
 \end{proposition}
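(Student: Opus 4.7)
The plan is to verify, for each of the three generalized diagrammatic graphs and each individual graph $G^\ast$ belonging to it, that every vertex $v\in V(G^\ast)\setminus\{0,u,u^+\}$ admits three edge-disjoint paths to $0$, $u$ and $u^+$ respectively. The two inputs I will leverage are (i) the assumed 3-connectivity of $G$ to $\{0,u,u'\}$, which yields a Menger-type triple of edge-disjoint paths in $G$ from every interior vertex to $0,u,u'$, and (ii) the explicit structure of the bridging graphs $G^{(1)},G^{(2)},G^{(3)}$, whose role is precisely to provide internally edge-disjoint connectors from the former label $u'$ (and the auxiliary vertex $y$, where applicable) to the new label $u^+$, while simultaneously lifting $u'$ and $y$ to degree $3$.

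First I would handle the simplest case $(G\circledast u^+)\uplus G^{(1)}$. Fix $e=\{a,b\}\in E(G)$ and write $u^\ast$ for the new degree-$3$ vertex introduced by the convolution, with its three incident edges $\{a,u^\ast\},\{b,u^\ast\},\{u^+,u^\ast\}$. For a vertex $v\in V(G)\setminus\{0,u,u'\}$, take edge-disjoint paths $\mathcal{P}_0,\mathcal{P}_u,\mathcal{P}_{u'}$ in $G$ from $v$ to $0,u,u'$ furnished by the hypothesis. Substituting in each of them the edge $e$ (if used) by the length-two detour through $u^\ast$ produces edge-disjoint paths in $G^\ast$, after which $\mathcal{P}_{u'}$ is extended through the edges of $G^{(1)}$ to reach $u^+$, while $\mathcal{P}_0$ and $\mathcal{P}_u$ are left untouched. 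For the new vertex $u^\ast$ itself, the edge $\{u^\ast,u^+\}$ gives the path to $u^+$, while edge-disjoint paths from $\{a,b\}$ to $\{0,u\}$ inside $G\setminus e$ are obtained from the 3-connectivity hypothesis applied to $a$ and $b$: the path to $u'$ in each of the two Menger triples can likewise be redirected into $G^{(1)}$. Finally, for a vertex inside $G^{(1)}$, the needed three paths are read off directly from the explicit shape of $G^{(1)}$, using one of its edges to enter the main body $G$ and then invoking 3-connectivity of $G$ there.

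The remaining two cases $(G\circledast y)\uplus G^{(2)}$ and $(G\circledast^2 y)\uplus G^{(3)}$ follow an analogous template, with $G^{(2)}$ and $G^{(3)}$ playing the role of $G^{(1)}$ above. In $\circledast y$ the vertex $y$ has degree $1$ after the convolution, so I rely on $G^{(2)}$ to add two further edges at $y$ and to supply two internally edge-disjoint paths from $\{u',y\}$ to $u^+$. In $\circledast^2 y$ there are two sub-types of graphs, depending on whether a single edge of $G$ is blown up into a pair of length-two detours attached to $y$, or two distinct edges $e_1,e_2$ are each blown up and attached to $y$; in both sub-types $y$ already has degree $2$, and the analysis reduces to the earlier template applied to the two fresh degree-$3$ vertices.

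The main obstacle is expected to be the second sub-type of $\circledast^2 y$: a Menger triple in $G$ from some $v$ to $\{0,u,u'\}$ may simultaneously use both $e_1$ and $e_2$, possibly on different paths or on the same path, and each substitution has to be carried out while preserving edge-disjointness of the rerouted triple in $G^\ast$. I plan to handle this by a short case split on which of the three original paths uses which of $e_1,e_2$, and by checking that in every configuration the new length-two detours through the replacement vertices can be chosen so that the substituted triple remains edge-disjoint; the extra freedom comes from the fact that each new vertex has three incident edges, of which only one leads to $y$, so the two ``in/out'' legs of a reused Menger path can always be assigned disjointly.
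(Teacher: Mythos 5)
Your high-level strategy matches the paper's: for each vertex $v$ of the resulting graph, take a Menger triple given by the $3$-connectivity of $G$ to $\{0,u,u'\}$, replace any use of the removed edge(s) by the length-two detours, extend the Menger path headed for the old label through the bridging graph to reach $u^+$, and treat vertices inside the bridging graph and the new detour vertices in separate cases. (A small point of orientation: you bridge through $u'$ and keep $u$ labelled, whereas the paper's proof extends the Menger path going to $u$ and establishes $3$-connectivity to $\{0,u',u^+\}$; the label set written in the proposition and the one appearing in the proof bullets do not agree, so this choice is not your fault, but you should match whichever convention the figures for $G^{(1)},G^{(2)},G^{(3)}$ actually fix.)

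There is, however, a genuine gap in how you dispose of the new detour vertex $u^*$ and the internal vertices of the bridging graphs. You assert that ``edge-disjoint paths from $\{a,b\}$ to $\{0,u\}$ inside $G\setminus e$ are obtained from the 3-connectivity hypothesis applied to $a$ and $b$.'' This is not a direct consequence: applying $3$-connectivity to $a$ and to $b$ gives two Menger triples, but they need not be edge-disjoint from each other, so you cannot just pick one path from each triple and expect disjointness. The paper resolves this with a short auxiliary result (Remark~\ref{tuli}): for any two distinct vertices $a,b\in V(G)\setminus\{0,u,u'\}$ there exist two edge-disjoint paths, one starting at $a$ and the other at $b$, one ending at $0$ and the other at $u'$ --- proved by following a path from $b$ until it first meets one of $a$'s Menger paths and then switching onto the unused branch of $a$'s system. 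You need to include that argument (and the variant where one starting vertex equals $u$). The same lemma is also what rescues the case you have silently omitted: the labelled vertex that becomes internal after the operation. That vertex has degree $2$ in $G$, is not covered by your generic case $v\in V(G)\setminus\{0,u,u'\}$, and cannot itself be fed into the $3$-connectivity hypothesis; its three edge-disjoint exits in the new graph come from the one fresh bridging edge together with two disjoint $G$-paths supplied by exactly this lemma applied to its two $G$-neighbours.
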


 \begin{remark}\label{cone}
 Note that in principle, we only defined the operation $\uplus$ on fully labelled graphs. However in the previous proposition is a statement on the graph structure of, for example, $(G\circledast u^+) \uplus G^{(1)}$ so in order to keep the notations lighter we avoided introducing artificial labels for all the vertices of graphs  $G^{(1)}$, $G^{(2)}$ and $G^{(3)}$.
 \end{remark}

 \begin{remark}\label{chess}
In generalized diagrams notation, this means that we can merge the   following graphs with $G$ at the point $u$
 \[
\begin{minipage}{0.3\textwidth}
      \includegraphics[width=0.5\linewidth]{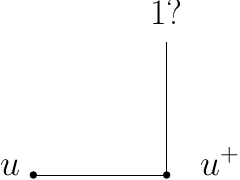}
    \end{minipage}
    \begin{minipage}{0.3\textwidth}
      \includegraphics[width=0.6\linewidth]{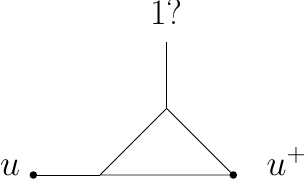}
    \end{minipage}
        \begin{minipage}{0.3\textwidth}
      \includegraphics[width=0.6\linewidth]{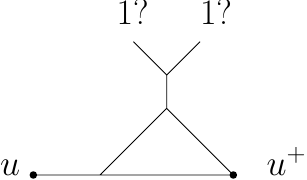}
    \end{minipage}
\]
and the connecting edge \lq\lq1?\rq\rq~gets connected to some edge in $G$.
 \end{remark}

 \begin{remark}\label{tuli}
 Note that if  $G$ is a connected 
 graph 3-connected to $\{0,u,u'\}$ then
 \begin{enumerate}
 \item  for any two vertices $x\neq y\in V(G)\setminus \{0,u,u'\}$ there exists two paths which are edge-disjoint paths starting from $x$ and $y$ with one ending at $0$ and the other at $u'$. This can be seen as follows: since $x$ is 3-connected to $\{0,u,u'\}$ there exists 2 simple paths $\mathcal{P}_1$ and $\mathcal{P}_2$ edge-disjoint starting from $x$ with the first ending at $0$ and the other at $u'$. Since $y$ is connected to $0$ by a simple path $\mathcal{P}_3$ we can look at the first vertex $z$ visited by $\mathcal{P}_3$ which is a vertex of  $\mathcal{P}_1$ or $\mathcal{P}_2$ (that exists since $0$ is part of $\mathcal{P}_3$  and either $\mathcal{P}_1$ or $\mathcal{P}_2$). By following $\mathcal{P}_3$ until $z$ and then the path it intersects, say $\mathcal{P}_1$ (or $\mathcal{P}_2$), we create a path $\mathcal{P}_4$ from $y$ to $0$ which is disjoint from $\mathcal{P}_2$ (or $\mathcal{P}_1$) and the pair $(\mathcal{P}_4, \mathcal{P}_2)$ (or $(\mathcal{P}_4, \mathcal{P}_1)$) provides the two paths we were looking for.
 \item for any  $x\in V(G)\setminus \{u\}$, there exists two paths which are edge-disjoint paths starting from $x$ and $u$ with one ending at $0$ and the other at $u'$. The case where $x\in \{0,u'\}$ is trivial. Otherwise, notice that the only property of $y$ we used in the previous is that $y$ was connected to $0$, so the same proof can be repeated with $u$ in place of $y$ since $G$ is connected proving the result for $x\in V(G)\setminus  \{0,u,u'\}$.
 \end{enumerate} 
 \end{remark}
 
\begin{proof}[Proof of Proposition \ref{expand_class}]
The graphs $H^{(1)}$, $H^{(2)}$ and $H^{(3)}$ are the 3 graphs drawn in Remark~\ref{chess} where the endpoints of connecting edges labelled \lq\lq 1?\rq\rq~are considered to be vertices (in the third case both vertices \lq\lq 1?\rq\rq~are considered to be distinct). We deal with the various cases separately.

\medskip 
\noindent{\it Case 1:  graphs arising from $(G\circledast u^+) \uplus G^{(1)}$ and  $(G\circledast y) \uplus G^{(2)}$.} 

 We will do the proof for $(G\circledast y) \uplus G^{(2)}$, as the one for $(G\circledast y) \uplus G^{(1)}$ can be done by replacing $H^{(2)}$ {\colAJ{with}} $H^{(1)}$.
 
Fix $u_*\notin V(G)$ and $\{x,y\} \in E(G)$. We denote $G^{(u_*)}$ the graph $(V(G)\cup \{u_*\}, (E(G)\setminus \{x,y\})\cup \{\{x,u_*\}, \{y,u_*\}\})$, where $u_*\notin V(G)$ and $\{x,y\} \in E(G)$.  
We denote $G^{(u_*)}\circledast H^{(2)}$ the graph resulting from identifying $u_*$ in $G^{(u_*)}$ with the vertex \lq\lq 1?\rq\rq~in $H^{(2)}$.
 
 Let us show that $G^{(u_*)}\circledast H^{(2)}$ is 3-connected to $\{0,u_+,u'\}$. For this let us take $v\in V(G^{(u_*)}\circledast H^{(2)})$, there are two cases
 \begin{itemize}
 \item if $v\in V(G^{(u_*)})\setminus u_*$, then we know there {\colAJ{exist}} 3 edge-disjoint paths $(p_i)_{i=1,2,3}$ from $v$ to $0$, $u$, $u'$ in $G$, which induces similar paths in $G^{(2)}$ by replacing the potential transition of the edge $\{x,y\}$ by two transitions: one from $x$ to $u_*$ and the other from $u_*$ to $y$ (or the reverse order if needed). The path leading to $u$ (assume without loss of generality that it is $p_3$), can then be completed using only edges and vertices of $H^{(2)}$ with a path leading to $u_+$ in $G^{(u_*)}\circledast H^{(2)}$ . If this new path is called $\tilde{p}_3$, we can see that the 3 paths $p_1$, $p_2$ and $\tilde{p}_3$ are 
 {\colAJ{edge disjoint}} and connect $v$ to the 3 points $0$, $u_+$ and $u'$. Hence $v$ is 3-connected to $\{0,u,u'\}$ in $G^{(2)}\circledast H^{(2)}$. 
 \item if $v\in V(H^{(2)})\setminus \{u,1?\}$ , we can easily see that $v$ can be connected with 3  paths in $H^{(2)}$ (which are edge-disjoint) ending in $u_+$, $u$ and \lq\lq 1?\rq\rq (the last being identified with $u_*$ in $G^{(u_*)}\circledast H^{(2)}$ ). We know that $u_*$ is adjacent to a vertex 
 {\colAJ{$v_*\in V(G)\setminus \{u\}$}} and using the second point of Remark~\ref{tuli}, we can find two edge-disjoint paths in $G$ from $v_*$ and $u$ with one leading to $0$ and the other to $u'$. Adding a transition from $u_*$ to $v_*$ we find two edge-disjoint paths from $u_*$ and $u$ with one leading to $0$ and the other to $u'$. We can then concatenate those two paths found with the 2 paths we built in $H^{(2)}$ which ended in $u$ and $u_*$ to build two paths $\tilde{p_1}$ and $\tilde{p_2}$. We can see that  $\tilde{p_1}$,  $\tilde{p_2}$ and the path $\tilde{p_3}$ from $v$ to $u_+$ in $H^{(2)}$ provides us with three edge-disjoint paths  in $G^{(u_*)}\circledast H^{(2)}$ from $v$ to $\{0,u_+,u'\}$. This is illustrated in the following picture, where the point $v$ is circled.
 \[
           \begin{minipage}{0.4\textwidth}
      \includegraphics[width=0.9\linewidth]{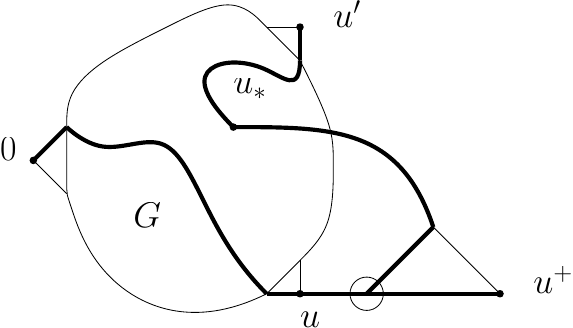}
    \end{minipage}\
    \]
 \item if $v=u$ or $v=u_*$ (the latter being identified with $u_*$ in $G^{(u_*)}\circledast H^{(2)}$). We know that $v$ can be connected with two edge-disjoint paths in $G$, and thus in $G^{(u_*)}$ to $0$ and $u'$ (this can be proved by using the first part of Remark~\ref{tuli} applied to the two neighbours of $v$ in $G$). We can complete this with a path from $v$ to $u_+$ within $H^{(2)}$ to provide the three edge-disjoint paths  in $G^{(u_*)}\circledast H^{(2)}$ from $v$ to $\{0,u_+,u'\}$. This is illustrated in the following picture.
   \[
              \begin{minipage}{0.4\textwidth}
      \includegraphics[width=0.9\linewidth]{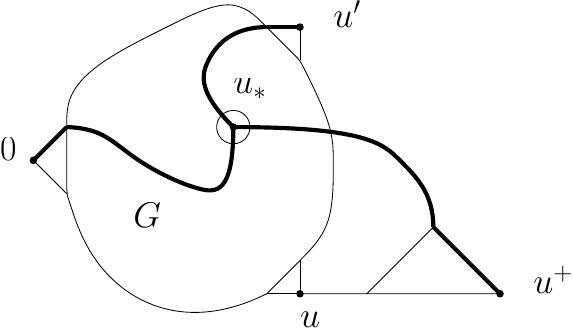}
    \end{minipage}\
  \]
 
 \end{itemize}
 
 Hence $G^{(u_*)}\circledast H^{(2)}$ is 3-connected to $\{0,u_+,u'\}$. Since any graph in the generalized diagrammatic graph $(G\circledast u^+) \uplus G^{(2)}$ can be written as $G^{(u_*)}\circledast H^{(2)}$ for some $u_*\notin V(G)$ and $\{x,y\} \in E(G)$, this proves the result for  $(G\circledast u^+) \uplus G^{(2)}$ .

\medskip
\noindent{\it Case 2: graphs arising from $(G\circledast^2 y) \uplus G^{(3)}$. }

This part has to be {\colAJ{divided according to}} whether the double connection from $\circledast^2 y$ lands on different edges of $G$ or not.

Let us start by the first case. Fix $u_*\neq u_{**} \notin V(G)$ and $\{x_1,y_1\}\neq \{x_2,y_2\} \in E(G)$, we denote $G^{(u_*,u_{**})}$ the graph 
\[(V(G)\cup \{u_*,u_**\}, (E(G)\setminus \{\{x_1,y_1\},\{x_2,y_2\}\})\cup \{\{x_1,u_*\}, \{y_1,u_*\},\{x_2,u_{**}\},\{y_2.u_{**}\}\}).\] 
We denote $G^{(u_*,u_{**})}\circledast H^{(3)}$ the graph resulting from identifying $u_*$ and~$u_{**}$ in $G^{(u_*,u_{**})}$ with the leftmost and the rightmost vertex \lq\lq 1?\rq\rq~in $H^{(3)}$.

 Let us show that $G^{(u_*,u_{**})}\circledast H^{(3)})$ is 3-connected to $\{0,u_+,u'\}$. Denote $a$ the vertex in $H^{(3)}$ which is adjacent to both \lq\lq 1?\rq\rq. Take $v\in V(G^{(u_*,u_{**})}\circledast H^{(3)}$, there are three cases
 \begin{itemize}
 \item if $v\in V(G^{(u_*,u_{**})})\setminus \{u_*,u_{**}\}$, then we know there exists 3 edge-disjoint paths $(p_i))_{i=1,2,3}$ from $v$ to $0$, $u$, $u'$ in $G$, which induces similar paths in $G^{(u_*,u_{**})}$ by replacing the potential transition along the edge $\{x_1,y_1\}$ (resp.~$\{x_2,y_2\}$) by two transitions one from $x_1$ (resp.~$x_2$) to $u_*$ (resp.~$u_{**}$) and the other from $u_*$ (resp.~$u_{**}$)  to $y_1$ (or $y_2$) (or the reverse order if needed). The path leading to $u$ (assume without loss of generality that it is $p_3$), can then be completed using only edges and vertices of $H^{(3)}$ with a path leading to $u_+$ in $G^{(u_*,u_{**})}\circledast H^{(3)}$ . If this new path is called $\tilde{p}_3$, we can see that the 3 paths $p_1$, $p_2$ and $\tilde{p}_3$ are vertex and edge disjoint and connect $v$ to the 3 points $0$, $u_+$ and $u'$. Hence $v$ is 3-connected to $\{0,u,u'\}$ in $G^{(u_*,u_{**})}\circledast H^{(3)}$.
 \item if $v\in V(H^{(3)})\setminus \{a,u,1?,1?\}$ (by this we mean that we do not consider both 1? points). We can easily see that $v$ can be connected with 3  paths in $H^{(3)}$ (which are edge-disjoint) one ending in $u_+$, one in $u$ and  the last one in the leftmost \lq\lq 1?\rq\rq (identified with $u_*$  in $G^{(u_*,u_{**})}\circledast H^{(3)}$ ). Essentially, repeating the argument from the corresponding case for $G^{(u_*)}\circledast H^{(2)}$ we can create three edge-disjoint paths  in $G^{(u_*,u_{**})}\circledast H^{(3)}$ from $v$ to $\{0,u_+,u'\}$.
 \item if $v=a$, then $v$ can be connected disjointly with 3 paths $(p_i)_{i=1,2,3}$ going respectively to $u_+$ the leftmost and the rightmost vertex \lq\lq 1?\rq\rq in $H^{(3)}$ (those being identified with  $u_*$ and $u_{**}$ in $G^{(u_*,u_{**})}\circledast H^{(3)}$). Whichever the edges of $G$ on which $u_*$ and $u_{**}$ are located, we can find $z_1 \neq z_2\in V(G) \setminus\{u\}$ such that  $z_1$ and $z_2$ are adjacent to $u_*$ and $u_{**}$ respectively. (otherwise those edges would have a endpoints $u$ and $z_1=z_2$ which means we have a double edge, contradicting the fact that $G$ is a simple graph). The first point of Remark~\ref{tuli} guarantees the existence of two paths in $G$, $\tilde{p_1}$ and $\tilde{p_2}$, which are vertex and edge disjoint  from $z_1$ and $z_2$ with one ending at $0$ and the other at $u'$. Adding a transition from $u_*$ to $z_1$ (resp.~$u_{**}$ to $z_2$) at the start of $\tilde{p_1}$ (resp.~$\tilde{p_2}$) yields two new edge-disjoint paths, $\tilde{p}_3$ and $\tilde{p}_4$, in $G^{(2)}$ going from $u_*$ and $u_{**}$ with one ending at $0$ and the other at $u'$. Now, we concatenate $p_2$ and $\tilde{p}_3$ (resp.~$p_3$ and $\tilde{p}_4$) to create two vertex and edge disjoint paths in $G^{(u_*,u_{**})}\circledast H^{(3)}$ from $v$  with one ending at $0$ and the other at $u'$ and those two paths also being disjoint from $p_1$ connecting $0$ to $u_+$. Showing that $v$ is 3-connected to $\{0,u_+,u'\}$ in  $G^{(u_*,u_{**})}\circledast H^{(3)}$.
 \item if $v=u$ or $v$ is one of the 1? vertices. Then, just as in the case $G^{(u_*,u_{**})}\circledast H^{(2)}$, we can connect $v$ to $0$ and $u'$ using disjoint paths within $G^{(u_*,u_{**})}$ and we can furthermore find another disjoint path within $H^{(3)}$ which connects $v$ to $u_+$. Those three paths show that $v$ is 3-connected to $\{0,u_+,u'\}$ in  $G^{(u_*,u_{**})}\circledast H^{(3)}$. \end{itemize}
 This case distinction shows that  $G^{(u_*,u_{**})}\circledast H^{(3)}$  is 3-connected to $\{0,u_+,u'\}$.
 
Finally, we deal with the case where the double connection lands on the same edge of $G$.  Fix $u_*\neq u_{**} \notin V(G)$ and $\{x,y\} \in E(G)$, we denote $\tilde{G}^{(u_*,u_{**})}$ the graph $(V(G)\cup \{u_*,u_**\}, (E(G)\setminus \{\{x,y\})\cup \{\{x,u_*\}, \{u_{**},u_*\},\{y,u_{**}\}\})$.  We denote $\tilde{G}^{(u_*,u_{**})}\circledast H^{(3)}$ for the graph resulting from identifying $u_*$ and $u_{**}$ in $\tilde{G}^{(u_*,u_{**})}$ with the leftmost and the rightmost vertex \lq\lq 1?\rq\rq~in $H^{(3)}$.

  Let us show that $\tilde{G}^{(u_*,u_{**})}\circledast H^{(3)}$ is 3-connected to $\{0,u_+,u'\}$. Denote $a$ the vertex in $H^{(3)}$ which is adjacent to both \lq\lq 1?\rq\rq. Take $v\in V(\tilde{G}^{(u_*,u_{**})}\circledast H^{(3)})$, there are three cases
 \begin{itemize}
 \item if $v\in V(\tilde{G}^{(u_*,u_{**})})\setminus \{u_*,u_{**}\}$, we can follow the same proof as in the case ${G}^{(u_*,u_{**})}\circledast H^{(3)}$.
  \item if $v\in V(\tilde{G}^{(u_*,u_{**})})\setminus \{a,u,1?,1?\}$,  we can follow the same proof as in the case ${G}^{(u_*,u_{**})}\circledast H^{(3)}$.
 \item if $v=a$, we see  that $u_*$ and $u_{**}$ each have a neighbour in $V(G)$, we denote them $z_1$, $z_2$ respectively (they have to be different since those were the endpoint of an edge in $G$). We can then follow the same proof as in the case ${G}^{(u_*,u_{**})}\circledast H^{(3)}$.
 \item if $v=u$ or $v$ is one of the 1? vertices, we can use the same proof as in the case ${G}^{(u_*,u_{**})}\circledast H^{(3)}$.
 \end{itemize}
This shows that  $\tilde{G}^{(u_*,u_{**})}\circledast H^{(3)}$ is 3-connected to $\{0,u_+,u'\}$.

Any graph in the generalized diagrammatic graph $(G\circledast^2 y) \uplus G^{(3)}$ can be written as $G^{(u_*,u_{**})}\circledast H^{(3)}$ for some $u_*\neq u_{**} \notin V(G)$ and $\{x_1,y_1\}\neq \{x_2,y_2\} \in E(G)$ or as $\tilde{G}^{(u_*,u_{**})}\circledast H^{(3)}$ for some $u_*\neq u_{**} \notin V(G)$ and $\{x,y\} \in E(G)$. This proves the result for $(G\circledast^2 y) \uplus G^{(3)}$. 
\end{proof}

\subsection{Diagrams appearing in the expansion of the bi-infinite incipient cluster}
In this section we shall give several diagrammatic bounds which will be needed in order to bound the expansion terms. 
We start by a simple diagrammatic estimate 
\begin{lemma}\label{lem_triangle_plus}
There exists $C>0$ such that for all $x,x'\in\Zd$ we have 
\[
\sum_{y} \tau(x) \tau(y) \tau(x,y) \tau(x',y) = C\tau(x)\tau(x')  \min\{|x|,|x-x'|\}^{-(d-4)} .
\]
\end{lemma}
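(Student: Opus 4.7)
My plan is to read the displayed identity as an inequality (the ``There exists $C>0$'' phrasing makes this clear), pull out the pre-factor $\tau(x)$, and estimate the triangle diagram
\[
T(x,x') := \sum_{y\in\Zd} \tau(y)\,\tau(y-x)\,\tau(y-x'),
\]
using translation invariance $\tau(x,y)=\tau(y-x)$. The target bound becomes $T(x,x') \le C\,\tau(x')\,\min(|x|,|x-x'|)^{-(d-4)}$.

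The key idea is a splitting argument centred at $x'$. By the triangle inequality $|y|+|y-x'|\ge |x'|$, every $y\in\Zd$ satisfies at least one of $|y|\ge |x'|/2$ or $|y-x'|\ge |x'|/2$, and I partition $\Zd = A \cup B$ accordingly. On $A$, I use $\tau(y)\le C(1+|y|)^{2-d}\le C(1+|x'|)^{2-d}$ (both inequalities are consequences of \eqref{eq:tauAsy}), pull the resulting constant out of the sum, and apply Lemma \ref{lem:ConvBd} with $a=b=d-2$ (valid since $d>4$) to obtain
\[
 \sum_y \tau(y-x)\,\tau(y-x') \le C(1+|x-x'|)^{4-d};
\]
this contributes $\le C(1+|x'|)^{2-d}(1+|x-x'|)^{4-d}$. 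On $B$, symmetrically, I apply $\tau(y-x')\le C(1+|x'|)^{2-d}$ and $\sum_y \tau(y)\tau(y-x)\le C(1+|x|)^{4-d}$, contributing $\le C(1+|x'|)^{2-d}(1+|x|)^{4-d}$.

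Adding the two contributions and using that $4-d<0$ gives $(1+|x|)^{4-d}+(1+|x-x'|)^{4-d}\le 2\min(1+|x|,1+|x-x'|)^{4-d}$, while $(1+|x'|)^{2-d}\le C\tau(x')$ by \eqref{eq:tauAsy}. Absorbing constants to pass between $|\cdot|$ and $1+|\cdot|$ (the bound is trivial when the minimum is bounded, since the right-hand side then dominates the left-hand side up to a constant), I conclude $T(x,x')\le C\tau(x')\min(|x|,|x-x'|)^{-(d-4)}$. Multiplying by the pre-factor $\tau(x)$ yields the stated bound.

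The only substantive choice in the argument is to split around $x'$ rather than $0$ or $x$; this is precisely what produces the $\tau(x')$ factor on the right-hand side and makes the minimum range only over $|x|$ and $|x-x'|$ (and not $|x'|$). I do not anticipate any real obstacle beyond the bookkeeping with the $(1+|\cdot|)$ correction.
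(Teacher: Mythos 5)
Your proposal is correct and follows essentially the same argument as the paper: split the sum over $y$ according to whether $|y|\ge |x'|/2$ or $|y-x'|\ge|x'|/2$ to extract the factor $\tau(x')$, then bound the two remaining open-triangle sums $\sum_y\tau(x,y)\tau(y)$ and $\sum_y\tau(x,y)\tau(x',y)$ by $O(|x|^{-(d-4)})$ and $O(|x-x'|^{-(d-4)})$ via the convolution bound. The only difference is presentational (you partition $\Zd$ explicitly while the paper uses a pointwise bound on $\min(\tau(y),\tau(x',y))$), but the mechanism is identical.
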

\begin{proof}
Notice that either $|y|\geq |x'|/2$ or $|x'-y| \geq |x'|/2$, hence, by~\eqref{eq:tauAsy},  $\min( \tau(y), \tau(x',y)) \leq C \tau(x')$
\[
\sum_{y} \tau(x) \tau(y) \tau(x,y) \tau(x',y) \leq C \tau(x)\tau(x') \Bigl(\sum_{y}  \tau(y) \tau(x,y) +\sum_{y}  \tau(y) \tau(x',y)\Bigr),
\]
but now, for the first sum appearing on the right side we can see that $|y|\geq |x'|/2$ or $|x'-y| \geq |x'|/2$, thus $\min( \tau(y), \tau(x',y)) \leq C \tau(x')$ and with a similar reasoning for the second sum
\[
\sum_{y} \tau(x) \tau(y) \tau(x,y) \tau(x',y) \leq C \tau(x)\tau(x') \Bigl(\sum_y \tau(x,y)\tau(y)+ \sum_y \tau(x',y)\tau(y,x)\Bigr),
\]
and by \eqref{eq:TauTauBd} we know that $\sum_y \tau(x,y)\tau(y)=O(|x|^{-(d-4)})$ and $\sum_y \tau(x',y)\tau(y,x)=O(|x-x'|^{-(d-4)})$. The lemma follows.
\end{proof}

Apart from the previous elementary diagram, we can bound all the diagrams using the machinery developed so far in this section.
\begin{lemma}\label{extra_diag}
There exists $C>0$ such that for all $x,x'\in\Zd$ we have 
      \[
        \begin{minipage}{0.3\textwidth}
      \includegraphics[width=0.8\linewidth]{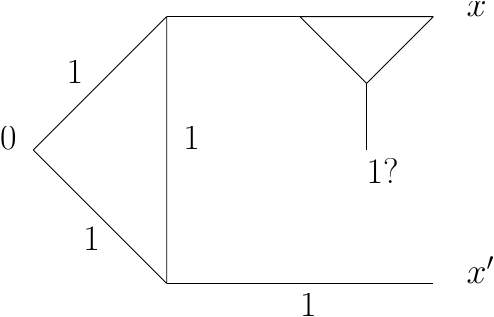}
    \end{minipage}\
=
\tau(x)\tau(x') \min\{|x|,|x-x'|\}^{-(d-4)} 
    \]
    \end{lemma}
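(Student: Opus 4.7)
The plan is to apply the H-reduction framework from Section \ref{sec:BoundsFramework} to reduce the complicated diagram on the left-hand side to the simpler triangle-plus configuration handled by Lemma \ref{lem_triangle_plus}. Specifically, I would first identify the underlying diagrammatic graph $\mathbf{G}=(G,\{0,x,x'\},\ell)$ corresponding to the picture in the lemma. By inspection of how this diagram arises within the preceding expansion of the bi-infinite cluster (and analogously to the building-block construction of Proposition \ref{expand_class}), $G$ should be a finite, simple, connected graph, 3-connected to $\{0,x,x'\}$, with every internal vertex of degree $3$ and the three labelled vertices of degree at most $2$.

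Next, I would apply Proposition \ref{prop:strong-H-reduce} iteratively to perform strong H-reductions. Each such H-reduction preserves both 3-connectivity and simpleness (Lemma \ref{fix_path-new}(ii)) and costs only an absolute constant by Proposition \ref{prop_Hred}. Rather than reducing all the way to the two canonical forms of Proposition \ref{super_reduc} (which would only yield the weaker bound $\tau(x)\tau(x')\tau(x-x')$ from Proposition \ref{kill_bill}), the aim is to stop one step earlier, at a configuration of the form
\[
\operatorname{Diag}(\mathbf{G})\ \le\ C\sum_{y\in\Zd}\tau(x)\,\tau(y)\,\tau(x,y)\,\tau(x',y),
\]
in which the only remaining internal vertex is the single summation vertex $y$ adjacent to $0$, $x$, and $x'$, with an extra $\tau(x)$ factor produced as a residue of the H-reductions along the arm to $x$. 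At that point Lemma \ref{lem_triangle_plus} applies directly and yields the stated bound $C\tau(x)\tau(x')\min\{|x|,|x-x'|\}^{-(d-4)}$.

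The main obstacle will be two-fold. First, one must verify that the specific diagrammatic graph $G$ in the figure genuinely satisfies the hypotheses of Proposition \ref{prop:strong-H-reduce}, i.e.\ it really is $3$-connected to $\{0,x,x'\}$ with the required degree conditions; in practice this follows from applying Proposition \ref{expand_class} inductively to the natural building blocks from which the diagram was assembled, together with Remark \ref{ex_3strong}. Second, one has to choose a reduction sequence that terminates at the triangle-plus configuration of Lemma \ref{lem_triangle_plus} rather than the bare triangle or square of Proposition \ref{super_reduc}; this is possible because the distinguished asymmetric factor $\tau(x)$ sits on a labelled edge that is not strongly H-reducible, and hence is never touched by the reduction procedure, while all remaining internal edges can be reduced until only one unlabelled degree-$3$ vertex $y$ remains. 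Once this intermediate reduced form is identified, the estimate follows immediately by combining Proposition \ref{prop_Hred} with Lemma \ref{lem_triangle_plus}.
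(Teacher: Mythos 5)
Your proposal has a genuine gap in the very first step: the claim that the generalized diagram $\mathbf{G}$ underlying the picture is $3$-connected to $\{0,x,x'\}$ is false. Tracing the construction from the proof of Lemma~\ref{lem:tPsiBd}, the diagram arises as $\{0,x'\}\circledast u_0$, followed by $\uplus\{0,u_0\}$, followed by $\circledast y_1$, and finally glued to the factor $\tau(u_0,z_1)\tau(z_1,y_1)\tau(y_1,x)\tau(z_1,x)$. In every constituent diagram of this generalized diagrammatic graph the vertex $x'$ has degree exactly one (it is the pendant endpoint of the single edge created by the $\circledast u_0$ operation). A degree-one vertex admits only one edge-disjoint path out of it, so $3$-connectivity to $\{0,x,x'\}$ fails outright, and Propositions~\ref{prop:strong-H-reduce} and~\ref{super_reduc} simply do not apply to this diagram with that choice of marked set. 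This is not a technicality one can wave away: if the graph really were $3$-connected to $\{0,x,x'\}$ with the stated degree conditions, Proposition~\ref{super_reduc} forces reduction to the triangle or the square, and Proposition~\ref{kill_bill} would then only yield the weaker bound $\tau(x)\tau(x')\tau(x-x')$ --- one could never reach the sharper $\min\{|x|,|x-x'|\}^{-(d-4)}$ decay, since the labelled degree-$2$ endpoints are never touched by H-reduction and the reduction endpoint is determined by Proposition~\ref{super_reduc}, not by an arbitrary decision to ``stop one step earlier.'' There is no mechanism in the framework that lets you halt a strong-H-reduction sequence at a chosen intermediate diagram.

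You have also mis-identified where the asymmetry lives. The leftover factor in Lemma~\ref{lem_triangle_plus} is the pendant edge $\tau(y,x')$ attached to the summation vertex $y$; the factor $\tau(x)=\tau(0,x)$ is one side of a genuine triangle on $\{0,x,y\}$, not a stray line. What the paper's proof actually does is split the generalized diagram into two sub-cases according to where the connecting edge attaches. In the first sub-case one identifies a \emph{proper sub-part} of the diagram that is $3$-connected to $\{0,x,u_*\}$, where $u_*$ is the \emph{internal} (summed-over) vertex to which $x'$ is pendant, and reduces only that sub-part to a triangle via Proposition~\ref{kill_bill}; the pendant edge $\tau(u_*,x')$ then tags along untouched and produces, after summing over $u_*$, exactly the expression of Lemma~\ref{lem_triangle_plus}. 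The second sub-case is handled by two direct H-reductions. To repair your argument you would need to make this same move: strip off the pendant arm to $x'$, apply the $3$-connectivity machinery with the marked set $\{0,x,u_*\}$ (checking this via Proposition~\ref{expand_class} and Remark~\ref{ex_3strong}, which is your correct instinct), and then reattach the pendant before invoking Lemma~\ref{lem_triangle_plus}; you would also need to address the second sub-case separately.
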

    \begin{proof}
We have 
          \begin{align*}
 \begin{minipage}{0.3\textwidth}
      \includegraphics[width=0.8\linewidth]{last1-eps-converted-to.pdf}
    \end{minipage}\
      &=
        \begin{minipage}{0.3\textwidth}
      \includegraphics[width=0.8\linewidth]{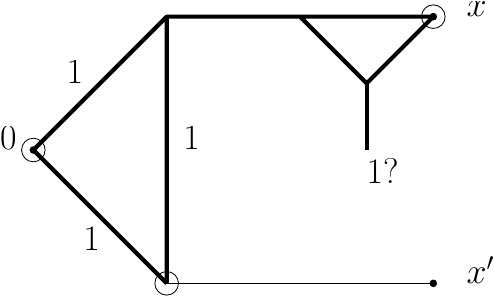}
    \end{minipage}\
    +
         \begin{minipage}{0.3\textwidth}
      \includegraphics[width=0.8\linewidth]{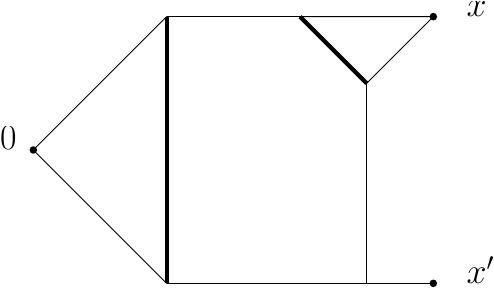}
    \end{minipage}\ \\
    & \lesssim   \begin{minipage}{0.3\textwidth}
      \includegraphics[width=0.8\linewidth]{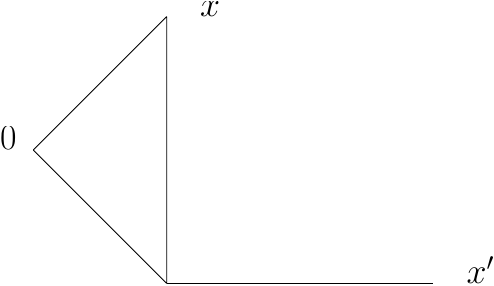}
    \end{minipage}
     \end{align*}
where we simplified the first diagram by noticing that the bold part (which is 3-connected to the circled points, by Proposition~\ref{expand_class} and Remark~\ref{ex_3strong}) reduces to a triangle by Proposition~\ref{kill_bill} and the second diagram is easily reduced by the two H-reductions highlighted in bold. The result follows from Lemma~\ref{lem_triangle_plus}.
    \end{proof}

We will now list several lemmas. Their proof is the same: the diagrams to be bounded can be obtained from the two base diagrams given in  Remark~\ref{ex_3strong} by gluing onto them one or several of the graphs 3 graphs in Remark~\ref{cone}. This means by Proposition~\ref{expand_class} that those graphs are 3-connected to the highlighted points $\{0,u,u'\}$ (or sometimes $\{0,u,x'\}$). In particular, 
by Proposition~\ref{kill_bill} it means that those diagrams are bounded by a triangle.

\begin{lemma}\label{please_stop}
We have
\[
                \begin{minipage}{0.3\textwidth}
      \includegraphics[width=0.6\linewidth]{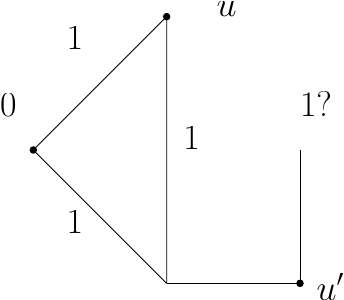}
    \end{minipage}\ 
    \lesssim 
\tau(u)\tau(u')\tau(u-u')
\]
and 
\[
                \begin{minipage}{0.3\textwidth}
      \includegraphics[width=\linewidth]{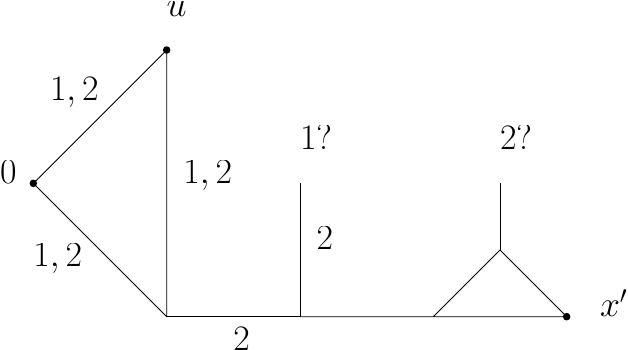}
    \end{minipage}\ 
    \lesssim 
\tau(u)\tau(u')\tau(u-u')
\]
\end{lemma}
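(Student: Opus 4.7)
The strategy is to apply the template described in the paragraph immediately preceding the lemma: view each diagram as a diagrammatic graph $\mathbf{G} = (G, S, \ell)$ with $S$ carrying the three labels $\{0, u, u'\}$, verify that $G$ satisfies the hypotheses of Proposition~\ref{kill_bill}, and conclude. By direct inspection of each picture, $G$ is a finite connected simple graph in which the three labelled vertices all have degree exactly $2$ and every other vertex has degree exactly $3$; hence the only nontrivial hypothesis left to check is that $G$ is 3-connected to $\{0, u, u'\}$.

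To establish 3-connectivity I would build each $G$ inductively, starting from one of the two base graphs in Remark~\ref{ex_3strong} --- both of which are already 3-connected to their three labelled vertices --- and then successively gluing onto the current graph a copy of the gadget $G^{(1)}$, $G^{(2)}$, or $G^{(3)}$ of Proposition~\ref{expand_class} (displayed graphically in Remark~\ref{cone}). By that proposition, each such gluing preserves the property of being 3-connected to the three labelled vertices of the resulting graph, so the final graph retains it. Reading off the explicit sequence of gluings that realises each diagram in the lemma amounts to a direct visual decomposition of the picture, layer by layer.

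Once the 3-connectivity and degree conditions are verified, Proposition~\ref{kill_bill} applies and yields
\[
\operatorname{Diag}(\mathbf{G}) \le C_{|E(G)|}\,\tau(0,u)\,\tau(u',u)\,\tau(u',0) = C\,\tau(u)\,\tau(u')\,\tau(u-u'),
\]
where the equality uses translation invariance of the two-point function. Since both diagrams in the lemma have a fixed number of edges, $C_{|E(G)|}$ is an absolute constant, and the two claimed bounds follow immediately.

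The main obstacle is not analytic but purely combinatorial: for each of the two diagrams one must exhibit the explicit sequence of $\uplus$, $\circledast u$ and $\circledast^{2} u$ operations from Proposition~\ref{expand_class} that realises it as an iterated gluing starting from a base graph of Remark~\ref{ex_3strong}, while tracking at each step that the degree conditions (degree $2$ at the labelled vertices, degree $3$ at the unlabelled ones) and simpleness of the graph are preserved. No new analytic input is needed beyond the H-reduction machinery developed in the earlier subsections.
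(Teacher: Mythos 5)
Your proposal matches the paper's argument exactly: the paper likewise dispatches this lemma (together with Lemmas~\ref{diag10}--\ref{diag13}) in a single paragraph by observing that each diagram is built from a base graph of Remark~\ref{ex_3strong} through repeated applications of the gluing operations of Proposition~\ref{expand_class}, so that the resulting graph is 3-connected to $\{0,u,u'\}$ with the right degree profile, and then concluding via Proposition~\ref{kill_bill}. You have correctly identified both the key structural reduction and the fact that the only remaining work is the combinatorial bookkeeping of exhibiting the gluing sequence, which the paper also leaves to inspection of the pictures.
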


\begin{lemma}\label{diag10}
The following four diagrams are bounded by $C\tau(u)\tau(x')\tau(u-x')$ for some constant $C<\infty$:
\begin{align*}
 \begin{minipage}{0.3\textwidth}
      \includegraphics[width=0.7\linewidth]{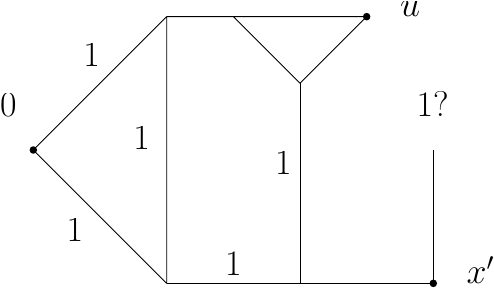} 
    \end{minipage}
& 
\begin{minipage}{0.3\textwidth}
      \includegraphics[width=0.7\linewidth]{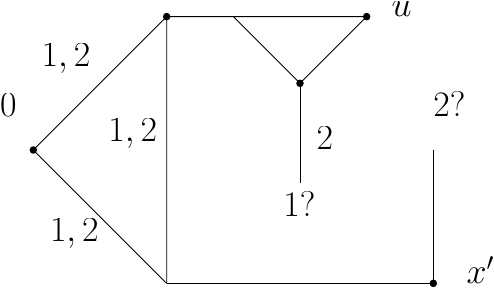}
    \end{minipage} \\
    \begin{minipage}{0.3\textwidth}
      \includegraphics[width=0.7\linewidth]{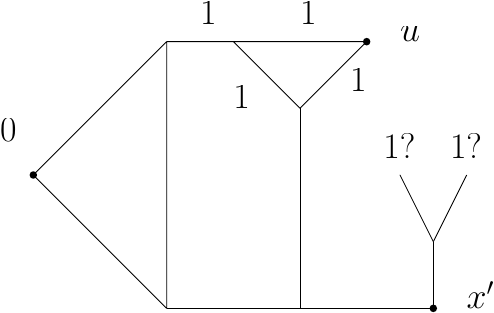} 
    \end{minipage} 
& 
\begin{minipage}{0.3\textwidth}
      \includegraphics[width=0.7\linewidth]{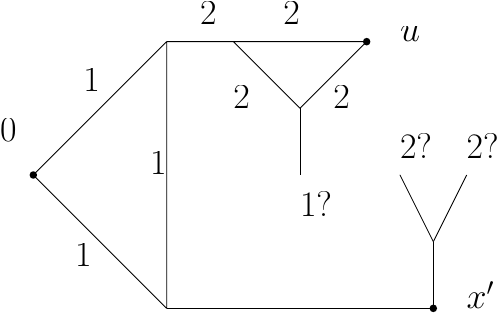}
    \end{minipage}
    \end{align*}
\end{lemma}

\begin{lemma}\label{diag11}
The following eight diagrams are bounded by $C\tau(u)\tau(u')\tau(u-u')$  for some constant $C<\infty$. 
\begin{align*}
 \begin{minipage}{0.3\textwidth}
      \includegraphics[width=0.7\linewidth]{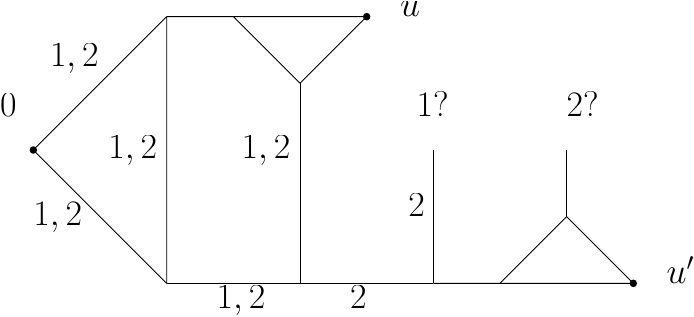} 
    \end{minipage}
& 
\begin{minipage}{0.3\textwidth}
      \includegraphics[width=0.7\linewidth]{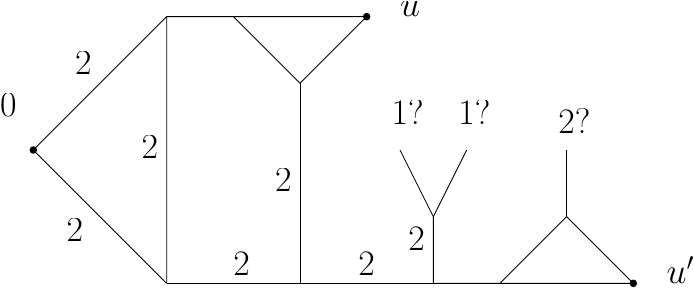}
    \end{minipage}  \\
    \begin{minipage}{0.3\textwidth}
      \includegraphics[width=0.7\linewidth]{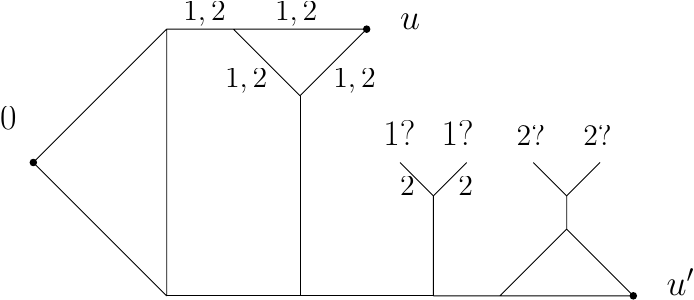} 
    \end{minipage} 
& 
\begin{minipage}{0.3\textwidth}
      \includegraphics[width=0.7\linewidth]{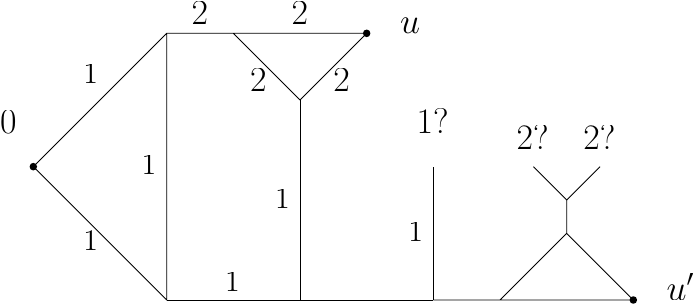}
    \end{minipage}  \\
    \begin{minipage}{0.3\textwidth}
      \includegraphics[width=0.7\linewidth]{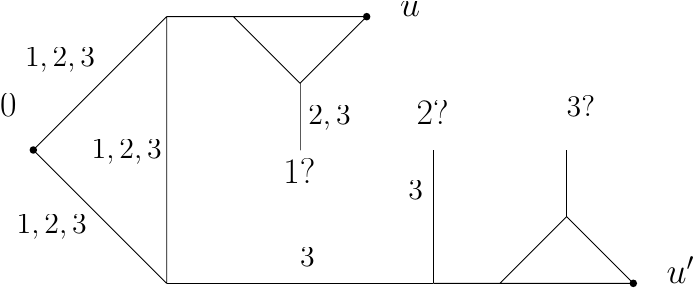} 
    \end{minipage} 
& 
\begin{minipage}{0.3\textwidth}
      \includegraphics[width=0.7\linewidth]{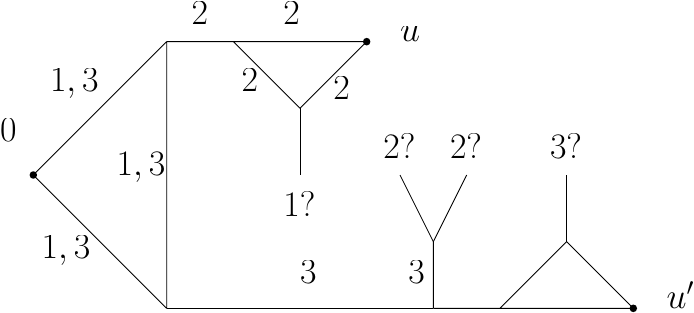}
    \end{minipage}  \\
    \begin{minipage}{0.3\textwidth}
      \includegraphics[width=0.7\linewidth]{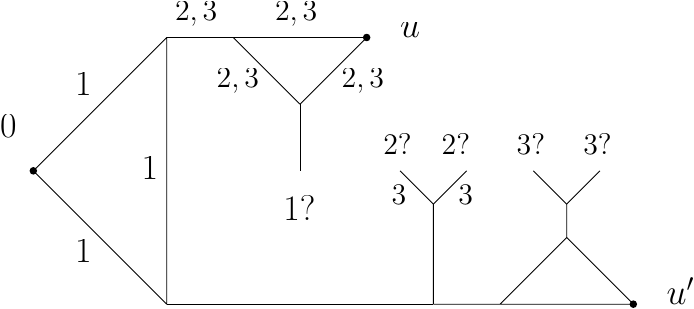} 
    \end{minipage} 
& 
\begin{minipage}{0.3\textwidth}
      \includegraphics[width=0.7\linewidth]{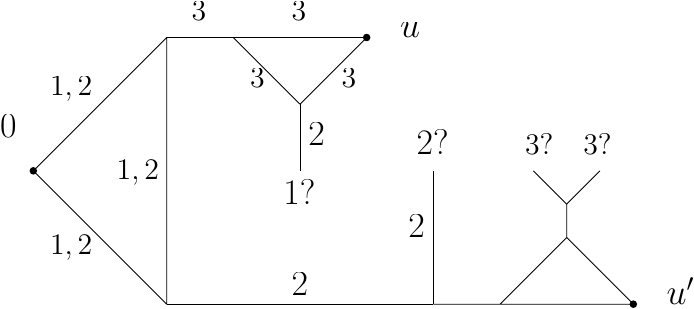}
    \end{minipage}
    \end{align*}   
\end{lemma}

\begin{lemma}\label{diag12}
The following four diagrams are bounded by $C\tau(u)\tau(u')\tau(u-u')$  for some constant $C<\infty$. 
\begin{align*}
 \begin{minipage}{0.3\textwidth}
      \includegraphics[width=0.7\linewidth]{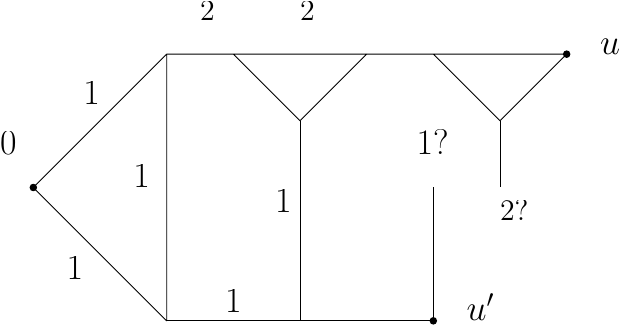} 
    \end{minipage}
& 
\begin{minipage}{0.3\textwidth}
      \includegraphics[width=0.7\linewidth]{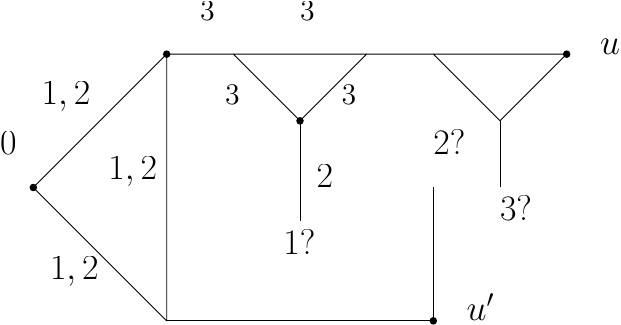}
    \end{minipage} \\
    \begin{minipage}{0.3\textwidth}
      \includegraphics[width=0.7\linewidth]{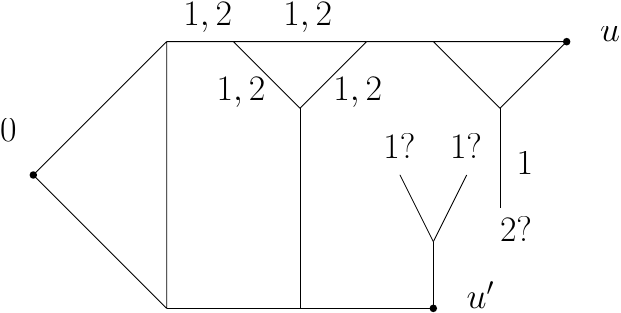} 
    \end{minipage} 
& 
\begin{minipage}{0.3\textwidth}
      \includegraphics[width=0.7\linewidth]{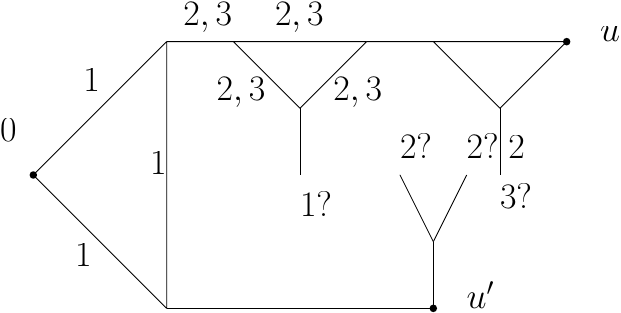}
    \end{minipage} 
    \end{align*}
\end{lemma}

Finally, the most complex diagrams we will need:
   \begin{lemma}\label{diag13}
The following eight diagrams are bounded by $C\tau(u)\tau(u')\tau(u-u')$  for some constant $C<\infty$: 
\begin{align*} 
 \begin{minipage}{0.3\textwidth}
      \includegraphics[width=0.7\linewidth]{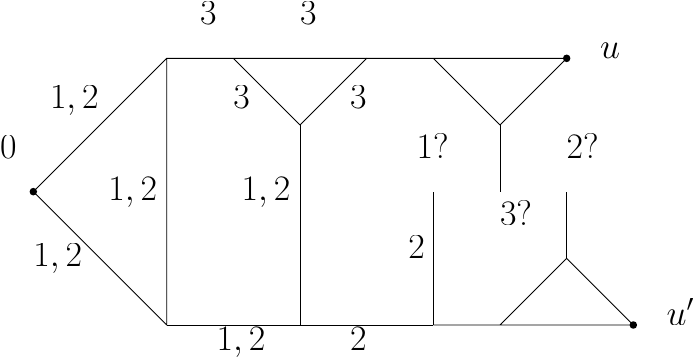} 
    \end{minipage}
& 
\begin{minipage}{0.3\textwidth}
      \includegraphics[width=0.7\linewidth]{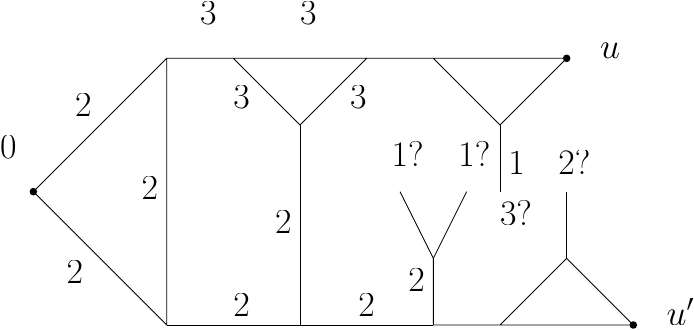}
    \end{minipage}  \\
    \begin{minipage}{0.3\textwidth}
      \includegraphics[width=0.7\linewidth]{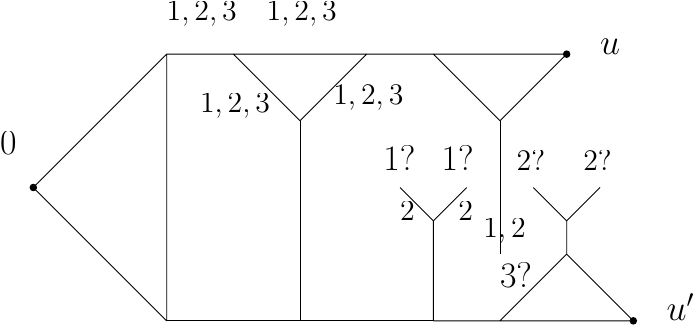} 
    \end{minipage} 
& 
\begin{minipage}{0.3\textwidth}
      \includegraphics[width=0.7\linewidth]{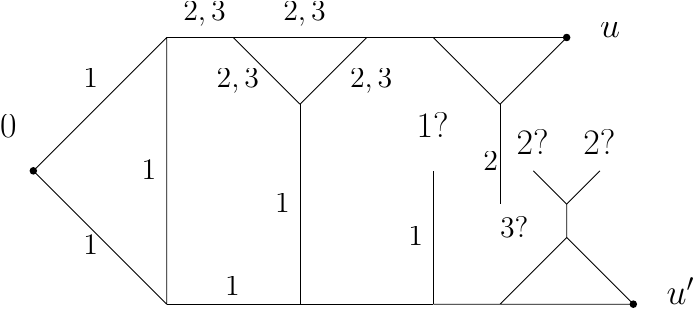}
    \end{minipage}  \\
    \begin{minipage}{0.3\textwidth}
      \includegraphics[width=0.7\linewidth]{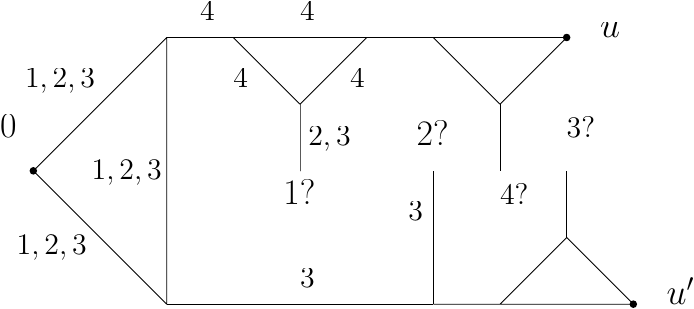} 
    \end{minipage} 
& 
\begin{minipage}{0.3\textwidth}
      \includegraphics[width=0.7\linewidth]{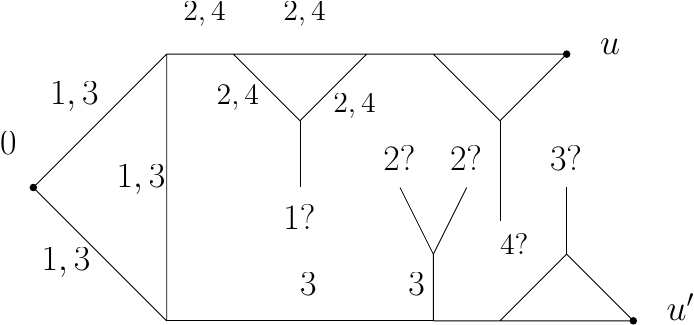}
    \end{minipage}  \\
    \begin{minipage}{0.3\textwidth}
      \includegraphics[width=0.7\linewidth]{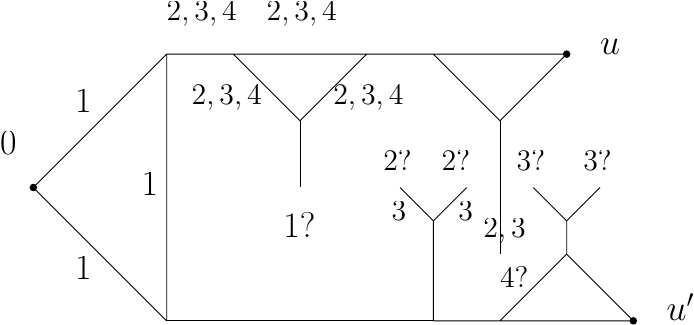} 
    \end{minipage} 
& 
\begin{minipage}{0.3\textwidth}
      \includegraphics[width=0.7\linewidth]{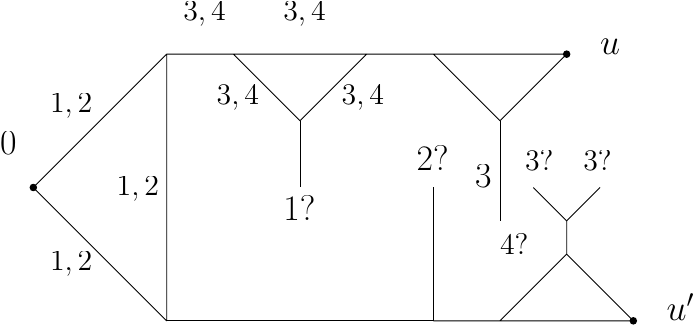}
    \end{minipage}
    \end{align*}   
\end{lemma}

\begin{remark}
One can notice, by simply counting the number of ways for the connecting edges to attach to other edges, that the seventh diagram  contains at least $3\times 5^2 \times 7^2 \times 4=14700$ standard lace expansion diagrams. This illustrates the compactness of the generalized diagram notation and the strength of the overall method. 
\end{remark}

\section{Bounding expansion coefficients in terms of  generalized diagrams}
\label{sec:bounding-diagrams}

{\colAJ{Recall that the underlying graph of our percolation model has vertex set $\Zd$ and edge set 
$\cE_D = \{ \{x,y\} \in V \times V : D(x,y) > 0 \}$.
For a set of edges $E \subset \cE_D$, we denote by $V(E)$ the set of vertices of the subgraph 
induced by $E$ (i.e., all vertices that are endpoints of an edge in $E$).}}

{\col 
Bounding the expansion coefficients $\Psi^{N,N'}$ amounts to identifying existing paths in the events that constitute these coefficients. We will now prove various lemmas to identify these paths in the underlying events. This will be related to the convolution operation $\circledast$  introduced in the previous section. Subsequently we use these lemmas to prove Proposition \ref{prop:PsiBd}. 
}

\subsection{Existence of \lq\lq extra paths\rq\rq~and generalized diagrams}\label{sect_extra_path}
In order to ``dissect'' the events underlying $\Psi$ and related quantities, we provide the following lemma. 
\begin{lemma}\label{lem_extra_path}
{\col Let $x\in\Zd$, $W\subset \Zd$, let {\colAJ{$E \subset \cE_D$}} and $z\in V(E)$. Then }
\begin{align*}
& \Bigl(\{E\text{ is occupied}\}\circ \{x \cnctd z\} \Bigr)\cap \{W \Leftrightarrow x\}\\
\subset &  \Bigl(\{E\text{ is occupied}\}\circ \{x \cnctd z\} \circ \{x \cnctd E\}\Bigr)   \bigcup \Bigl(\{E\text{ is occupied}\}\circ \{x \cnctd  z\} \circ \{W  \cnctd  x\}\Bigr).
\end{align*}
\end{lemma}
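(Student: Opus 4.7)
The plan is to show that on the left-hand event there exist three pairwise edge-disjoint witness sets: $B_1:=E$ for $\{E\text{ occupied}\}$, a path $B_2'$ from $x$ to $z$ inside $\omega\setminus E$, and a third witness $B_3'$ for either $\{x\cnctd E\}$ or $\{W\cnctd x\}$, also inside $\omega\setminus E$. The hypothesis $\{E\text{ occupied}\}\circ\{x\cnctd z\}$ already supplies a path $P_z$ from $x$ to $z$ inside $\omega\setminus E$, so the task reduces to producing two edge-disjoint $\omega\setminus E$-occupied paths from $x$, one ending at $z$ and the other ending in $T:=W\cup V(E)$.

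The key input is a min-cut computation. By $W\Leftrightarrow x$ and Menger's theorem, the minimum $x$-$W$ cut in $\omega$ is at least $2$, so the same bound holds for the minimum $x$-$T$ cut since $T\supset W$. The hypothesis $z\in V(E)$ places every edge of $E$ inside $T$, and no minimum $x$-$T$ bond cut in $\omega$ can contain an edge both of whose endpoints lie in $T$; hence removing $E$ does not change this min cut, which is still at least $2$ in $\omega\setminus E$. By Menger, two edge-disjoint $\omega\setminus E$-occupied paths $R_1,R_2$ from $x$ to $T$ exist.

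If one of $R_1,R_2$ ends at $z$, it serves as $B_2'$ and the other as $B_3'$; depending on the latter's endpoint this is case $\{x\cnctd E\}$ or $\{W\cnctd x\}$. If neither ends at $z$ and $x$ cannot reach $T\setminus\{z\}$ in $\omega\setminus E$, then $R_1,R_2$ both terminate at $z\in V(E)$ and again serve as the required witnesses of $\{x\cnctd z\}$ and $\{x\cnctd E\}$. The remaining case is that neither $R_i$ ends at $z$ and $x$ does reach $T\setminus\{z\}$. I would introduce an auxiliary graph $G^+$ by joining a super-sink $s$ to $z$ via a capacity-$1$ edge and to every $y\in T\setminus\{z\}$ via capacity-$\infty$ edges; a short calculation shows the minimum $x$-$s$ cut in $G^+$ is the smaller of the minimum $x$-$T$ cut in $\omega\setminus E$ and of the minimum $x$-$(T\setminus\{z\})$ cut in $\omega\setminus E$ plus $1$, both of which are at least $2$. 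So by Ford-Fulkerson the unit flow obtained by sending flow along $P_z$ and out through $\{z,s\}$ admits an edge-disjoint augmentation to a $2$-flow that uses $\{z,s\}$ and some $\{y,s\}$ with $y\in T\setminus\{z\}$. Projecting the $2$-flow back to $\omega\setminus E$ yields the required pair of edge-disjoint paths: one from $x$ to $z$ (giving $B_2'$) and one from $x$ to $y\in T\setminus\{z\}\subset W\cup V(E)$ (giving $B_3'$, witnessing $\{x\cnctd E\}$ if $y\in V(E)$ and $\{W\cnctd x\}$ if $y\in W$).

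The main obstacle is this last case: Menger by itself does not let one prescribe that one of the two edge-disjoint $x$-$T$ paths end at $z$, so the given path $P_z$ must be combined with a Mengerian path via the super-sink augmentation above. The other delicate ingredient is the invariance of the $x$-$T$ min cut under removal of $E$, which rests essentially on the hypothesis $z\in V(E)$ placing $E$ entirely inside the target set.
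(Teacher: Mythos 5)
Your proof is correct, but takes a genuinely different route from the paper's. The paper constructs the required pairwise edge-disjoint witnesses by hand: from $\{W \Leftrightarrow x\}$ it takes two edge-disjoint occupied paths $\pi_1,\pi_2$ from $x$ to $W$, truncates them to $\pi_1',\pi_2'$ at their first encounter with $V(E)$ (so that they avoid $E$ and end in $T:=W\cup V(E)$), and then reroutes the given $z$-to-$x$ path $\mathcal{P}_x$ onto whichever $\pi_i'$ it first meets, producing one path from $z$ to $x$ and leaving the other $\pi_j'$ intact as the witness for $\{x\cnctd E\}$ or $\{W\cnctd x\}$ depending on where it ends. You instead invoke max-flow/min-cut machinery: your observation that every edge of $E$ lies internal to $T$ (so never crosses any $x$-$T$ vertex partition, and therefore never appears in a minimum cut) cleanly explains why deleting $E$ leaves the min $x$-$T$ cut equal to that in $\omega$, hence still at least two; the super-sink with a capacity-one edge at $z$, together with a Ford--Fulkerson augmentation starting from the given path $P_z$ (whose augmenting path cannot touch $\{z,s\}$ since $s$ is the sink of a simple augmenting path), forces one of the two edge-disjoint paths in the resulting $2$-flow to end at $z$. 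Both arguments succeed; the paper's is shorter and more elementary, while yours isolates precisely where Menger alone is insufficient (it does not let one prescribe that a path land at $z$) and patches it with a standard flow tool, which one might argue is more systematic. One small remark on presentation: your middle case (``neither $R_i$ ends at $z$ and $x$ cannot reach $T\setminus\{z\}$'') is vacuous---if neither $R_i$ ends at $z$ then both end in $T\setminus\{z\}$, which $x$ therefore does reach---so the case analysis really reduces to two branches, but this redundancy does not affect correctness.
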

\begin{proof}
On the event $\{W \Leftrightarrow x\}$, we may choose two edge-disjoint paths $\pi_1$ and $\pi_2$ starting from $x$ and ending in $W$. Now, consider $\pi_1'$  (resp.~$\pi_2'$) the oriented path coinciding with $\pi_1$ (resp.~$\pi_2$) but ended at the first time when $\pi_1$ (resp.~$\pi_2$) reaches $V(E)$. This yields a shorter path except when $\pi_1$ (resp.~$\pi_2$) does not intersect $V(E)$ in which case we simply have $\pi_1=\pi_1'$ ($\pi_2=\pi_2'$).

Let us denote  $\mathcal{P}_{x}$ a simple path, edge-disjoint from $E$ which connects $z$ to $x$. Set $x_*$ the first vertex in the path $\mathcal{P}_{x}$ (so the closest to $z$ along that path) which belongs to either $\pi_1'$ or $\pi_2'$. Note that $x_*$ is well defined since the set of vertices in the path $\mathcal{P}_{x}$ which belongs to either $\pi_1'$ or $\pi_2'$ is non-empty because   $x$ is such a point. We can assume that $x_*$ belongs to $\pi_1'$ (otherwise just change the roles of $\pi_1'$ and $\pi_2'$).

Define $\mathcal{P}_1$ the path coinciding with $\mathcal{P}_{x}$ from $z$ to  the first (and only) visit to $x_*$ and then following $\pi_1'$ in reverse order. We know that 
\begin{itemize}
\item  $\mathcal{P}_1$ is  a path from  $z$  to $x$.
\item $\mathcal{P}_1$  is edge disjoint from $\pi_2'$. Indeed, the initial part of  $\mathcal{P}_1$ coincides with  $\mathcal{P}_{x}$ before reaching $x_*$ and, by definition of $x_*$, no point before $x_*$ is on $\pi_1'$ or $\pi_2'$. Furthermore the latter part of  $\mathcal{P}_1$ is included in $\pi_1'$ which is edge disjoint from $\pi_2' $ (since $\pi_1$ and $\pi_2$ are).
\item  $\mathcal{P}_1$  is edge-disjoint from $E$. Indeed, its initial part is included in $\mathcal{P}_{x}$ which is edge-disjoint from $E$ and its latter part is included in $\pi_1'$ which was defined as the part of $\pi_1$ reached (from $x'$) before reaching $V(E)$; hence obviously edge disjoint from $E$.
\end{itemize}

Finally, we can see that $\pi_2'$ is edge-disjoint from $E$ and  starts at $x$. There are two cases, 
\begin{enumerate}
\item  $\pi_2$ reaches $V(E)$, in which case $\pi_2'$ ends  in $V(E)$ and it implies the occurrence of
\[
 \{E\text{ is occupied}\}\circ \{x \text{ is connected (using $\pi_2'$) to }E\} \circ \{x \text{ is connected (using $\mathcal{P}_1$) to }z\},
 \]
\item $\pi_2$ does not reach $V(E)$, thus $\pi_2'$  coincides with $\pi_2$ and ends in $W$ by definition. This implies the occurrence of
\[
\{E\text{ is occupied}\}\circ \{x \text{ is connected (using $\pi_2'$) to }W\} \circ \{x \text{ is connected (using $\mathcal{P}_1$) to }z\},
 \]
\end{enumerate}
This shows the lemma. 
\end{proof}

The previous lemma has an important consequence in terms of diagrammatic graphs.
\begin{lemma}\label{lem_extra_path2}
Let $x,z\in\Zd$, $W\subset \Zd$. 
For a diagrammatic graph $\mathbf G$ that contains the label $z$, we have 
\[
(\mathcal{D}_{\mathbf  G}\circ \{z \leftrightarrow x\} )\cap \{W \Leftrightarrow x\} \subset \bigcup_{\substack{{\bf G'}\in {\bf G} \circledast x \\ \text{ or }{\bf G'} ={\bf G}\uplus \{W, x\}}} (\mathcal{D}_{{\bf G'}} \circ \{z \leftrightarrow x\}).
\]
\end{lemma}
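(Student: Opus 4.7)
The plan is to deduce this from Lemma~\ref{lem_extra_path} by extracting bond-disjoint witnesses from the diagrammatic event. Fix a configuration $\omega$ in $(\mathcal{D}_{\mathbf G} \circ \{z \cnctd x\}) \cap \{W \Leftrightarrow x\}$. By the definition of $\mathcal{D}_{\mathbf G}$, there is an admissible label function $f \in \mathcal L_{\mathbf G}$ sending the labelled vertex of label $z$ to $z$, together with pairwise bond-disjoint occupied paths $E_e$ realising $\{f(\underline e) \cnctd f(\bar e)\}$ for each $e \in E(G)$, and one further bond-disjoint path $\mathcal P_{zx}$ realising $\{z \cnctd x\}$. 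Setting $E := \bigcup_{e \in E(G)} E_e$, we have $z \in V(E)$ because the labelled vertex for $z$ is incident to some edge of $G$ (in the isolated-vertex case the argument collapses directly into the $\uplus\{W,x\}$ branch).

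Next I apply Lemma~\ref{lem_extra_path} to the event $(\{E \text{ is occupied}\} \circ \{x \cnctd z\}) \cap \{W \Leftrightarrow x\}$, with $\mathcal P_{zx}$ playing the role of the disjoint $x \cnctd z$ connection. This produces an additional occupied path $\mathcal P_*$ starting at $x$ and bond-disjoint from both $E$ and $\mathcal P_{zx}$, ending either at some $u_* \in V(E)$ (case~(a)) or at some $w_* \in W$ (case~(b)).

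In case~(b), $\mathcal P_*$ directly witnesses a new edge from $x$ to $w_* \in W$; combined with the preserved witnesses $\{E_e\}_e$ and $\mathcal P_{zx}$, this realises $\mathcal{D}_{\mathbf G'} \circ \{z \cnctd x\}$ for $\mathbf G' = \mathbf G \uplus \{W, x\}$. In case~(a), $u_*$ lies on at least one witness path $E_{e^*}$ with $e^* = \{a,b\} \in E(G)$; pick any such $e^*$ and cut $E_{e^*}$ at its first visit to $u_*$ into sub-paths from $f(a)$ to $u_*$ and from $u_*$ to $f(b)$. Combining these two sub-paths with the other $E_e$ ($e \neq e^*$), with $\mathcal P_*$, and with $\mathcal P_{zx}$ yields pairwise bond-disjoint witnesses for the three new edges $\{a, u_{**}\}$, $\{u_{**}, b\}$, $\{x_*, u_{**}\}$ (where the freshly created unlabelled vertex $u_{**}$ is labelled by $u_*$ and $x_*$ by $x$) replacing $e^*$ in the element of $\mathbf G \circledast x$ obtained by convolving at the edge $e^*$; the remaining edges of $\mathbf G$ are witnessed by the untouched $E_e$.

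The main technical point is the bond-disjoint bookkeeping after splitting $E_{e^*}$: the two sub-paths inherit bond-disjointness from $E_{e^*}$ itself, while their bond-disjointness from $\mathcal P_*$, $\mathcal P_{zx}$, and the other $E_e$ is built into the conclusion of Lemma~\ref{lem_extra_path}. The only genuinely delicate subcase is when $\mathcal P_*$ lands at an endpoint of some $E_{e^*}$ (so $u_* \in \{f(a), f(b)\}$ and one split sub-path is trivial); this is absorbed by choosing another witness path through that endpoint for the split or, failing that, by re-interpreting $\mathcal P_*$ as furnishing a witness for an element of $\mathbf G \uplus \{W,x\}$ in the degenerate limit — in all cases the resulting witness configuration lies in $\mathcal D_{\mathbf G'} \circ \{z \cnctd x\}$ for some $\mathbf G'$ in the union on the right-hand side.
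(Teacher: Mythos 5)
Your plan is essentially the paper's: fix a realisation $E$ of the diagrammatic event together with a disjoint $z$-to-$x$ witness $\mathcal P_{zx}$, feed $E$ into Lemma~\ref{lem_extra_path}, and in the $\{x\cnctd E\}$ branch split the witness path at $u_*$ to land in $\mathbf G\circledast x$. The slip is the claim that $\mathcal P_*$ is bond-disjoint from the \emph{original} $\mathcal P_{zx}$: Lemma~\ref{lem_extra_path} does not deliver that. Its conclusion is only the three-fold disjoint occurrence $\{E\text{ occupied}\}\circ\{x\cnctd z\}\circ\{x\cnctd E\}$, and inspection of its proof shows that the $\{x\cnctd z\}$-witness is in general a \emph{modified} path, namely a concatenation of an initial segment of $\mathcal P_{zx}$ with a segment of one of the $\{W\Leftrightarrow x\}$-witnesses; the original $\mathcal P_{zx}$ may well share edges with $\mathcal P_*$. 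The repair is immediate: carry the modified $\{z\cnctd x\}$-witness supplied by the lemma's conclusion into the final combination rather than insisting on $\mathcal P_{zx}$, which is exactly what the paper does implicitly. As a side remark, your ``delicate subcase'' $u_*\in\{f(a),f(b)\}$ is not actually delicate: the trivial split sub-path is an empty edge set witnessing the always-true event $\{f(a)\cnctd f(a)\}$, and the empty set is disjoint from everything; moreover the fallback you offer there (re-interpreting via $\mathbf G\uplus\{W,x\}$) would be incorrect in that case anyway, since $u_*\in V(E)$ and not in $W$.
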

{\colAJ{We note immediately that the terms arising from ${\bf G}\uplus \{W,x\}$ will not concern us: 
their contribution
will be bounded above by $C_W$ times the contribution of the terms arising from ${\bf G} \circledast x$, 
by Proposition \ref{neglect_w2}. Hence the reader may safely concentrate only on the latter type of 
diagrammatic graphs. See also Remark \ref{neglect_W} below.}}

\begin{proof}[Proof of Lemma \ref{lem_extra_path2}.]
{\col The statement is trivial if $z=x$.} 
{\colAJ{We will make use of the simple fact that if $I$ is any index set and $A_\alpha$, $\alpha \in I$ 
and $B$ are increasing events then 
\eqn{e:observation} 
{ \left( \cup_{\alpha \in I} A_\alpha \right) \circ B
   = \cup_{\alpha \in I} \left( A_\alpha \circ B \right). }
Let us enumerate the edges of $\mathbf{G}$ as $(\{x_i,y_i\})_{i\leq n}$. Let $f\colon V(G) \to \Zd$ be a 
labeling in the definition of $\mathcal{D}_{\bf G}$, and let $(\pi_i)_{i \le n}$ be any choice of 
edge-disjoint paths with $\pi_i$ connecting $f(x_i)$ and $f(y_i)$. Let $E$ be the set of edges
$\cup_{i \le n} \pi_i$. Thus, $\mathcal{D}_{\bf G}$ is the union of the events $\{ E \text{ is occupied}\}$
over all possible choices of $E$ in the above.
Note that $z\in V(E)$ by assumption. 

We apply Lemma~\ref{lem_extra_path} to $E$, which yields that 
\begin{align*}
& ( \{ E \text{ is occupied}\}\circ \{z \leftrightarrow x\} )\cap \{W \Leftrightarrow x\} \\
& \subset \Bigl(\{E \text{ is occupied}\}\circ \{x \cnctd z\} \circ \{x\cnctd E\}\Bigr) \bigcup \Bigl(\{E \text{ is occupied}\}\circ \{x \cnctd  z\} \circ \{W  \cnctd  x\}\Bigr).
\end{align*}
It is clear that 
\[
\{E \text{ is occupied}\}\circ \{x \cnctd  z\} \circ \{W  \cnctd  x\}\subset \mathcal{D}_{\bf G} \circ \{z \leftrightarrow x\} \circ\{W \leftrightarrow x\}
=\mathcal{D}_{{\bf G}\uplus \{W, x\}} \circ \{z \leftrightarrow x\} .
\] 
Furthermore, on the event $\{E \text{ is occupied}\} \circ \{x \cnctd E\}$, there exists $i_0\leq n$ and $x_*$ in the vertices of $\pi_{i_0}$ such that $x$ is connected to $x_*$  without using edges of $E$. This means that event 
$ \left( \bigcirc_{i\neq i_0} \{f(x_i)\leftrightarrow f(y_i)\} \right)
   \circ \{f(x_{i_0}) \leftrightarrow x_*\} \circ  \{f(y_{i_0}) \leftrightarrow x_*\} 
   \circ  \{x \leftrightarrow x_*\} $
occurs. This implies
\[
\{E \text{ is occupied}\} \circ \{x \cnctd E\} \circ \{z \leftrightarrow x\} 
\subset \bigcup_{{\bf G'}\in {\bf G} \circledast x} (\mathcal{D}_{{\bf G'}}  \circ \{z \leftrightarrow x\}).
\]
Taking the union over all choices of $E$, the result follows from the last three equations and 
\eqref{e:observation}.}}
\end{proof}

\begin{lemma}\label{lem_extra_path3}
Let $x\in\Zd$, $W\subset \Zd$, let {\colAJ{$E \subset \cE_D$}} and $z\in V(E)$. Then 
\begin{align*}
\{E\text{ is occupied}\}\cap \{W \leftrightarrow y\} 
\subset ( \{E\text{ is occupied}\}\circ \{y \cnctd E\} )
\cup (\{E\text{ is occupied}\}\circ \{W \leftrightarrow y\}).
\end{align*}
\end{lemma}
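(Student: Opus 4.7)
The plan is to mimic the argument used in Lemma~\ref{lem_extra_path}, but with the simplification that now we have only a single connection from $W$ to $y$ rather than a double connection, so no ``gluing'' at an intermediate vertex $x_\ast$ is needed. The hypotheses $x \in \Zd$ and $z \in V(E)$ stated in the lemma play no role in the argument below.

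First I would fix a witness of $\{ E \text{ is occupied} \}$ to be $E$ itself. Since the two events on the right-hand side are of the form ``$E$ occupied disjointly from $\cdot$'', it will be enough to exhibit, using edges outside $E$, either a connection from $y$ to a vertex of $V(E)$ or a connection from $W$ to $y$. The event $\{ W \cnctd y \}$ provides an occupied simple path $\pi$ from some $w \in W$ to $y$.

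Next I would branch on whether $V(\pi) \cap V(E) = \es$. If it is empty, then no edge of $\pi$ can lie in $E$, since every edge of $E$ has both endpoints in $V(E)$; thus $\pi$ itself witnesses $\{W \cnctd y\}$ using edges disjoint from $E$, so $\{E \text{ is occupied}\} \circ \{W \cnctd y\}$ occurs. Otherwise let $x_\ast$ be the vertex of $V(\pi) \cap V(E)$ that is closest to $y$ along $\pi$, and let $\pi'$ denote the portion of $\pi$ from $y$ to $x_\ast$. By the choice of $x_\ast$, the only vertex of $\pi'$ lying in $V(E)$ is $x_\ast$ itself, hence any edge of $\pi'$ has at least one endpoint outside $V(E)$ and so is not in $E$. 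Therefore $\pi'$ witnesses $\{ y \cnctd V(E) \}$ disjointly from $E$, and $\{E \text{ is occupied}\} \circ \{y \cnctd E\}$ occurs.

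There is no genuine obstacle here: the only point that requires care is the claim that after truncation all edges of $\pi'$ avoid $E$, and this is immediate from the fact that edges of $E$ require both endpoints in $V(E)$. This is precisely the same observation underlying the ``truncate at first hit of $V(E)$'' step in the proof of Lemma~\ref{lem_extra_path}.
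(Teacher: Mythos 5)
Your proposal is correct and follows essentially the same route as the paper's proof: take a path $\pi$ from $W$ to $y$, truncate it at its first hit of $V(E)$ (as traced from $y$), and observe that the truncated piece has no edge in $E$ since each of its edges has an endpoint outside $V(E)$. The only cosmetic difference is that the paper truncates at the first hit of $V(E)\cup W$ in one step and reads off which of the two events occurs from where the truncation landed, whereas you handle the ``never meets $V(E)$'' case by a separate branch; these are equivalent.
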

\begin{proof}
On $\{E\text{ is occupied}\}\cap \{W \leftrightarrow y\}$, we can choose a path $\pi$ from $y$ to $W$. Consider $\pi'$ the path coinciding with $\pi$ until  the first point $x_*$ where  this path meets $V(E)\cup W$. By definition, $\pi'$ is edge-disjoint from $E$, starts at $y$ and ends  in $V(E)$ or $W$, we have
\begin{align*}
& \Bigl(\{E\text{ is occupied}\}\cap \{W \leftrightarrow y\}\\
\subset &  \Bigl(\{E\text{ is occupied}\}\circ \{\pi' \text{ connects } y \text{ to }E\}\Bigr)  \bigcup \Bigl(\{E\text{ is occupied}\}\circ\{\pi' \text{ connects } y \text{ to }  W\}\Bigr),
\end{align*}
and the lemma follows.
\end{proof}

Using a proof similar to that of Lemma~\ref{lem_extra_path2}, the previous lemma allows us to obtain
\begin{lemma}\label{lem_extra_path4}
Let $W$ be a {\colAJ{finite subset of $\Zd$ containing $0$}} and also set $y\in \mathbb{Z}^d$. 
For $\bf G$ a diagrammatic graph that contains the label $0$, we have
\[
\mathcal{D}_{\bf G}\cap \{W \leftrightarrow y\} \subset  \bigcup_{\substack{{\bf G'}\in {\bf G} \circledast y \\ \text{ or }{\bf G'} ={\bf G}\uplus \{W, y\}}} \mathcal{D}_{{\bf G'}}.
\]
\end{lemma}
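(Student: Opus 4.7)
The plan is to follow the proof of Lemma \ref{lem_extra_path2} essentially verbatim, substituting Lemma \ref{lem_extra_path3} for Lemma \ref{lem_extra_path} to handle the plain intersection with $\{W\cnctd y\}$ in place of the disjoint occurrence with a $\{z\cnctd x\}$ event.

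First I would enumerate the edges of ${\bf G}$ as $(\{x_i,y_i\})_{i\le n}$, fix a labelling $f\in\LG$ together with a choice of edge-disjoint occupied paths $(\pi_i)_{i\le n}$ with $\pi_i$ connecting $f(x_i)$ and $f(y_i)$, and set $E:=\bigcup_{i\le n}\pi_i$, so that $\mathcal{D}_{\bf G}$ decomposes as the union of the events $\{E\text{ is occupied}\}$ over all admissible choices of $f$ and $E$. Since $0$ is a label of ${\bf G}$ (and we may assume the corresponding vertex has positive degree, else it contributes nothing to $\mathcal{D}_{\bf G}$), we have $0\in V(E)$. Applying Lemma \ref{lem_extra_path3} with this $E$ and with $z=0$ then yields
\[
\{E\text{ is occupied}\}\cap\{W\cnctd y\}\subset \bigl(\{E\text{ is occupied}\}\circ \{y\cnctd E\}\bigr)\cup\bigl(\{E\text{ is occupied}\}\circ\{W\cnctd y\}\bigr).
\]

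For the second term on the right, the path realising $\{W\cnctd y\}$ disjointly from $E$ produces precisely an additional edge of the kind created when forming ${\bf G}\uplus\{W,y\}$, so this event is contained in $\mathcal{D}_{{\bf G}\uplus\{W,y\}}$. For the first term, there exist $i_0\le n$ and a vertex $x_*$ on $\pi_{i_0}$, together with an occupied path from $y$ to $x_*$ that is edge-disjoint from $E$; splitting $\pi_{i_0}$ at $x_*$ then realises a graph in ${\bf G}\circledast y$ whose new unlabelled vertex $u_*$ is identified with $x_*$, so this event is contained in $\bigcup_{{\bf G'}\in{\bf G}\circledast y}\mathcal{D}_{{\bf G'}}$. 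Taking the union over all valid $(f,E)$ and invoking the distributivity observation \eqref{e:observation} (used identically in the proof of Lemma \ref{lem_extra_path2}) then yields the desired inclusion.

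No genuine obstacle arises: the argument is a direct template transcription from Lemma \ref{lem_extra_path2}, with the only change being the replacement of the dichotomy furnished by Lemma \ref{lem_extra_path} with the simpler dichotomy of Lemma \ref{lem_extra_path3}, together with the mild book-keeping that the ``extra'' disjoint path now terminates in $W$ rather than at a third labelled point $x$.
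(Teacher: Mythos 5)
Your proposal is correct and follows exactly the route the paper intends: the paper itself states only that Lemma \ref{lem_extra_path4} follows ``using a proof similar to that of Lemma \ref{lem_extra_path2}'' with Lemma \ref{lem_extra_path3} in place of Lemma \ref{lem_extra_path}, which is precisely the transcription you carry out. (Two minor remarks: the $z\in V(E)$ hypothesis of Lemma \ref{lem_extra_path3} is in fact unused in its proof, so your care over choosing $z=0$ is unnecessary; and here the appeal to the distributivity observation \eqref{e:observation} is not needed, since one is distributing an ordinary intersection rather than a disjoint occurrence over the union defining $\mathcal{D}_{\bf G}$.)
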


{\colAJ{The observation contained in the following important remark will be used repeatedly, and often
without further comment.}}
\begin{remark}\label{neglect_W}
Using Proposition~\ref{neglect_w2}, we see that, using the BK inequality, Lemma~\ref{lem_extra_path2} implies 
\[
{\bf P}[(\mathcal{D}_{\bf G} \circ \{z \leftrightarrow x\} )\cap \{W \Leftrightarrow x\} ] 
 \leq {\colAJ{ (1+C_W) \mathrm{Diag} (({\bf G} \circledast x) \uplus \{z,x\} ), }}
\]
where the right-hand side might be viewed as a generalized diagram (here $C_W$ denotes the same constant as in Proposition~\ref{neglect_w2}).

We could also take the diagrammatic event, intersect it with $\{W\leftrightarrow y\}$ which would allow for the successive application of Lemma~\ref{lem_extra_path2} and Lemma~\ref{lem_extra_path4}, leading to the inclusion
\[
 ((\mathcal{D}_{\bf G} \circ\{z \leftrightarrow x\}) \cap \{W \Leftrightarrow x\})\cap \{W \leftrightarrow y\} \\    \subset  \bigcup_{\substack{{\bf G'}\in {\bf G} \circledast x \\ \text{ or }{\bf G'} ={\bf G}\uplus\{W,x \}}} \bigcup_{\substack{{\bf G''} \in ({\bf G'}\uplus \{ z, x\}) \circledast y \\\text{ or }{\bf G''} =({\bf G'}\uplus \{ z, x \}) \uplus \{W, y\}}}\mathcal{D}_{{\bf G''}}.
\]
where we can now apply Proposition~\ref{neglect_w2} recursively (together with the BK inequality) and obtain 
\[
{\bf P}[((\mathcal{D}_{\bf G} \circ\{z \leftrightarrow x\}) \cap \{W \Leftrightarrow x\})\cap \{W \leftrightarrow y\}  ] 
 \leq {\colAJ{ (1+C_W)^2 \mathrm{Diag} ( ( ({\bf G} \circledast x) \uplus \{ z, x\}) \circledast y ), }}
\] 
which proves that all the terms arising from the box $W$ are irrelevant, up to multiplicative constants, 
for upper bounds on probabilities.
\end{remark}

In the same vein {\colAJ{as Lemmas \ref{lem_extra_path2} and \ref{lem_extra_path4} we also prove 
the next statement.}}
\begin{lemma}\label{sam}
{\colAJ{Let $\textbf G$ be a diagrammatic graph which contains the labels $u, v$, and let $b \in \Zd$.}} Then 
\[
(\{v \leftrightarrow b\} \circ \{b\leftrightarrow u\}) \cap \mathcal{D}_{{\bf G}} 
 \subset \bigcup_{{\bf G'}\in G \circledast^2 b} \mathcal{D}_{{\bf G'}}.
\]
\end{lemma}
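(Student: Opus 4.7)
The plan is to mimic the strategy of Lemmas~\ref{lem_extra_path} and \ref{lem_extra_path2}, locating the two edge-disjoint connections at $b$ simultaneously. Fix a configuration $\omega$ in the left-hand event. By definition of $\mathcal D_{\bf G}$, select a labeling $f\in \mathcal L_{\bf G}$ and edge-disjoint occupied paths $(\pi_e)_{e\in E(G)}$, with $\pi_e$ connecting $f(\underline e)$ to $f(\bar e)$; set $E:=\bigcup_e \pi_e$, so that in the generic case (where the labels $u,v$ are each incident to some edge of $G$) both $u$ and $v$ lie in $V(E)$. Independently, by the disjoint occurrence $\{v\leftrightarrow b\}\circ \{b\leftrightarrow u\}$, pick edge-disjoint occupied paths $\pi_u$ from $b$ to $u$ and $\pi_v$ from $b$ to $v$. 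Let $u_\ast$ (resp.\ $v_\ast$) be the first vertex of $\pi_u$ (resp.\ $\pi_v$) meeting $V(E)$ when starting at $b$; these are well defined because each path terminates in $V(E)$. The truncated paths $\pi_u',\pi_v'$ are still edge-disjoint from each other, and each is edge-disjoint from $E$ since all its internal vertices avoid $V(E)$.

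I would then split into two cases according to whether $u_\ast$ and $v_\ast$ sit on the same path $\pi_{e_0}$ of the realization, or on distinct paths $\pi_{e_1},\pi_{e_2}$. In the two-edge case, take $\mathbf G':=(G_{e_1,e_2,b},S',\ell')\in {\bf G}\circledast^2 b$, extend $f$ to a labeling $f'$ of $G'$ by $f'(u_\ast):=u_\ast$ and $f'(u_{\ast\ast}):=v_\ast$, and realize the new edges $\{x_1,u_\ast\},\{u_\ast,y_1\},\{x_2,u_{\ast\ast}\},\{u_{\ast\ast},y_2\}$ by the four sub-arcs obtained from splitting $\pi_{e_1}$ at $u_\ast$ and $\pi_{e_2}$ at $v_\ast$, while $\{b,u_\ast\}$ and $\{b,u_{\ast\ast}\}$ are realized by $\pi_u'$ and $\pi_v'$, and all remaining edges of $G'$ are realized by the original $\pi_e$. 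In the single-edge case, orient $\pi_{e_0}$ so that $u_\ast$ precedes $v_\ast$, take $\mathbf G':=(G_{e_0,b},S',\ell')\in {\bf G}\circledast^2 b$ with the same extension $f'(u_\ast):=u_\ast,\,f'(u_{\ast\ast}):=v_\ast$, and realize $\{x,u_\ast\},\{u_\ast,u_{\ast\ast}\},\{u_{\ast\ast},y\}$ by the three consecutive sub-arcs of $\pi_{e_0}$, with $\{b,u_\ast\},\{b,u_{\ast\ast}\}$ realized by $\pi_u',\pi_v'$. In both cases edge-disjointness of all witnessing paths is inherited from edge-disjointness of $(\pi_e)$ together with $\pi_u',\pi_v'$ and the mutual disjointness of the truncated paths with $E$; hence $\omega\in \mathcal D_{\bf G'}$.

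The main (and only nontrivial) obstacle will be the degenerate subcase $u_\ast=v_\ast$ of the single-edge scenario (and the analogous collapse when $b\in V(E)$): two distinct graph vertices of $\mathbf G'$ then receive a common $\Zd$-label, and an edge such as $\{u_\ast,u_{\ast\ast}\}$ encodes a tautological connection of length zero. This is harmless because $\mathcal D_{\bf G'}$ is defined as a union over all labelings in $\mathcal L_{\bf G'}$, which are required to extend $\ell'$ on $S'$ but may otherwise be non-injective, so length-zero connections contribute the trivial event. Taking the union over all realizations $\omega$ then yields the claimed inclusion.
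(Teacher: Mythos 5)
Your proof is correct and follows essentially the same approach as the paper's: realize $\mathcal{D}_{\bf G}$ by edge-disjoint paths, truncate the two disjoint paths from $b$ to $u$ and $v$ at their first hitting of the realization's vertex set, and observe that the truncated pieces are edge-disjoint from the realization and from each other. Your write-up is more explicit than the paper's (in particular the case distinction mirroring the two families of graphs in $\circledast^2$, and the discussion of the degenerate coincidence $u_*=v_*$), but the underlying argument is the same.
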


\begin{proof}
 Let $(\pi_i)_{i\leq K}$ denote {\colAJ{occupied}} paths realizing the event $\mathcal{D}_{{\bf G}}$. Let us denote $V$ the vertices contained in those paths.

Since $\{v \leftrightarrow b\} \circ \{b\leftrightarrow u\}$, we know that there are two disjoint paths $\pi_1'$ and $\pi_2'$ starting from $b$ with the first connecting to $v$ and the second to $u$. Since those two vertices are in $V$, we know that both $\pi_1'$ and $\pi_2'$ have to intersect $V$. Consider $\pi_1''$ (resp.~$\pi_2''$) the path coinciding with $\pi_1'$ (resp.~$\pi_2'$) until  the first point $x_*$ where  $\pi_1'$ (resp.~$\pi_2'$) meets $V$. 

By definition, $\pi''_1$  and $\pi_2''$ are edge-disjoint and $\pi''_1$ (resp.~$\pi_2''$) is edge-disjoint from the paths $(\pi_i)_{i\leq K}$, starts at $b$ and ends in $V$. Hence the paths $\pi''_1$, $\pi_2''$ and $(\pi_i)_{i\leq K}$ realize the diagrammatic events linked to the generalized diagram in the statement of the lemma. The result follows.
\end{proof}

\subsection{Bounds on expansion coefficients}
We now bound the events appearing in the expansion in Section \ref{sec:expansion}. 
\begin{lemma}
\label{lem:E-bnd}\ 
{\col Let $F$ be a special cylinder event in the finite window $W\subset \Zd$.} 
\\
(i) For all $x, x' \not\in W$ we have 
\[
E(F;x,x') \subset  \bigcup_{\substack{{\bf G'}\in \{0,x'\} \circledast x \\ \text{ or }{\bf G'} =\{0,x'\}\uplus\{W,x \}}}{\bf G'} \circ \{ o \cnctd x \}.
\]
(ii) For all $\{ u_0, v_0 \} \not\in E_W$, $x' \not\in W$ we have 
\[ 
\widetilde{E}(F;u_0,v_0,x') 
   \subset E(F,u_0,x'). 
   \]
\end{lemma}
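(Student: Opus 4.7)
The plan is to treat the two parts separately, with (ii) requiring only a short monotonicity argument and (i) being essentially a direct application of Lemma \ref{lem_extra_path2}.

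For part (ii), the key observation is that $F$ is a special cylinder event depending only on bonds in $E_W$, and since $\{u_0,v_0\} \notin E_W$, the configurations $\omega_0$ and $(\omega_0)_{\{u_0,v_0\}}$ agree on $E_W$; hence $F$ takes the same value under both. The remaining two events in the definition of $\widetilde{E}$, namely $\{W \dcnctd u_0\}$ and $\{o \cnctd u_0\} \circ \{o \cnctd x'\}$, are both increasing, so their occurrence on the thinned configuration $(\omega_0)_{\{u_0,v_0\}}$ implies their occurrence on $\omega_0$. Combining these facts yields $\widetilde{E}(F;u_0,v_0,x') \subset E(F;u_0,x')$ at once.

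For part (i), the plan is to recast the definition of $E(F;x,x')$ in the language of diagrammatic events so that Lemma \ref{lem_extra_path2} applies directly. Identifying $\{o \cnctd x'\}$ with $\mathcal{D}_{\{0,x'\}}$, the diagrammatic event of the single-edge graph between the labelled vertices $0$ and $x'$, we rewrite
\[
E(F;x,x') = F \cap (\mathcal{D}_{\{0,x'\}} \circ \{o \cnctd x\}) \cap \{W \dcnctd x\}.
\]
Applying Lemma \ref{lem_extra_path2} with $\mathbf{G} = \{0,x'\}$, $z = 0$, gives
\[
(\mathcal{D}_{\{0,x'\}} \circ \{o \cnctd x\}) \cap \{W \dcnctd x\} \subset \bigcup_{\substack{\mathbf{G}' \in \{0,x'\} \circledast x \\ \text{or } \mathbf{G}' = \{0,x'\} \uplus \{W,x\}}} \mathcal{D}_{\mathbf{G}'} \circ \{o \cnctd x\}.
\]
Dropping the factor $F$ on the left (which only tightens the inclusion) yields the bound in (i).

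The only hypotheses to verify are that the label $z = 0$ is present in $\mathbf{G} = \{0,x'\}$ (true by construction), and that the convolution $\{0,x'\} \circledast x$ is non-trivial, which requires $x \notin \{0,x'\}$; this follows from $x \notin W$ (giving $x \ne 0$) and the standing assumption $x \ne x'$. I do not anticipate any substantive obstacle: both parts are direct once one aligns the notation with Lemma \ref{lem_extra_path2} and exploits the fact that $F$ is insensitive to edges outside $E_W$.
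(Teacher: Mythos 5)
Your proof is correct and follows the same route as the paper: for (i) you recast $E(F;x,x')$ as $F \cap (\mathcal{D}_{\{0,x'\}}\circ\{o\cnctd x\})\cap\{W\dcnctd x\}$, apply Lemma~\ref{lem_extra_path2} with $\mathbf{G}=\{0,x'\}$ and $z=0$, and drop the $F$ factor; for (ii) you use monotonicity in the bond configuration. The paper's own argument is terser (it states (ii) is immediate and cites Lemma~\ref{lem_extra_path2} for (i) without unpacking the hypotheses), but the substance is identical; your extra remark about $F$ being insensitive to $\{u_0,v_0\}$ is harmless but not actually needed, since $F$ is evaluated on $\omega_0$ itself rather than on the thinned configuration in the definition of $\widetilde{E}$.
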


\begin{proof}
Part (ii) is immediate from the definition of the events, by removing the restriction that the edge 
$\{ u_0, v_0 \}$ should not be used.


For part (i), we know that $E(F;x,x')=\bigl(\{ o \cnctd x \} \circ \{ o \cnctd x' \}\bigr) \cap \{ W \Leftrightarrow x \}$ which allows us to use Lemma~\ref{lem_extra_path2} with
{\colAJ{the diagrammatic event $\{ o \cnctd x' \}= \mathcal{D}_{\{o,x'\}}$ which includes label $o$}}. 
\end{proof}

\begin{lemma}
\label{lem:tPsiBd0}
For any $F\in\mathfrak F_{00}$ there exists $C>0$ such that for all $x,x'\in\Zd$ we have 
\[ \tPsi^{(0)}(F;x,x') \le C\tau(x)\tau(x')  \min\{|x|,|x-x'|\}^{-(d-4)} .\]
\end{lemma}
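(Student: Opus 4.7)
My plan is to read off $\tPsi^{(0)}(F;x,x') = \P[E(F;x,x')]$ from \eqref{e:PsiZero} and invoke Lemma \ref{lem:E-bnd}(i), which yields
\[
E(F;x,x') \subset \bigcup_{\substack{{\bf G'}\in \{o,x'\} \circledast x \\ \text{or }{\bf G'} = \{o,x'\}\uplus\{W,x\}}} \mathcal{D}_{{\bf G'}} \circ \{o \cnctd x\}.
\]
Since $\{o \cnctd x\} = \mathcal{D}_{\{o,x\}}$, the right-hand side is a union of diagrammatic events $\mathcal{D}_{{\bf G'} \uplus \{o,x\}}$, and applying the BK inequality edge by edge bounds
\[
\tPsi^{(0)}(F;x,x') \;\le\; \sum_{{\bf G'}} \mathrm{Diag}({\bf G'} \uplus \{o,x\}).
\]

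Next, because the new edge $\{o,x\}$ is adjacent to the origin, I would apply Proposition \ref{neglect_w2} (cf.\ Remark \ref{neglect_W}) to absorb the branch ${\bf G'} = \{o,x'\}\uplus\{W,x\}$ into the convolution branch at the cost of a multiplicative constant depending on $W$, hence on $F$. This collapses the upper bound to a single generalized diagram,
\[
\tPsi^{(0)}(F;x,x') \;\le\; C\,\mathrm{Diag}\!\left((\{o,x'\}\circledast x) \uplus \{o,x\}\right).
\]

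Finally, the remaining generalized diagram is explicit: its vertex set consists of the labels $o,x,x'$ together with a single unlabelled vertex $y$ summed over $\Zd$, and its edges are $\{o,y\},\{y,x'\},\{y,x\},\{o,x\}$, so
\[
\mathrm{Diag}\!\left((\{o,x'\}\circledast x) \uplus \{o,x\}\right) \;=\; \tau(x) \sum_{y\in\Zd} \tau(y)\,\tau(y,x')\,\tau(y,x),
\]
which is precisely the left-hand side of Lemma \ref{extra_diag} (equivalent to Lemma \ref{lem_triangle_plus}). Applying that lemma yields the desired bound $C\,\tau(x)\tau(x')\min\{|x|,|x-x'|\}^{-(d-4)}$. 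I do not anticipate any serious difficulty here: the ``extra path plus triangle reduction'' machinery of Section \ref{sec:BoundsFramework} is tailored to exactly this diagram shape. The only mild bookkeeping point is that the base graph $\{o,x'\}$ has only one edge, so the convolution $\{o,x'\}\circledast x$ produces a unique diagrammatic graph (up to relabelling of the summed vertex), making the final identification with Lemma \ref{extra_diag} immediate.
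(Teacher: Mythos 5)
Your proof is correct and follows the same three steps as the paper's own argument: Lemma~\ref{lem:E-bnd}(i) together with the BK inequality, then Proposition~\ref{neglect_w2} (Remark~\ref{neglect_W}) to absorb the $\{o,x'\}\uplus\{W,x\}$ branch at the cost of $C_W$, and finally a triangle bound. One small mislabeling: the generalized diagram you arrive at, $\tau(x)\sum_{y}\tau(y)\tau(y,x')\tau(y,x)$, is exactly the left-hand side of Lemma~\ref{lem_triangle_plus}, not of Lemma~\ref{extra_diag} (the latter handles a larger diagram needed only for the $N\ge 1$ contributions in Lemma~\ref{lem:tPsiBd}); since you invoke Lemma~\ref{lem_triangle_plus} anyway, the conclusion stands.
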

\begin{proof}
From Lemma \ref{lem:E-bnd}(i) and the BK-inequality, we get
\begin{align*}
  \tPsi^{(0)}(F;x,x') & \leq {\bf P}\Bigl[ \bigcup_{\substack{{\bf G'}\in \{0,x'\} \circledast x \\ \text{ or }{\bf G'} =\{0,x'\}\uplus{\{W, x\}}}} \mathcal D_{{\bf G'}} \circ \{ o \cnctd x \}\Bigr]  \\
  &  \leq  C_W   \begin{minipage}{0.18\textwidth}
      \includegraphics[width=0.9\linewidth]{part0-eps-converted-to.pdf}
    \end{minipage}\
    \end{align*}
where we used Proposition~\ref{neglect_w2}. The result follows from Lemma~\ref{lem_triangle_plus}.
\end{proof}

We next bound $\sum_{N \ge 1} \tPsi^{(N)}(F;x,x')$. 
In what follows, we will often use the next lemma that is a direct consequence of
\cite[Lemma 2.5]{HaraSlade90a} and the proof of \cite[Proposition 2.4]{HaraSlade90a}.

{\colAJ{
\begin{lemma}
\label{lem2.5HS} \ \\
(i) For any $S \subset \Zd$ and $v_0, x \in \Zd$, we have
\eqnsplst{
\E_1 \left[ E'(v_0,x;S) \right] 
   &\le \sum_{y_1, z_1} I[ y_1 \in S ] \tau(v_0,z_1) \tau(z_1,y_1) \tau(y_1,x) \tau(z_1,x) \\
   &= \sum_{y_1, z_1} I[ y_1 \in S ] \tau(v_0,z_1) A^{(1)}(y_1,z_1,x). }
where we recall $A^{(1)}$ from~\eqref{def_A}. \\
(ii) For any $S \subset \Zd$ and $v_0, u_1, v_1, x \in \Zd$, we have
\[ \sum_{u_1,v_1} \E_1 \left[ E'(v_0,u_1;S) \Xi^{(N)}(v_1,x;\tC_1) \right] 
   \le \sum_{y_1, z_1} I[ y_1 \in S ] \tau(v_0,z_1) A^{(N)}(y_1,z_1,x). \]
\end{lemma}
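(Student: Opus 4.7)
The plan is to derive both parts from the original Hara--Slade analysis in \cite{HaraSlade90a}, which our definitions of $E'(v_0,\cdot;S)$ and $\Xi^{(N)}$ faithfully mirror. The underlying geometric picture is the standard lace-expansion one: on $E'(v_0, x; S)$, the connection from $v_0$ to $x$ has a ``sausage'' at the end which touches $S$, and the branching structure of this sausage is what produces the bounding diagram.

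For part (i), I will start with the defining properties of $E'(v_0,x;S)$. The condition $v_0 \stackrel{S}{\cnctd} x$ forces every occupied path from $v_0$ to $x$ to touch some vertex $y_1 \in S$. The additional condition about the pivotal bonds $P_{(v_0,x)}$ ensures that, past all pivotal bonds, the ``off-$S$'' portion of the connection branches at some vertex $z_1$ into two bond-disjoint pieces: one reaching $y_1 \in S$ and the other reaching $x$, together with the connection $y_1 \cnctd x$. Once this branching point is identified, the BK inequality (specifically the version in \cite[Prop.~9.12]{Slade06}) yields the product of two-point functions
\[ \tau(v_0, z_1)\,\tau(z_1, y_1)\,\tau(y_1, x)\,\tau(z_1, x), \]
summed over $y_1 \in S$ and $z_1 \in \Zd$. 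Recognising $\tau(z_1,y_1)\tau(y_1,x)\tau(z_1,x) = A^{(\mathrm{end})}(y_1, z_1, x) = A^{(1)}(y_1, z_1, x)$ yields both claimed forms.

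For part (ii), I will unfold $\Xi^{(N)}$ inductively using the expansion bound from (i). By its definition, $\Xi^{(N)}(v_1, x; \tC_1)$ is a nested sum over intermediate pivotal edges $(u_2,v_2), \ldots, (u_{N-1}, v_{N-1})$, each tagged with an event $E'(v_{i-1}, u_i; \tC_{i-1})$, with the innermost being $E'(v_{N-1}, x; \tC_{N-1})$. Applying the bound of (i) to each $E'$ event successively extracts, at level $i$, a pair $(y_i, z_i)$ with $y_i$ forced to lie in the previous cluster $\tC_{i-1}$ (which itself is generated by the earlier branching). Summing over $(u_i, v_i)$ produces the convolution factor $pD \ast \tau(\cdot, \cdot)$ appearing inside both $A_1$ and $A_2$. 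The dichotomy between $A_1$ and $A_2$ reflects whether the new vertex $y_i \in \tC_{i-1}$ sits on the ``straight'' backbone segment $v_{i-1} \cnctd u_i$ of $\tC_{i-1}$ (giving an $A_1$-diagram) or on a ``side branch'' rooted at some vertex $y_{i-1}$ of $\tC_{i-1}$ (giving an $A_2$-diagram). Iterating through all $N-1$ levels and finishing at the innermost $E'$ via part (i) produces precisely $\tau(v_0, z_1) A^{(N)}(y_1, z_1, x)$.

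\emph{Main obstacle.} The conceptually delicate step is the geometric bookkeeping at each iteration: one must track which vertex of $\tC_{i-1}$ the new $y_i$ attaches to, verify that the disjoint-occurrence constraints from the successive BK applications accumulate consistently, and thereby justify the $A_1$ versus $A_2$ dichotomy. This bookkeeping is precisely the content of the proof of \cite[Prop.~2.4]{HaraSlade90a}, and we inherit it unchanged; the only difference in our setting is cosmetic (the presence of the outer set $S$ is a standard generalisation already handled in \cite[Lemma~2.5]{HaraSlade90a}).
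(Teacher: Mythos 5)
Your proposal is correct and takes essentially the same route as the paper: the paper also proves this lemma simply by citing \cite[Lemma 2.5]{HaraSlade90a} together with the proof of \cite[Proposition 2.4]{HaraSlade90a}, exactly the two references you invoke, and your narrative of the branching/BK bookkeeping is a faithful account of what those results contain.
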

}}

\begin{lemma}\label{lem:tPsiBd}
For all $F\in\mathfrak F_{00}$ there exist $C>0$ such that for all $x,x'\in\Zd$, 
\eqnsplst{
\sum_{N \ge 1} \tPsi^{(N)}(F;x,x')
   &= C \tau(x)\tau(x') \min\{|x|,|x'|,|x-x'|\}^{-(d-4)}.}
\end{lemma}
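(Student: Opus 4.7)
The plan is to treat the $N=1$ and $N\ge 2$ cases uniformly by first peeling off the $\omega_0$ contribution using the bounds from Lemma~\ref{lem:E-bnd}, then controlling the $\omega_1$ contribution via Lemma~\ref{lem2.5HS} (in part~(i) for $N=1$ and part~(ii) for $N\ge 2$). The compact form in \eqref{eqDefPsi1} for $N=1$ and in \eqref{e:tPsiN-short} for $N\ge 2$ shows that we must bound
\[
\sum_{u_0,v_0,u_1,v_1} pD(u_0,v_0)\,pD(u_1,v_1)\,\E_0\!\times\!\E_1 \Bigl[I[\widetilde{E}(F;u_0,v_0,x')]\,I[E'(v_0,u_1;\tC_0)]\,\Xi^{(N)}(v_1,x;\tC_1)\Bigr]
\]
(with the factor $pD(u_1,v_1)$ and the last indicator absent when $N=1$).

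For the $\omega_0$ factor, Lemma~\ref{lem:E-bnd}(ii) gives $\widetilde{E}(F;u_0,v_0,x')\subset E(F;u_0,x')$, and Lemma~\ref{lem:E-bnd}(i) bounds the probability of $E(F;u_0,x')$ via the generalized diagrammatic graph $(\{0,x'\}\circledast u_0)\uplus\{0,u_0\}$ (we ignore the alternative $\uplus\{W,u_0\}$, which costs only a multiplicative constant by Proposition~\ref{neglect_w2} and Remark~\ref{neglect_W}). On the $\omega_1$ side, Lemma~\ref{lem2.5HS}(ii) yields
\[
\sum_{u_1,v_1}\E_1\!\left[E'(v_0,u_1;\tC_0)\,\Xi^{(N)}(v_1,x;\tC_1)\right]
\le \sum_{y_1,z_1}I[y_1\in\tC_0]\,\tau(v_0,z_1)\,A^{(N)}(y_1,z_1,x),
\]
and analogously for $N=1$ with $A^{(1)}$. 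The constraint $y_1\in\tC_0$ in $\omega_0$ is weaker than $\{W\leftrightarrow y_1\}$ (we relax the edge restriction for an upper bound), and Lemma~\ref{lem_extra_path4} then lets us absorb this extra connection by further convolving the current diagrammatic graph with $y_1$, producing the generalized diagrammatic graph $\bigl((\{0,x'\}\circledast u_0)\uplus\{0,u_0\}\bigr)\circledast y_1$.

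Next we interchange the sum over $N$ with the sums over the remaining vertices and invoke assumption \eqref{eq:Aass}:
\[
\sum_{N\ge 1}A^{(N)}(y_1,z_1,x)\le C\,\tau(y_1,z_1)\,\tau(y_1,x)\,\tau(z_1,x).
\]
Finally, the convolution $\sum_{v_0}pD(u_0,v_0)\,\tau(v_0,z_1)\le C\,\tau(u_0,z_1)$ from \eqref{eq:DtauBd} effectively identifies $u_0$ with $z_1$ (up to a multiplicative constant). The resulting generalized diagram thus falls squarely in the class built from the base diagrams of Remark~\ref{ex_3strong} by the operations of Proposition~\ref{expand_class}: it is $3$-connected to the three labelled points $\{0,x,x'\}$, each of which has degree exactly~$2$, and every unlabelled vertex has degree~$3$. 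By Proposition~\ref{kill_bill} the whole generalized diagram is bounded by $C\,\tau(x)\tau(x')\tau(x-x')$. Since $d>6$ implies $\tau(x-x')\le C\,\min\{|x|,|x'|,|x-x'|\}^{-(d-4)}$ (a direct consequence of \eqref{eq:tauAsy} and the three cases depending on which quantity realises the minimum), the stated bound follows.

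The main obstacle is the bookkeeping in steps~1--3, namely that the generalized diagrammatic graph produced by successively applying Lemmas~\ref{lem:E-bnd}, \ref{lem2.5HS} and \ref{lem_extra_path4}, followed by the summation over $N$ and the convolution identification $u_0\leftrightsquigarrow z_1$, genuinely satisfies the hypotheses of Proposition~\ref{expand_class}: $3$-connectivity to $\{0,x,x'\}$, degree~$3$ at unlabelled vertices, and degree~$2$ at labelled vertices. Once this structural property is in place, all subsequent estimates (Proposition~\ref{kill_bill}, the triangle-to-min reduction) are mechanical, and the resulting bound is uniform in $p\le p_c$ because all the intermediate inequalities are.
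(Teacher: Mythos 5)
There is a genuine gap in the final step, precisely where you claim that the resulting generalized diagrammatic graph has the structure required by Proposition~\ref{expand_class} and Proposition~\ref{kill_bill}. Tracing your own chain of operations, the $\omega_0$-piece you produce is of the form $\bigl((\{0,x'\}\circledast u_0)\uplus\{0,u_0\}\bigr)\circledast y_1$, with the $\omega_1$-factor $\tau(u_0,z_1)\tau(z_1,y_1)\tau(y_1,x)\tau(z_1,x)$ then glued on at $u_0$ and $y_1$. In this diagram the vertex $x'$ has degree~$1$ (a single edge to the added vertex $u_*$), not~$2$, in every constituent diagram except those in which the connecting edge from $\circledast y_1$ happens to land on the edge incident to $x'$. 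Only the $x$-arm has been Hara--Slade expanded; the $x'$-arm terminates in a single line. Hence Proposition~\ref{kill_bill} does not apply to the whole graph, and the asserted ``degree exactly~$2$ at each of $0,x,x'$'' is false.

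The failure is not merely a missing verification: the conclusion you derive from it, namely a bound $C\tau(x)\tau(x')\tau(x-x')$, is actually too strong. For instance, already the triangle-plus diagram $\tau(x)\sum_y\tau(y)\tau(x,y)\tau(x',y)$ (the $N=0$ analogue, see Lemma~\ref{lem:tPsiBd0} and Lemma~\ref{lem_triangle_plus}) is, for $x=(n,0,\dots,0)$ and $x'=(-n,0,\dots,0)$, of order $n^{8-3d}$, which for $d>6$ strictly dominates the full triangle $\tau(x)\tau(x')\tau(x-x')\sim n^{6-3d}$. So no argument can yield a $\tau(x-x')$ factor here, because the underlying event does not enforce any connectivity at $x'$ beyond a single arm. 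This is exactly why the statement of the lemma involves the min-power $\min\{|x|,|x'|,|x-x'|\}^{-(d-4)}$ rather than a third two-point function. The paper deals with this by applying Proposition~\ref{kill_bill} only to a proper subgraph (the part $3$-connected to the circled intermediate points, in Lemma~\ref{extra_diag}), and then invoking Lemma~\ref{lem_triangle_plus} on the reduced triangle-plus diagram, which is what produces the min. Your preliminary steps (Lemma~\ref{lem:E-bnd}, Lemma~\ref{lem2.5HS}, Lemma~\ref{lem_extra_path4}, assumption \eqref{eq:Aass}, and the $D$-convolution) coincide with the paper's; you only need to replace the last structural claim with the reduction in Lemma~\ref{extra_diag} (or its ingredients Proposition~\ref{kill_bill} applied to a subgraph plus Lemma~\ref{lem_triangle_plus}).
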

Together with the foregoing lemma, this proves \eqref{eq:momentbound}. 
\begin{proof}
First we consider the term $N=1$. 
{\colAJ{Recalling \eqref{eqDefPsi1} we estimate using Lemma \ref{lem2.5HS}(i), 
conditional on $\omega_0$,
\eqnsplst{
\E_1 \left[ E'(v_0,x;\tC_0) \right] 
   &= \sum_{y_1, z_1} I[ y_1 \in \tC_0 ] \tau(v_0,z_1) A^{(1)}(y_1,z_1,x). }
In the case $N \ge 2$, we similarly get using Lemma \ref{lem2.5HS}(ii) that
\[ \sum_{u_1,v_1} \E_1 \left[ E'(v_0,u_1;\tC_0) \Xi^{(N)}(v_1,x;\tC_1) \right] 
   \le \sum_{y_1, z_1} I[ y_1 \in \tC_0 ] \tau(v_0,z_1) A^{(N)}(y_1,z_1,x). \]

Now, recalling~\eqref{e:tPsiN-short}, we see that
 \begin{align}\label{bla}
 & \sum_{N \ge 1} \tPsi^{(N)}(F;x,x') \\ \nonumber
  \leq &\sum_{u_0,v_0} \sum_{y_1,z_{1}}  pD(u_0,v_0)\E_0 \Big[ I \Bigl[ \widetilde{E}(F;u_0,v_0,x') \Bigr]  I[ y_1 \in \tC_0 ]\tau(v_0,z_1)\sum_{N\geq 1} A^{(N)}(y_1,z_1,x) \Big] \\\nonumber
  \leq & C\sum_{u_0,v_0} \sum_{y_1,z_{1}}  pD(u_0,v_0)\E_0  \Big[ I \Bigl[ E(F;u_0,x') \Bigr] I[ y_1 \in \tC_0 ]\Big]  \tau(v_0,z_1)\tau(z_1,y_1) \tau(y_1,x) \tau(z_1,x)
 \end{align}
where we used Lemma \ref{lem:E-bnd}(ii) and that \eqref{eq:Aass} implies
\[
\sum_{N\geq 1} A^{(N)}(y_1,z_1,x)
   \le C \tau(z_1,y_1) \tau(y_1,x) \tau(z_1,x).
    \]  
Using \eqref{e:D*tau}, we can perform the summation over $v_0$ and get
\begin{align}\label{bla+}
 & \sum_{N \ge 1} \tPsi^{(N)}(F;x,x') 
 \leq C \sum_{u_0,y_1,z_{1}} \E_0  \Big[ I \Bigl[ E(F;u_0,x') \Bigr] I[ y_1 \in \tC_0 ]\Big]  \tau(u_0,z_1)\tau(z_1,y_1) \tau(y_1,x) \tau(z_1,x)
 \end{align}
}}
Using that $\{y_1 \in \tC_0 \} \subset \{W \leftrightarrow y_1\}$, we are led to bounding 
  \begin{equation}\label{ship}
   E(F;u_0,x') \cap \{ W \leftrightarrow y_1 \}
   \subset \{ (\{0\leftrightarrow u_0\} \circ\{0 \leftrightarrow x'\}) \cap \{W \Leftrightarrow u_0\})\cap \{W \leftrightarrow y_1\}.
   \end{equation}
 
Use Lemma~\ref{lem_extra_path2} with the graph $\{0,x'\}$ yields 
\[ 
(\{0\leftrightarrow u_0\} \circ\{0 \leftrightarrow x'\}) \cap \{W \Leftrightarrow u_0\}) \subset    \bigcup_{\substack{{\bf G'}\in \{0,x'\} \circledast u_0 \\ \text{ or }{\bf G'} =\{0,x'\}\uplus\{W,u_0 \}}} \mathcal{D}_{{\bf G'}} \circ \{ 0 \cnctd u_0 \}
\]
  and by using Lemma~\ref{lem_extra_path4} 
  \begin{align}\label{ship1}
 & ((\{0\leftrightarrow u_0\} \circ\{o \leftrightarrow x'\}) \cap \{W \Leftrightarrow u_0\})\cap \{W \leftrightarrow y_1\} \\ \nonumber
 \subset &  \bigcup_{\substack{{\bf G'}\in \{0,x'\} \circledast u_0 \\ \text{ or }{\bf G'} =\{0,x'\}\uplus\{W,u_0 \}}} (\mathcal{D}_{{\bf G'}} \circ \{ 0 \cnctd u_0 \}) \cap \{W \leftrightarrow y_1 \} \\ \nonumber
   \subset & \bigcup_{\substack{{\bf G'}\in \{0,x'\} \circledast u_0 \\ \text{ or }{\bf G'} =\{0,x'\}\uplus\{W,u_0 \}}} \bigcup_{\substack{{\bf {\bf G''}}\in ({\bf G'}\uplus \{ 0, u_0 \}) \circledast y_1 \\\text{ or }{\bf {\bf G''}} =({\bf G'}\uplus \{ 0, u_0 \}) \uplus \{W, y_1\}}}\mathcal{D}_{{\bf {\bf G''}}},
  \end{align}
and by Remark~\ref{neglect_W} we have
\[
{\bf P}[ ((\{0\leftrightarrow u_0\} \circ\{o \leftrightarrow x'\}) \cap \{W \Leftrightarrow u_0\})\cap \{W \leftrightarrow y_1\} ] \leq   C_W          \begin{minipage}{0.18\textwidth}
      \includegraphics[width=0.9\linewidth]{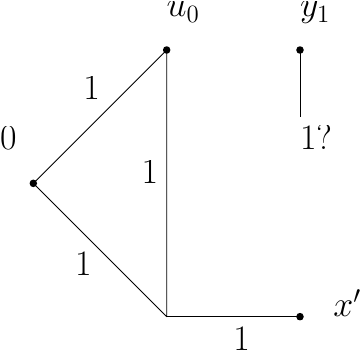}
    \end{minipage}\
    \]
  
Coming back to~\eqref{bla+}, we can include $\tau(u_0,z_1)\tau(z_1,y_1) \tau(y_1,x) \tau(z_1,x)$ 
into our diagrams to see that
\[
             \begin{minipage}{0.4\textwidth}
$\sum_{N \ge 1} \tPsi^{(N)}(F;x,x') \leq C_W$
    \end{minipage}\
                \begin{minipage}{0.4\textwidth}
      \includegraphics[width=0.7\linewidth]{last1-eps-converted-to.pdf}
    \end{minipage}\
    \]
    and the lemma follows from Lemma~\ref{extra_diag}.
\end{proof}

\subsection{Diagrammatic bounds for the terms arising in the second expansion}
\label{ssec:error-2nd}

\subsubsection{The $N=0$ terms of the double expansion}

{\colAJ{We start with the simplest case $N = 0$. The following is a preparatory lemma.}}

\begin{lemma}
\label{lem:E''-bnd}
We have
\begin{align*}
& E^{'',0}(F;u,v,x') \\
      \subset & \Bigl(\bigcup_{y\in \mathbb{Z}^d} \bigcup_{\substack{{\bf G'}\in K_{0,y,u} \circledast x' \\ \text{ or }{\bf G'} =K_{0,y,u}\uplus \{W, x'\}}} \mathcal{D}_{{\bf G'}} \circ \{y \leftrightarrow x'\} \Bigr) 
      \cup \Bigl( \bigcup_{\substack{{\bf G'}\in \{0,u\}\uplus\{W,u\} \circledast x' \\ \text{ or }{\bf G'} =\{0,u\}\uplus\{W,u\}\uplus \{W, x'\}}} \mathcal{D}_{{\bf G'}} \circ \{0 \leftrightarrow x'\}\Bigr),
\end{align*}
where we recall that $K_{0,y,u}$ denotes the complete graph with vertices {\colAJ{labelled} $0,y,u$}.
\end{lemma}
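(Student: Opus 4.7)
The plan is to derive the inclusion by two successive applications of Lemma~\ref{lem_extra_path2}: the first exploits the double connection $\{W\dcnctd u\}$ already present in $\widetilde{E}(F;u,v,x')$, and the second exploits the double connection $\{W\dcnctd x'\}$ which is produced by combining $\{P^{',0}_{x'}=\es\}$ with the connection $\{0\cnctd x'\}$. Indeed, by Lemma~\ref{lem:E-bnd}(ii) we may drop the restriction on $\{u,v\}$ and drop $F$, so that
\[
\widetilde{E}(F;u,v,x')\subset \bigl(\{0\cnctd u\}\circ\{0\cnctd x'\}\bigr)\cap\{W\dcnctd u\}.
\]
Observing that, on $E^{'',0}(F;u,v,x')$, the existing connection $0\cnctd x'$ together with $P^{',0}_{x'}=\es$ means that any path realizing $W\cnctd x'$ can be rerouted along two edge-disjoint paths (no bond outside $E_W$ being pivotal), so that $\{W\dcnctd x'\}$ holds.

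For the first application I would take $\mathbf{G}=\{0,x'\}$ (with distinguished label $z=0$) and endpoint $x=u$ in Lemma~\ref{lem_extra_path2}, obtaining
\[
\widetilde{E}(F;u,v,x')\subset \bigcup_{\substack{\mathbf{G}'\in\{0,x'\}\circledast u \\ \text{or }\mathbf{G}'=\{0,x'\}\uplus\{W,u\}}}\mathcal{D}_{\mathbf{G}'}\circ\{0\cnctd u\}.
\]
Then I split into two cases and apply Lemma~\ref{lem_extra_path2} a second time with the double connection $\{W\dcnctd x'\}$. In Case A, the only graph in $\{0,x'\}\circledast u$ carries an unlabeled vertex $u_*$ adjacent to $0,x',u$; writing $\mathcal{D}_{\mathbf{G}'}$ as a union over the position $y=u_*$ and regrouping gives
\[
\mathcal{D}_{\mathbf{G}'}\circ\{0\cnctd u\}=\bigcup_{y\in\Zd}\mathcal{D}_{K_{0,y,u}}\circ\{y\cnctd x'\},
\]
where $K_{0,y,u}$ is now fully labeled. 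Applying Lemma~\ref{lem_extra_path2} with $\mathbf{G}=K_{0,y,u}$, $z=y$, $x=x'$ and the condition $\{W\dcnctd x'\}$ produces exactly the first term on the right-hand side of the lemma. In Case B, $\mathbf{G}'=\{0,x'\}\uplus\{W,u\}$ and $\mathcal{D}_{\mathbf{G}'}\circ\{0\cnctd u\}$ may be rewritten as $\mathcal{D}_{\{0,u\}\uplus\{W,u\}}\circ\{0\cnctd x'\}$; applying Lemma~\ref{lem_extra_path2} with $\mathbf{G}=\{0,u\}\uplus\{W,u\}$, $z=0$, $x=x'$ and the condition $\{W\dcnctd x'\}$ produces exactly the second term.

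The main subtlety will be the relabeling step in Case~A: the vertex $u_*$ introduced by the first $\circledast u$ is a priori unlabeled in $\mathbf{G}'$, so one cannot directly apply Lemma~\ref{lem_extra_path2} (which requires a labeled vertex $z$) without first fixing $u_*=y$ and taking the union over $y$ outside. Once this rewrite is carried out, both second applications are entirely routine, and the side terms of the form $\uplus\{W,u\}$ and $\uplus\{W,x'\}$ produced by Lemma~\ref{lem_extra_path2} slot directly into the corresponding $\uplus\{W,\cdot\}$ alternatives in the statement (and by Remark~\ref{neglect_W} contribute only a constant factor at the probabilistic level).
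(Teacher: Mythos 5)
Your proposal is correct and follows essentially the same two-step strategy as the paper's own proof: first apply Lemma~\ref{lem_extra_path2} with $\mathbf{G}=\{0,x'\}$, $z=0$, $x=u$ exploiting $\{W\dcnctd u\}$; then rewrite $\{0,x'\}\circledast u$ as a union over $y$ of fully labelled graphs $K_{0,y,u}$ (and $\{0,x'\}\uplus\{W,u\}$ as $\{0,u\}\uplus\{W,u\}$), and finally apply Lemma~\ref{lem_extra_path2} once more, exploiting the double connection $\{W\dcnctd x'\}$ coming from $\{P^{',0}_{x'}=\es\}$. Your remark on the subtlety of the relabelling of $u_*$ before the second application is the same observation the paper makes implicitly via the displayed identity $\bigcup_{{\bf G'}\in\{0,x'\}\circledast u}\mathcal D_{{\bf G'}}\circ\{0\cnctd u\}=\bigcup_y\mathcal D_{K_{0,y,u}}\circ\{y\cnctd x'\}$.
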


Recall from Remark \ref{rem:Wsum} that the symbol $W$ represents a union over the elements of $W$. 

\begin{remark}
For better visualization, we emphasize that the generalized diagram in the first union over $y\in \mathbb{Z}^d$ and the generalized diagram appearing in the second union can be drawn as follows
\[
                \begin{minipage}{0.3\textwidth}
      \includegraphics[width=0.7\linewidth]{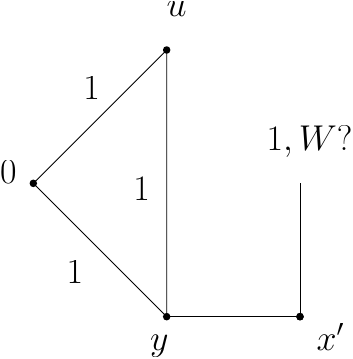}
    \end{minipage}\
    \qquad 
                    \begin{minipage}{0.3\textwidth}
      \includegraphics[width=0.5\linewidth]{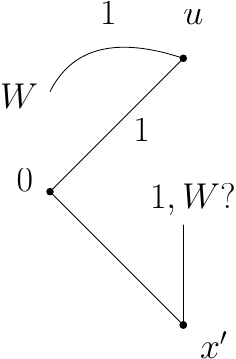}
    \end{minipage}\
    \]
    where we emphasize that Remark~\ref{neglect_W} shows that only the first diagrammatic terms (with a connection \lq\lq 1?\rq\rq and not \lq\lq 1,W?\rq\rq) are important, up to multiplicative constants, once we will move onto estimating probabilities of such events.
\end{remark}

\begin{proof}
Recalling the definition of $E^{'',0}(F;u,v,x')$ at~\eqref{def_epp}, we have
\[ 
  E^{'',0}(F;u,v,x') \subset \Bigg(\text{$\{ W \dcnctd u \} \cap 
      \Big( \{ 0 \cnctd u \} \circ \{ 0 \cnctd x' \}\Big) \Bigg) \cap \{W \Leftrightarrow x'\} $}  . 
\]
We apply Lemma~\ref{lem_extra_path2} {\colAJ{to the diagrammatic graph $\{0,x'\}$}} to get 
\[
	\{ W \dcnctd u \} \cap  \Big( \{ 0 \cnctd u \} \circ \{ 0 \cnctd x' \}\Big)\subset \bigcup_{\substack{{\bf G'}\in \{0,x'\} \circledast u \\ \text{ or }{\bf G'} =\{0,x'\}\uplus \{W, u\}}} \mathcal{D}_{{\bf G'}} \circ \{0 \leftrightarrow u\}.
\]      
and notice that 
\[
  \bigcup_{{\bf G'}\in \{0,x'\} \circledast u } \mathcal{D}_{{\bf G'}} \circ \{0 \leftrightarrow u\}= \bigcup_{y\in \mathbb{Z}^d} \mathcal{D}_{K_{0,y,u}} \circ \{y \leftrightarrow x'\}
\]
 and
  \[	\bigcup_{\substack{{\bf G'} =\{0,x'\}\uplus \{W, u\}}} \mathcal{D}_{{\bf G'}} \circ \{0 \leftrightarrow u\}
  	  =\bigcup_{{\bf G'}=\{0,u\}\uplus\{W,u\}} \mathcal{D}_{{\bf G'}} \circ \{0 \leftrightarrow x'\}.
 \]

We use the last three equations to see that
\begin{align*}
&  \Bigl(\{ W \dcnctd u \} \cap \Big( \{ 0 \cnctd u \} \circ \{ 0 \cnctd x' \}\Big) \Bigr) \cap \{W \Leftrightarrow x'\}  \\
      \subset & \Bigl(\bigcup_{y\in \mathbb{Z}^d} (\mathcal{D}_{K_{0,y,u}} \circ \{y \leftrightarrow x'\})\cap   \{W \Leftrightarrow x'\}\Bigr) 
      \cup \bigcup_{{\bf G'}=\{0,u\}\uplus\{W,u\}} \Bigl((\mathcal{D}_{{\bf G'}} \circ \{0 \leftrightarrow x'\})\cap   \{W \Leftrightarrow x'\}\Bigr),
\end{align*}
and we apply Lemma~\ref{lem_extra_path2} again {\colAJ{(this time to the diagrammatic 
graphs $K_{0,y,u}$ and ${\bf G'}=\{0,u\}\uplus\{W,u\}$, respectively)}}, getting
\begin{align*}
&  \Bigl(\{ W \dcnctd u \} \cap \Big( \{ 0 \cnctd u \} \circ \{ 0 \cnctd x' \}\Big) \Bigr) \cap \{W \Leftrightarrow x'\}  \\
      \subset & \Bigl(\bigcup_{y\in \mathbb{Z}^d} \bigcup_{\substack{{\bf G'}\in K_{0,y,u} \circledast x' \\ \text{ or }{\bf G'} =K_{0,y,u}\uplus \{W, x'\}}} \mathcal{D}_{{\bf G'}} \circ \{y \leftrightarrow x'\} \Bigr) 
      \cup \Bigl( \bigcup_{\substack{{\bf G'}\in \{0,u\}\uplus\{W,u\} \circledast x' \\ \text{ or }{\bf G'} =\{0,u\}\uplus\{W,u\}\uplus \{W, x'\}}} \mathcal{D}_{{\bf G'}} \circ \{0 \leftrightarrow x'\}\Bigr).
\end{align*}
This proves the lemma.
\end{proof}

It is convenient to introduce the abbreviation
\begin{equation}
	T(o,u,x') = \tau(o,u) \tau(o,x') \tau(u,x'). 
\end{equation}

{\colAJ{We are ready to prove the bound for the $N=0$ terms.}}

\begin{proposition}
\label{prop:n=0error} 
For all $F\in\mathfrak F_{00}$ there exist $C_W>0$ such that for all $u,v,,x\in\Zd$, 
\eqn{eq:n=0error}{
  \sum_{N'=0}^\infty \hat{\Psi}^{0,N'}(F,u,v,x')
  \le C_W T(o,u,x'). }
\end{proposition}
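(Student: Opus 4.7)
The plan is to treat the $N'=0$ and $N'\ge 1$ contributions separately, in each case reducing to a generalized diagram with labelled vertices $\{o,u,x'\}$ and invoking either Proposition~\ref{kill_bill} directly or the concrete estimates of Lemma~\ref{please_stop}.

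For $N'=0$, we have $\hat\Psi^{0,0}(F;u,v,x')=\P_0[E^{'',0}(F;u,v,x')]$, which by Lemma~\ref{lem:E''-bnd} and the BK inequality is bounded by a finite union of generalized diagrams; by Remark~\ref{neglect_W} the contributions involving $\uplus\{W,\cdot\}$ are absorbed into a multiplicative constant $C_W$. The remaining generalized diagrams are built from $K_{0,y,u}\circledast x'$ and $\{0,u\}\circledast x'$ structures---precisely the patterns bounded in Lemma~\ref{please_stop}---giving $\hat\Psi^{0,0}(F;u,v,x')\le C_W\,T(o,u,x')$.

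For $N'\ge 1$, I would condition on $\omega_0$ (so that $\tD_0$ is deterministic) and apply Lemma~\ref{lem2.5HS}(ii) to bound the $\omega'_1$-expectation by
\[
\sum_{y'_1,z'_1}I[y'_1\in\tD_0]\,\tau(v'_0,z'_1)\,A^{(N')}(y'_1,z'_1,x'),
\]
with the $N'=1$ case handled analogously via Lemma~\ref{lem2.5HS}(i). Summing over $N'\ge 1$ and using assumption~\eqref{eq:Aass} gives $\sum_{N'\ge 1}A^{(N')}(y'_1,z'_1,x')\le C\tau(z'_1,y'_1)\tau(y'_1,x')\tau(z'_1,x')$. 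Summing out $v'_0$ via~\eqref{eq:DtauBd} and using $\{y'_1\in\tD_0\}\subset\{W\cnctd y'_1\}$ leads to
\[
\sum_{N'\ge 1}\hat\Psi^{0,N'}(F;u,v,x')\le C\!\!\!\!\sum_{u'_0,y'_1,z'_1}\!\!\!\P_0\!\left[E^{'',0}(F;u,v,u'_0)\cap\{W\cnctd y'_1\}\right]\tau(u'_0,z'_1)\tau(z'_1,y'_1)\tau(y'_1,x')\tau(z'_1,x').
\]
The probability on the right is then handled by applying Lemma~\ref{lem:E''-bnd} to $E^{'',0}(F;u,v,u'_0)$, followed by Lemma~\ref{lem_extra_path4} to absorb $\{W\cnctd y'_1\}$ into the diagrammatic graph as a $\circledast y'_1$ operation (with $\uplus\{W,\cdot\}$ terms absorbed by Remark~\ref{neglect_W}).

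After attaching the remaining $\tau$-factors and summing out $u'_0,y'_1,z'_1$, we obtain a finite family of generalized diagrams with labelled vertices $\{o,u,x'\}$, each of which matches one of the patterns already bounded in Lemma~\ref{please_stop}, yielding $\sum_{N'\ge 1}\hat\Psi^{0,N'}(F;u,v,x')\le C_W\,T(o,u,x')$. Combining the two ranges of $N'$ gives the claim. The principal obstacle is the combinatorial bookkeeping required to verify that all the generalized diagrams arising from chaining the $\circledast$ operations still satisfy the 3-connectivity and degree hypotheses of Proposition~\ref{kill_bill}; this is precisely the role of Proposition~\ref{expand_class}, which ensures that $\circledast$ and $\circledast^2$ applied to the base 3-connected diagrams of Remark~\ref{ex_3strong} produce again 3-connected diagrams, so that Proposition~\ref{kill_bill} (equivalently, the templates of Lemma~\ref{please_stop}) applies to each term.
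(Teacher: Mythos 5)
Your proposal is correct and follows essentially the same route as the paper: split $N'=0$ from $N'\geq 1$, handle $N'=0$ via Lemma~\ref{lem:E''-bnd}, BK, Remark~\ref{neglect_W}, and Lemma~\ref{please_stop}, and handle $N'\geq 1$ by conditioning on $\omega_0$, applying Lemma~\ref{lem2.5HS} and assumption~\eqref{eq:Aass}, summing $v'_0$ via~\eqref{e:D*tau}, replacing $\{y'_1\in\tD_0\}$ by $\{W\cnctd y'_1\}$, and chaining Lemma~\ref{lem:E''-bnd} with Lemma~\ref{lem_extra_path4} to land on the generalized diagrams of Lemma~\ref{please_stop}. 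Your closing remark about Propositions~\ref{expand_class} and~\ref{kill_bill} just makes explicit the machinery underlying Lemma~\ref{please_stop} and does not constitute a different argument.
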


{\colAJ{
\begin{remark}
Observe that \eqref{eq:n=0error} also immediately provides a bound for 
$\hat{\Psi}^{0,N'}(F;u,v,u')$ present in \eqref{eq:DefBarPiNN}, by setting 
$x' = u'$.
\end{remark}
}}

\begin{proof}[Proof of Proposition \ref{prop:n=0error}.]
We start with bounding the $N'=0$ term. Recalling  Lemma \ref{lem:E''-bnd}, we apply the BK inequality with the  insight from Remark~\ref{neglect_W} to see that 
 \[
 {\bf P}[E^{'',0}(F;u,v,x')]\leq C_W              \begin{minipage}{0.18\textwidth}
      \includegraphics[width=0.8\linewidth]{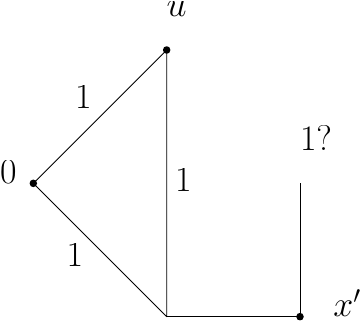}
    \end{minipage}\
    \]
    and by Lemma~\ref{please_stop} we obtain the statement for $N'=0$.

Let now $N'=1$. Consider the expression $I[E'(v'_0,x',\tD_0)]$ inside the definition of
$\hat{\Psi}^{0,1}$. This is bounded, conditionally on $\omega_0$, by
\eqnsplst{
   \E'_1 \left[ E'(v'_0,x';\tD_0) \right] 
   &\le \sum_{y'_1, z'_1} I[ y'_1 \in \tD_0 ] \tau(v'_0,z'_1) \tau(z'_1,y'_1) \tau(y'_1,x') \tau(z'_1,x') \\
   &= \sum_{y'_1, z'_1} I[ y'_1 \in \tD_0 ] \tau(v'_0,z'_1) A^{(1)}(y'_1,z'_1,x'). }
In the case $N' \ge 2$, we similarly get, {\colAJ{using Lemma \ref{lem2.5HS} and \eqref{e:D*tau},}} the bound 
\[ \sum_{u'_1, v'_1} p D(u'_1, v'_1) \E'_1 \left[ E'(v'_0,u'_1;\tD_0) \Xi^{(N')}(v'_1,x';\tC'_1) \right] 
   \le \sum_{y'_1, z'_1} I[ y'_1 \in \tD_0 ] \tau(v'_0,z'_1) A^{(N')}(y'_1,z'_1,x'). \]
Summing over $N'$, we bound {\colAJ{using \eqref{eq:Aass}}} 
\begin{equation}\label{sdf} \sum_{N' \ge 1} A^{(N')}(y'_1,z'_1,x')
   \le C \tau(z'_1,y'_1) \tau(y'_1,x') \tau(z'_1,x'). \end{equation}
Hence, also using \eqref{e:D*tau}, we have
 \begin{align}\label{bla1}
 & \sum_{N' \ge 1}\hat{\Psi}^{0,N'}(F,u,v,x')\\ \nonumber
  \leq &C\sum_{u_0',v_0'} \sum_{y_1',z_{1}'}  pD(u_0',v_0')\E_0 \Big[ I \Bigl[ E^{'',0}(F;u,v,u_0')\Bigr]  I[ y_1' \in \tD_0 ]\tau(v_0',z_1')\sum_{N'\geq 1} A^{(N')}(y_1',z_1',x') \Big] \\\nonumber
  \leq & C\sum_{u_0',v_0'} \sum_{y_1',z_{1}'}  pD(u_0',v_0')\E_0  \Big[ I \Bigl[  E^{'',0}(F;u,v,u_0') \Bigr] I[ y_1' \in \tD_0 ]\Big]  \tau(v_0',z_1')\tau(z_1',y_1') \tau(y_1',x') \tau(z_1',x') \\\nonumber
  \leq & C \sum_{u'_0, y_1',z_{1}'} \E_0 \Big[ I \Bigl[  E^{'',0}(F;u,v,u_0') \Bigr] I[ y_1' \in \tD_0 ]\Big]  \tau(u_0',z_1')\tau(z_1',y_1') \tau(y_1',x') \tau(z_1',x')
 \end{align}

Notice that {\colAJ{$ E^{'',0}(F;u,v,u_0')\cap \{ y_1' \in \tD_0\} \subset E^{'',0}(F;u,v,u_0')\cap \{ y_1' \leftrightarrow W \}$, since $\{ y_1' \in \tD_0\} \subset \{y_1'\leftrightarrow W\}$.}}
Now, using  Lemma \ref{lem:E''-bnd} and then Lemma \ref{lem_extra_path4}, we see that
\begin{align*} 
& E^{'',0}(F;u,v,u_0')\cap \{ y_1' \in \tD_0\}  \\
\subset &
 \Bigl(\bigcup_{{\colAJ{y}}\in \mathbb{Z}^d} \bigcup_{\substack{{\bf G'}\in K_{0,{\colAJ{y}},u} \circledast u_0' \\ \text{ or }{\bf G'} =K_{0,{\colAJ{y}},u}\uplus \{W, x'\}}} (\mathcal{D}_{{\bf G'}} \circ \{{\colAJ{y}} \leftrightarrow x'\}) \cap \{y_1'\leftrightarrow W\}  \Bigr) \\
 & \qquad  \cup \Bigl( \bigcup_{\substack{{\bf G'}\in K_{0,u,W} \circledast x' \\ \text{ or }{\bf G'} =K_{0,u,W}\uplus \{W, x'\}}} (\mathcal{D}_{{\bf G'}} \circ \{0 \leftrightarrow x'\})\cap \{y_1'\leftrightarrow W\}\Bigr) \\
 \subset &
 \Bigl(\bigcup_{{\colAJ{y}}\in \mathbb{Z}^d} \bigcup_{\substack{{\bf G'}\in K_{0,{\colAJ{y}},u} \circledast u_0' \\ \text{ or }{\bf G'} =K_{0,{\colAJ{y}},u}\uplus \{W, u_0'\}}} \bigcup_{\substack{{\bf G''}\in ({\bf G'}\uplus \{ {\colAJ{y}}, u_0' \}) \circledast y_1 \\\text{ or }{\bf G''} =({\bf G'}\uplus \{ {\colAJ{y}}, u_0' \}) \uplus \{W, y_1\}}}\mathcal{D}_{{\bf G''}}\Bigr) \\
 & \qquad  \cup \Bigl( \bigcup_{\substack{{\bf G'}\in K_{0,u,W} \circledast u_0' \\ \text{ or }{\bf G'} =K_{0,u,W}\uplus \{W, u_0'\}}} \bigcup_{\substack{{\bf G''}\in ({\bf G'}\uplus \{ {\colAJ{y}}, x' \}) \circledast y_1 \\\text{ or }{\bf G''} =({\bf G'}\uplus \{ {\colAJ{y}}, u_0' \}) \uplus \{W, y_1\}}}\mathcal{D}_{{\bf G''}}  \Bigr).
 \end{align*}
 
 We take the probability of the previous events and using BK, this will give rise to diagrams.  Using the usual argument with Remark~\ref{neglect_W}, we get that
 \[
 {\bf P}[E^{'',0}(F;u,v,u_0')\cap \{ y_1' \in \tD_0\} ]\leq C_W              \begin{minipage}{0.4\textwidth}
      \includegraphics[width=0.8\linewidth]{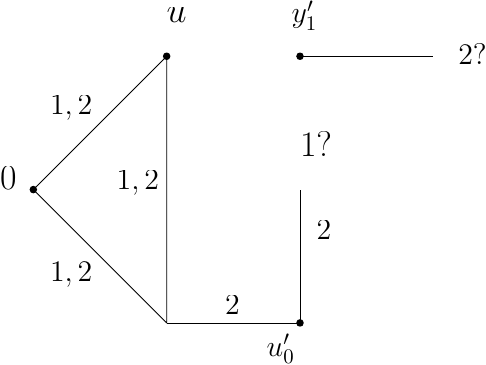}
    \end{minipage}\
    \]
and thus
\begin{align*}
& {\colAJ{\E_0  \Big[ I \Bigl[  E^{'',0}(F;u,v,u_0') \Bigr] I[ y_1' \in \tD_0 ]\Big]  \tau(u_0',z_1')\tau(z_1',y_1') \tau(y_1',x') \tau(z_1',x') }} \\
 \leq & C_W              \begin{minipage}{0.4\textwidth}
      \includegraphics[width=0.7\linewidth]{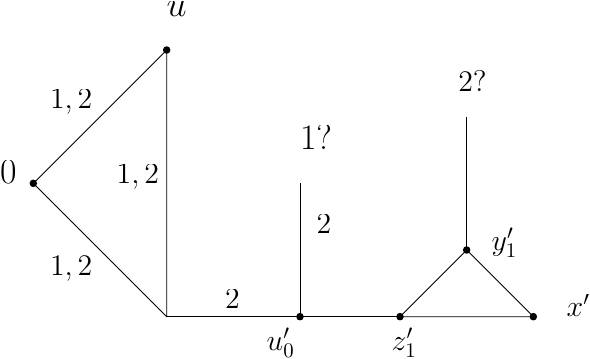}
    \end{minipage}\
    \end{align*}

Finally, by summing over the points {\colAJ{in the right hand side of~\eqref{bla1}}}, we see that
\[
 \sum_{N' \ge 1}\hat{\Psi}^{0,N'}(F,u,v,x')\leq C_W\;
    \begin{minipage}{0.4\textwidth}
      \includegraphics[width=0.7\linewidth]{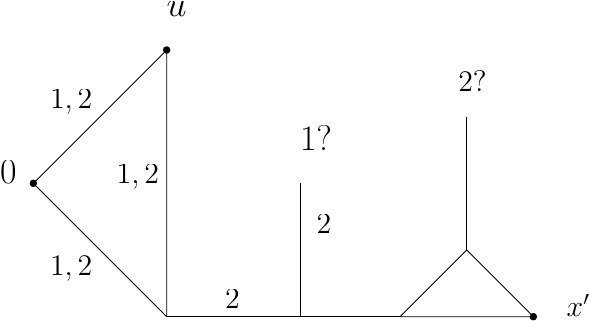}
    \end{minipage}\
 \]
and the result follows from Lemma~\ref{please_stop}.
\end{proof}

\subsubsection{The $N=1$ terms of the second expansion}

We next discuss the $N=1$ case. We start with a preparatory lemma.

\begin{lemma}\label{lem_dev_first}
For finite sets $A,B \subset \Zd$, we have
\[
  \widetilde{E}(F;u_0,v_0,x')\cap  \{y_1\leftrightarrow A\} \cap \{ B \Leftrightarrow x' \} \subset \bigcup_{{\bf G}\in {\colAJ{\mathcal{G}(u_0,x',W,A,B)}}} \mathcal{D}_{{\bf G}}.
\]
where {\colAJ{$\mathcal{G}(u_0,x',W,A,B)$}} is {\colAJ{the set of diagrammatic graphs shown below}}
\begin{align*}
 \begin{minipage}{0.4\textwidth}
      \includegraphics[width=0.5\linewidth]{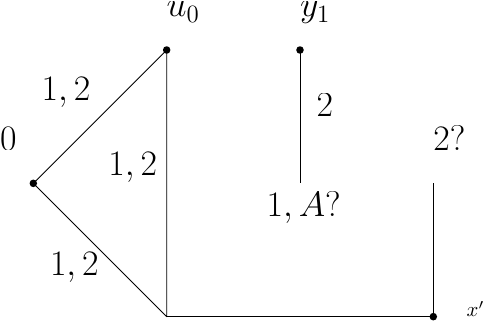}
    \end{minipage}\ \qquad  \begin{minipage}{0.4\textwidth}
      \includegraphics[width=0.5\linewidth]{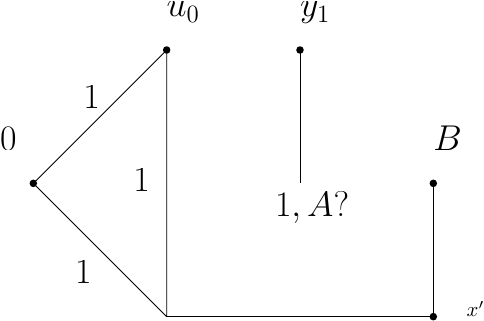}
    \end{minipage} \\[2ex]
      \begin{minipage}{0.4\textwidth}
      \includegraphics[width=0.5\linewidth]{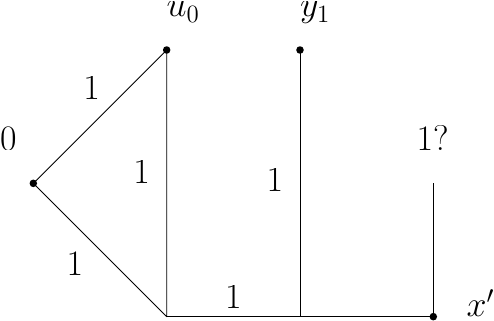}
    \end{minipage}\ \qquad  \begin{minipage}{0.4\textwidth}
      \includegraphics[width=0.5\linewidth]{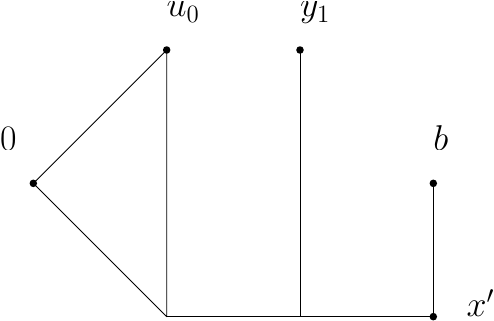}  
    \end{minipage} \end{align*}
{\colAJ{together with the corresponding diagrams with an edge $u_0 \cnctd W$ (not shown as 
due to Remark \ref{neglect_W} they will be bounded in terms of the above cases).}}
\end{lemma}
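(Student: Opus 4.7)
The plan is to reduce the event on the left-hand side to a diagrammatic event and then apply the extra-path machinery of Section \ref{sect_extra_path} three times, one for each of the ``extra'' topological constraints present: the double connection $W\Leftrightarrow u_0$ built into $\widetilde E$, the double connection $B\Leftrightarrow x'$, and the connection $y_1\leftrightarrow A$. First, Lemma \ref{lem:E-bnd}(ii) gives $\widetilde E(F;u_0,v_0,x')\subset E(F;u_0,x')$ (we lose only the restriction that $\{u_0,v_0\}$ be vacant, which is harmless for upper bounds). Then Lemma \ref{lem:E-bnd}(i) applied with $x\to u_0$ yields
\[
E(F;u_0,x')\;\subset\;\bigcup_{{\bf G_1}}\mathcal{D}_{\bf G_1}\circ\{0\cnctd u_0\},
\]
with ${\bf G_1}\in\{0,x'\}\circledast u_0$ or ${\bf G_1}=\{0,x'\}\uplus\{W,u_0\}$. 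The latter alternative produces the ``corresponding diagrams with an edge $u_0\cnctd W$'' alluded to in the statement, so from now on I follow the principal branch ${\bf G_1}\in\{0,x'\}\circledast u_0$; by Remark \ref{neglect_W} / Proposition \ref{neglect_w2} the contribution of the discarded branch is at most a constant times the retained one.

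Next, I handle the condition $\{B\Leftrightarrow x'\}$. Write ${\bf G_2}:={\bf G_1}\uplus\{0,u_0\}$, which contains $x'$ as a label and has an edge $\{u_*,x'\}$ incident to $x'$ (where $u_*$ is the apex produced by $\circledast u_0$). Detaching this edge exposes a factor $\{u_*\leftrightarrow x'\}$, so one can apply Lemma \ref{lem_extra_path2} (or, equivalently, the underlying Lemma \ref{lem_extra_path} on a witness set of ${\bf G_2}$ containing $u_*$): the result is a union with $x'$ becoming the target of a $\circledast$-split of some edge of ${\bf G_2}$, or alternatively with an extra edge $\{B,x'\}$ appended via $\uplus$. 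Finally, I incorporate $\{y_1\leftrightarrow A\}$ via Lemma \ref{lem_extra_path4}: since the diagrammatic graph from the previous step contains $0$ (and in fact many) labels, the lemma applies and yields two sub-cases, namely a $\circledast y_1$ on some existing edge or a direct $\uplus\{A,y_1\}$.

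Combining the two binary choices coming from the $\{B\Leftrightarrow x'\}$ and $\{y_1\leftrightarrow A\}$ reductions gives $2\times 2=4$ generalised diagrammatic graphs, which match precisely the four diagrams displayed in the lemma (all are built on the common ``triangle + apex $u_*$ attached to $u_0$'' skeleton with an extra edge $\{0,u_0\}$, supplemented by the $W$-, $B$- and $A$-decorations). The remaining sub-cases in which the $W\Leftrightarrow u_0$ step was realised via the $\uplus\{W,u_0\}$ alternative are, as noted above, exactly the ``corresponding diagrams with an edge $u_0\cnctd W$'' mentioned in the statement.

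The only subtle point in carrying this out is the second step: Lemma \ref{lem_extra_path2} is formulated with a labelled witness $z$, whereas the natural candidate $u_*$ arising from the $\circledast u_0$ construction is unlabelled. The way around this is either to promote $u_*$ to a label before the application (sum over its position afterwards), or to invoke the more primitive Lemma \ref{lem_extra_path} directly at the level of the underlying bond set $E$, which imposes no labelling restriction. Once this bookkeeping is pinned down, the three applications of the extra-path lemmas chain together mechanically and reproduce the announced list of diagrams.
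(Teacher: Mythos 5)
Your strategy is fundamentally the right one: pass from $\widetilde E$ to $E$ via Lemma \ref{lem:E-bnd}(ii), apply Lemma \ref{lem:E-bnd}(i) for the built-in $\{W\Leftrightarrow u_0\}$, then use the extra-path lemmas (\ref{lem_extra_path2} and \ref{lem_extra_path4}) for the remaining two constraints, and the ``promote $u_*$ to a label and sum over its position'' fix you describe is indeed exactly what the paper does. The gap lies in the order of the last two reductions. You handle $\{B\Leftrightarrow x'\}$ before $\{y_1\leftrightarrow A\}$; the paper handles $\{y_1\leftrightarrow A\}$ first (via \eqref{ship}--\eqref{ship1}) and only afterwards applies Lemma \ref{lem_extra_path2} to $\{B\Leftrightarrow x'\}$, and to do so it must first do a case split (``does the $1?$-edge from the $y_1$-convolution land on the edge incident to $x'$, or on the triangle at $0$?'') to re-expose a clean factor $\circ\{z\leftrightarrow x'\}$. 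Your order avoids that case analysis, which is tidier, but the cost is that the resulting family of generalised diagrammatic graphs is not the same as the one in the statement: the operations $\circledast x'$ and $\circledast y_1$ do not commute. Concretely, in your order the $y_1$-connecting edge may land on the new edges created by the $x'$-convolution but the $x'$-connecting edge can never land on $y_1$-created edges, while in the paper's order it is exactly the opposite. Thus the $2\times 2=4$ generalised diagrams you produce are a genuinely different collection from the four shown (which are produced by the paper's ordering with its intermediate $D_1/D_2$ case split), and the line ``which match precisely the four diagrams displayed in the lemma'' is asserted but not verified and is in fact false.

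This matters downstream: the proofs of Propositions \ref{prop:n=1error} and \ref{prop:n>=2error} work with the specific families $\mathcal{G}_1$ and $\mathcal{G}_2(b)$ read off from these very figures, so substituting a different family would require reworking those bounds as well. To repair the argument you should either reproduce the paper's order ($u_0$, then $y_1$, then the $D_1/D_2$ split, then $x'$), or, if you keep your order, redraw the resulting four generalised diagrams explicitly and re-derive the subsequent $H$-reduction bounds for them rather than appealing to the stated figures.
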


\begin{proof}
We use Lemma~\ref{lem:E-bnd}(ii),~\eqref{ship} and~\eqref{ship1} to see that 
{\colAJ{
\eqnsplst{
 \widetilde{E}(F;u_0,v_0,x')\cap   \{y_1\leftrightarrow A\}  
 &\subset {E}(F;u_0,x')\cap   \{y_1\leftrightarrow A\} \\
 &\subset  \bigcup_{\substack{{\bf G'}\in \{0,x'\} \circledast u_0 \\ \text{ or }{\bf G'} =\{0,x'\}\uplus{\colAJ{\{u_0, W\}}}}} \bigcup_{\substack{{\bf G''}\in ({\bf G'}\uplus \{ 0, u_0 \}) \circledast y_1 \\\text{ or }{\bf G''} =({\bf G'}\uplus \{ 0, u_0 \}) \uplus \{y_1, A\}}}\mathcal{D}_{{\bf G''}}. }
}}

We will upper-bound 
 \begin{align}\label{cargo}
& \widetilde{E}(F;u_0,v_0,x')\cap  \{y_1\leftrightarrow A\} \cap \{ B \Leftrightarrow x' \} \\\nonumber
 \subset &  \bigcup_{\substack{{\bf G'}\in \{0,x'\} \circledast u_0 \\ \text{ or }{\bf G'} =\{0,x'\}\uplus
 {\colAJ{\{u_0, W\}}}}} \bigcup_{\substack{{\bf G''}\in ({\bf G'}\uplus \{ 0, u_0 \}) \circledast y_1 \\\text{ or }{\bf G''} =({\bf G'}\uplus \{ 0, u_0 \}) \uplus \{y_1, A\}}}\mathcal{D}_{{\bf G''}}\cap \{ B \Leftrightarrow x' \},
 \end{align}
where the corresponding generalized diagram can be written as 
\[
\begin{minipage}{0.3\textwidth}
      \includegraphics[width=0.5\linewidth]{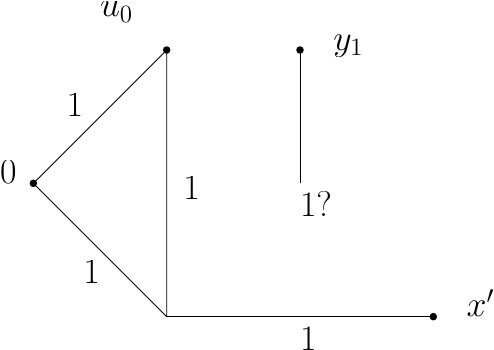}
    \end{minipage}\
    \]
{\colAJ{(We shall not spell out how to deal with the case of ${\bf G'} =\{0,x'\}\uplus {\colAJ{\{u_0, W\}}}$, as this is analogous to arguments in the proof of Proposition \ref{prop:n=0error}.)}}
    
By considering the cases where the edge $1?$ gets connected to the edge adjacent to $x'$ or to the triangle at $0$,
{\colAJ{respectively, we rewrite}} the diagrammatic graphs (just as in Example~\ref{ex_graph}) to see that
\begin{align*}
& \bigcup_{{\bf G'}\in \{0,x'\} \circledast u_0 }\bigcup_{\substack{{\bf G''}\in ({\bf G'}\uplus \{ 0, u_0 \}) \circledast y_1 \\\text{ or }{\bf G''} =({\bf G'}\uplus \{ 0, u_0 \}) \uplus \{y_1, A\}}}\mathcal{D}_{{\bf G''}}\cap \{ B \Leftrightarrow x' \}\\
=& \bigcup_{u_*,u_{**}\in \mathbb{Z}^d} (\mathcal{D}_{D_1(u_*,u_{**})}\circ \{u_{**} \leftrightarrow x'\})\cap \{  B \Leftrightarrow x' \} \\
& \qquad  \cup \bigcup_{u_*\in \mathbb{Z}^d} \bigcup_{{\bf G}\in D_2(u_*,A)} (\mathcal{D}_{\bf G} \circ \{u_{*} \leftrightarrow x'\})\cap \{  B \Leftrightarrow x' \}
\end{align*}
where 
\[
 D_1(u_*,u_{**}) =\begin{minipage}{0.3\textwidth}
      \includegraphics[width=0.5\linewidth]{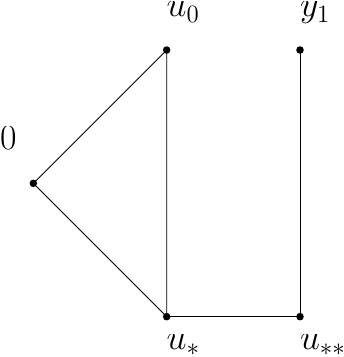}
    \end{minipage}\ \text{ and } \qquad D_2(u_*,A)   =\begin{minipage}{0.3\textwidth}
      \includegraphics[width=0.5\linewidth]{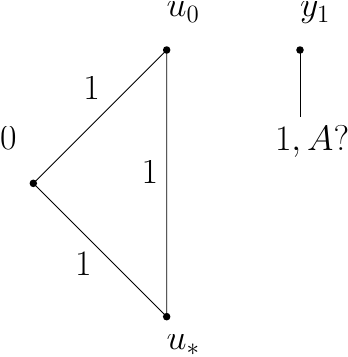}
    \end{minipage}
    \]

We use Lemma~\ref{lem_extra_path2} {\colAJ{to deal with the extra connections to $x'$}} and see that 
 \begin{align*}
& \bigcup_{{\bf G'}\in \{0,x'\} \circledast u_0 }\bigcup_{\substack{{\bf G''}\in ({\bf G'}\uplus \{ 0, u_0 \}) \circledast y_1 \\\text{ or }{\bf G''} =({\bf G'}\uplus \{ 0, u_0 \}) \uplus \{y_1, A\}}}\mathcal{D}_{{\bf G''}}\cap \{ B \Leftrightarrow x' \}\\ 
=& \bigcup_{u_*,u_{**}\in \mathbb{Z}^d} \bigcup_{\substack{{\bf G'}\in D_1(u_*,u_{**}) \circledast x' \\\text{ or }{\bf G'} =D_1(u_*,u_{**}) \uplus \{x',  B\}}} \mathcal{D}_{{\bf G'}}\circ \{u_{**} \leftrightarrow x'\} \\
& \qquad  \cup \bigcup_{u_*\in \mathbb{Z}^d} \bigcup_{{\bf G}\in D_2(u_*,A)} \bigcup_{\substack{{\bf G'}\in {\bf G} \circledast x' \\ \text{ or }{\bf G'}={\bf G} \uplus \{x',  B\}}}\mathcal{D}_{{\bf G'}} \circ \{u_{*} \leftrightarrow x'\}.
\end{align*}

The first union on the right-hand side in the previous equation is over the following graphs
\[
\begin{minipage}{0.4\textwidth}
      \includegraphics[width=0.5\linewidth]{Lem33-eps-converted-to.pdf}
    \end{minipage}\ \qquad  \begin{minipage}{0.4\textwidth}
      \includegraphics[width=0.5\linewidth]{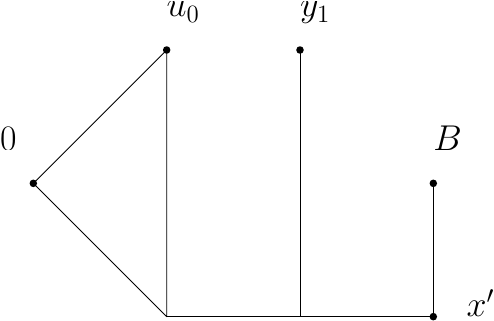}
    \end{minipage}
\]
and the second union is over the following graphs
\[
\begin{minipage}{0.4\textwidth}
      \includegraphics[width=0.5\linewidth]{Lem37-eps-converted-to.pdf}
    \end{minipage}\ \qquad  \begin{minipage}{0.4\textwidth}
      \includegraphics[width=0.5\linewidth]{Lem36-eps-converted-to.pdf}
    \end{minipage}
\]
and the result follows.
\end{proof}

We are ready to prove the bound for the $N=1$ terms.

{\colAJ{
\begin{proposition}
\label{prop:n=1error} For all $F\in\mathfrak F_{00}$ the following holds.  
\\
(i) There is a constant $C_W$ such that 
\eqn{eq:n=1error}{
  \sum_{N'=0}^\infty \Psi^{1,N'}(F;u,x')
  \le C_W T(o,u,x'). }
(ii) There is a constant $C_W$ such that uniform in $v'$,
\eqn{eq:n=1error2}{
  \hat{\Psi}^{1,0}(F,u,u',v') + \sum_{N'=1}^\infty \Psi^{1,N'}(F;u,u')
  \le C_W T(o,u,u'). }
\end{proposition}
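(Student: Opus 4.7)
The argument will mirror that of Proposition~\ref{prop:n=0error}, with the added complication of handling the $\omega_1$-measurable factor $I[E'(v_0,u,\tC_0)]$ present in $E''$ (for part~(i) with $N'=0$) or $I[E'(v_0,u,\tD_0)]$ present in $\hat{E}''$ (for part~(ii)) coming from the $N=1$ level of the first expansion, and, for $N'\ge 1$, the $\omega'_1$-measurable factor $I[E'(v'_0,u';\tD_0\cup\tC(\tB_1))]\,\Xi^{(N')}(v'_1,u',\tC'_1)$ from the second expansion. The plan is to integrate out these expansion factors using Lemma~\ref{lem2.5HS}, dominate cluster-indicators by connections to $W$ (and to $\tB_1$), dissect the remaining $\omega_0$-events into generalized diagrammatic events via Lemma~\ref{lem_dev_first} and the lemmas of Section~\ref{sect_extra_path}, and finally apply the diagrammatic bounds of Lemmas~\ref{diag10}--\ref{diag13}.

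More concretely, I will first bound, conditionally on $\omega_0$,
\[
  \E_1[E'(v_0,u,\tC_0)]
  \le \sum_{y_1,z_1} I[y_1\in\tC_0]\,\tau(v_0,z_1)\,A^{(1)}(y_1,z_1,u),
\]
via Lemma~\ref{lem2.5HS}(i), together with $A^{(1)}(y_1,z_1,u)\le C\,\tau(z_1,y_1)\tau(y_1,u)\tau(z_1,u)$ from \eqref{eq:Aass}, and analogously with $\tC_0$ replaced by $\tD_0$ for part~(ii). For $N'\ge 1$, Lemma~\ref{lem2.5HS}(ii) applied to the joint $\E'_1$-expectation, summed over $N'$ using \eqref{eq:Aass}, and then convolved out in $v'_1$ using \eqref{eq:DtauBd}, yields the analogous triangle factor $\tau(z'_1,y'_1)\tau(y'_1,u')\tau(z'_1,u')$ in the second arm. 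I then use the deterministic inclusions $\{y_1\in\tC_0\}\subset\{y_1\leftrightarrow W\}$ and $\{y'_1\in\tD_0\cup\tC(\tB_1)\}\subset\{y'_1\leftrightarrow W\}\cup\{y'_1\leftrightarrow\tB_1\}$ to reduce the remaining $\omega_0$-event to the form treated by Lemma~\ref{lem_dev_first}.

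In the second step, I apply Lemma~\ref{lem_dev_first} (after dropping the pivotality constraints $\{P_{x'}=\es\}$ or $\{P'_{u'_0}=\es\}$ for an upper bound) to dissect $\widetilde{E}$ (respectively $\hat{E}''$) intersected with these extra connections into the four base diagrammatic graphs listed there; the additional attachment $\{y'_1\leftrightarrow\tB_1\}$ is handled by Lemma~\ref{sam} (or by a further application of Lemma~\ref{lem_extra_path4}), and the box $W$ is absorbed into the constant $C_W$ via Proposition~\ref{neglect_w2} and Remark~\ref{neglect_W}. After BK, the $N'=0$ contribution in part~(i) matches the diagrams of Lemma~\ref{diag10}, and the $N'\ge 1$ contribution in part~(i) the remaining diagrams of the same lemma (with the role of $u'$ played by $x'$). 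For part~(ii), the same procedure yields the eight diagrams of Lemma~\ref{diag11} for $\hat\Psi^{1,0}$ and the diagrams of Lemmas~\ref{diag12}--\ref{diag13} for the $N'\ge 1$ terms; each is bounded by $C\,T(o,u,u')$. Uniformity in $v'$ is automatic, since $v'$ enters only through the definition of $\tD_0$ and the monotonicity $\{y_1\in\tD_0\}\subset\{y_1\leftrightarrow W\}$ that we invoke does not depend on $v'$.

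The main obstacle is the sheer number of generalized diagrams produced by these successive dissections; verifying that every one of them falls within the families bounded in Lemmas~\ref{diag10}--\ref{diag13} is a bookkeeping task which, by construction of each resulting diagrammatic graph via iterated applications of $\circledast$ and $\uplus$, reduces to checking the $3$-connectivity hypothesis of Proposition~\ref{expand_class}, and hence is settled uniformly by the H-reduction framework of Section~\ref{sect_big_class}.
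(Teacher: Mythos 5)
Your first step---``bound, conditionally on $\omega_0$, $\E_1[E'(v_0,u,\tC_0)]\le\sum_{y_1,z_1}I[y_1\in\tC_0]\,\tau(v_0,z_1)A^{(1)}(y_1,z_1,u)$''---integrates out $\omega_1$ too early, and this is a genuine gap, not a stylistic difference. In $\Psi^{1,0}$ the event $E''$ (and in part (ii), $\hat E''$) is \emph{not} a product of an $\omega_0$-event with the $\omega_1$-event $E'(v_0,u,\tC_0)$: it also contains the pivotality constraint $\{P'_{x'}=\es\}$ (resp.\ $\{P'_{u'_0}=\es\}$), which is $\omega_0$-measurable only \emph{given} $\tB_1$, i.e.\ it depends on $\omega_1$ through $\tB_1(\omega_1)$. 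Likewise, for $N'\ge 1$ the indicator $I[y'_1\in\tD_0\cup\tC(\tB_1)]$ depends on $\omega_1$ through $\tC(\tB_1)$. Once you have replaced $E'(v_0,u,\tC_0)$ by a deterministic upper bound, $\tB_1$ is gone and there is nothing left on which $\{W\cup\tB_1\Leftrightarrow x'\}$ or $\{y'_1\leftrightarrow\tB_1\}$ can be formulated; in particular your later appeal to Lemma~\ref{sam} has nothing to act on, since that lemma requires $T^+$ to still be present \emph{as an event} on $\omega_1$. Nor can you simply ``drop the pivotality constraint for an upper bound'': it is exactly this constraint that delivers the second disjoint connection to $x'$ (resp.\ $u'$) and thus the triangle $T(o,u,x')$; without it you only get the weaker $\min\{|u|,|u-x'|\}^{-(d-4)}$-type bound of Lemma~\ref{lem:tPsiBd}.

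The correct order, which the paper follows, is: apply Lemma~\ref{lem2.5HS}(i) at the level of events, i.e.\ $I[E'(v_0,u;\tC_0)]\le\sum_{y_1,z_1}I[y_1\in\tC_0]\,I[T^+(v_0,z_1,y_1,u)]$ (keeping the $\omega_1$-randomness), then decouple the joint $(\omega_0,\omega_1)$-dependence by summing over the possible realisations $B$ of $\tB_1$, writing $\E_0\times\E_1[\cdots]=\sum_B\E_0[\cdots I[\{W\cup B\Leftrightarrow x'\}]\cdots]\,\E_1[I[\tB_1=B]I[T^+(\cdots)]]$, and \emph{only then} apply Lemma~\ref{lem_dev_first} to the $\omega_0$-expectation (with $B$ a parameter) and Lemma~\ref{sam} to the $\omega_1$-expectation involving $\{b\in\tB_1\}$. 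The rest of your plan (dominating cluster-indicators by connections to $W$ or $\tB_1$, dissecting via Lemmas~\ref{lem_dev_first}, \ref{lem_extra_path4}, absorbing $W$ via Remark~\ref{neglect_W}, and landing on the diagram families of Lemmas~\ref{diag10}--\ref{diag13}) is in the right spirit, but it cannot be carried out unless the $\tB_1$-decoupling is performed before, not after, averaging over $\omega_1$.
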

}}

{\colAJ{
\begin{remark}
Observe that bounds on the $N' \ge 1$ terms in \eqref{eq:n=1error} and \eqref{eq:n=1error2} 
can be proved identically, by setting $x' = u'$.
\end{remark}
}}

\begin{proof}[Proof of Proposition \ref{prop:n=1error}.]
We start with the $N'=0$ term of (i), noting that the proof for (ii) will work
very similarly. We have that 
\eqnspl{bla1-old}{
 {\colAJ{\Psi^{1,0}(F,u,x')}} 
 &\leq C\sum_{u_0,v_0} pD(u_0,v_0)
 \E_0 \times \E_1\Big[I[\widetilde{E}(F;u_0,v_0,x')] I[ {\colAJ{E'(v_0,u,\tC_0)}} ] 
 I[ \{ P'_{x'} = \es \}]\Big] \\
 &\leq {\colAJ{C\sum_{u_0,v_0} pD(u_0,v_0)
 \E_0 \times \E_1\Big[I[{E}(F;u_0,x')] I[ E'(v_0,u,\tC_0) ] 
 I[ \{ P'_{x'} = \es \}]\Big]}}. }
Let us denote 
\[
T^+(v_0,z_1,y_1,u)= \{ v_0 \cnctd z_1 \} \circ \{ z_1 \cnctd y_1 \} \circ \{ y_1 \cnctd u \} \circ \{ z_1 \cnctd u \} .
\]
From Lemma \ref{lem2.5HS}, conditionally on $\omega_0$, we have
\[
 I[ E'(v_0,u;\tC_0) ] 
   \le \sum_{y_1, z_1} I[ y_1 \in \tC_0 ]  T^+(v_0,z_1,y_1,u)(\omega_1) .
\]
We use the previous equation, $ \{ P'_{x'} = \es \}\subset \{W \cup \tB_1\Leftrightarrow x'\}$ 
{\colAJ{(with $\tB_1 = \tB_1(v_0,u,\omega_1)$)}} and $\{y_1 \in \tC_0\}\subset \{y_1 \leftrightarrow_{\omega_0} W\}$ to see that 
 \begin{align}\label{bla1-new}
 & {\colAJ{\Psi^{1,0}(F,u,x')}} \\ \nonumber
  \leq &C \sum_{u_0,v_0} \sum_{y_1,z_{1}} p D(u_0,v_0) \sum_{B\subset \mathbb{Z}^d} \E_0\Big[I[{\colAJ{E(F;u_0,x')]}} I[ y_1 \leftrightarrow W ] I[ \{ W \cup B \Leftrightarrow x' \}] \Big] \\ \nonumber 
&   \qquad\qquad\qquad\qquad\qquad \times  \E_1 \Big[I[\tB_1=B] I[ T^+(v_0,z_1,y_1,u) ] \Big]  \end{align}
{\colAJ{
Analogously, the following upper bound can be derived for $\hat{\Psi}^{1,0}$ in statement (ii):
 \begin{align}\label{bla1-new2}
 & \hat{\Psi}^{1,0}(F,u,u',v') \\ \nonumber
  \leq &C \sum_{u_0,v_0} \sum_{y_1,z_{1}} p D(u_0,v_0) \sum_{B\subset \mathbb{Z}^d} \E_0\Big[I[ E(F;u_0,u')] I[ y_1 \leftrightarrow W ] I[ \{ W \cup B \Leftrightarrow u' \}] \Big] \\ \nonumber 
&   \qquad\qquad\qquad\qquad\qquad \times  \E_1 \Big[I[\tB_1=B] I[ T^+(v_0,z_1,y_1,u) ] \Big]  \end{align}
For this we merely have to replace $\tC_0$ by $\tD_0$, and $P'_{x'}$ by $P'_{u'}$, in the 
above arguments. From here onwards, the proof of (ii) will be identical to the proof of the $N'=0$ case of
(i) (with $u'$ replacing $x'$) and will not be spelled out.
}}

 The first expectation in \eqref{bla1-new} can be bounded using the BK inequality over all the generalized diagrams appearing in Lemma~\ref{lem_dev_first} with `$A$'$=W$ and `$B$'$=W \cup B$. Once more, using Remark~\ref{neglect_W}, we can neglect (when upper-bounding up to a multiplicative constant $C_W$) any connection to $W$ in both sets.  This allows us to write   
\begin{equation}\label{active}
\begin{minipage}{0.3\textwidth}
      \includegraphics[width=0.7\linewidth]{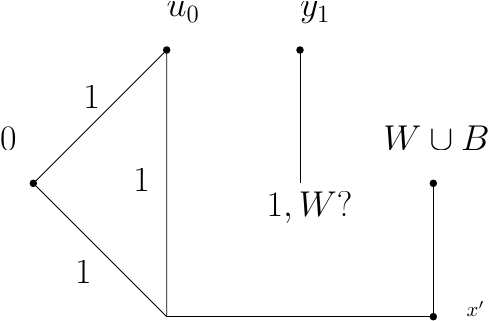} 
    \end{minipage} 
    \leq  C_W 
\begin{minipage}{0.3\textwidth}
      \includegraphics[width=0.7\linewidth]{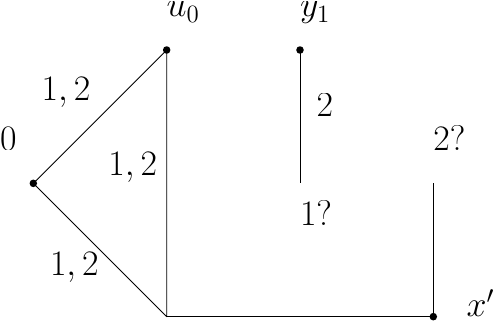}
    \end{minipage}
    + C_W  \sum_{b\in B}
\begin{minipage}{0.3\textwidth}
      \includegraphics[width=0.7\linewidth]{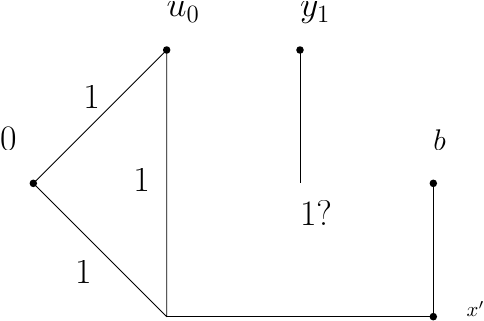}
    \end{minipage} 
\end{equation}
and
\[
    \begin{minipage}{0.3\textwidth}
      \includegraphics[width=0.7\linewidth]{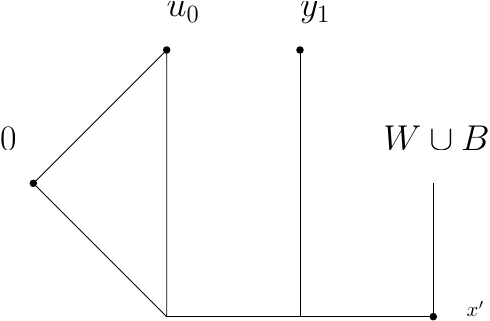} 
    \end{minipage}  \\ 
    \leq  C_W 
\begin{minipage}{0.3\textwidth}
      \includegraphics[width=0.7\linewidth]{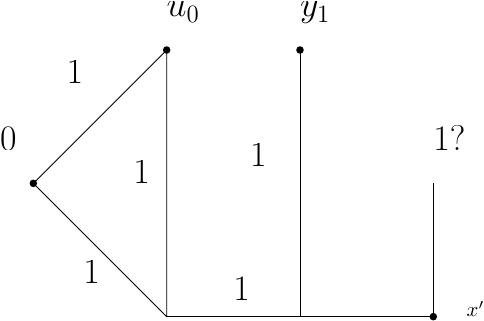}
    \end{minipage}
    + C_W  \sum_{b\in B}
\begin{minipage}{0.3\textwidth}
      \includegraphics[width=0.7\linewidth]{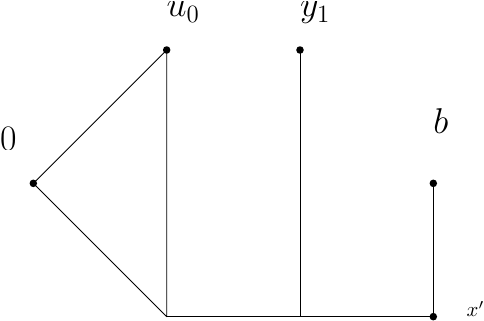}
    \end{minipage} 
\]
       
Taking this into account, we can introduce the pertinent set of diagrams. Let $\mathcal{G}_1$ be the set of diagrammatic graphs 
{\colAJ{constituting}} the two following generalized diagrams
 \begin{equation}\label{eldo1}
\begin{minipage}{0.4\textwidth}
      \includegraphics[width=0.5\linewidth]{Lem33-eps-converted-to.pdf} 
    \end{minipage} 
\qquad \text{and} \qquad 
\begin{minipage}{0.4\textwidth}
      \includegraphics[width=0.5\linewidth]{Lem36b-eps-converted-to.pdf}
    \end{minipage}
    \end{equation}
and $\mathcal{G}_2(b)$ be the set of diagrams constituting the following two generalized diagrams 
 \begin{equation}\label{eldo2}
\begin{minipage}{0.4\textwidth}
      \includegraphics[width=0.5\linewidth]{Lem35-eps-converted-to.pdf} 
    \end{minipage} 
\qquad \text{and} \qquad 
\begin{minipage}{0.4\textwidth}
      \includegraphics[width=0.5\linewidth]{Lem37d-eps-converted-to.pdf}
    \end{minipage}
    \end{equation}

With this notation (and the simplifications of $W$), Lemma~\ref{lem_dev_first}  can be written
\begin{align*}
& {\bf P}_0[{\colAJ{E(F;u_0,x')}} \cap  \{y_1 \leftrightarrow W\} \cap \{ W \cup B \Leftrightarrow x' \}] \\
\leq & C_W\sum_{{\bf G}\in \mathcal{G}_1} {\bf P}_0[\mathcal{D}_{\bf G}] + C_W  \sum_{b\in \mathbb{Z}^d} I[b\in B] \sum_{{\bf G}\in \mathcal{G}_2(b)} {\bf P}_0[\mathcal{D}_{\bf G}]
\end{align*}

 Using this estimate yields
  \begin{align}\label{crack}
 & \sum_{u_0,v_0} \sum_{y_1,z_{1}} \E_0 \Big[I[{\colAJ{E(F;u_0,x')}}] I[ y_1 \leftrightarrow W ] I[ \{ W \cup \tB_1 \Leftrightarrow x' \}] \Big]
\\ \nonumber
        \leq &C_W \sum_{u_0,v_0} \sum_{y_1, z_1} p D(u_0,v_0) \sum_{B \subset \Zd} \sum_{{\bf G}\in \mathcal{G}_1} {\bf P}_0[\mathcal{D}_{\bf G}]  \E_1\Big[I[\tB_1=B] I[T^+(v_0,z_1,y_1,u)] \Big] \\\nonumber
        & +C_W \sum_{u_0,v_0} \sum_{y_1, z_1} p D(u_0,v_0) \sum_{B\subset \mathbb{Z}^d} \sum_{b\in \mathbb{Z}^d} I[b\in B] \sum_{{\bf G}_b \in \mathcal{G}_2(b)} {\bf P}_0[\mathcal{D}_{{\bf G}_b}] \E_1\Big[I[\tB_1=B] I[T^+(v_0,z_1,y_1,u)] \Big] \\\nonumber
          \leq &C_W \sum_{u_0,v_0} \sum_{y_1, z_1} p D(u_0,v_0) \sum_{{\bf G}\in \mathcal{G}_1} {\bf P}_0[\mathcal{D}_{\bf G}] {\bf P}_1 [T^+(v_0,z_1,y_1,u)] \\\nonumber
        & +C_W \sum_{u_0,v_0} \sum_{y_1, z_1} p D(u_0,v_0) \sum_{{\bf G}_b\in \mathcal{G}_2(b)} {\bf P}_0[\mathcal{D}_{{\bf G}_b}]  {\bf P}_1\Big[\{b\in\tB_1\} \cap T^+(v_0,z_1,y_1,u) \Big] ,      
 \end{align}
where we recall that all the graphs in $\mathcal{G}_1$ and $\mathcal{G}_2(b)$ implicitly depend on $u_0$, $y_1$ and $x'$.

{\colAJ{For the first term in the right hand side of \eqref{crack}, we use the BK inequality on the term ${\bf P}_0 [\mathcal{D}_{\bf G}] \,{\bf P}_1 [T^+(v_0,z_1,y_1,u)]$, and use \eqref{e:D*tau} to see that}}
\begin{align*}
 & \sum_{u_0,v_0} \sum_{y_1, z_1} p D(u_0,v_0) \sum_{{\bf G}\in \mathcal{G}_1} {\bf P}_0[\mathcal{D}_{\bf G}] {\bf P}_1 [T^+(v_0,z_1,y_1,u)] \\
\leq & \begin{minipage}{0.4\textwidth}
      \includegraphics[width=0.5\linewidth]{Lem38-eps-converted-to.pdf} 
    \end{minipage} 
+
\begin{minipage}{0.4\textwidth}
      \includegraphics[width=0.5\linewidth]{Lem39-eps-converted-to.pdf}
    \end{minipage}
\end{align*}

{\colAJ{For the second term in the right hand side of \eqref{crack}, we apply Lemma~\ref{sam} to the diagrammatic event $T^+(v_0,z_1,y_1,u)$ to bound ${\bf P}_1\Big[\{b\in\tB_1\}  \cap T^+(v_0,z_1,y_1,u)  \Big] $ by a generalized diagram.
This, and \eqref{e:D*tau}, show that
\begin{align*}
 & \sum_{u_0,v_0}\sum_{y_1, z_1}  \sum_{{\bf G}_b \in \mathcal{G}_2(b)}pD(u_0,v_0) {\bf P}_0[\mathcal{D}_{{\bf G}_b}]  {\bf P}_1\Big[\{b\in\tB_1\}  \cap T^+(v_0,z_1,y_1,u)  \Big]  \\
\leq & \begin{minipage}{0.4\textwidth}
      \includegraphics[width=0.5\linewidth]{Lem51-eps-converted-to.pdf} 
    \end{minipage} 
+
\begin{minipage}{0.4\textwidth}
      \includegraphics[width=0.5\linewidth]{Lem52-eps-converted-to.pdf}
    \end{minipage}
\end{align*}
}}
The previous three equations imply 
  \begin{align}\label{bottom}
&\sum_{u_0,v_0} \sum_{y_1,z_{1}} \sum_{B\subset \mathbb{Z}^d} pD(u_0,v_0)
 \E_0\Big[I[{\colAJ{E(F;u_0,x')}}]  I[ y_1 \leftrightarrow W ] I[ \{ W \cup B \Leftrightarrow x' \}] \Big] \\ \nonumber 
&   \qquad\qquad\qquad\qquad\qquad \times  \E_1 \Big[I[\tB_1=B]I[T^+(v_0,z_1,y_1,u)]\Big] \\ \nonumber
\leq& C_W  \begin{minipage}{0.4\textwidth}
      \includegraphics[width=0.5\linewidth]{Lem38-eps-converted-to.pdf} 
    \end{minipage} 
+ C_W
\begin{minipage}{0.4\textwidth}
      \includegraphics[width=0.5\linewidth]{Lem39-eps-converted-to.pdf}
    \end{minipage} \\ \nonumber 
   & +C_W\begin{minipage}{0.4\textwidth}
      \includegraphics[width=0.5\linewidth]{Lem51-eps-converted-to.pdf} 
    \end{minipage} 
+C_W
\begin{minipage}{0.4\textwidth}
      \includegraphics[width=0.5\linewidth]{Lem52-eps-converted-to.pdf}
    \end{minipage}  
    \end{align}
    
This and~\eqref{bla1-new} yields
{\colAJ{
\[
\Psi^{1,0}(F;u,x')\leq C_W T(o,u,x'),
\]
by Lemma~\ref{diag10}. Likewise, $\hat{\Psi}^{1,0}(F;u,u',v') \le C_W T(o,u,u')$.}}

Consider now the $N'\geq 1$ terms. Starting with $N'=1$, consider the expression $I[E'(v'_0,u';\tD_0 \cup \tC(\tB_1))]]$ inside the definition of
{\colAJ{$\Psi^{1,1}(F;u,u')$. Using Lemma \ref{lem2.5HS},}} this is bounded, conditionally on $\omega_0$ and $\omega_1$, by
\eqnsplst{
   \E'_1 \left[ I[E'(v'_0,u';\tD_0 \cup \tC(\tB_1))] \right] 
   &\le \sum_{y'_1, z'_1} I[ y'_1 \in \tD_0  \cup \tC(\tB_1)] \tau(v'_0,z'_1) \tau(z'_1,y'_1) \tau(y'_1,u') \tau(z'_1,u') \\
   &= \sum_{y'_1, z'_1} I[ y'_1 \in \tD_0  \cup \tC(\tB_1)] \tau(v'_0,z'_1) A^{(1)}(y'_1,z'_1,u'). }
In the case $N' \ge 2$, we similarly get the bound 
\[ \E_1' \left[ E'(v'_0,u'_1;\tD_0 \cup \tC(\tB_1)) \Xi^{(N')}(v'_1,u';\tC'_1) \right] 
   \le \sum_{y'_1, z'_1} I[ y'_1 \in \tD_0 \cup \tC(\tB_1) ] \tau(v'_0,z'_1) A^{(N')}(y'_1,z'_1,u'). \]
{\colAJ{
Hence, using \eqref{eq:Aass} we have
 \begin{align}\label{bla2}
 \sum_{N' \ge 1} & \Psi^{1,N'}(F;u,u') \\ \nonumber
  \leq &C\sum_{u_0,v_0} \sum_{u_0',v_0'} \sum_{y_1',z_{1}'} pD(u_0,v_0) pD(u_0',v_0')\E_0\times \E_1 \Big[ I \Bigl[ \hat{E}''(F;u_0,v_0,u,u'_0,v'_0)\Bigr] \\ \nonumber
  &\qquad \times I[ y_1' \in \tD_0 \cup \tC(\tB_1)]\tau(v_0',z_1')\sum_{N'\geq 1} A^{(N')}(y_1',z_1',u') \Big] \\\nonumber
  \leq & C\sum_{u_0,v_0} \sum_{u_0',v'_0} \sum_{y_1',z_{1}'} pD(u_0,v_0) pD(u'_0,v'_0) \E_0\times \E_1   \Big[ I \Bigl[  \hat{E}''(F;u_0,v_0,u,u'_0,v'_0) \Bigr] \\ \nonumber
  &\qquad \times I[ y_1' \in \tD_0 \cup \tC(\tB_1)]\Big]  \tau(v_0',z_1') T(z'_1,y'_1,u')
 \end{align}

Recall that 
\eqnsplst{
 \hat{E}''(F;u_0,v_0,u,u'_0,v'_0) 
 &\subset \widetilde{E}(F;u_0,v_0,u'_0) \cap E'(v_0,u,\tD_0(F;u_0,v_0,u'_0,v'_0)) \\
 &\subset E(F;u_0,u'_0) \cap E'(v_0,u,\tD_0(F;u_0,v_0,u'_0,v'_0)). } 
This gives 
\begin{align}\label{crazy}
  & \sum_{N' \ge 1} \Psi^{1,N'}(F;u,u') 
  \leq \sum_{\substack{u_0,v_0 \\ u'_0,v'_0}} \sum_{y_1,z_1} \sum_{y_1', z_1'} p D(u_0,v_0) p D(u'_0,v'_0) \tau(v_0',z_1') T(z'_1,y_1',u') \\ \nonumber
  &\quad \E_0 \times \E_1\Big[ I[ y_1' \leftrightarrow_{\omega_0} W \cup  \tB_1 ] I[E(F;u_0,u'_0)]
     I[ E'(v_0,u,\tD_0) ] I[ \{ P'_{u'_0} = \es \}] \Big]. 
     \end{align}
}} 
The $\E_0 \times \E_1$ expectation is {\colAJ{essentially}} the same as the one appearing in~\eqref{bla1-old}, apart from the extra term $I[ y_1' \leftrightarrow_{\omega_0} W \cup  \tB_1 ]$. Following the same steps as for that equation, we obtain
\begin{align}\label{crazy1}
  & \E_0 \times \E_1[I[ y_1' \leftrightarrow_{\omega_0} W \cup  \tB_1 ] 
  I[E(F;u_0,u'_0) \cap E'(v_0,u,{\colAJ{\tD_0}}) 
     \cap \{ P'_{u'_0} = \es \}]] \\ \nonumber
  \leq &   C\sum_{y_1, z_1} \sum_{B\subset \mathbb{Z}^d} \E_0\Big[I[ y_1' \leftrightarrow W \cup  B ] 
  I [ {\colAJ{E(F;u_0,u_0')}}] I[ y_1 \leftrightarrow W ] I[ W \cup B \Leftrightarrow u_0' ] \Big] \\ \nonumber
        & \qquad \qquad \qquad \qquad \qquad \qquad  \times \E_1\Big[I[\tB_1=B]   I [ T^+(v_0,z_1,y_1,u)]  \Big].
        \end{align}

We know that ${\colAJ{E(F;u_0,u_0')}} \cap \{ y_1 \leftrightarrow W\} \cap \{ W \cup B \Leftrightarrow u_0' \}$ is included in a sum of  diagrammatic events which are described by the generalized diagrams appearing in Lemma~\ref{lem_dev_first} with  '$A$'$=W$ and '$B$'$=W \cup B$ (and with $x'$ replaced by $u_0'$). Note that to be perfectly rigorous, one should include some further generalized diagrams appearing in~\ref{cargo} with an edge leading to $W$ but, as we know from Remark~\ref{neglect_W}, they will not contribute to the $\E_0 \times \E_1$ expectation more than the main term (up to a multiplicative constant $C_W$), so we choose to omit them. 

Furthermore, with an argument similar to the one used starting at~\eqref{active}, the pertinent diagrams for the event ${\colAJ{E(F;u_0,u_0')}} \cap \{ y_1 \leftrightarrow W\} \cap \{ W \cup B \Leftrightarrow u_0' \}$ are the ones appearing in~\eqref{eldo1} and~\eqref{eldo2}. Hence, writing $ \mathcal{G}_1^{x'\to u_0'}$ (resp.~{\colAJ{$ \mathcal{G}_2(b)^{x'\to u_0'}$}}) for the diagrams in~\eqref{eldo1} (resp.~\eqref{eldo2}) with $x'$ replaced by $u_0'$, we have 
\begin{align*}
& \E_0\Big[I[ y_1' \leftrightarrow W \cup  B ] I [ \widetilde{E}(F;u_0,v_0,u_0')]I[ y_1 \leftrightarrow W ] I[ W \cup B \Leftrightarrow u_0' ] \Big]\\
\leq &C_W\sum_{{\bf G}\in \mathcal{G}_1^{x\to u_0'}} {\bf P}_0[\mathcal{D}_{\bf G} \cap \{y_1' \leftrightarrow W \cup  B\}] + C_W  \sum_{b\in \mathbb{Z}^d} I[b\in B] \sum_{{\bf G}\in \mathcal{G}_2(b)^{x\to u_0'}} {\bf P}_0[\mathcal{D}_{\bf G}\cap \{y_1' \leftrightarrow W \cup  B\}] \\
\leq &C_W\sum_{{\bf G}\in \mathcal{G}_1^{x\to u_0'}} {\bf P}_0[\mathcal{D}_{\bf G} \cap \{y_1' \leftrightarrow  B\}] + C_W  \sum_{b\in \mathbb{Z}^d} I[b\in B] \sum_{{\bf G}\in \mathcal{G}_2(b)^{x\to u_0'}} {\bf P}_0[\mathcal{D}_{\bf G} \cap \{y_1' \leftrightarrow   B\}],
\end{align*}
where in the last line we used the fact that the connections to $W$ contribute at most, up to a multiplicative $C_W$ constant, as much as the main terms. We now use Lemma~\ref{lem_extra_path4} for estimating the probabilities in the last sums, and this allows us to show that
\begin{align*}
& \E_0\Big[I[ y_1' \leftrightarrow W \cup  B ] I [ {\colAJ{E(F;u_0,u_0')}} ] 
  I[ y_1 \leftrightarrow W ] I[ W \cup B \Leftrightarrow u_0' ] \Big]\\
\leq &  C_W\sum_{{\bf G}\in \tilde{\mathcal{G}}_{1,1}}{\bf P}_0[\mathcal{D}_{\bf G}] 
  +C_W \sum_{b'\in \mathbb{Z}^d} \sum_{{\bf G}\in \tilde{\mathcal{G}}_{1,2}(b')}
   I[b'\in B]{\bf P}_0[\mathcal{D}_{\bf G}]  \\
& \qquad +C_W \sum_{b\in \mathbb{Z}^d} \sum_{{\bf G}\in \tilde{\mathcal{G}}_{2,1}(b)} 
   I[b \in B]{\bf P}_0[\mathcal{D}_{\bf G}]  
   + C_W \sum_{b\in \mathbb{Z}^d} \sum_{b'\in \mathbb{Z}^d} \sum_{{\bf G}\in \tilde{\mathcal{G}}_2(b,b')} 
   I[b\in B]  I[b'\in B]  {\bf P}_0[\mathcal{D}_{\bf G}],
\end{align*}
using the notation $\tilde{\mathcal{G}}_{1,1}=\mathcal{G}_{1,1,1}\cup \mathcal{G}_{2,1,1}$, $\tilde{\mathcal{G}}_{2,1}(b)=\mathcal{G}_{1,2,1}(b)\cup \mathcal{G}_{2,2,1}(b)$, $\tilde{\mathcal{G}}_{1,2}(b')=\mathcal{G}_{1,1,2}(b')\cup \mathcal{G}_{2,1,2}(b')$ and $\tilde{\mathcal{G}}_{2,2}(b,b')=\mathcal{G}_{1,2,2}(b,b')\cup \mathcal{G}_{2,2,2}(b,b')$ where 
  $\mathcal{G}_{1,1,1}$, $\mathcal{G}_{1,2,1}(b)$, $\mathcal{G}_{1,1,2}(b')$ and $\mathcal{G}_{1,2,2}(b,b')$ are the sets of diagrams constituting the following generalized diagrams
 \begin{align}\label{eldo11}
&	\mathcal{G}_{1,1,1}=
	\begin{minipage}{0.3\textwidth}
      \includegraphics[width=\linewidth]{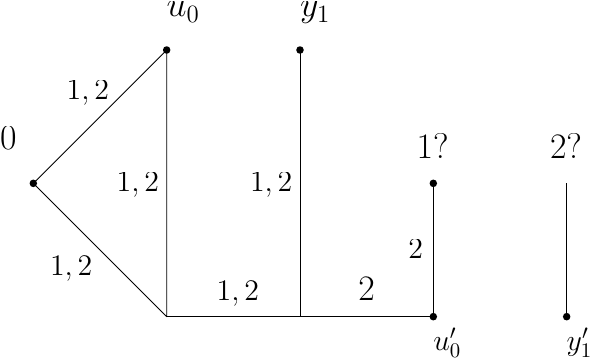} 
    \end{minipage} 
\qquad   
\mathcal{G}_{1,2,1}(b)=\begin{minipage}{0.3\textwidth}
      \includegraphics[width=\linewidth]{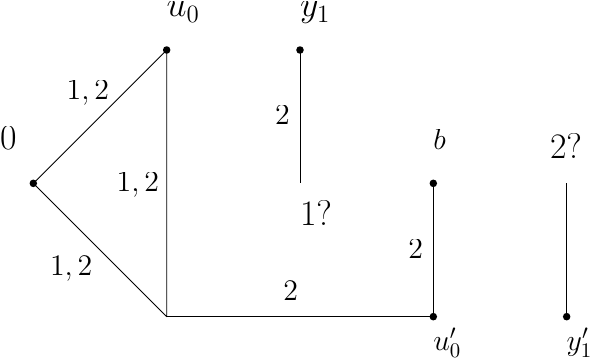}
    \end{minipage} \\
  &  \mathcal{G}_{1,1,2}(b')=
    \begin{minipage}{0.3\textwidth}
      \includegraphics[width=\linewidth]{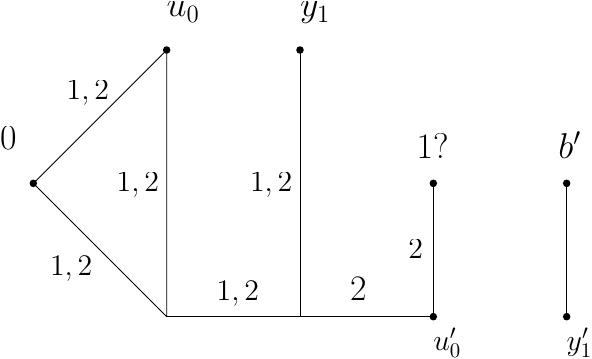} 
    \end{minipage} 
    \qquad
\mathcal{G}_{1,2,2}(b,b')=
\begin{minipage}{0.3\textwidth}
      \includegraphics[width=\linewidth]{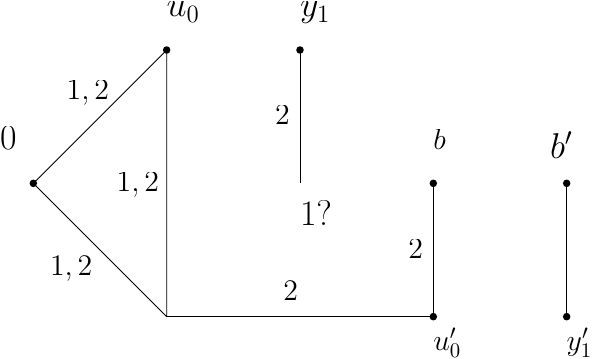}
    \end{minipage}
    \end{align}
    and $\mathcal{G}_{2,1,1}$, $\mathcal{G}_{2,2,1}(b)$, $\mathcal{G}_{2,1,2}(b')$ and $\mathcal{G}_{2,2,2}(b,b')$  are the set of diagrams constituting the two following generalized diagrams
 \begin{align}\label{eldo21}
\hskip-5em\mathcal{G}_{2,2,1}(b)=
\begin{minipage}{0.3\textwidth}
      \includegraphics[width=\linewidth]{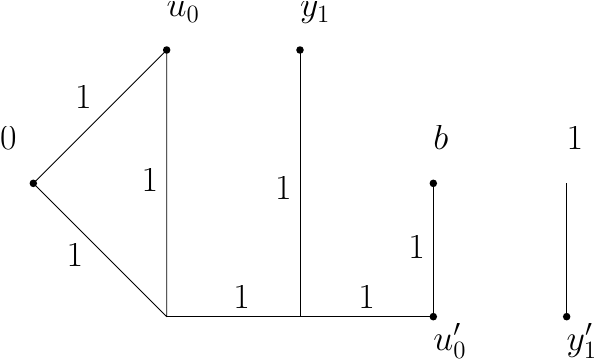} 
    \end{minipage} 
    \qquad\qquad
\mathcal{G}_{2,1,1}=
\begin{minipage}{0.3\textwidth}
      \includegraphics[width=\linewidth]{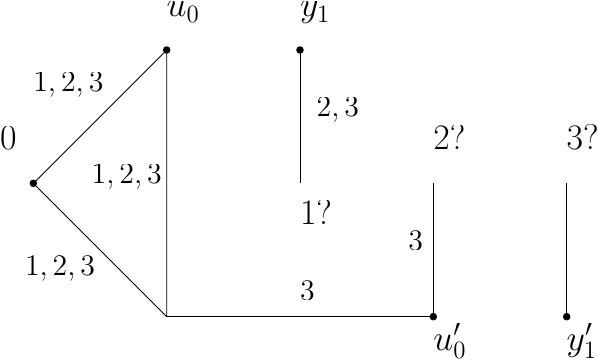}
    \end{minipage} \\
    \mathcal{G}_{2,2,2}(b,b')=
    \begin{minipage}{0.3\textwidth}
      \includegraphics[width=\linewidth]{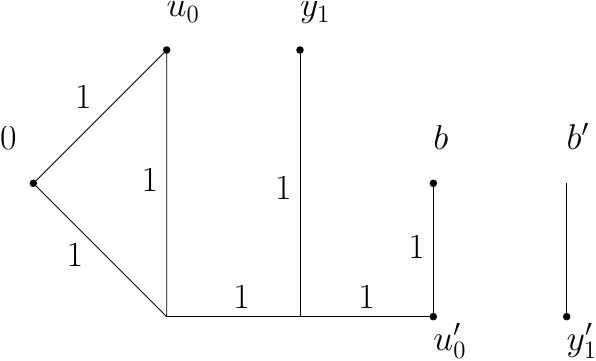} 
    \end{minipage} 
    \qquad
\mathcal{G}_{2,1,2}(b')=
\begin{minipage}{0.3\textwidth}
      \includegraphics[width=\linewidth]{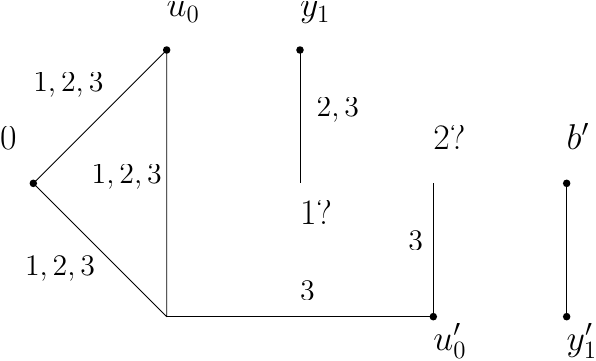}
    \end{minipage}
    \end{align}
Going back to~\eqref{crazy} and the equation that follows, the previous estimate allows us to write
\begin{align*}
  & \sum_{u_0,v_0} pD(u_0,v_0) \E_0 \times \E_1[I[ y_1' \leftrightarrow_{\omega_0} W \cup  \tB_1 ] I[ {\colAJ{E(F;u_0,u'_0)}} \cap E'(v_0,u_1,{\colAJ{\tD_0}}) 
     \cap \{ P'_{u'_0} = \es \}]] \\ \nonumber
  \leq &  C\sum_{u_0,v_0}\sum_{y_1, z_1}  \sum_{B\subset \mathbb{Z}^d} pD(u_0,v_0)
  \Bigl(\sum_{{\bf G}\in \tilde{\mathcal{G}}_{1,1}}{\bf P}_0[\mathcal{D}_{\bf G}]  
  + \sum_{b'\in \mathbb{Z}^d} \sum_{{\bf G}\in \tilde{\mathcal{G}}_{1,2}(b')} 
    I[b'\in B]{\bf P}_0[\mathcal{D}_{\bf G}]  \\ \nonumber
& \qquad + \sum_{b\in \mathbb{Z}^d} \sum_{{\bf G}\in \tilde{\mathcal{G}}_{2,1}(b)} 
    I[b \in B]{\bf P}_0[\mathcal{D}_{\bf G}] 
  + \sum_{b\in \mathbb{Z}^d} \sum_{b'\in \mathbb{Z}^d} \sum_{{\bf G}\in \tilde{\mathcal{G}}_2(b,b')} 
    I[b\in B]  I[b'\in B] {\bf P}_0[\mathcal{D}_{\bf G}]\Bigr) \\
        & \qquad \qquad \qquad \qquad \qquad \qquad  \times \E_1\Big[I[\tB_1=B] I [ T^+(v_0,z_1,y_1,u)] \Big] \\ \nonumber
  \leq &  C\sum_{u_0,v_0}\sum_{y_1, z_1}  \sum_{B\subset \mathbb{Z}^d} pD(u_0,v_0)
  \Bigl(\sum_{{\bf G}\in \tilde{\mathcal{G}}_{1,1}}{\bf P}_0[\mathcal{D}_{\bf G}]
    \E_1\Big[ I [ T^+(v_0,z_1,y_1,u)] \Big]\\
 &  \qquad\qquad \qquad\qquad 
   + \sum_{b'\in \mathbb{Z}^d} \sum_{{\bf G}\in \tilde{\mathcal{G}}_{1,2}(b')}
     {\bf P}_0[\mathcal{D}_{\bf G}]  \E_1\Big[I[b'\in  \tB_1] I [ T^+(v_0,z_1,y_1,u)] \Big] \\ \nonumber
 & \qquad\qquad\qquad\qquad 
   + \sum_{b\in \mathbb{Z}^d} \sum_{{\bf G}\in \tilde{\mathcal{G}}_{2,1}(b)}
     {\bf P}_0[\mathcal{D}_{\bf G}] \E_1\Big[I[b\in \tB_1]  I [ T^+(v_0,z_1,y_1,u)]\Big]  \\ 
 & \qquad\qquad\qquad\qquad 
   +  \sum_{b\in \mathbb{Z}^d} \sum_{b'\in \mathbb{Z}^d} \sum_{{\bf G}\in \tilde{\mathcal{G}}_2(b,b')}  
     {\bf P}_0[\mathcal{D}_{\bf G}] \E_1\Big[I[b\in\tB_1] I[b'\in\tB_1] I [ T^+(v_0,z_1,y_1,u)]\Big]\Bigr).
        \end{align*}
The ${\bf E}_1$-expectations of the second and third term can bounded using Lemma~\ref{sam} and the fourth term can be bounded using the same lemma twice. This, together with \eqref{e:D*tau}, yields
        \[
        \sum_{v_0}  pD(u_0,v_0)  \E_1\Big[I[b'\in \tB_1]   I [ T^+(v_0,z_1,y_1,u)]  \Big] \lesssim \begin{minipage}{0.4\textwidth}
      \includegraphics[width=0.5\linewidth]{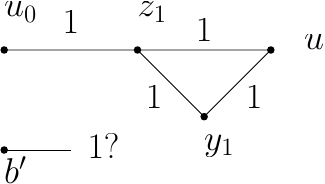}
    \end{minipage}
        \]
        and 
        \[
         \sum_{v_0}  pD(u_0,v_0)  \E_1\Big[I[b\in\tB_1] I[b'\in\tB_1]    I [ T^+(v_0,z_1,y_1,u)]\Big] \lesssim \begin{minipage}{0.4\textwidth}
      \includegraphics[width=0.5\linewidth]{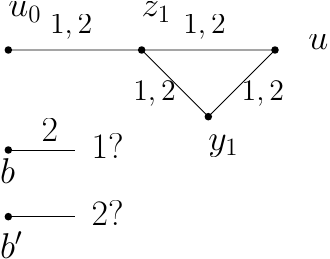}
    \end{minipage}
         \]
         
Using the last three equations (where the diagrammatic notation were introduced in~\eqref{eldo11} and~\eqref{eldo21}) in~\eqref{crazy}  we see that $\sum_{N' \ge 1} \Psi^{1,N'}(F;u,u')$ can be bounded up to multiplicative constants by the diagrams in Lemma~\ref{diag11} and the result follows.
\end{proof}

\subsubsection{The $N \ge 2$ terms of the second expansion}

Finally, we discuss the $N \ge 2$ case. 

\begin{proposition}
\label{prop:n>=2error} For all $F\in\mathfrak F_{00}$ the following holds.   \\
{\colAJ{(i) There is a constant $C_W$ such that 
\eqn{eq:n>=2error}{
  \sum_{N=2}^{\infty} \sum_{N'=0}^\infty \Psi^{N,N'}(F;u,x')
  \le C_W T(o,u,x'). }
(ii) There is a constant $C_W$ such that for all $v'$, 
\eqn{eq:n>=2error2}{
  \sum_{N=2}^{\infty} \left[ \hat{\Psi}^{N,0}(F;u,u',v') + \sum_{N'=1}^\infty \Psi^{N,N'}(F;u,u') \right]
  \le C_W T(o,u,u'). }
}}
\end{proposition}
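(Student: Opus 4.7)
The plan is to reduce the $N \ge 2$ case to a minor variant of the $N = 1$ case handled in Proposition \ref{prop:n=1error}, by exploiting the full strength of the lace expansion assumption \eqref{eq:Aass}. Whereas for $N = 1$ the factor $I[E'(v_0, u; \tC_0)]$ produced a single triangle via Lemma \ref{lem2.5HS}(i), for $N \ge 2$ the combination $\sum_{u_1, v_1} p D(u_1, v_1) \E_1[I[E'(v_0, u_1; \tC_0)] \Xi^{(N)}(v_1, u, \tC_1)]$ is controlled via Lemma \ref{lem2.5HS}(ii) in terms of $A^{(N)}$, and \eqref{eq:Aass} ensures that $\sum_{N \ge 2} A^{(N)}(y_1, z_1, u) \le C \, \tau(y_1, z_1)\tau(y_1, u) \tau(z_1, u)$. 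Thus, after summing over $N$, the bound reduces to exactly the same structure of $\omega_0$-diagrams that already appears in the $N = 1$ analysis.

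First I would carry out the argument for $\Psi^{N, 0}(F; u, x')$. Conditioning on $\omega_0$, apply Lemma \ref{lem2.5HS}(ii) and sum over $N \ge 2$ using \eqref{eq:Aass}; the remaining $\E_0$-expectation is identical to the one in \eqref{bla1-new} of the proof of Proposition \ref{prop:n=1error}, so the generalized-diagram decomposition via Lemmas \ref{lem:E-bnd}(ii), \ref{lem_dev_first}, \ref{lem_extra_path2}, \ref{lem_extra_path4}, combined with Proposition \ref{neglect_w2} for the $W$-edges, yields the four diagrams of Lemma \ref{diag10} and hence the bound $C_W \, T(o, u, x')$. The treatment of $\hat{\Psi}^{N, 0}(F; u, u', v')$ in (ii) is identical upon substituting $\tD_0$ for $\tC_0$ and $u'$ for $x'$; the bound is uniform in $v'$ because $v'$ enters only through the restriction $\{u'_0, v'_0\} \not\in E_W \cup \{u_0, v_0\}$, which plays no role in the diagrammatic upper bound.

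Next I would handle the double sum $\sum_{N \ge 2} \sum_{N' \ge 1} \Psi^{N, N'}(F; u, u')$. Apply Lemma \ref{lem2.5HS}(ii) on the $\omega_1$-side to extract $A^{(N)}(y_1, z_1, u)$ and on the $\omega'_1$-side to extract $A^{(N')}(y'_1, z'_1, u')$ (together with the indicator $I[y'_1 \in \tD_0 \cup \tC(\tB_1)]$), then sum both series via \eqref{eq:Aass}. The resulting expression matches the structure of \eqref{crazy}--\eqref{crazy1} in the $N=1$ proof. The indicator $I[y'_1 \in \tD_0 \cup \tC(\tB_1)]$ is split into its two contributions: $\{y'_1 \in \tD_0\}$ produces an $\omega_0$-connection from $y'_1$ to $W$, absorbed into the $\omega_0$-diagram via Lemma \ref{lem_extra_path4}, and $\{y'_1 \in \tC(\tB_1)\}$ produces a double-connection into the $\omega_1$-backbone $\tB_1$, which Lemma \ref{sam} converts into an extra edge on the $\omega_1$-triangle. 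The resulting generalized diagrams are precisely those of Lemma \ref{diag13}, with Lemma \ref{diag12} covering the $N'=0$ sub-cases; each is bounded by $C \, T(o, u, u')$ via the H-reduction machinery of Propositions \ref{expand_class} and \ref{kill_bill}, and summability in $(N, N')$ is automatic since the per-term bound is independent of $N, N'$.

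The main obstacle is purely organisational: when one expands $I[y_1 \in \tC_0]$ and $I[y'_1 \in \tD_0 \cup \tC(\tB_1)]$ simultaneously with $\{P'_{u'_0} = \es\}$, one generates a sizeable collection of generalized diagrams. However, once they are written down, Proposition \ref{expand_class} guarantees that each resulting diagrammatic graph built from the base graphs of Remark \ref{ex_3strong} together with successive applications of the three building blocks of Remark \ref{chess} is 3-connected to $\{o, u, u'\}$, so Proposition \ref{kill_bill} immediately bounds each of them by the triangle $T(o, u, u')$ --- precisely the point of developing the general framework of Section \ref{sec:BoundsFramework}, so that no case-by-case combinatorial effort is needed for this step.
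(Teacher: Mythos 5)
There is a genuine gap in your plan. You propose to integrate out $\omega_1$ wholesale by applying Lemma~\ref{lem2.5HS}(ii) to the combination $\sum_{u_1,v_1}pD(u_1,v_1)\E_1[I[E'(v_0,u_1;\tC_0)]\Xi^{(N)}(v_1,u,\tC_1)]$, claiming that after summing over $N$ the bound reduces to \emph{exactly} the same $\omega_0$-structure as in the $N=1$ case. This cannot work. The event $\hat E''(F;u_0,v_0,u_1,u'_0,v'_0)$ contains $\{P'_{u'_0}=\es\}$, and $P'_{u'_0}$ is defined in terms of $\tB_1=\tB_1(v_0,u_1;\omega_1)$; likewise, the $\omega'_1$-event $E'(v'_0,u'_1;\tD_0\cup\tC(\tB_1))$ depends on $\tC(\tB_1)$, which again needs $\tB_1$. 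So $\omega_1$ cannot be integrated out before these events are dealt with. The paper instead bounds only $\Xi^{(N)}(v_1,u;\tC_1)$ conditionally on $\omega_0,\omega_1$ (via Lemma~\ref{lem2.5HS} applied one level down, giving $\sum_{y_2,z_2}I[y_2\in\tC_1]\tau(v_1,z_2)A^{(N-1)}(y_2,z_2,u)$), keeping the $\omega_1$-expectation alive. The new indicator $I[y_2\in\tC_1]$ then joins $T^+(v_0,z_1,y_1,u_1)$, $\{b\in\tB_1\}$, $\{b'\in\tB_1\}$ inside the $\omega_1$-probability, which is what produces the strictly larger diagrammatic graphs — one extra building block from Remark~\ref{chess} compared to $N=1$. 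This is precisely the reason the relevant diagrams are those of Lemma~\ref{diag12} ($N'=0$) and Lemma~\ref{diag13} ($N'\ge1$), not Lemma~\ref{diag10}.

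Two further points. First, you contradict yourself about which diagram lemma covers the $N\ge2$, $N'=0$ terms: you first say Lemma~\ref{diag10}, then later Lemma~\ref{diag12}; only the latter is correct, for the reason just given. Second, your remark that ``the per-term bound is independent of $N,N'$'' is not literally how the summability works: one must sum the full series $\sum_{N\ge2}A^{(N-1)}$ and $\sum_{N'\ge1}A^{(N')}$ using \eqref{eq:Aass} \emph{before} taking the diagrammatic bound, exactly because the bound in Lemma~\ref{lem2.5HS} is applied once conditionally and not per-$N$. Your use of Proposition~\ref{expand_class} and Proposition~\ref{kill_bill} to conclude once the diagrams are written down is the right idea and matches the paper, but the organisation of the $\omega_1$-expectation has to be corrected to make the plan go through.
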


{\colAJ{
\begin{remark}
Similarly to Proposition \ref{prop:n=1error}, the only difference between the proofs of (i) and (ii)
lies in a slight difference in handling the $N'=0$ terms.
\end{remark}
}}

\begin{proof}
First we bound the expression $\Xi^{(N)}$, conditional on $\omega_0, \omega_1$ 
{\colAJ{using Lemma \ref{lem2.5HS}}}, by the expression
\[ \Xi^{(N)}(v_1,u;\tC_1)
   \le \sum_{y_2,z_2} I[ y_2 \in \tC_1 ] \tau(v_1,z_2) A^{(N-1)} (y_2,z_2,u). \]
Hence, using~\eqref{eq:Aass}, we have
\[ 
\sum_{N=2}^\infty \Xi^{(N)}(v_1,u;\tC_1) 
\le C \sum_{y_2,z_2} I[ y_2 \in \tC_1 ] \tau(v_1,z_2) T(y_2,z_2,u).
 \]

We start with the case $N'=0$. Using the previous equation, and {\colAJ{then \eqref{e:D*tau} to 
perform the summation over $v_1$, we have}}
 \begin{align*}
\sum_{N=2}^{\infty} \hat{\Psi}^{N,0}(F;u,u',v') 
 & \leq \sum_{\substack{u_0,v_0 \\ u'_0,v'_0}}
  \sum_{u_1,y_2,z_2}
    p D(u_0,v_0) p D(u'_0,v'_0) \\
  &\qquad  \qquad \E_0 \times \E_1 \Big[I[ y_2 \in \tC_1 ] I \left[ \hat{E}''(F;u_0,v_0,u_1,u',v') \right] \Big] \tau(u_1,z_2) T(y_2,z_2,u).
     \end{align*}
This expression is similar to {\colAJ{equation \eqref{bla1-old}}} of Proposition~\ref{prop:n=1error}. 
By following the same reasoning up to~\eqref{crack} we obtain
   \begin{align}\label{yay}
 & \sum_{N=2}^{\infty} \hat{\Psi}^{N,0}(F;u,u',v') \\ \nonumber
 & \quad \leq C_W \sum_{\substack{u_0,v_0 \\ u'_0,v'_0}} \sum_{u_1,y_2,z_2} 
  p D(u_0,v_0) p D(u'_0,v'_0) \tau(u_1,z_2) T(y_2,z_2,u) \\ \nonumber
 &\qquad \Bigl( \sum_{{\bf G}\in \mathcal{G}_1} {\bf P}_0[\mathcal{D}_{\bf G}] 
   {\bf P}_1 [\{ y_2 \in \tC_1 \}\cap T^+(v_0,z_1,y_1,u_1)]  \\
 &\qquad\quad +\sum_{b\in \mathbb{Z}^d} \sum_{{\bf G}_b\in \mathcal{G}_2(b)} {\bf P}_0[\mathcal{D}_{{\bf G}_b}] 
   {\bf P}_1\Big[\{ y_2 \in \tC_1 \}\cap\{b\in\tB_1\}  \cap T^+(v_0,z_1,y_1,u_1)  \Big]\Bigr).      
 \end{align}
      
{\colAJ{Consider first the summation over $\mathcal{G}_1$ in \eqref{yay}. }}
Applying Lemma~\ref{lem_extra_path4} shows that 
\[
\{ y_2 \in \tC_1 \}\cap T^+(v_0,z_1,y_1,{\colAJ{u_1}}) \subset \bigcup_{{\bf G}\in \mathcal{G}_{T^+}} \mathcal{D}_{\bf G},
\]
 where $\mathcal{G}_{T^+}$ is the set of diagrams given by the following generalized diagram
  \[
          \begin{minipage}{0.4\textwidth}
      \includegraphics[width=0.7\linewidth]{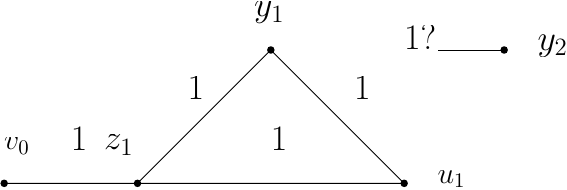}
    \end{minipage}\
  \]
and thus by BK
 \begin{align*}
 & {\colAJ{\sum_{u_1,y_2,z_2}}} \tau(u_1,z_2) T(y_2,z_2,u) 
   {\bf P}_1 [\{ y_2 \in \tC_1 \} \cap T^+(v_0,z_1,y_1,{\colAJ{u_1}})] \\
 \leq &       \qquad     \begin{minipage}{0.6\textwidth}
      \includegraphics[width=0.7\linewidth]{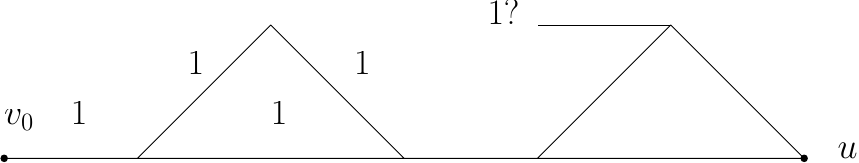}
    \end{minipage}\
   \end{align*}     

{\colAJ{Consider now the summation over $\mathcal{G}_2(b)$ in \eqref{yay}.}}
Applying Lemma~\ref{lem_extra_path4} and after that Lemma~\ref{sam} shows that 
\[
 \{ y_2 \in \tC_1 \}\cap\{b\in\tB_1\}  \cap T^+(v_0,z_1,y_1,{\colAJ{u_1}}) 
 \subset \bigcup_{{\bf G}_b \in \mathcal{G}_{T^+}(b)} \mathcal{D}_{\bf G},
\]
where $\mathcal{G}_{T^+}(b)$ is the set of diagrams given by the following generalized diagram
  \[
          \begin{minipage}{0.4\textwidth}
      \includegraphics[width=0.8\linewidth]{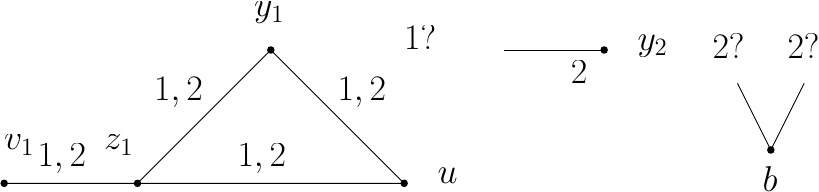}
    \end{minipage}\
  \]
which by BK means that
 \begin{align*}
 & {\colAJ{\sum_{u_1,y_2,z_2}}} \tau(u_1,z_2) T(y_2,z_2,u) {\bf P}_1 [\{ y_2 \in \tC_1 \}\cap\{b\in\tB_1\}  \cap T^+(v_0,z_1,y_1,{\colAJ{u_1}})]\\
 \leq &       \qquad     \begin{minipage}{0.6\textwidth}
      \includegraphics[width=0.6\linewidth]{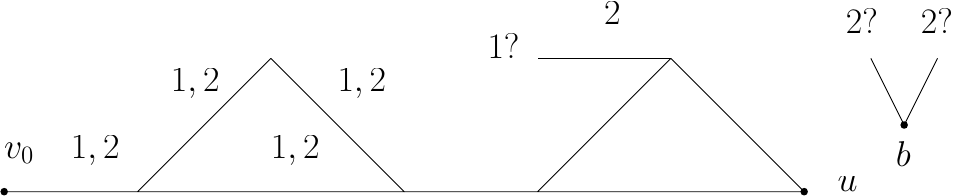}
    \end{minipage}\
   \end{align*}

Using the above in~\eqref{yay} shows that $\sum_{N=2}^{\infty} \hat{\Psi}^{N,0}(F;u,u',v')$ 
is bounded above by diagrams appearing in Lemma~\ref{diag12} and hence adequately bounded.
Minor adaptations to the arguments yield that $\sum_{N=2}^{\infty} \Psi^{N,0}(F;u,x')$ 
satisfies the analogous bound.
  
The case $N'\geq 1$ can again be solved by following the steps of Proposition~\ref{prop:n=1error}
(the case $N=1$, $N'\geq 1$). The only event appearing that we have not analyzed yet is 
 \[
\{ y_2 \in \tC_1 \}\cap\{b\in\tB_1\} \cap\{b'\in\tB_1\} \cap T^+(v_0,z_1,y_1,{\colAJ{u_1}}) 
\subset \bigcup_{{\bf G}_{b,b'} \in \mathcal{G}_{T^+}(b,b')} \mathcal{D}_{{\bf G}_{b,b'}},
\]
 where $\mathcal{G}_{T^+}(b,b')$ is the set of diagrams given by the following generalized diagram
  \[
          \begin{minipage}{0.6\textwidth}
      \includegraphics[width=1\linewidth]{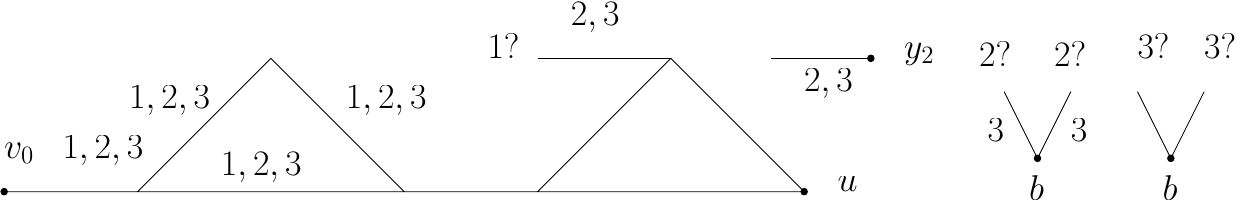}
    \end{minipage}\
  \]
Hence, by BK we have
  \begin{align*}
 & \sum_{u_1,y_2,z_2} \tau(u_1,z_2) T(y_2,z_2,u) {\bf P}_1 [\{ y_2 \in \tC_1 \}\cap\{b\in\tB_1\} \cap\{b'\in\tB_1\} \cap T^+(v_0,z_1,y_1,{\colAJ{u_1}})]\\
 \leq &       \qquad     \begin{minipage}{0.6\textwidth}
      \includegraphics[width=0.8\linewidth]{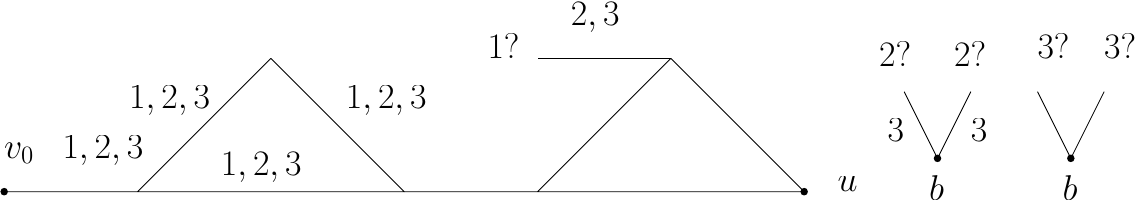}
    \end{minipage}\
   \end{align*}     
This means the diagrams appearing in $\sum_{N'=1}^\infty \sum_{N=2}^{\infty} \Psi^{N,N'}(F;u,u')$  are the  diagrams appearing in Lemma~\ref{diag13} and hence adequately bounded. 
{\colAJ{For $\sum_{N'=1}^\infty \sum_{N=2}^{\infty} \Psi^{N,N'}(F;u,x')$, the analogous bound 
also holds.}}
\end{proof}

\appendix
\section{Auxiliary bounds}
{\colAJ{
\begin{lemma}
\label{lem:D*tau}
Assume the kernel $D\colon \Zd\to[0,1]$ sums to $1$ and that $D(0,y) \le C_D (1+|y|)^{-d}$. 
Assume further that $\tau$ satisfies \eqref{eq:tauAsy}. Then there is a constant $C$ 
such that for all $x \in \Zd$ we have
\eqnspl{e:D*tau}
{ (D \ast \tau) (0,x)
 = \sum_{y \in \Zd} D(0,y) \tau(y,x)
 \le C C_D \tau(x). }
\end{lemma}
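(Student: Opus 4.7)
The plan is to decompose $(D \ast \tau)(0,x) = \sum_y D(0,y)\, \tau(y,x)$ according to whether $y$ lies near $0$ or near $x$, splitting the $y$-sum at the threshold $|y-x| = |x|/2$. The key insight is that in the ``far'' region one exploits only the probability-kernel property $\sum_y D(0,y) = 1$ and does not need the polynomial decay of $D$, whereas in the ``near'' region the decay of $D$ becomes crucial because the triangle inequality forces $|y| \ge |x|/2$.

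In the far region $|y-x| > |x|/2$, the asymptotic \eqref{eq:tauAsy} combined with the triangle inequality yields a uniform bound $\tau(y,x) \le C_1\, \tau(x)$: for $|x|$ bounded this is trivial since $\tau(x)$ is bounded below by a positive constant, and for $|x|$ large it follows from $\tau(y,x) \le C(1+|y-x|)^{2-d} \le C'|x|^{2-d} \le C''\,\tau(x)$. Summing and using $\sum_y D(0,y) = 1$ yields a contribution of at most $C_1\,\tau(x)$, with no use of the decay hypothesis on $D$.

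In the near region $|y-x| \le |x|/2$, the triangle inequality gives $|y| \ge |x|/2$, so the hypothesis on $D$ yields $D(0,y) \le C_D(1+|y|)^{-d} \le 2^d C_D (1+|x|)^{-d}$. I would then invoke the volume estimate $\sum_{|z| \le R} \tau(z) \le C_2\, R^2$, which follows from \eqref{eq:tauAsy} by a routine discretization of $\int_0^R r^{2-d}\, r^{d-1}\, dr$, to obtain
\[\sum_{|y-x| \le |x|/2} D(0,y)\, \tau(y,x) \le 2^d C_D (1+|x|)^{-d}\, C_2\,|x|^2 \le C_3\, C_D\, (1+|x|)^{2-d} \le C_4\, C_D\, \tau(x),\]
using \eqref{eq:tauAsy} again in the last step.

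Adding the two contributions produces the desired bound, with a constant that can be absorbed into the form $C\, C_D$ by adjusting $C$. I do not expect any substantive obstacle here, as the entire argument reduces to two elementary inequalities; the conceptual point worth stressing is that the probability-kernel normalization $\sum_y D(0,y) = 1$ is what handles the far region cleanly, sidestepping the logarithmic boundary issue that would otherwise arise when convolving with a kernel decaying only as $(1+|y|)^{-d}$, which formally falls on the critical boundary of Lemma~\ref{lem:ConvBd}.
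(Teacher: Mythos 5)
Your proof is correct, and it uses the same far/near split at $|x-y| = |x|/2$ as the paper, with the far region handled identically via the normalization $\sum_y D(0,y) = 1$ and the near region using that $|y| \ge |x|/2$ forces $D(0,y) \lesssim C_D(1+|x|)^{-d}$. The only (cosmetic) difference is that in the near region you appeal to the prepackaged volume estimate $\sum_{|z|\le R}\tau(z)\lesssim R^2$ rather than carrying out the paper's explicit dyadic decomposition in $|x-y|$ and summing the geometric series; these are the same computation in different clothing.
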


\begin{proof}
We split the sum over $y$ according to the distance between $x$ and $y$ on dyadic scales:
\eqnsplst{
 \sum_{y \in \Zd} D(0,y) \tau(y,x)
 &\le C \sum_{y \in \Zd} D(0,y) |x-y|^{2-d} \\
 &\le C \sum_{\substack{y \in \Zd: \\ |x-y| > \frac{1}{2}|x|}} D(0,y) |x-y|^{2-d}
    + \sum_{k \ge 1} \sum_{\substack{y \in \Zd: \\ \frac{|x|}{2^{k+1}} < |x-y| \le \frac{|x|}{2^k}}} 
      D(0,y) |x-y|^{2-d} \\
 &\le \frac{C}{|x|^{d-2}} \left[ \sum_{\substack{y \in \Zd: \\ |x-y| > \frac{1}{2}|x|}} D(0,y) 
    + C_D \sum_{k \ge 1} \sum_{\substack{y \in \Zd: \\ \frac{|x|}{2^{k+1}} < |x-y| \le \frac{|x|}{2^k}}} 
      |y|^{-d} 2^{k(d-2)} \right] \\
 &\le \frac{C}{|x|^{d-2}} \left[ 1
    + C_D \sum_{k \ge 1} |x|^d 2^{-kd} |x|^{-d} 2^{k(d-2)} \right] \\
 &\le \frac{C C_D}{|x|^{d-2}}. }
\end{proof}
}}

\bigskip
\subsubsection*{Acknowledgements} 
MH thanks the \emph{Centre de recherches math\'ematiques} Montreal for hospitality during a research visit in spring 2022 through the Simons-CRM scholar-in-residence program. 
Part of this work was done while MH was in residence at the Mathematical Sciences Research Institute in Berkeley supported by NSF Grant No. DMS-1928930.

\bibliographystyle{plain}
\bibliography{Bib}
\bigskip

%
%
%

\end{document}